\setlist{leftmargin=*}
\begin{document}
\begin{otherlanguage}{english}
\def\A{{\mathbb A}}
\def\R{{\mathbb R}}
\def\N{{\mathbb N}}
\def\Z{{\mathbb Z}}
\def\S{{\mathbb S}}
\def\u{ \underline }
\def\sA{{\mathfrak A}}
\def\sB{{\mathfrak B}}
\def\sC{{\mathfrak C}}
\def\sD{{\mathfrak D}}
\def\sE{{\mathfrak E}}
\def\sF{{\mathfrak F}}
\def\sG{{\mathfrak G}}
\def\sI{{\mathfrak I}}
\def\sJ{{\mathfrak J}}
\def\sL{{\mathfrak L}}
\def\sM{{\mathfrak M}}
\def\sN{{\mathfrak N}}
\def\sO{{\mathfrak O}}
\def\sP{{\mathfrak P}}
\def\sR{{\mathfrak R}}
\def\sS{{\mathfrak S}}
\def\sY{{\mathfrak Y}}
\def\sT{{\mathfrak T}}

\def\cV{{\mathcal V}}
\def\cI{{\mathcal I}}
\def\cD{{\mathcal D}}
\def\cH{{\mathcal H}}
\def\cY{{\mathcal Y}}
\def\cX{{\mathcal X}}

\def\sa{{\mathfrak a}}
\def\sb{{\mathfrak b}}
\def\sc{{\mathfrak c}}
\def\sd{{\mathfrak d}}
\def\sd{{\mathfrak d}}
\def\se{{\mathfrak e}}
\def\sg{{\mathfrak g}}
\def\sm{{\mathfrak m}}
\def\sn{{\mathfrak n}}
\def\sp{{\mathfrak p}}
\def\sq{{\mathfrak q}}
\def\sr{{\mathfrak r}}
\def\ss{{\mathfrak s}}
\def\st{{\mathfrak t}}
\def\sw{{\mathfrak w}}

\def\wt{\widetilde}
\def\wh{\widehat}

\def\arr{\overleftarrow}
\def\bm{{\boxdot_-}}
\def\bp{{\boxdot_+}}
\def \qand{\quad \text{and} \quad}

\def\Diff{\text{Diff}}
\def\Max{\text{max}}
\def\Log{\text{log}}
\def\loc{\text{loc}}
\def\inta{\text{int }}
\def\det{\text{det}}
\def\exp{\text{exp}}
\def\Re{\text{Re}}
\def\lip{\text{Lip}}
\def\leb{\text{Leb}}
\def\dom{\text{Dom}}
\def\diam{\text{diam}\:}
\newcommand{\ovfork}{{\overline{\pitchfork}}}
\newcommand{\ovforki}{{\overline{\pitchfork}_{I}}}
\newcommand{\whforki}{{\widehat{\pitchfork}_{I}}}
\theoremstyle{plain}

\newenvironment{sublemm}{\begin{enonce}{Sublemma}}{\end{enonce}}
\newenvironment{prob}{\begin{enonce}{Problem}}{\end{enonce}}
\newenvironment{fact}{\begin{enonce}{Fact}}{\end{enonce}}

\newenvironment{ques}{\begin{enonce}{Question}}{\end{enonce}}

\theoremstyle{remark}
\newenvironment{Claim}{\begin{enonce}{Claim}}{\end{enonce}}
\newenvironment{PartAns}{\begin{enonce}{Partial Answer}}{\end{enonce}}
\newenvironment{remas}{\begin{enonce}{Remarks}}{\end{enonce}}
\newenvironment{Examples}{\begin{enonce}{Remarks}}{\end{enonce}}

\title{Abundance of non-uniformly hyperbolic H\'enon-like endomorphisms}
\author[P. Berger]{Pierre Berger}
\address{CNRS-LAGA, Universit\'e Paris 13.}
\email{berger(at)math.univ-paris13.fr}

\date{\today}

\begin{abstract} 
For every $C^2$-small function $B$, we prove that the map $(x,y)\mapsto (x^2+a,0)+B(x,y,a)$ leaves invariant a physical, SRB probability measure, for a set of parameters $a$ of positive Lebesgue measure. 
When the perturbation $B$ is zero, this is the Jakobson Theorem; when the perturbation is a small constant times $(0,x)$, this is the celebrated Benedicks-Carleson Theorem.

In particular, a new proof of the last theorem is given, based on devellopment of the  combinatorial formalism of  the Yoccoz puzzles. By  adding new geometrical and combinatorial ingredients, and restructuring classic analytical ideas,  we are able to carry out our proof in the $C^2$-topology, even when the underlying dynamics are given by endomorphisms.
\end{abstract}
\begin{altabstract} 
Pour toute petite $C^2$-function $B$, nous prouvons que pour un ensemble de paramètres $a$ de mesure de Lebesgue positive, l'application $(x,y)\mapsto (x^2+a,0)+B(x,y,a)$ pr\'eserve une mesure de probabilité qui est physique et SRB. Quand l'application $B$ est nulle, il s'agit du théorème de Jakobson ; quand la perturbation est égale à une petite constante fois $(0,x)$, on obtient le célèbre théorème de Benedicks-Carleson.

Nous donnons en particulier une nouvelle preuve de ce dernier théorème, basée sur le formalisme combinatoire des pièces de puzzle de Yoccoz. En ajutant de nouveaux ingrédiants géométriques et combinatoires, et en restructurant des idées analytiques classiques, nous arrivons à prouver notre résultat en topologie $C^2$, et cela, même quand la dynamique est un endomorphisme.
\end{altabstract}
\maketitle


Our aim is to prove the existence of a non-uniformly hyperbolic attractor for a large set of parameters $a\in \mathbb R$, for the following family of maps:
\[f_{a\,B}:\; (x,y)\mapsto (x^2+a,0)+B(x,y,a),\]
where $B$ is a fixed $C^2$-map of $\mathbb R^3$ to $\R^2$ close to $0$. We denote by $b$ an upper bound of the uniform $C^2$-norm of $B|[-3,3]^2$ and of the determinant of $Df_{a\,B}$. For $B$ fixed, we prove that for a large set $\Omega_B$ of parameters $a$, the dynamics $f_{a\,B}$ is \emph{strongly regular}.  This has many consequences, among which is the following theorem.
\index{$a$,$b$,$B$}

\begin{theo}[Main]\label{Main}
For any $\eta>0$, there exist constants $a_0>-2$ and $b>0$ such that the following property holds: for any $B$ with $C^2$-norm less than  ${b}$, there is a subset $\Omega_B \subset [-2,a_0]$ of relative measure greater than $1-\eta$, such that for any $a\in \Omega_B$, $f_{a\,B}$ leaves invariant a physical, SRB measure. 
\end{theo}
 This answers a question of  Pesin-Yurchenko for reaction-diffusion PDEs in applied mathematics \cite{Yurch}. This solves also a step of the program of Yoccoz stated at his first lecture at Collège-de-France in 1997 \cite{Yolecture1}. The present manuscript was also, following his own words \cite[-1'37'']{Yo16}, the main source of inspiration of his last lecture at Collège-de-France. Nevertheless the writing of our text has been deeply revised since this time.   
 
  To the author's knowledge,  this is the first result showing the abundance of  non-uniformly hyperbolic, surface, (non-invertible and not expanding) endomorphisms leaving invariant an SRB measure. It seems also to be the first result proving the abundance of  non-uniformly hyperbolic surface maps  (invertible or not) for families in a $C^2$-open set (all previous results need three derivatives even \cite{YW08}).
{\small  \tableofcontents }
\section*{Introduction}
\subsection{History}
 The birth of chaotic dynamical systems goes back to Poincar\'e in his study of the 3-body problem.  At the time, the prevaling belief was that dynamical systems are always deterministic: small perturbations do not change the long term behavior. Let us recall a (simplified version) of his famous counterexample.

The idea is to consider two massive planets of equal mass and in circular orbits around $0$ in the complex plane $\mathbb C$. One views $\mathbb C$ as embedded into $\mathbb C\times \mathbb S^1$ via the inclusion $\mathbb C\approx \mathbb C\times \{1\}\hookrightarrow \mathbb C\times \mathbb S^1$.
Put a planet $P$ in $\{0\}\times \mathbb S$ with a vertical initial speed $v\in \{0\}\times \mathbb R$. On one hand, assuming $P$ has negligible mass, the motions of both massive planets remain circular and included in $\mathbb C\times \{0\}$. On the other, the dynamics of the planet remains in the circle $\{0\}\times \mathbb S$; in this way the dynamics resemble the (time one) pendulum map $f$ of the tangent space of the circle $T\mathbb S=\mathbb S\times\mathbb R$. We identify $T\mathbb S$ with the punctured  plane in the phase diagram drawn at the left of Figure \ref{fig:poincare}.   
It turns out that the point $M=(-1, 0)\in T\mathbb S$ is a hyperbolic fixed point: the differential of $f$ at $M$ has two eigenvalues of modulus different from 1.
Moreover, the stable and unstable manifolds of $M$ are equal to a same curve $W^s(M)=W^u(M)$. This is a \emph{homoclinic tangency}.

Contrary to the case of the pendulum, we can perturb the system in such a way that not only it holds $W^s(M)\not=W^u(M)$ but also the intersection $W^s(M)\cap W^u(M)$ contains a point $N\not= M$ where the intersection is transverse (see the second picture of Figure \ref{fig:poincare}). Indeed, we can  assume that the two massive planets have an elliptic orbit centered at 0 with small eccentricity $e\not=0$. The hyperbolic point $M$ persists, its stable and unstable manifolds $W^s(M)$ and $W^u(M)$ are  no longer equal, the intersection $W^s(M)\cap W^u(M)$ contains all the iterates of $N$. 

The global picture of $W^s(M)$ and $W^u(M)$ is then extremely complex (see the third picture of Figure \ref{fig:poincare});  with it, the field of chaotic dynamical systems is born.

 Even today, we do not know how to describe this picture mathematically. In fact, we do not even know if the closure of $W^s(M)\cap W^u(M)$ can have positive Lebesgue measure. 
\begin{figure}[h]
    \centering
        \includegraphics[width=\textwidth]{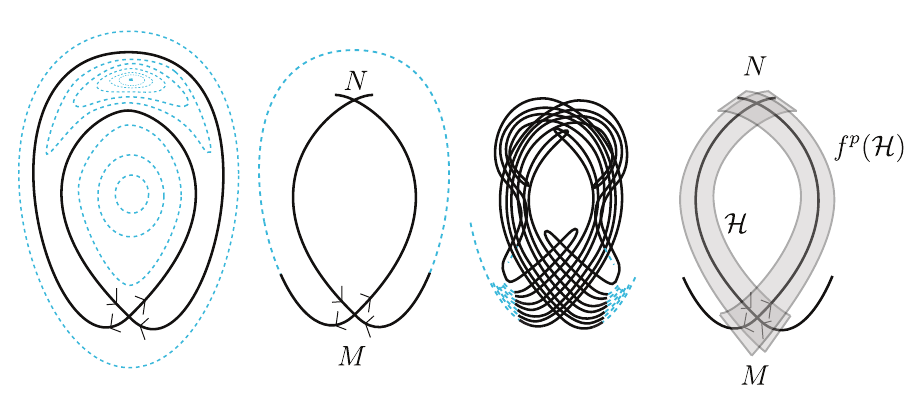}
    \caption{Homoclinic tangle arising from a 3-body problem.}
\label{fig:poincare}
\end{figure}

In the sixties, Smale remarked that a thin neighborhood $\mathcal H$ of the segment of $W^s(M)$ containing $M$ and $N$ is sent by an iterate $f^p$ onto a neighborhood of the segment of $W^u(M)$ containing $M$ and $N$ (see the last picture of  Figure \ref{fig:poincare}). If $\mathcal H$ is sufficiently thin, the maximal invariant $K=\cap_{n\in \mathbb Z} f^{pn}(\mathcal H)$ is a  compact hyperbolic set: the celebrated Smale Horseshoe. \emph{Hyperbolicity} means that the space $T\mathbb R^2|K$ is endowed with two $Df^p$-invariant directions, one expanded, the other contracted by $Df$.

 The theory of hyperbolic dynamical systems was largely developed by the schools of Smale and Sinai. It can be considered as more or less complete \cite{Sm}. A hyperbolic set $K$ is an attractor if it is transitive and { if} $K=\cap_{n\ge 0} f^n(V)$ for some neighborhood $V$ of $K$.  Hyperbolic attractors are well understood through the following properties.
\medskip

\noindent{\bf Persistence.} A uniformly hyperbolic attractor is \emph{persistent} { if every} $C^1$-perturbation $f'$ of $f$ leaves invariant a uniformly hyperbolic attractor $K'$ homeomorphic to $K$, via a homeomorphism which conjugates the dynamics $f|K$ and $f'|K'$.
\medskip

\noindent{\bf Geometry.} Every uniformly hyperbolic attractor supports a lamination { whose leaves are} unstable manifolds. This means that $K$ can be covered by finitely many open sets $(U_i)_i$ whose intersection with $K$ is homeomorphic to the product of $\mathbb R^d$ with a compact set $T$, such that $\mathbb R^d\times \{\st\}$ corresponds to a local unstable manifold, for every $t\in T$.
\medskip

\noindent{\bf SRB, physical measure.} An \emph{SRB measure} of $K$ is an $f$-invariant probability measure $\nu$ supported by $K$, such that 
the conditional measure with respect to every local unstable manifold is absolutely continuous with respect to the Lebesgue measure. Whenever the dynamics is of class $C^{1+\alpha}$, there exists a unique, ergodic,  $SRB$ measure supported by $K$.  By Birkhoff's Theorem, $\nu$-almost every point is $\nu$-\emph{generic} { with orbit intersecting every Borel subset $U$ in mean with proportion $\nu(U)$}. Moreover every SRB measure is \emph{physical}: the set of $\nu$-generic points -- called \emph{the basin of $\nu$} -- is of positive Lebesgue measure on a neighborhood of the attractor.  
\medskip

\noindent{\bf Coding.} Every uniformly hyperbolic attractor has a Markov partition. This provides a semi-conjugacy of the dynamics with a subshift of finite type. The conjugacy is 1-1 on a generic set supporting all the measure with large entropy. This implies the existence and uniqueness of the maximal entropy measure, it is also a key point to construct the \textquotedblleft thermodynamic" formalism.
\medskip

The persistence and the existence of an SRB measure show that deterministic dynamical systems may have (robust) statistical behaviors.

These properties enable a { deep} understanding  of \emph{uniformly hyperbolic dynamical systems}, or more precisely \emph{Axiom A diffeomorphisms}. { By definition, the latter are those dynamical systems} whose  non-wandering set is
locally maximal and hyperbolic.

Many dynamical systems do not satisfy Axiom A.  It is indeed easy to see that an Axiom A,  conservative dynamical system is necessarily Anosov (the whole manifold is hyperbolic).  Newhouse { also found} a (dissipative) $C^2$-surface diffeomorphism, robustly not Axiom A \cite{Newhouse}, by building an example of horseshoe of the plane whose local stable and unstable manifolds are robustly tangent.

In the meantime, H\'enon \cite{Henon} { numerically exhibited} a \textquotedblleft strange attractor"  for the family of maps $(x,y)\mapsto (x^2+a+y,bx)$. { Later,} Benedicks-Carleson \cite{BC2} proved its existence mathematically: for a  set of parameters $(a,b)$ of positive Lebesgue measure (with $b$ very small), they proved that there exists a topological attractor which is not (uniformly) hyperbolic. 
 Viana-Mora  \cite{MV93} showed that this proof can be adapted to an open set of $C^3$-perturbations of the Hénon family, occurring in the study of the unfolding of some homoclinic tangencies. Later Benedicks-Young showed the existence of an SRB measure for these parameters \cite{BY}. These works were generalized by Wang-Young \cite{WY01, YW08};  they showed further properties of the SRB measure. A recent work of Takahashi shows the existence of strange attractors for endomorphisms of the plane \cite{Ta11}, nonetheless he did not show the existence of an SRB measure for these maps.  

The work of \cite{BC2} was one of the greatest achievements in dynamical systems of the last decades, especially for the analysis developed therein.
Unfortunately the maps they deal with are defined by a long induction ($\ge 100$). This makes their ideas difficult to be understood and generalized.  Moreover, to state a property on such dynamics, one has to recall the whole induction process. That is why we propose a new approach to this problem based on a development of the Yoccoz puzzle pieces            .
\subsection{Current developments}
We will work with the following family of maps indexed by a parameter $a\in \R$:
\[f_{a\,B}:\; (x,y)\mapsto (x^2+a,0)+B(x,y,a),\]
where $B$ is a fixed $C^2$-map of $\mathbb R^2$ close to $0$. We denote by $b$ an upper bound of the uniform $C^2$-norm of $B|[-3,3]^3$ and of the determinant of $Df_{a\,B}$. We observe that for $B=0$, the dynamics is the product of $x\mapsto x^2+a$ with $y\mapsto 0$. Note that for $B(x,y,a)=( b y , \pm b x)$, the   dynamics $f_{a\, B}$ is conjugated to the Hénon map $(x,y)\mapsto (x^2+a +y, \pm b^2x)$, for every $(a, b)$. 

For any $B$ such that $b$ is sufficiently small, the main theorem gives the existence of a physical SRB measure for $f_{a\, B}$ for every parameter $a$ in a set of Lebesgue measure positive. Contrarily to all the previous generalizations \cite{MV93, WY01, YW08, Ta11} of \cite{BC2}, our approach is basically different: it is a generalization and development of  Yoccoz' concept of strong regularity. The combinatorial formalism developed in this work is in my opinion its main novelty. We will give a rough idea of it in the sketch of proof at the  next section. For now let us just mention that the definition of strongly regular Hénon-like endomorphisms can be rigorously stated and rather quickly (if one skips the one-dimensional study). This definition is purely combinatorial and topological, and intrinsic (it does not depend on the family of maps). Moreover, the formalism will enable us to define combinatorially and topologically some Pesin manifolds and  Pesin sets.  Furthermore,  we will give some  bounds on the box dimension of the SRB measure,  the expansion and contraction along the Pesin manifolds and the distortion along the unstable manifold, from purely combinatorial and topological hypotheses. 
This will enable us to show that the strongly regular Hénon-like endomorphisms  display analogous properties to those of   uniformly hyperbolic attractors. Let us summary these. \medskip
  
\noindent {\bf Abundance.} We will prove that the strongly regular Hénon-like endomorphisms are \emph{abundant}. This means that for every $B\in C^2(\R^3, \R^2)$ of $C^2$-norm bounded by $b$ small, there is a parameter set $\Omega_B \subset  \mathbb R$ of positive Lebesgue measure, such that for every $a\in \Omega_B$, the map $f_{a\, B}$  is strongly regular.\medskip
\medskip
  
\noindent {\bf Geometry.} When dealing with an endomorphism $f$ of a manifold $M$, we consider its inverse limit space $\arr M_f$:
$$\arr M_f:=\{(z_i)_{i\le 0}\in M^{\Z^-}: z_{i+1}=f(z_i),\; \forall i<0\}\; .$$ 
We endow $M_f$ with the induced product topology.

The non-uniformly hyperbolic counterpart to the uniformly hyperbolic local stable and unstable manifolds are the following:
\begin{defi}[Local Pesin manifolds]\index{Pesin manifolds}  \label{pesin}
An embedded manifold $W\subset M$ is a \emph{Pesin local stable manifold} of $ x \in  M$ if there exists a neighborhood $ U$ of $ x$ such that $ W$ is the connected component of $ x$ in
$ \{ y\in  U:  \limsup_{i\to +\infty} \frac 1i \log  d(f^i(x),f^i(y))<0\}\; .$

An embedded manifold $\arr W\subset \arr M_f$ is a \emph{Pesin local unstable manifold} of $\arr x=(x_i)_{i\le 0} \in \arr M$ if there exists a neighborhood $\arr U$ of $\arr x$ such that $\arr W$ is the connected component of $\arr x$ in
$ \{\arr y=(y_i)_{i\le 0} \in \arr U:  \limsup_{i\to -\infty} \frac 1i \log  d(x_i,y_i)>0\}\; .$ 
\end{defi}
Strongly  regular will be defined by asking tangencies between combinatorially defined Pesin, local stable and unstable manifolds. \medskip

\noindent {\bf  SRB, physical measure.} We will show that every strongly regular Hénon-like endomorphism $f$ leaves invariant an SRB, hyperbolic, physical  measure. To define these concepts in the endomorphism case, we shall consider the dynamics $\arr f$ on the inverse limit $\overleftarrow{M}_f$. Let  $\pi: \arr M_f\to M$ be the zero coordinate projection. We recall that by  \cite{Ro67}, the map $\arr \mu\mapsto \mu:=\pi_* \arr \mu$ is a bijection from the set of $\arr f$-invariant probability measures $\arr \mu$ onto the set of $f$-invariant probability measures $\mu$.

\begin{defi}[Hyperbolic measure]\index{Hyperbolic measure} An $f$-invariant  ergodic probability measure $\mu$ is \emph{hyperbolic} if there exists for $\arr \mu$ a.e. point $\overleftarrow  z = (z_i)_i$, a splitting $E^s_{z_0}\oplus E^u_{\overleftarrow  z}=T_{z_0} M$ which depends measurably on $\overleftarrow  z$ and such that:
\begin{equation}
D_{z_0} f (E^s_{z_0})\subset E_{f(z_0)}^s\qand D_{z_0}f (E^u_{\overleftarrow  z})= E_{\overleftarrow  f(\overleftarrow z)}^u,\end{equation}\begin{equation}
\limsup_{n\to +\infty} \frac1n \log \|D_{z_0}f^n (E_{z_0}^s)\|<0\qand 
\liminf_{n\to +\infty} \frac1n \log \|D_{z_{-n}}f^n (E_{\overleftarrow f^{-n}(\overleftarrow  z)}^u)\|>0\; .
\end{equation}
\end{defi}
By ergodicity, the dimension $d_u$ of $E^u_x$ is $\mu$-a.e. constant. Let us assume $d_u\neq 0$ and let $B_{d_u}$ be the unit ball of $\R^{d_u}$.
\begin{defi}[SRB]\label{defiSRB}\index{SRB measure} The measure $  \mu$ is an \emph{SRB measure} if there exists:
\begin{itemize}
\item  a compact metric space $T$ and a $C^0$-embedding
 $\phi: B_{d_u}\times T\hookrightarrow \arr M$ such that for every $t$, $\phi(B_{d_u}\times \{\st\})$ is a Pesin local unstable manifold. 
\item the pull back $\nu$  of $\arr \mu$ by $\phi$ is not  identically zero.
 \item  the conditional measure of $\nu$ with respect to the fibers of $B_{d_u}\times T \to T$ is absolutely continuous w.r.t. the Lebesgue measure of $B_{d_u}$. \end{itemize}
\end{defi}
Let us recall the definition of the conditional measure in this context. First, the projection $ B_{d_u}\times T \to T$ pushes forward the measure $\nu$ to a measure $\hat \nu$ on $T$. Then by \cite{Ro67}, there exists a measurable family of measures $(\nu_t)_{t\in T}$ -- called \emph{conditional measure of $\mu$ w.r.t. $B\times \{\st\}\approx B_{d_u}$} such that for every Borel set $U$, it holds that $\nu(U)$ is the integration of the measurable function $t\mapsto \nu_t(U\cap B\times \{\st\})$ w.r.t. $\hat \nu$. 

\begin{defi}[Physical measure]\index{Physical measure} An ergodic  measure $\mu$ is \emph{physical} if 
its basin $B_\mu:=\{z\in \R^2: \frac1n \sum_{k=0}^n\delta_{f^n(z)}\rightharpoonup \mu\}$  has positive Lebesgue measure.
\end{defi}

\noindent {\bf Coding.} Among strongly regular Hénon-like \emph{diffeomorphism}, in \cite{Berentropy}, we showed that there is a compact set $\check {\mathcal R}\subset \mathbb R^2$ such that:
\begin{itemize}
\item For every ergodic, invariant probability measure with not too small entropy, the set $\check {\mathcal R}$ has a positive measure. 
\item there exists a countable Markov partition $(\check{\mathcal R}_\sg)_\sg$ of $\check {\mathcal R}$ such that for every $\sg$, the induced time $n_\sg$ of $\mathcal R_\sg$ is the first return time of every point in $\check{\mathcal R}_\sg$ into $\check{\mathcal R}$. 
\item this semi-conjugacy induces a H\"older conjugacy (mod 0) with a countable, strongly positive recurrent Markov shift.
\end{itemize}
We will give the combinatorial definition of this set in this manuscript, but we will not re-prove this result (which is not too hard once the sets are defined). 
 This result implies for every strongly regular Hénon-like diffeomorphism, the existence and the uniqueness of the  maximal entropy measure and its exponential mixing property.
 
 \subsection{Open Questions}
Many geometrical properties on the attractor are proved in \cite{Berentropy}, in particular the ergodic measures with support off $\check {\mathcal R}$ are supported by a unique set of small Hausdorff dimension and the Hausdorff dimension of $\check {\mathcal R}$ is close to $1$. To answer to the following question, it remains to study the support of points which are not generic.
\begin{ques} Does the Hausdorff dimension of the topological attractor of strongly regular Hénon-like diffeomorphism is close to 1 when $b$ is small? Is it smaller than $1+\frac1{|\log b|}$?
\end{ques}
In a recent work Matheus-Palis-Yoccoz \cite{MPY17} proved that  the Hausdorff dimension of the Palis-Yoccoz  non-uniformly hyperbolic horseshoes is what was expected.

In the diffeomorphism case and for Benedicks-Carleson parameters, Benedicks-Viana showed that the basin of the SRB measure has full Lebesgue measure in the neighborhood of the attractor \cite{BV}. There is no similar result for endomorphisms with singularities, neither local diffeomorphisms.
\begin{ques}  Does the basin of the SRB measure is of full Lebesgue measure in the neighborhood of the attractor of Pesin-Yurchenko strongly regular map?
\end{ques}

We will explain in \cref{Dream}  that the techniques and concept of this manuscript are hopefully useful for the following:
\begin{prob}  Find an abundant family of non-uniformly hyperbolic attractors of Hausdorff dimension approaching 3/2.\end{prob}

\subsection{Sketch of proof}

The proof is split into three parts. All the inductions done are independent and rather short (at most one page long). In order to do so a combinatorial formalism is introduced to encode all the operations done on pieces and graph transforms. The first part introduces and states the main concept of this work: the strongly regular Hénon-like endomorphisms. The second part shows the main result of this manuscript (abundance and existence of an SRB measure) by developing the combinatorial formalism. Together with the definition of strongly regular endomorphisms stated in \cref{section SR}, the second part contains certainly the most original ideas of this work. These sections contain some proposition, the proof of which are induction free and postponed to the last part. The third part is  independent to the second part, and does not need to understand the notion (nor the existence!) of strongly regular maps.  The last and third part is more classical. Let us describe in more details each of these parts. 
 \medskip
 
The concept of strong regularity was initiated by Yoccoz in his proof of the Jakobson theorem \cite{Y95}.  This concept was designed to be generalized to understand the abundance of the Hénon maps at the parameters considered by \cite{BC2},  as reported in his first lecture at Collège-de-France \cite{Yolecture1}. The concept of strong regularity is based on his concept of one-dimensional puzzle piece. In the one-dimensional, real case, a piece is a rather simple topological and combinatorial object: the pair of an integer $n$ and an interval  sent diffeomorphically by the $n^{th}$-iterate of the quadratic map onto the interval bounded by a fixed point and its preimage. Such pieces are used to encode the critical orbits. 
Yoccoz' definition of strongly regular quadratic maps is formulated using this coding. It is recalled in \cref{Yoccoz SR}. We give also a variation of this definition in \cref{our SR in dimension 1} that will be generalized in dimension 2 to prove our main result.

The 2-dimensional situation is more complex: first by its topology and also because the $2$-dimensional counterpart of the critical point 0 -- which is a set of homoclinic tangency between Pesin stable and unstable manifolds -- is more tricky to manipulate: it is a Cantor set of positive dimension and each of its points may vanish by small perturbation of the dynamics. In the work \cite{PY01, PY09}, Palis and Yoccoz defined a two-dimensional counterpart of these pieces: the affine-like iterations, and two operations on these puzzle pieces: the $\star$-product and the parabolic products $\boxdot_\pm$.  Their definition of strongly regular non-uniformly hyperbolic horseshoes was based on these operations: roughly speaking, they considered dynamics for which these operations can be iterated forever, and with some room in the parameter space depending on the order of iterations. As a matter of fact, their definition of strongly regular maps depends on the considered family of maps (and does not include the Hénon family).  

In this work we consider a similar generalization of puzzle piece and operations. We define the $2$-dimensional piece as the pairs  $(Y,n)$  of a box $Y$ and an integer $n$ called the order. A box  is a subset  $Y\subset \R^2$ and an integer $n$ such that $Y$ satisfies several geometrical assumptions, as well as the iterations $f^k|Y$ for $k\le n$. In particular, the set $Y$ is diffeomorphic to $[0,1]^2$ and bounded by two (nearly) vertical curves and two segments of two fixed affine, horizontal segments. The dynamical assumption asks for some expansions of the nearly horizontal vectors, in a way which is pointwise similar to \cite{BC2} and its developments \cite{MV93,WY01, Ta11}.   
The boxes are of heights uniformly wide as in \cite{PY09}, but now there are pieces  considered which are not puzzle pieces: in particular the width of $f^n(Y)$ is not uniformly wide among all the pieces $(Y,n)$. Such a generality seems necessary to deal with abundant attractors.  Furthermore, as for one-dimensional pieces,  we ask the vertical boundary of each box to be the union of  two arcs of the stable manifold of a fixed point. This dynamical assumption enables us to define canonically the hyperbolic continuation of such boxes. The definitions of the boxes and pieces are given in \cref{section pieces}. Similarly to \cite{PY09}, we define two operations. The first is the $\star$-product defined in \cref{section star prod}. 
  In \cref{section action of piece on curve}, we show how the $\star$-product enables us define Pesin manifolds. In \cref{section para prod}, we define the second operation: the parabolic product $\boxdot_\pm$. As the images of the boxes are not necessarily wide, the parabolic product is more tricky to define.
  Then the idea is to consider the pieces inherited from the Chebichev map $x\mapsto x^2-2$ and all the pieces constructed using the $\star$ and $\boxdot_\pm$ operations following some combinatorial rules. 

To encode the algebra of these operations and state the combinatorial rules we use an abstract alphabet $\hat \sA$ which does not depend on the dynamics. This alphabet enables us to encode both the puzzle pieces of the quadratic maps and some pieces of the Hénon-like maps, as words in the alphabet $\hat \sA$. In \cref{Algebraic preliminary}, we endow the set $\hat \sA^{(\N)}$ of words in the alphabet $\hat \sA $ with the concatenation rule $\cdot $ to form monoid, which is graded by the order function. Likewise, the set of pieces endowed with the operation $\star$ is a graded, pseudo-monoid. The encoding of these pieces is a homomorphism of graded pseudo-monoid. 
 In \cref{grammar}, we state the combinatorial rules.  The rules are formulated on words in the alphabet $\hat \sA$. The objects in the category of symbols which will be always denoted in $\mathfrak{Gothic}$.
 We illustrate this formalism in \cref{subsection 3.3} with the case of the quadratic maps.  
 
 In \cref{section Strong regularity in dimension 2}, we define the key subsets $\sR_k\subset \sR$  of $\sA^{(\N)}$ which index sets of pieces via an injective homomorphism. We define the sets $\arr \sR\subset \hat \sA^{\Z^-}$ and  $\overrightarrow \sR\subset \hat \sA^{\N}$ of concatenation of infinitely many words in $\sR$. We show that each element of these respective sets defines respectively an unstable or a stable  local,  Pesin  manifold. We define also the subset $\sL\subset \overrightarrow{\sR}$ of sequence which  two additional combinatorial rules: the strong regularity and common conditions.  Then we state the main concept of this work: the strong regularity condition for Hénon-like endomorphisms. This condition is satisfied if each $\st \in \arr \sR$ defines an unstable local  Pesin  manifold  which is sent tangent to a stable local,  Pesin  manifolds defined by a sequence $\sc\in \sL\subset \overrightarrow \sR$.  This definition of strong regularity is basically induction free and depends only on the dynamics (and not on the family in which it belongs). 
  Then we state the two results which imply the main theorem: the strongly regular Hénon-like endomorphisms are abundant and the strongly regular maps leave invariant an SRB, physical, ergodic probability measure.
 
 In \cref{subsection 3.5}, we prove that the definition of $\sR_k$
 is  purely combinatorial and topological. This implies that the sets  $\arr\sR$ and $\overrightarrow \sR$ are so, and thus that the  strong regularity condition on Hénon-like endomorphisms is purely  combinatorial and topological.
 The whole part is consistent with a general spirit of puzzle pieces in complex dynamics: from combinatorial and topological hypotheses, one obtains analytical bounds. A new aspect of the proof is that all the sets $\sR_k$ are well defined for a same open set of Hénon-like endomorphisms (formed by the $0$-strongly regular maps). This enables us to do most of the proofs without other hypotheses on the maps. This is why all the inductions of this work are at most one page long. We hope that this will make the argumentation much easier to verify by the reader. 
 

The second part of this manuscript proves the properties of the strongly regular maps. We begin  in \cref{Ideas of proof of the parameter selection} by giving the idea of the parameter selection (i.e. the proof of the abundance of strongly regular maps).  In dimension one, the idea is to put the critical value in a Pesin set of (positive Lebesgue measure), which ``varies'' slowly with the parameter. In dimension 2, the idea is to include a Cantor set of small box dimension into the stable foliation of a Pesin set, such that the picture ``varies'' slowly with the parameter. We give the abstract, sufficient conditions of \cite{BM13} (which were motivated by an earlier version of this work), such that a compact subset $K_1$ of $\R$ can be included in another compact subset $K_2$ of $\R$, for an abundant set of translation parameters. The conditions involve the box dimension $d$ of $K_1$, the diameters of $K_1$ and $K_2$, the Lebesgue measure of $K_2$ and the $L^{1-d}$ norm of the gaps of $K_2$.  
In \cref{section $k$-Strong regularity}, we truncate the definition of the strongly regular maps to some finite depth $k$ to form the definition of $k$-strongly regular maps. The symbolic formalism enables us to define rigorously and quickly the notion of $k$-combinatorial parameter interval, as those for which all the dynamics define the same set of words $\sR_k$. In \cref{Section: Transversality of the hyperbolic continuations}, we state that the hyperbolic continuation of any strongly regular piece varies with the parameter slower than the fold of any persisting Pesin unstable manifold indexed by an element of $\arr \sR$. 
 In \cref{section Geometry of the stable and unstable transverse spaces}, we define combinatorialy the subset 
of greatly regular words in $\sR$, which is included in the set strongly regular words in $\sR$. We show that these words define a stable foliation of positive Lebesgue measure, surprisingly without using the large deviation theorem nor distortion bounds. Such a proof is inspired from the work of \cite{Ts93} and \cite{Ta11}, although it seems that they used both the large deviation theorem and some distortion bounds. Then we state a combinatorial proposition, which implies that the box dimension of the transversal space to the unstable lamination indexed by $\arr \sR$ is small. This implies that the stable and the unstable laminations define subsets satisfying the assumptions of the aforementioned Theorem of \cite{BM13}.  We use all this material to achieve the proof of the parameter selection in \cref{section final proof abundance SR }. It is a new proof which does not use the large deviation argument (nor distortion bounds at this step). Instead, it counts the number of the combinatorial components of ``greatly regular maps'', the number of excluded parameter intervals and uses sharp bounds on their lengths. 
  In \cref{Structure of the transversal space to the unstable manifolds}, we prove the stated upper bound on the box dimension of the unstable  lamination  indexed by $\arr \sR$. The proof is new and crucial for the parameter selection because it defines combinatorialy some finite $\epsilon$-dense set in $\arr \sR_k'\subset \arr \sR \cap \sR_{k'} ^{\Z^-}$ with $k'$ small compared to $k$. As a matter of fact, this set is constant in any $k'$-combinatorial interval.  In order to introduce the finite $\epsilon$-dense set, we define in \cref{section righ divisibility} an order relation on the words of $\hat \sA^{(\N)}$, called the right divisibility. This defines an arithmetic distance on $\hat \sA^{\Z^-}$. A combinatorial counterpart of the Benediks-Carleson favorable times \cite{BC2} is defined and called the favorable divisors.
This definition is indeed purely combinatorial.  In \cref{section Application of the divisibility to Henon-like endomorphism}, we show that the favorable divisor defines an $\epsilon$-dense subset of $\arr \sR$, which is included in $\arr \sR_k$ and so depends only on the $k$-combinatorial interval for any $\epsilon\ge \eta^k$ with $\eta$ small. Furthermore, we show that the map which associates to an element of $\arr \sR$ its local unstable manifold  is Lipschitz for the $C^1$-topology. 
  In \cref{section Compactification of the transversal space to the long unstable manifolds}, we define the closure $\sT$ of $\arr \sR$ for the combinatorial, arithmetic distance, and associate  a Pesin local unstable manifold to each $\st\in \sT$ . This is useful for the proof of  the existence of the SRB measure done in  \cref{SRB}.
 In \cref{section rigidPPP}, we first define the one-dimensional piece of 2-dimensional maps and prove that under a combinatorial condition, the words in $\tilde \sR$ define pieces whose images have wide width: they are puzzle pieces. In \cref{section pour la distortion}, we give a combinatorial condition implying that the corresponding one-dimensional pieces have bounded distortions, and show that such pieces fill up the local unstable manifolds defined by $\arr \sR$ exponentially fast if the map is strongly regular.  In \cref{Construction SRB}, we use the previous section to deduce the existence of an SRB, physical, ergodic measure via a standard argument written in  \cite{BV}. 
  
The third part proves all the analytical ingredients of the proof. This part does not involve the definition of $(k)$-strongly regular maps.  It is also independent of the second part of this manuscript. 
 In \cref{section Estimate on the expansion of simple and strong regular pieces}, we recall Yoccoz' bounds on the expansion of the simple pieces, and deduce similar bounds for the corresponding two-dimensional pieces, as well as for the strongly regular pieces. In \cref{proof:regularpiece},  we first recall some analytic tools from \cite{BC2} and \cite{WY01}, which implies that the graph transform induced by a piece takes its value in the set of horizontal curves. Then we give some sharp bounds on the distortion of strongly regular  pieces.  We finish this section by recalling and applying some general lemmas of \cite{WY01} which study the most contracted direction in a piece. In \cref{section AF}, we start by presenting the latter bounds using the affine-like representation of these pieces. Such representations were introduced in \cite{PY01, PY09}, but the idea goes back to the generating functions and the Shilnikov cross coordinates \cite{Sh67}. We apply this to deduce that the graph transform induced by the pieces is contracting for the $C^1$-topology. Then we show that the graph transform induced by a parabolic product is also contracting. Furthermore, we prove the well-definedness of the parabolic product can be deduced from purely topological and combinatorial conditions. In \cref{section dependence piece with para}, we first give bounds on the parameter dependence of the pieces. Then we prove the bounds on the parameter dependence of the local unstable manifold indexed by $\arr \sR$. We finishes this manuscript by recalling the proof of a combinatorial lemma of \cite{BM13}. 

\textbf{An index  at the end gives the notations and the definitions involved.}   

{\footnotesize\textsl{Acknowledgments. } 
This work was done at IMS at SUNY Stony Brook NY, at CRM, at MFO, at IHES, at
Institute Mittag-leffler and at University Paris 13. I thanks these institutes for their hospitality. I am heavily indebted to M. Benedicks who presented me during  a long time his work with L. Carleson at the IMS. I am also extremely thankful to J.-C. Yoccoz for many discussions, his interesting advice (those implicitly contained in \cite{Y15} are explicitly mentioned).
 I wish to thank M. Lyubich who offered me all the best conditions to start this project.
I am also very grateful for Y. Ishii, M. Shishikura, H. Takahashi  and M. Tsujii for having carefully listened to this work during a mini-courses  in Tokyo and expressed valuable suggestions. I am finaly thanksful to  S. Crovisier, F. Ledrappier, E. Pujals,  R. Perrez Marco and J. Rivera-Letelier for their encouragements. I am grateful to F. Brumley for some of his English corrections. 
I am very thankful for the referees, their corrections and suggestions.  
}

\part{Strong regularity}
\label{rappeldesdef}

{
\section{Strongly regular quadratic maps}\label{SR dim 1}
For $a$ greater but close to $-2$, the quadratic map $P\colon  x\mapsto x^2+a$ has two fixed points $-1\approx \alpha_0< \beta\approx 2$ which are hyperbolic. The segment $[-\beta, \beta]$ is sent into itself by $P$, and its boundary bounds the basin of infinity. All the points of $(-\beta,\beta)$ are sent by an iterate of $P_a$ into $[\alpha_0,- \alpha_0]$. \index{$\alpha_0$}\index{$\beta$}
Yoccoz' definition of strongly regular maps is based on the position of the critical value $a$ with respect to the preimages of $\alpha_0$. To formalize this, he used his concept of puzzle pieces.

\subsection{Puzzle pieces}\label{piece1D}{
\begin{defi}[Piece and puzzle piece]\index{Piece and puzzle piece in dimension 1}
A \emph{piece} $(I, n)$ of $P$ is the 
data of a segment $I$ of $\R$  with non-empty interior and an integer $n$, such that:
\begin{enumerate}
\item   $P^{n}|I$ is a diffeomorphism onto its image included in $[\alpha_0,- \alpha_0]$.
\item the endpoints of $I$ are preimages of $\alpha_0$ but not of $0$:
$$\partial I \subset \bigcup_{n\ge 0} f^{-n}(\{\alpha_0\})\setminus \bigcup_{n\ge 0} f^{-n}(\{0\})\; .$$
\end{enumerate}
 The integer $n$ is called the \emph{order} of the piece. The piece $(I,n)$ is a \emph{puzzle piece} of $P$  if $P^{n}$ sends $I$ onto $[\alpha_0,- \alpha_0]$. 
\end{defi}
The set of pieces $(I,n)$ will be endowed below with a binary operation $\star$, defined by the composition of the iterations $f^n| I$. 
We will see in \textsection \ref{Algebraic preliminary} that the set of pieces endowed with $\star$ is a graded pseudo-monoid. This structure of pieces will be generalized in dimension 2.  One of the main idea behind the two dimensional argument is to describe the dynamics using the structure of this pseudo-monoid. In dimension one, since we will consider the return map to the interval $[\alpha_0,- \alpha_0]$ , we introduce the identity element of the pseudo-monoid by setting:
\[I_\se:= [\alpha_0,- \alpha_0]\qand n_\se=0\; .\]
\begin{exem} The pair $(I_\se, n_\se)$ is a puzzle piece, called \emph{neutral}.\index{$I_\se$, $n_\se$}
\end{exem}
To define the simple puzzle pieces, let us denote by $M$ the minimal integer such that $P^M(a)$ belongs to $I_\se$. Assume that $P^M(a)$ is not an endpoint of $I_\se$. 

 \begin{rema} We notice that $M$ is large since $a>-2$ is close to $-\beta\approx -2$. In \cref{propdefcM} \cpageref{propdefcM},
we will show that $a+\beta$ is of the order of $4^{-M}$.
\end{rema}}
\label{defalphai}
For $i\ge 0$, let $\alpha_i:= -(P|\R^+)^{-i}(-\alpha_0)$. Note that $(\alpha_i)_{i\ge 0}$ is decreasing and converges to $-\beta$. Also $[\alpha_{i+1}, \alpha_{i}]$ is sent bijectively by $P^{i+1}_a$ onto $I_\se$. The same holds for $[ -\alpha_i, -\alpha_{i+1}]$. \index{b @Segments $I_\se$, $I_\ss$}\index{$\alpha_i$}

\begin{figure}[h]
    \centering
        \includegraphics{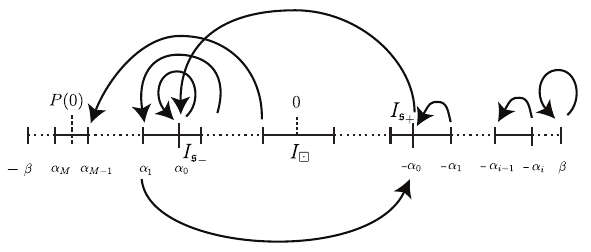}
    \caption{{\color{black} Dynamics of the first preimages of $\alpha_0$.} }
\label{geometricmodele}
\end{figure}

By definition of $M$, the critical value $a$ belongs to  $( \alpha_{M}, \alpha_{M-1})$.
 Hence for $2\le i\le M$, there are segments $I_{\ss^{i}_-}\subset \R^-$ and $I_{\ss^{i}_+}\subset \R^+$ both sent bijectively by $P$ onto $[\alpha_{i-1},\alpha_{i-2}]$.

\begin{defi}[Simple puzzle piece]
The pairs of the form $(I_{\ss^{i}_\pm}, i)$ for $2\le i\le M-1$ are puzzle pieces called \emph{simple}. We put $n_{\ss^{i}_\pm}=i$. \index{Simple piece}
 There are $2(M-2)$ such pairs. The set of symols of  simple puzzle pieces is denoted by $\sY_0=\{\ss^i_\pm ; 2\le i\le M-1, \pm\in\{-,+\}\}$. We put 
 $\sA_0:=   \sY_0\cup \{\ss^-_M, \ss^+_M \}$
with $(I_{ \ss^-_M},M)$ and  $(I_{ \ss^+_M},M)$ the pieces associated to $\ss_\pm ^M$. 
 \label{Psimple}\index{Symbolic set $\sY_0$} \index{Symbolic set $\sA_0$} 
\end{defi}
{\color{black}  In corollary \ref{coroYoc} 
p. \pageref{coroYoc}, we will show that:
 \begin{prop}\label{simple are hyp} for every $\ss\in \sY_0$, it holds $|D_xP^{n_\ss}|\ge 2^{(n_\ss -k)/3} |D_xP^k|$ for every $k\le n_\ss$ and $x\in I_\ss$. 
 \end{prop}

Puzzle pieces enjoy two fundamental properties:
\begin{enumerate}
\item Two puzzle pieces $(I,n)$ and $(I', n')$ are nested or with disjoint interior: 
\[I\subset I'\text{ or } I'\subset I \text{ or }  int\; I\cap int\; I'=\varnothing\;.\]
\item For every piece $(I,n)$, for every perturbation of the dynamics, the hyperbolic continuations of the corresponding preimages of the fixed point $\alpha_0$ define a puzzle piece for the perturbation.
\end{enumerate}

A natural operation on the pieces is the so-called \emph{simple product} $\star$:
\begin{defi}[$\star$-product]\index{Operation $\star$ in dim 1}
Let $(I,n)$ and $(I', n')$ be pieces. Also we put:
 $$(I,n)\star (I', n')=(I'', n'')\text{, with }I'':= I \cap f^{-n}(I')\text{ and }n''= n +n'.$$ 
The pair of pieces  $((I,n), (I', n'))$ is said \emph{admissible for $\star$} if $P^n(int \, I)$ intersects $I'$. 
\end{defi}
%

\begin{rema} \label{rema puzzle dim 1}We notice that if $(I,n)$ and $(I', n')$ are puzzle pieces with $I'\subset I_\se$, then 
they are admissible for $\star$ and 
their $\star$-product $(I, n)\star (I', n')$ is a puzzle piece: the map $P^{n''}$ sends bijectively $I''$ onto $I_\se$.\end{rema}

The following is immediate:
\begin{prop}\label{1-piece are pseudo}
The operation $\star$ is associative. For any pieces $(I,n)$, $(I',n')$, $(I'',n'')$ it holds:
\[((I,n)\star (I',n'))\star (I'',n'') = (I,n)\star ( (I',n')\star (I'',n''))=: (I,n)\star (I',n')\star (I'',n'')
\; . \]
The piece $(I_\se, n_\se)$ is the neutral element of the product $\star$:
\[(I,n)\star (I_\se, n_\se)= (I,n)\quad \text{ for every piece }(I,n).\]
\end{prop}
\begin{defi}[Prime piece]\index{Prime puzzle piece}
A puzzle piece $(I,n)$ such that $I\subset I_\se$ is \emph{prime} if it is not $(I_\se, 0)$ and if there is no puzzle piece $(I',n')$ such that $I\subsetneq I'\subsetneq I_\se$. 
\end{defi}
The following sheds light on the concept of prime piece:
\begin{prop}\label{primeproperty}
\begin{enumerate}[$(i)$]\item A puzzle piece is prime if and only if it is not the $\star$-product of two pieces both different to $(I_\se, 0)$. 
\item  If $(I,n)$ is a prime puzzle piece and $(I', n')$ is a  puzzle piece different to  $(I_\se, 0)$, then the interiors of $I$ and $I'$ are disjoint or $I'$ is contained in $I$. 
\end{enumerate}
\end{prop}
\begin{proof}
To show $(i)$ we notice that if a puzzle piece is a simple product  $(I,n)= (I',n')\star (I'',n'')$ of pieces different to $(I_\se,0)$ , then $I\subsetneq I'$, thus $(I,n)$ is not prime. Conversely, if two puzzle pieces $(I,n)$ and $(I',n')$ are strictly nested: say $I\subsetneq I'$, then $f^{n'}( I)\subsetneq I_\se$, and so $n>n'$. Furthermore, we notice that $(I'',n'') :=(f^{n'}(I), n-n')$ is a puzzle piece $\neq (I_\se,0)$ which satisfies $(I,n)= (I', n')\star (I'', n'')$. 
  
Statement $(ii)$ is an immediate consequence of the definition of prime and the fact that puzzle pieces are nested or with disjoint interiors. 
\end{proof}
A consequence of the proof is the well-definedness of:
\begin{defi}[Prime decomposition and depth]
Every  puzzle piece $(I,n)$ is equal to a unique product $(I_1, n_1)\star \cdots \star (I_k, n_k)$ of prime puzzle pieces. The chain $(I_i,n_i)_{1\le i\le k}$ is called the \emph{prime decomposition} of $(I,n)$. The integer $k$ is called the \emph{depth} of $(I,n)$. 
\end{defi}
\subsection{Strongly regular quadratic map after Yoccoz}\label{Yoccoz SR}
\begin{defi}[Strongly regular product of pieces]
\label{Stronglyregular sequence}\index{Strongly regular product of pieces}
A $\star$-product $(I_1, n_1)\star \cdots \star (I_m, n_m)$ of puzzle pieces $(I_j, n_j)$ is \emph{strongly regular} if each interval $I_j$ is included in $I_\se$ and it holds:
\begin{equation*}
\sum_{j\le i: \; (I_j, n_j)\text{ is not simple}} n_j \le 2^{-\sqrt M} \sum_{j\le i} n_j,\quad \forall m\ge  i\ge 1.
 \end{equation*}
Roughly speaking, the latter  inequality means that the time proportion of simple pieces in the product $(I_1, n_1)\star \cdots \star (I_j, n_j)$ is very close to $1$ for every $j\le m$. 
\end{defi} 
%
Let us recall:

\begin{defi}[ Yoccoz' definition of strong regularity]
A quadratic map $P$ is  \emph{Yoccoz'  strongly regular} if there exists a sequence  $(I_{j}, n_{j})_{ j\ge 1}$ of puzzle pieces with $I_{j}\subsetneq I_\se$ such that with $(I^{(k)},n^{(k)}):=
(I_{1}, n_{1})\star \cdots \star (I_{k}, n_{k})$ it holds:

\begin{enumerate}[($SR_1$)]
\item   the first return $P^{M+1}(0)\in I_\se$ belongs $I^{(k)}$ for every $k$: $ P^{M+1}(0)\in \bigcap_{k\ge 1} I^{(k)}$.
\item The product $(I_{1}, n_{1})\star \cdots \star (I_{k}, n_{k})$ is strongly regular for every $k\ge 1$.
%
\end{enumerate}
\begin{enumerate}[($SR_3^*$)]
\item For every $j$, the segment  $I_{j}$ has a neighborhood $\hat I_{j}$ which is sent bijectively by $P^{n_{j}}$ onto the neighborhood $[\alpha_1,-\alpha_1]$ of $I_\se$.
\end{enumerate}
\end{defi}

We are going to change condition $(SR_3^*)$ by a condition slightly more technical but which will simplify greatly the combinatorial study of those mappings.  The interest of $(SR^*_3)$ is the uniform  distortion bound it implies (by using the Schwarzian derivative of $P$, see \cref{distbound1} p. \pageref{distbound1}). In particular the contraction of $P^{n_{j}}|I_{j}$ is bounded from below by a constant independent of $M$. On the other hand, by \cref{simple are hyp},  for every simple piece $(I,n)$, the iteration $P^{n}|I$ is expanding by a uniform constant $>1$ at the power $n$. As most of the puzzle pieces  $(I_{j}, n_{j})_{ j\ge 1}$ are simple by strong regularity of the products (see definition \ref{Stronglyregular sequence}), it comes:
 \begin{prop}[Prop. 3.10 \cite{Y95}]
A  Yoccoz' strongly regular unimodal map satisfies the Collet-Eckmann condition, when $M$ is large enough:
\begin{equation}\tag{$\mathcal {CE}$}
\liminf_{n\to \infty} \frac1n\log \|D P^{n}(a)\| >0\; .
\end{equation}
\end{prop}}
%
%
\subsection{Strongly regular quadratic maps}\label{our SR in dimension 1}{\color{black}
Our definition needs one new combinatorial definition.  To make the notation less cluttered, we denote the symbols $\ss^2_-$ and $\ss^2_+$ by respectively $\ss_-$ and $\ss_+$. We recall that 
$I_{\ss_-}$ and $I_{\ss_+}$ are neighborhoods of respectively $\alpha_0$ and $-\alpha_0$ in $I_\se$.
The $\star $-product of $k$-times $(I_{\ss_-},2)$ is denoted by:
\[(I_{\ss_-^{\odot k}}, n_{\ss_-^{\odot k}}):= (I_{\ss_-},2)\star\cdots \star  (I_{\ss_-},2)\; .\]
We put also:
\[(I_{\ss_+\cdot \ss_-^{\odot k}}, n_{\ss_+\cdot \ss_-^{\odot k}}):= (I_{\ss_+},2)\star(I_{\ss_-^{\odot k}}, n_{\ss_-^{\odot k}})\; .\]
We notice  that the segments $I_{\ss_-^{\odot k}}$ and 
$I_{\ss_+\cdot \ss_-^{\odot k}}$ are neighborhoods of respectively $\alpha_0$ and $-\alpha_0$ in $I_\se$.\label{aleph}\index{$s_-^{\star \aleph(i)}$}

\begin{defi}[Common piece]
A puzzle piece $(I,n )$ is \emph{common} if its prime decomposition $(I_j, n_j)_{1\le j\le m}$ satisfies for every $j\ge 0$: 
 \begin{multline*}
(I_{j+1}, n_{j+1})\star \cdots \star (I_{j+m'}, n_{j+m'})\neq
(I_{  \ss_-^{\odot m'}}, n_{  \ss_-^{\odot m'}})
\text{ if }m\ge j+m'
\qand \\
(I_{j+1}, n_{j+1})\star \cdots \star (I_{j+m'+1}, n_{j+m'+1})\neq
(I_{  \ss_+\cdot \ss_-^{\odot m'}}, n_{  \ss_+\cdot \ss_-^{\odot m'}})
\text{ if }m\ge j+m'+1
,\end{multline*}
with $m':=\aleph (n^{(j)})$,  $n^{(j)}:= n_1+\cdots +n_j$ and  $\aleph(k):=\left \lfloor\frac k{12}+\frac M{24}\right\rfloor$ for every $k\ge 0$.\index{$\aleph$}
\end{defi}

The following definition is similar but not equivalent to Yoccoz' strong regularity definition. It differs by condition $(SR_3^*)$ which is replaced by $(SR_3)$.  
\begin{defi}[Strong regular quadratic map]
\label{defSRquadra}\index{Strong regular quadratic map}
A quadratic map $P$ is strongly regular if there exists a sequence  $(I_{j}, n_{j})_{ j\ge 1}$ of puzzle pieces with $I_{j}\subsetneq I_\se$ such that, with $(I^{(k)},n^{(k)}):=
(I_{1}, n_{1})\star \cdots \star (I_{k}, n_{k})$, it holds:
\begin{enumerate}[($SR_1$)]
\item   the first return $P^{M+1}(0)\in I_\se$ belongs $I^{(k)}$ for every $k$: $P^{M+1}(0)\in \bigcap_{k\ge 1} I^{(k)}.$
\item The product $(I_{1}, n_{1})\star \cdots \star (I_{k}, n_{k})$ is strongly regular for every $k\ge 1$.
\item The puzzle piece $(I^{(k)}, n^{(k)})$ is common and with depth $k$ for every $k\ge 1$.
\end{enumerate}
\end{defi}
\begin{rema} Asking that $(I^{(k)}, n^{(k)})$ is common for every $k$ implies that $0$ is not one of the preimages of $\alpha_0$. In particular, 
$P^{M+1}(0)$ does not belong to the boundary $\partial I$ of any puzzle piece $(I,n)$. 
\end{rema}
\begin{rema} Asking that each piece $(I^{(k)}, n^{(k)})$ is of depth $k$ in $(SR_3)$ is equivalent to ask that each piece $(I_j, n_j)$ is prime. This is not an extra condition on the map, because if a piece is not prime we can replace it by its prime decomposition. Indeed $(SR_1)$, $(SR_2)$ and the common condition are then still satisfied. However, this enables us to fix the structure: the point $P^{M+1}(0)$ is included in the (interior) of a unique puzzle piece of  depth $k$ which is $(I^{(k)}, n^{(k)})$. 
\end{rema}
To analyse the pieces of strongly regular maps, let us introduce the parabolic products. In order to do so, 
let us define:
 \[I_\boxdot:= cl(I_\se\setminus \bigcup_{\sa\in \sA_0} I_\sa)=P^{-1}_a([ a, -\alpha_{M-1}])\qand n_\boxdot=M+1\; .\]\index{b @Segment $I_\boxdot$}\index{b @Integer $n_\boxdot$}
As $a+\beta$ is of the order of $4^{-M}$ by \cref{propestimates1} \cpageref{propestimates1}, it holds:
\[|I_\boxdot|= O(2^{-M})\quad \text{when }a\to -2 \quad (\text{or equivalently } M\to \infty)\; .\]
The first return time of $I_\boxdot$ into $I_\se$ is $n_\boxdot$. The pair $(I_\boxdot,n_\boxdot)$ is not a puzzle piece because $P^{n_\boxdot}|I_\boxdot$ is not bijective. However, the \emph{parabolic products} $\boxdot_-$ and $\boxdot_+$ will enable us to define (combinatorially) pieces with subsegments of $I_\boxdot$.
First let us notice that $P$ has two inverse branches: 
$g_+: [a, \infty)\to \R^+$ and $g_-:[a, \infty) \to \R^-$. 
\begin{defi}[Parabolic products $\boxdot_-$ and $\boxdot_+$]\index{Parabolic products $\boxdot_\pm$ in dim 1}
Let $(I, n)$ and $(I', n')$ be two puzzle pieces such that $P^{M+1}(0)\in I'\subset I\subset I_\se$.  Assume that the left endpoints of $I$ and $I'$ are different (in particular $I'\subsetneq I$). Then the parabolic product $\boxdot_\pm (I-I')$ are called admissible and we define the segments:
\begin{multline*}
I_{\boxdot_+(I-I')}:=g_+(cl(I\setminus  I'))\quad \text{and}\quad 
 I_{\boxdot_-(I-I')}:=g_-(cl(I\setminus  I'))\; ,
 \end{multline*}
\index{b @Segment $I_{\boxdot_\pm(I-I')}$}
and for $\sp\in \{\boxdot_+(I-I'), \boxdot_-(I-I')\}$ , we put $n_\sp:=n_\boxdot +n$. 
The pair $(I_\sp, n_\sp)$ is a \emph{parabolic pieces}.
\end{defi}

\begin{fact}\label{primeend}For every
 $\sp\in \{\boxdot_+(I-I'), \boxdot_-(I-I')\}$, the segment $I_\sp$  is sent by $P^{n_\sp}$ onto a component of $I_\se\setminus I''$, with  $I''=P^n(I')$. The pair $(I_\sp, n_\sp)$ is not a puzzle piece and $(I'', n'-n)$ is a puzzle piece.
\end{fact}
\section{Pieces and operations for Hénon like-endomorphisms}
\subsection{Initial settings}\label{setting}
For every $n,m\ge 1$, every $r\ge 0$, every compact set $C\subset \R^n$, the space $C^r(C,\R^m )$ is formed by the maps $\phi\in C^0(C,\R^m)$ which can be extended to a $C^r$-map $\tilde \phi : \R^n\to \R^m $. Then we put:
\[\|\phi\|_{C^r}= \inf_{\tilde \phi \text{ extension of }\phi}  \sum_{j=0}^r \max_{z\in C} \|D^j_z \tilde \phi\|\; .\]
 The space $C^r(C,\R^m )$ endowed with this norm is a Banach space. 
 
In dimension $2$, we will deal with the following maps:
\begin{defi}[$0$-strongly regular map]
A $C^2$-map $f$ of $\R^2$ is $0$-\emph{strongly regular} if it is of the form:
\[f(x,y)= (P_a(x),0)+B(x,y)\quad \text{with } P_a(x)=x^2+a\]
and with $a$ and $B$ which satisfy that:
\begin{list}{$\bullet$}{}
\item The parameter $a>-2$ is close to $-2$ or equivalently the first return time $M$ of $a$ by $P_a$ in $I_\se=[\alpha_0,-\alpha_0]$ is large. Furthermore $P_a^M(a)$ does not belong $I_{ \ss_-^{\odot \lfloor M/24\rfloor}}$ (defined \cpageref{aleph}).
\item The $C^2$-norm of $B|[-3,3]^2$ and the determinant of $Df$ are bounded by  $b$.\index{1@Constant $b$ is the $C^2$-norm of $B$}
\item The integer $M$ is large, and $b$ is small depending on $M$.
\end{list}
\end{defi}
To fix the idea, we will assume:
 $$M\ge 10^6\qand  \theta:= 1/|\log b|\le 2^{-2^M}\; .\index{1@Constant $\theta= -1/\log_2\,  b$}$$ 

\begin{fact}\label{trivial estimate}For every $0$-strongly regular map $f$, it holds:
\begin{list}{$\bullet$}{}
\item $\|D^2f|[-3,3]^2\|\le 2+b$,
\item $\|Df| [-\beta, \beta]\times [-3, 3]\|< 4$.
\end{list}
\end{fact}

 \begin{enonce*}{Assumption on the families of maps}
Some  $0$-strongly regular maps will be selected among families $(f_a)_{a\in [-3,3]}$ of the form:
\[f_a(x,y)= (P_a(x),0)+B_a(x,y)\]
such that $(x,y,a)\in [-3,3]^3 \mapsto B_a(x,y)$ is of class $C^2$, with $C^2$-norm smaller than $b$.
\end{enonce*}
By definition, a $0$-strongly regular map $f_a$ is $b$-close to $\hat P_a:=(x,y)\mapsto (x^2+a,0)$ which preserves the line $\R\times \{0\}$ and whose restriction therein is equal to the quadratic map $P_a$.  Hence,  the fixed point $(\alpha_0,0)$ of $\hat P_a$ persists as a fixed point $A$ of $f_a$. 

\index{$A$}

Let $W^s_{loc} (\alpha_0 ; \hat P_a)= \{\alpha_0\}\times [- \theta, \theta]$ and $W^s_{loc} (-\alpha_0 ; \hat P_a)= \{-\alpha_0\}\times [- \theta, \theta]$. These are two segments of the stable manifold of $A$, whose union is sent into $W^s_{loc} (\alpha_0 ; \hat P)$ by $\hat P_a$. As $ f_a$ is $C^2$-close to $\hat P_a$, there are two  curves 
$W^s_{loc} (\alpha_0; f_a)$ and $W^s_{loc} (-\alpha_0 ; f_a)$ which are $C^2$-close to  respectively  $W^s_{loc} (\alpha_0 ; \hat P_a)$ and  $W^s_{loc} (-\alpha_0 ; \hat P_a)$, which are both sent into 
$W^s_{loc} (\alpha_0; f_a)$ by $f_a$ and with endpoints in $\R\times (\{-\theta\} \cup\{\theta\})$. 

Also a halve  local unstable manifold $W^u_{1/2} (A; f_a) $ of the fixed point $A$ of $f_a$ is  $C^2$-close to  $[\alpha_0, -\alpha_0]\times \{0\}=I_\se \times \{0\}$ and  with endpoints in $W^s_{loc} (\alpha_0; f)\cup W^s_{loc} (-\alpha_0; f)$. 

\begin{fact}\label{defi WuA} 
For every $f$ $0$-strongly regular we can assume moreover that 
$$W^u_{1/2} (A; f) =I_\se\times \{0\}\qand  
W^s_{loc} (\pm \alpha_0, f)=  \{\pm \alpha_0\}\times [-\theta, \theta]. $$ 
This can be also assumed at every $0$-strongly regular parameter of any $(f_a)_a$ satisfying the assumption on families.
\end{fact}
\begin{proof}
We will be short because this fact will be used only to simplify the notations of the manuscript. Let $J$ be the set of parameters for which $f_a$ is $0$-strongly regular.   
To prove this lemma, it suffices to show that:
\begin{itemize}
\item  $\bigcup_{a\in J} \{a\}\times W^s_{loc} (\pm \alpha_0, f_a)$ is $O(b)$-$C^2$-close to $\bigcup_{a\in J} \{a\}\times \{\pm \alpha_0(a)\}\times [-\theta, \theta]$. 
\item $\bigcup_{a\in J} \{a\}\times W^u_{loc} (A, f_a)$ is $O(b)$-$C^2$-close to $\bigcup_{a\in J} \{a\}\times I_\se(P_a)\times \{0\}$. 
\end{itemize}
Then we can conjugate $(f_a)_a$ via a family $O(b)$-$C^2$-close to the identity to a family of dynamics with the sough property . 
 
By hyperbolic continuation, the above sets are indeed $O(b)-C^0$-close surfaces. The $C^1$-bounds are shown by using the $Df$ forward and backward invariance of cones centered to respectively the direction  $\R^2\times \{0\}$ and $\R\times \{0\}\times \R$ with angle $O(b)$. The $C^2$-bounds are done using the invariance of $C^2$-jet cones \cite{berlam}[\textsection  6.6 and \textsection  7.6]. 
\end{proof}

%
%
We put:
$$Y_\se:= I_\se\times [- \theta, \theta],\quad  \partial^sY_\se:=\{\alpha_0, -\alpha_0\}\times [- \theta, \theta]\qand \partial^sY_\se:=I_\se\times \{- \theta, \theta\}\; .\index{$Y_\se$, $\partial^uY_\se$, $\partial^u Y_\se$}$$

$$W^u_{1/2} (A; f) = I_\se \times \{0\}\qand W^s_{loc}(A):= W^s(\alpha_0;f)\index{$W^s(A):= W^s(\alpha_0;f)\; .$}\; .$$

%
\subsection{Pieces}\label{section pieces} The definition of strong regularity is both combintatorial and topological. The formulation of the topological conditions will be done by using boxes and curves. Both need cones to be defined.

\begin{defi}[Horizontal and vertical cones]\index{Vertical and horizontal cones $\chi_h$ and $\chi_v$}\label{defchvchih}\index{Cone $\chi_h$}\index{Cone $\chi_v
$}
The \emph{horizontal cone} is $\chi_h:= \{(u,v): |v|\le \theta | u|\}$.
The \emph{vertical cone} is $\chi_v:= \{(u,v): |u|\le \theta | v|\}$. 
\end{defi}
\begin{defi}[Flat curve]\index{flat curve}
A $C^{1+Lip}$-embedded curve in $\Gamma$ in $\R^2$  is \emph{flat} if its curvature is at most  $\theta$. In other words, there exists parametrization $t\mapsto \gamma(t)$ of $\Gamma$ such that $\|\partial_t \gamma \|=1$ and the following holds true at every $t$:
\[\limsup_{s\to 0}  \left|\det\left(\frac{\dot \gamma(t)-\dot \gamma(t+s)}{s}, \dot \gamma(t)\right)\right|\le \theta\; .\]
\end{defi}
\begin{defi}[Horizontal and vertical stretched curves]\index{Horizontal curve}\index{Vertical curve}\index{Stretched curve}
By \emph{horizontal curve} we mean a  $C^{1+Lip}$-embedded curve $S$ included in  $\R\times [-\theta, \theta]$, which is flat and such that its tangent space is in $\chi_h$. A horizontal curve is said to be \emph{stretched} if both of its endpoints belong to the vertical sides $\partial^s Y_\se$ of $Y_\se$.

By \emph{vertical curve}, we mean a $C^{1+Lip}$-embedded curve $\mathcal C$ included in  $\R\times [-\theta, \theta]$ which is flat and such that its tangent space is in $\chi_v$. The vertical curve $\mathcal C$ is \emph{stretched} if both of its endpoints belong to $\R\times \{-\theta\}\sqcup \R\times \{\theta\}$. 
\end{defi}

From this definition, each of the two endpoints of a horizontal, stretched curve $S$ belongs to each of the two connected components of $\partial^s Y_\se$ respectively, such that the curve stretches across $Y_\se$. Then, note that $S$ is included in $Y_\se$. 

Similarly, each of the two endpoints of a vertical stretched curve belongs to each of the two connected components of $\R\times \{-\theta, \theta\}$, such that the curve stretches across $\R\times [-\theta, \theta]$.  We note that a vertical, stretched  curve is included in $\R\times [- \theta,\theta]$ but it is \emph{not} necessarily included in $Y_\se$. 

Similarly to the one-dimensional case, we will define subset of $\R\times [- \theta, \theta]$ using subsets of the stable manifold of $A$. We recall that $W^s_{loc}(A):=\{\alpha_0\}\times [- \theta,\theta]$ is a local stable manifold of $A$. Its preimages are not necessarily curves because $f$ is an endomorphism. To this end we need a transversality assumption.
\begin{defi}[Arc of $W^s(A)$]\index{Arc of $W^s(A)$}  An \emph{arc}  of $W^s(A)$ is a connected curve $\mathcal C\subset \R\times [-\theta, \theta]$ which is sent by an iterate $f^n$ into $W^s_{loc}(A)$, and such that  $D f^n$ does not vanish on the normal space to $\mathcal C$:
\[\forall z\in \mathcal C,\quad D_z f^n(\R^2)\not\subset   T_{f^n(z)} W^s_{loc}(A)\; .\]
\end{defi}
By transversality, \emph{arcs} are $C^2$-embedded curves. 
A first interest of the later object is that given two arcs of $W^s(A)$, they are either disjoint or  their union forms an arc of $W^s(A)$. A second interest is that the hyperbolic continuation enables us to follow them for perturbations of the dynamics. In dimension 2, the concept of piece takes the following form:
\begin{defi}[Box]\index{box}\label{boxdef}
A \emph{box} $Y$ is a compact subset of $\R\times [-\theta, \theta]$ such that there exist two real $C^2$-functions $\psi_+>\psi_-$ satisfying:
\[Y=\{(x,y)\in \R\times [-\theta, \theta]: \psi_-(y)\le  x\le \psi_+(y)\}\; .\]
and each of the curves $\{(\psi_\pm (y),y): y\in [-\theta, \theta]\}$ is a vertical, stretched,  arc of $W^s(A)$.

We remark that $Y$ is diffeomorphic to the filed square $[0,1]^2$. We define also: 
$$\partial^u Y:= \bigsqcup_{ \pm \in \{-,+\}}  \{(x,\pm \theta ): \psi_-(\pm \theta )\le  x\le \psi_+(\pm \theta )\}\; ,$$
$$\partial^sY:= \bigsqcup_{ \pm \in \{-,+\}} \{(\psi_\pm (y),y): y\in [-\theta, \theta]\}.$$ 
We notice that $\partial^s Y$ is formed by two disjoint, vertical, stretched curves. 

Also $\partial^u Y$ is formed by two horizontal curves and $\partial Y:= \partial^u Y\cup \partial^s Y$ is the topological boundary of $Y$. 
\begin{rema}\label{constructionbox} Conversely, a subset of $\R\times [-\theta, \theta]$  bounded by two disjoint vertical, stretched, arcs of $W^s (A)$ is a box.\end{rema} 

\end{defi}
An example of box is $Y_\se=I_\se\times [-\theta, \theta]$. To construct a box associated to each letter $\ss_n^\pm\in \sY_0$, we will use the following:
\begin{lemm}\label{conefact}
Let $U:= [a+2 b, \infty]\times [-\theta, \theta]$. For every box $Y\subset U$, the preimage 
$(f|\R\times [-\theta, \theta])^{-1}(Y)$ is formed by two boxes $g_-(Y) \subset (-\infty, -\sqrt b)\times [-\theta, \theta]$ and  $g_+(Y) \subset (\sqrt b , +\infty)\times [-\theta, \theta]$.
\end{lemm}
\begin{proof} Note that $f^{-1}(U)$ is in the complement of $[-\sqrt{b}, \sqrt b]\times \R$. We note that the differential $D_{x,y}f$ is $b$-close to $(u,v)\mapsto (2 xu, 0)$. Thus:
\begin{equation}\label{mappingcone} 
D_{z}f(\chi_h)\subset \chi_h\qand D_{z}f^{-1}(\chi_v)\subset \chi_v\; ,\quad \forall z\in f^{-1}(U) .\end{equation}
Consequently, given $y\in [-\theta, \theta]$, the curve $(\sqrt b , +\infty)\times \{y\}$ is sent by $f$ to a horizontal curve which stretches across $U$. It intersects transversally $\partial^s Y$ at two points. By transversality these two points depend $C^2$-on $y$. Also by \cref{mappingcone}, when $y$ varies in  $[-\theta, \theta]$, these intersection points form two disjoint vertical curves.  As by definition, these two vertical curves are stretched, by \cref{constructionbox}, the preimage of $Y$ in $(\sqrt b , +\infty)\times [-\theta, \theta]$ is a box $g_+(Y)$. 
The proof for  $g_-(Y)$ is the same. \end{proof}
We are going to construct boxes for each of these symbols:
\begin{eqnarray*}
\sY_0:= \{s^\pm_j: \pm\in \{-,+\}, 2\le j\le M-1\}\qand 
 \sA_0:= \sY_0\sqcup \{\ss^-_M, \ss^+_M\}\end{eqnarray*}

\begin{exem}[Simple boxes $Y_\ss$, $\ss\in \sY_0$] \label{simple boxes}\index{Simple boxes}
With the notation of \cref{conefact}, an induction on $i$ shows that  $g_+^i(Y_\se)$ is a box included in  $(0, +\infty)\times [-\theta, \theta]\subset U$. Actually each of these boxes is $O(b)$-close to $[-\alpha_{i-1}, -\alpha_i]\times [-\theta,\theta]$ when $i\ge 1$ (where the sequence $(\alpha_i)_i$ was defined in  \cref{piece1D} \cpageref{piece1D}). 
Similarly,  $g_- \circ g_+^i(Y_\se)$ is a box which is $O(b)$-close to $[\alpha_{i}, \alpha_{i-1}]\times [-\theta,\theta]$. 

Thus when $i\le M-2$, the box $g_- \circ g_+^i(Y_\se)$ is included in $U$. Thus we can define for every $\ss_{i+2}^\pm\in \sY_0$ the following simple box:
\[Y_{\ss_{i+2}^\pm}:= g_\pm\circ g_- \circ g_+^{i}(Y_\se)\; .\]
\end{exem}
\begin{exem}[Boxes $Y_{\ss_M^\pm}$, $Y_\sr$ and $Y_\boxdot$
] \index{$Y_\boxdot$}\index{$Y_\sr$}\label{boxYsM} Let $Y_\sr :=g_- \circ g_+^{M-1}(Y_\se)$. 

We recall that by $0$-strong regularity, it holds that $P^{M+1}(0)$ does not belong to 
$I_{ \ss_-^{\odot \aleph(0)}}$, with $\aleph(0)= \lfloor M/24\rfloor$. As $DP|[-\beta, \beta]$ is at most $4$, the preimage by $f^M|[\alpha_M, \alpha_{M-1}]$ of $I_{ \ss_-^{\odot \aleph(0)}}$ has length $\ge 2\cdot 4^{-M-M/12}$ and so $a-\alpha_{M-1}\ge  2\cdot 4^{-M-M/12}$. 
Consequently, $Y_\sr :=g_- \circ g_+^{M-1}(Y_\se)$ belongs to the open set $U$ defined in \cref{conefact}   because it is $O(b)$-close to $[\alpha_{M}, \alpha_{M-1}]\times [-\theta, \theta]$. Also by \cref{conefact}, the following are boxes:
\[Y_{\ss_M^\pm}:= g_\pm\circ g_- \circ g_+^{M-2}(Y_\se)\; .\]
The right component of $\partial^s Y_{\ss_M^-}$ and the left component of  $\partial^s Y_{\ss_M^+}$ are disjoint vertical, stretched curves. By \cref{constructionbox}, they bound a box that we denote by $Y_\boxdot$.
\end{exem}
We notice that $f(Y_\boxdot)\subset Y_\sr$ . Also it holds $Y_\se= \bigcup_{\ss\in \sA_0} Y_\ss \cup Y_{\boxdot}.$

\begin{figure}[h!]
 \centering
 \includegraphics[width=13cm]{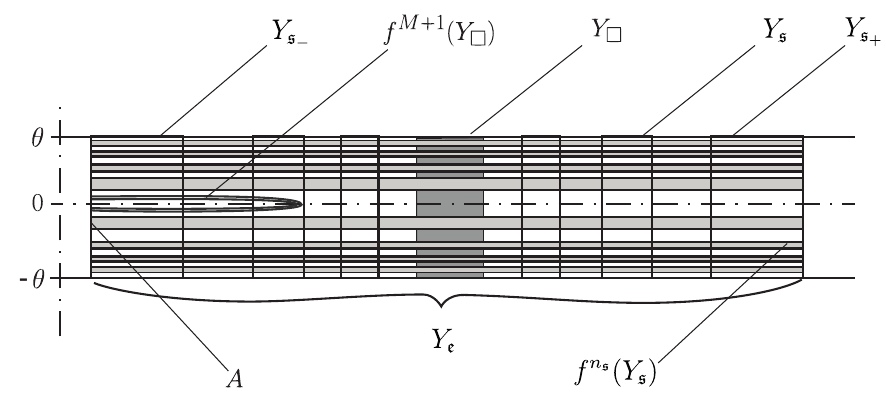}
 \caption{Simple pieces and their first return}
\label{notation_firstreturn simple piece}\end{figure}
\begin{defi}[Piece]\index{Piece in dimension 2}\label{piecedef}
A \emph{piece} $(Y, n)$ is the data of an integer $n$ and a box $Y$ such that the following conditions are satisfied:
\begin{enumerate}[$(i)$]
\item[$(o)$]The box $Y$ is sent into $Y_\se$ by $f^{n}$: $f^{n}(Y)\subset Y_\se$.
\item \label{defi_condi_hyp} For every $w\in \chi_h$,  for every $z\in Y$, for every $0\le m\le n$, it holds:
\[\| D_zf^{n}(w)\|\ge 2^{\frac{m}3 } \|D_{f^m(z)}f^{n-m}(w)\|\; .\]
\item  For every $w\in \chi_h$,  for every $z\in Y$, for every $0\le m\le n$ it holds:
\[\| D_zf^{m}(w)\|\ge b^{m/6} \|w\|\; .\]
 \end{enumerate}
\end{defi}
Using that the determinant of $Df$ is very small, we will show in sections \ref{proof:invariancecone}   and \ref{proof:invariancecone2}, Pages \pageref{proof:invariancecone} and \pageref{proof:invariancecone2}, the following:
\begin{prop}\label{invariancecone}
Every piece  $(Y, n)$  satisfies the following properties for every $z\in Y_\sa$:
\begin{enumerate}[(i)]
\item The horizontal cone is sent into itself by $D_zf^{n}$: %
$D_z f^{n}(\chi_h)\subset \chi_h\; .$ Moreover, given a horizontal curve $S\subset Y$, the curve $f^n(S)$ is horizontal (and so flat).
\item The vertical cone pulls back into itself by 
$D_zf^{n}$: 
$(D_z f^{n})^{-1}(\chi_v)\subset \chi_v\; .$
Moreover, 
for every vertical stretched curve $\mathcal C\subset Y_\se$ which is between the two components of $f^n(\partial^s Y)$, the set  $(f^{n}|Y)^{-1}(\mathcal C)$ is a vertical,  stretched  curve. 
 \end{enumerate}
\end{prop}
\begin{rema}\label{def piece canonical}
In the piece \cref{piecedef}, we put conditions $(i)$ and $(ii)$ instead of the above cone conditions $(i)$ and $(ii)$ (which would give a canonical generalization of the definition of piece in dimension 1), because they are stronger by \cref{invariancecone} and  the stated expansion will be obtained automatically for all the operations we will consider, provided that some combinatorial rules are satisfied. In other words, for all the pieces we will construct, these two pair of conditions are equivalent. 
\end{rema}
\medskip 
\begin{rema}\label{3.12}As $f$ is an endomorphism, the set $f^{n}(Y)$ is not necessarily diffeomorphic to the filed square $[0,1]^2$. However, $ f^{n}(\partial^u Y)$ is a union of two horizontal curves by \cref{invariancecone} $(i)$. These two curves may intersect. Conversely,  $f^{n}(\partial^s Y)$ is  the union of two disjoint curves (in general not vertical and not even embedded), by \cref{piecedef} $(i)$ and \cref{invariancecone} $(i)$ .\end{rema}
\begin{defi}[Puzzle piece]\index{Puzzle piece in dimension 2}
A piece $(Y, n)$ is a \emph{puzzle piece} if   $f^{n} (\partial ^s Y)$ is included in $\partial^sY_\se$. 
\end{defi}
\begin{exem}[Piece associated to $\se$] The pair $(Y_\se, n_\se)$ is a puzzle piece, with $n_\se=0$. \end{exem}
\begin{exem}[Pieces associated to $\ss\in \sA_0\cup\{\sr\}$]\index{Simple pieces in dimension 2}
Given $\ss=\ss_i^\pm\in \sA_0$, we recall that $n_\ss=i$. We put $n_{\sr}= M$. \label{simplespieceindim2}
We recall that in examples \ref{simple boxes} and \ref{boxYsM} we showed that 
$Y_\ss$ is a box for every $\ss\in \sA_0\cup\{\sr\}$.
We notice furthermore that $(Y_\ss, n_\ss)$ satisfies $f^{n_\ss}(Y_\ss)\subset Y_\se$ 
$f^{n_\ss}(\partial^s Y_\ss)\subset \partial^s Y_\se$. 
In  \cref{app to 2} p. \pageref{app to 2}, we will show moreover that $(Y_\ss, n_\ss)$ satisfies Conditions $(i)$ and $(ii)$ of \cref{piecedef}. Thus $(Y_\ss, n_\ss)$ is a puzzle piece for every $\ss\in \sA_0\cup\{\sr\}$. The puzzle piece is said \emph{simple} if $\ss\in \sY_0$. 
\end{exem}
\begin{rema}
The first return time of $Y_\boxdot$ in $Y_\se$ is $n_\boxdot=M+1$. We notice that $f^{n_\boxdot} (\partial^s Y_\boxdot)$ is included in the left hand side component of $\partial^s Y_\se$. Thus $f^{n_\boxdot}(\R\times\{0\}\cap Y_\boxdot)$ has  both endpoints in the left component of $\partial^s Y_\se$ and so it cannot be horizontal. Thus $(Y_\boxdot, n_\boxdot)$ is not a piece.
\end{rema}
The following is fundamental although elementary.
\begin{prop}
If $(Y,n)$ and $(Y',n')$ are puzzle pieces, then they are either nested or with disjoint interior:
Either  $Y\subset Y'$ or $Y'\subset Y$ or $Y\cap int(Y')=\varnothing$.
\end{prop}
\begin{proof}
Assume that $n \ge n'$ and  that $Y$ intersects $Y'$ at a set with non-empty interior. By coherence of the arcs of $W^s(A)$, the set  $I:=\partial^u Y\cap \partial^uY'$ is the union of two non trivial segments. If $Y$ is not included in $Y'$, then $f^{n'}(\partial^u Y)$ is not included in $Y_\se \supset f^{n'}(\partial^u Y')$.  As it contains $f^{n'}(I)\subset Y_\se$, the curves $f^{n'}(\partial^u Y)$ intersects $\partial^sY_\se$ in their interiors.  This property will hold also for any iterate  $f^{n'+k}(\partial^u Y)$, $k\ge 0$. 
  This contradicts the fact that $f^{n}(Y)\subset Y_\se$.
  Consequently $Y$ is included in $Y'$.
\end{proof}

\subsection{Operation  $\star$ on the pieces}\label{section star prod}
The mapping cone property \ref{invariancecone} enables to define a $\star$-product of two pieces as soon as a pure topological condition is satisfied.
\begin{defi}[$\star$-product]\index{$\star$-product in dimension 2}\label{starprod}
Let $(Y,n)$ and $(Y', n')$ be pieces with 
$Y'\subset Y_\se$.  The pair of pieces  $((Y,n), (Y', n'))$ is said \emph{admissible for $\star$} if $f^n(int \, Y)$ intersects $Y'$.
Also we put: $$(Y,n)\star (Y', n')=(Y'', n'')\text{, with }Y''= Y \cap f^{-n}(Y')\text{ and }n''= n +n'.$$ 
We depict this definition in \cref{fig_starprod}.
\end{defi}

\begin{figure}[h!]
 \centering
 \includegraphics[width=13cm]{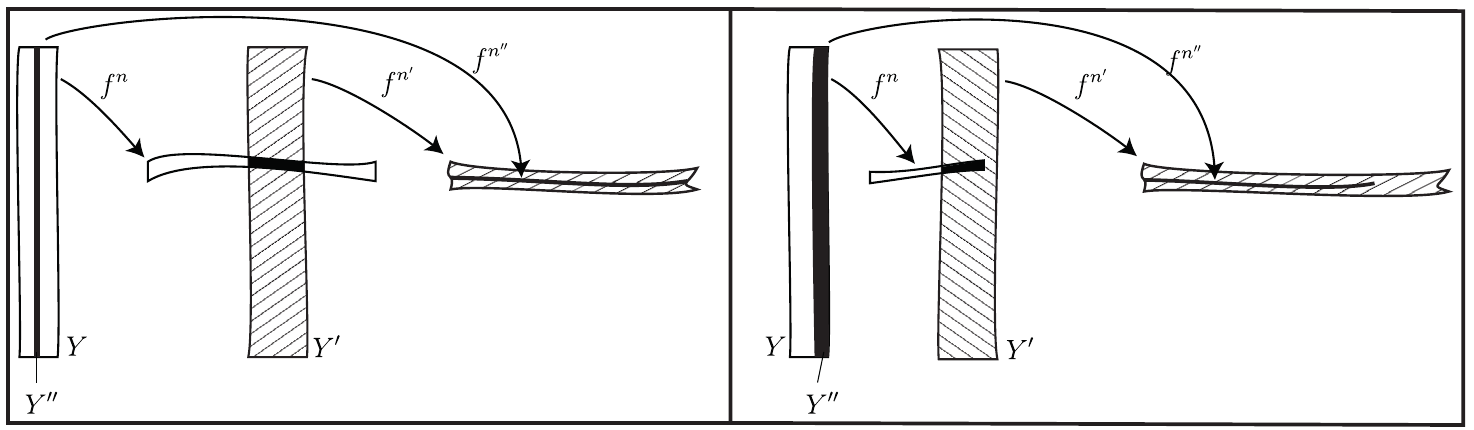}
 \caption{Two admissible configurations for the $\star$-product:
\[(Y'', n'')= (Y,n)\star (Y',n')\;.\]}
\label{fig_starprod}\end{figure}

\begin{prop}\label{admi2}
If $((Y,n), (Y',n') )$ is a pair of  pieces admissible for $\star$, then $(Y'',n'') = (Y,n)\star (Y',n')$ is a piece.
\end{prop}

\begin{proof}
We recall that $f^n(\partial^s Y)$ is formed by two components by \cref{3.12}. 
 As $\partial^s Y$ and $\partial^s Y'$ are formed by arcs of $W^s(A)$, each component of $f^n(\partial^s Y)$ is disjoint or included in  $\partial^s Y'$. Also $f^n(Y)$ is included in the $b$-neighborhood of $\R\times \{0\}$ and so it is disjoint from $\partial^u Y_\se$.

Let $y\in [-\theta, \theta]$ and $S(y) :=\R\times \{y\}\cap Y$. This curve has its endpoints in $\partial^sY$ and its tangent space in $\chi_h$. By  \cref{invariancecone} $(i)$, the curve $S'(y):=f^{n}(S(y))$ has its tangent space in $\chi_h$. Hence $S'(y)$ is transverse to  $\partial^sY'$ and intersects at most once each of its components (by the coherence of the arcs of $W^s(A)$ and since the image of $f$ is in $\R\times [-b,b]$).  The curve $S'(y)$ varies $C^2$ with $y$, and its endpoints cannot cross the curves $\partial^s Y'$ nor $\partial^u Y_\se$. 
Thus $S'(y)$ intersects in its interior a number of components of $\partial^s Y''$ which does not depend on $y\in [-\theta, \theta]$. 
If this number is 0, then $f^n(Y)$ must be included in $Y'$, and so $Y''=Y$ is a box. 
If a component of $\partial^s Y'$ intersects  the interior of $S'(y)$, then it intersects the interior of each curve $S'(y')$ for $y'\in [-\theta, \theta]$. Thus by \cref{invariancecone} $(ii)$, the preimage of this component is a vertical, stretched curve. This vertical curve bounds one side of $Y''$. The other side is shown to be a vertical curve using the same argument. 
This proves that $Y''$ is a box.

To verify conditions $(i)$ and $(ii)$ of \cref{piecedef}, it suffices to recall that $Df^{n}|Y$ sends $\chi_h$ into $\chi_h$ by \cref{invariancecone} and to use then conditions $(i)$ and $(ii)$ of $(Y,n)$ and $(Y',n')$. This shows that $(Y'',n'')$ is a piece.
\end{proof}

Note that the operation $\star$ is associative:
\begin{prop}
For any pieces $(Y,n)$, $(Y',n')$, $(Y'',n'')$ the two following assertions are equivalent:
\begin{itemize}
\item  $((Y,n), (Y',n'))$ is  admissible for $\star$ and  $((Y,n)\star (Y',n'), (Y'',n''))$ is admissible for $\star$,
\item $((Y',n'), (Y'',n''))$ is admissible for $\star$ and 
$( (Y,n), (Y',n')\star (Y'',n''))$ is admissible for $\star$.
\end{itemize}
Furthermore it holds:
\[((Y,n)\star (Y',n'))\star (Y'',n'') = (Y,n)\star ( (Y',n')\star (Y'',n''))=: (Y,n)\star (Y',n')\star (Y'',n'')
\; . \]
\end{prop}
The proof of this proposition follows from the commutativity of the operations $\bigcap$ and $f^{-1}$ on sets. 
\begin{exem}\label{productsimplepiece}
For $\sg= \sa_1\cdots \sa_m\in \sA_0^{(\N)}$, the following product is admissible: 
\[(Y_{\sg}, n_{\sg}):= 
(Y_{\sa_1}, n_{\sa_1})\star \cdots \star (Y_{\sa_m}, n_{\sa_m})\; .\]
Furthermore, $(Y_{\sg}, n_{\sg})$ is a puzzle piece. 
\end{exem}
\begin{exem}\label{ss-j}
We recall that $\ss_-:=\ss_2^-$ and $\ss_-:=\ss_2^+$.  
Given $j\ge 0$, we denote:
\[( Y_{  \ss_-^{\star j }}  ,n_{  \ss_-^{\star j }} ):= 
\underbrace{ (Y_{\ss_-},2)\star \cdots \star (Y_{\ss_-},2)}_{j\text{ times}}\; 
\qand
( Y_{ \ss_+\cdot   \ss_-^{\odot  j }}  ,n_{ \ss_+\cdot  \ss_-^{\odot  j }} ):= 
 (Y_{\ss_+},2)\star(Y_{\ss_-^{\odot  j}},2j) 
 \]
We notice that the box  $Y_{  \ss_-^{\odot  j }}$ is a neighborhood in $Y_\se$ of the left component of $\partial^s Y_\se$. 
On the other hand $Y_{  \ss_+\cdot \ss_-^{\odot  j }}$ is a neighborhood in $Y_\se$ of the right component of $\partial^s Y_\se$. 
\end{exem}
\begin{exem}[Set $\partial_j Y$]\label{ss-j2}\index{$\partial_j Y$}
 Given a puzzle piece $(Y,n)$ let:
\[ (\partial^-_{j} Y, n+2j):= (Y,n)\star (Y_{\ss_-^{\odot j}}, 2j )\qand 
(\partial^+_{j} Y, n+2j +2):= (Y,n)\star (Y_{\ss_+\cdot \ss_-^{\odot j}}, 2j+2)\]
\[\partial_j Y:= \partial^+_{j} Y\sqcup \partial^-_{j} Y\]

 The set $\partial_j Y$ is a neighborhood of $\partial^s Y$ in $Y$. \end{exem}
\begin{rema}\label{estim_epaisseur}
For every $y\in [–\theta, \theta]$, the line $\R\times \{y\}$ intersects $\partial^j Y$ at two segments of length at least $4^{-n-2j}$. Indeed, such segments are sent by $f^{n+2j}$ to horizontal, stretched curves (whose length is approximately $2$), and along this orbit, the norm of $Df$ is at most $4$ by Fact \ref{trivial estimate}. \end{rema}

\subsection{Action of pieces on curves}\label{section action of piece on curve}
Puzzle piece induces natural graph transform on the set of 
 horizontal, stretched curves. They are also contracting for a complete  distance that we shall define. 
 \begin{defi}[Metric  space $\mathcal H$]\label{horizontal curve}\index{$\mathcal H$}
Let $\mathcal H$ be the set of horizontal, stretched curves. 
Given $S_+$ and $S_-$ in $\mathcal H$, we define:
\[d(S_+,S_-)= \|\rho_--\rho_+\|_{C^1},\]
where $S_\pm $ is the graph of $\rho_\pm $: $S_\pm =\{(x,\rho_\pm(x)):x\in I_\se\}$.

A \emph{graph transform} is a continuous map from $\mathcal H$ into $\mathcal H$.\index{Graph transform}
\end{defi}
We notice that $(\mathcal H, d)$ is a compact metric space. 

We recall that by the first item of \cref{invariancecone}, given a puzzle piece $(Y,n)$ in $Y_\se$ and a horizontal, stretched  curve $S\in \mathcal H$, the curve $f^n(S\cap Y)$ is horizontal and stretched. We will prove the following in \cref{proof Contraction of the graph transform}. 
\begin{prop}\label{contractionPP}
For every puzzle piece $(Y,n)$, the map   $S\in \mathcal H\mapsto f^{n}(S\cap Y)\in \mathcal H$ is $b^{n/3}$ contracting.
\end{prop} 
\begin{coro}\label{productsimplepiece2}
For every $\sg= \sa_1\cdots \sa_m\in \sA_0^{(\N)}$, with $(Y_\sg, n_\sg)$ the puzzle piece defined in \cref{productsimplepiece}, the following graph transform is well defined and $b^{n_\sg/3}$-contracting:
\[T_\sg : S\in \mathcal H \mapsto  S^\sg:= f^{n_\sg}(Y_\sg\cap S)\in \mathcal H\; .\]
\end{coro}

The next proposition shows that a sequence of nested puzzle pieces defines a Pesin local stable  manifold (see \cref{pesin} \cpageref{pesin}).
\begin{prop}\label{Pesin stable1}\index{$\pi_f$}
There is a Lipschitz map $\pi_f: \R\times [-\theta, \theta]\to \R$ such that:
\begin{enumerate}
\item The map $\pi_f$ is $b^{1/3}$-$Lip$-close to the first coordinate projection, and its fibers are $C^{1+Lip}$-curves.
\item The map $z\mapsto \ker D_z\pi_f$ is Lispchitz with constant smaller than $b^{1/3}$. 
\item For every puzzle piece $(Y,n)$, there are $(x_-, x_+)$ such that $Y=\pi^{-1}([x_-, x_+])$. Moreover, for every $z\in \partial^s Y$ and $n\ge 1$ it holds $ \|Df^n| T_z \partial^s Y\|\le b^{n/2}\; .$
\item For every sequence $C=(Y_k, n_k)_k$  of puzzle pieces $(Y_k, n_k)$ such that $Y_{k+1}\subsetneq Y_k$ for every $k\ge 0$,  there exists $x_C\in \R$ such that:
$$W^s_C:= \bigcap_{k\ge 0} Y_k  = \pi_f^{-1}(x_C)$$
 and for every $z\in W^s_C$ and $n\ge 1$ it holds $ \|Df^n| T_z W^s_C\|\le b^{n/2}\; .$
 In particular $W^s_C$ is a vertical stretched curve and a Pesin local stable manifold.   
\end{enumerate}
\end{prop}
We will prove this proposition in \cref{pesinstableprop}.

The next proposition shows how to construct a Pesin local unstable  manifold (see \cref{pesin} \cpageref{pesin}). Let $M:= \R\times (-\theta, \theta)$ and let $\arr M_f:= \{ (z_i)_{i\le 0}\in M^{\Z^-} :  f(z_{i-1})= z_i\; \forall i\}$ be the inverse limit of $f$. We recall that $\arr M_f$ is endowed with the induced product topology. We will prove the following in \cref{proof prop pesin instable gen} \cpageref{proof prop pesin instable gen}:
\begin{prop}\label{prop pesin instable gen} Let $ Y_\infty:=(Y_i, n_i)_i$ be a sequence of puzzle pieces of increasing order such that $f^{n_{i+1}-n_i}(Y_{i+1})\subset Y_{i}\subset Y_\se$ for every $i$. Then it holds:
\begin{enumerate}
\item the set  $W_u^{Y_\infty}:=\bigcap_i f^{n_i}(Y_i)$ is a horizontal stretched curve. 
\item the set $\arr W_u^{Y_\infty}=\{(z_i)_i\in M_f:  z_{-n_i}\in Y_i, \forall i\ge 0\}$ is a Pesin  local  unstable manifold which projects via the $0$-coordinate projection onto $W_u^{Y_\infty}$.
\end{enumerate}
\end{prop}
\subsection{Parabolic product}\label{section para prod}
In the one-dimensional case we define the parabolic pieces using that the first return $P^{n_\boxdot} (0)$ of the critical point in $I_\se$ belongs to 
$I_\sa\subset I_\sb$ for two puzzle pieces  $(I_\sa, n_\sa)$ and $(I_\sb, n_\sb)$. Actually, assumption $(SR_3)$ of strongly regular, quadratic map implies that the parabolic product used satisfies that $P^{n_\boxdot} (0)$ does not belong to $I_\sb\setminus (I_{\sb\cdot \ss_-^{\odot \aleph(\sb)}} \cup I_{\sb\cdot \ss_+\cdot \ss_-^{\odot \aleph(\sb)}})$. The following is the $2$-dimensional counterpart of this assumption. 
\begin{figure}[h!]
 \centering
 \includegraphics[width=7cm]{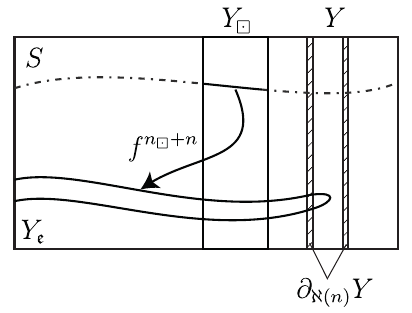}
 \caption{Critical position}
\label{fig:critical position}\end{figure}

\begin{defi}[Critical position]\index{Critical position} \label{defi Critical position}
A horizontal, stretched curve $S$ is \emph{in critical position} with a puzzle piece $(Y,n)$ if $f^{n_\boxdot}(S\cap Y_\boxdot)$ intersects $cl(Y\setminus \partial_{\aleph(n)} Y)$ and exactly one component of $\partial_{\aleph(n)} Y$, with $\aleph(n) =\lfloor \frac{n}{12}+\frac{M}{24}\rfloor$. This definition is depicted \cref{fig:critical position}.
\end{defi}

\begin{defi}[Domains $\cD(\boxdot(Y-Y'))$ and $\tilde \cD(\boxdot(Y-Y'))$] \label{domaine}\index{$\cD(\boxdot(Y-Y'))$ and $\tilde \cD(\boxdot(Y-Y'))$}
Let $(Y,n)$ and $(Y',n')$ be two puzzle pieces such that $Y'\subset Y \subset Y_\se$ and  $n'\le n +\max(M, n)$. Let $\cD(\boxdot(Y-Y'))$ be the set of horizontal stretched curves in critical position with $(Y',n')$ and let $\tilde \cD(\boxdot(Y-Y'))$ be the closed  $\theta^{n'}$-neighborhood  of
$\cD(\boxdot(Y-Y'))$ in $\mathcal H$. 
\end{defi}
\begin{rema}\label{C0 Dboxdot} The set $D(\boxdot(Y-Y'))$ is closed for the $C^0$-topology of $\mathcal H$, whereas $\tilde D(\boxdot(Y-Y'))$ is closed for the canonical, $C^1$-topology of $\mathcal H$. 
\end{rema}
\begin{rema}If the left components of $\partial^s Y'$ and $\partial^s Y$ are the same, then  $\cD(\boxdot(Y-Y'))=\tilde \cD(\boxdot(Y-Y'))=\varnothing$. 
\end{rema} 
\begin{rema}\label{estim_epaisseur bis} For every  $S\in \tilde \cD(\boxdot(Y-Y'))$ there exists $S'\in \cD(\boxdot(Y-Y'))$ which is $\theta^{n'}$-close. Then connected compact sets $f^{n_\boxdot} (S\cap Y_\boxdot)$ and $f^{n_\boxdot} (S'\cap Y_\boxdot)$ are $4^M\theta^{n'}$-close.  We recall that  $n'\le n +\max(M, n)$. Hence by \cref{estim_epaisseur}, the width of each component of $\partial_{\aleph(n')} Y'$ is large compared to $4^M\theta^{n'}$. Thus for every $S\in \tilde \cD(\boxdot(Y-Y'))$, the curve $f^{n_\boxdot} (S\cap Y_\boxdot)$ enter and exit at least once in $int\, Y'$ by the left component of $\partial^s Y'$, and does not intersect the right component of $\partial^s Y'$.
\end{rema}

\begin{prop}\label{def S_boxdot(a-b)}\label{remark pour la tranversalite} For every $S\in \tilde \cD(\boxdot(Y-Y'))$, the set $(f^{M+1}|S\cap Y_\boxdot)^{-1}cl(Y\setminus Y')$ consists of two disjoint segments. 
We denote by $S_{\boxdot_-(Y -Y')}$ the left hand side segment and by $S_{\boxdot_+(Y -Y')}$ the right hand side segment.

One endpoint of $S_{\boxdot_\pm(Y -Y')}$ is given by a transverse intersection with $\partial^sY$ via $f^{M+1}$ and the other endpoint is given by a transverse intersection with $\partial^sY'$ via $f^{M+1}$.
\end{prop}\begin{proof}
 As $S$ is a horizontal, stretched curve, it can be parametrized by $(t, \rho(t))_{t\in I_\se}$ for a function $\rho$ which is $2\theta$-$C^{1+Lip}$-small. 
 Its image by $f|Y_\boxdot$ is $O(b)$-close to $(t^2+a ,0)_{t\in I_\boxdot}$.
  Likewise,  $S^\boxdot:=f^{M+1}(S\cap Y_\boxdot)$ is $\theta$-$C^{1+Lip}$-close to be of the form $(S^\boxdot(t))_t=\{(Cst\cdot  t^2+P^{M}(a), 0): t\in I_\se\}\cap Y_\se$
  for $Cst<0$. Note that  $S^\boxdot$ is in the $b$-neighborhood of $I_\se\times \{0\}$ and so does not intersect $\partial^u Y_e$. Also  the endpoints of  $(S^\boxdot(t))_t$ are in the left component of $\partial^s Y_\se$.

Thus given any stretch vertical curve $V\subset Y_\se$ which intersects $(S^\boxdot(t))_t$, either the intersection is tranverse and occurs at exactly two points, or the intersection holds at a unique intersection point. 
Consequently, by \cref{estim_epaisseur bis}, the parametric curve $(S^\boxdot(t))_t$ intersects the left components of $\partial^s Y'$ transversally at exactly 2 points, but does not intersect the right component of $\partial^s Y'$.
\end{proof}

The curve  $f^{n_\boxdot +n} (S_{\boxdot_\pm (Y -Y')})$ is not stretched. However,  under extra assumptions, it is horizontal and  we will be able to extend it to a horizontal, stretched curve. Let us precise this:
\begin{defi}[Extension algorithm for a piece]\index{Extension of a horizontal curve}
If $S$ is a horizontal curve, an \emph{extension} $\hat S$ of $S$ is a horizontal, stretched curve which contains $S$: $\hat S\in \mathcal H$ and $\hat S\supset S$.
An \emph{extension algorithm for a piece}  $(Y,n)$ in $Y_\se$ is a continuous map
$T_Y:S\in  \mathcal H \to S^Y\in \mathcal H$ such that 
$$T_Y(S) \supset f^n(S\cap Y),\quad \forall S\in \mathcal H\; .$$ 
\end{defi}

We recall that the space of horizontal, stretched curves $\mathcal H$ is canonically endowed with a complete $C^1$-distance $d$ (see \cref{horizontal curve}).
\begin{prop}\label{graphtransformpara}
Let $(Y,n)$ and $(Y',n')$ be two puzzle pieces such that $Y'\subset Y \subset Y_\se$  with $n'\le n +\max(M, n)$.
If $\tilde \cD(\boxdot(Y-Y'))$ is non-empty, then for every $\pm\in \{-,+\}$, there exists a map:
\[T_{\boxdot_\pm (Y-Y')}: S\in \tilde \cD(\boxdot (Y-Y'))\subset \mathcal H\mapsto 
S^{\boxdot_\pm (Y -Y')}\in \mathcal H,\]
such that:
\begin{enumerate}[$(i)$]
\item  Then for every $S\in \tilde \cD(\boxdot (Y-Y'))$, the 
 horizontal, stretched curve $S^{\boxdot_\pm (Y -Y')}$ is an extension of $f^{n_\boxdot +n} (S_{\boxdot_\pm (Y -Y')})$:
$S^{\boxdot_\pm (Y -Y')} \supset f^{n_\boxdot +n} (S_{\boxdot_\pm (Y -Y')})\; .$
\item The map $T_{\boxdot_\pm (Y-Y')}$ is $b^{(n_\boxdot +n)/3}$-contracting.
\item The curve  $S^{\boxdot_\pm (Y-Y')}$ is $\theta b^{n/3}$-close to 
$f^n(S'\cap Y)$ for all $S\in \tilde \cD(\boxdot (Y-Y'))$ and  $S'\in \mathcal H$.
\end{enumerate}
\end{prop}
We will show this proposition in \cref{proof of disjoint box}. 
The proof will define explicitly the map $T_{\boxdot_\pm(Y\setminus Y')}$ 
in \cref{extension algo} \cpageref{extension algo}.
A consequence of this proposition is that each curve $f^{n_\boxdot +n} (S_{\boxdot_\pm (Y -Y')})$ is horizontal.  
\begin{figure}[h!]
 \centering
 \includegraphics[width=7cm]{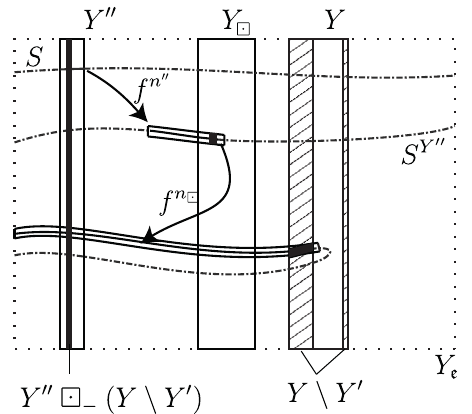}
 \caption{Admissible parabolic product.}
\label{notation_prodpara}\end{figure}
\begin{defi}[Admissible parabolic products]\label{def pre admissible}\index{Pre-admissible parabolic product}
Let $\pm\in \{-,+\}$.  Let $(Y,n), (Y',n'), (Y'',n'')$ be pieces and let
$T_{Y''}$ be an extension algorithm for $(Y'', n'')$.  The parabolic product  $\boxdot_\pm$ is  (resp. weakly) \emph{pre-admissible}  from these data if:
\begin{enumerate}
\item the pairs $(Y, n)$ and $(Y',n')$ are puzzle pieces and  
$M+1+n'\le 2^M n$,
\item the range of $T_{Y''}$ is included in  $\cD(\boxdot (Y-Y'))$ (resp.  $\tilde \cD(\boxdot (Y-Y'))$),
\item  $ f^{n''}(Y'')$ intersects $\boxdot_\pm (Y-Y')= \bigcup_{S\in \tilde \cD(\boxdot(Y-Y'))} S_{\boxdot_\pm (Y-Y')}$ at more than one arc of $W^s(A)$.
\end{enumerate}
A \emph{pre-admissible parabolic product} is (resp. weakly) \emph{admissible} if the pair:
$$(Y''\boxdot_\pm (Y-Y'), n''+n_\boxdot+n)$$ is a piece, with  $Y''\boxdot_\pm (Y-Y'):= (f^{n''}| Y'')^{-1}(\boxdot_\pm (Y-Y'))$.
This definition is depicted \cref{notation_prodpara}.\index{Admissible parabolic product}
\end{defi}
\begin{rema}\label{pre-admi=topo} If conditions 1 and 2 of the pre-admissibility are satisfied then:
$$Y''\boxdot_\pm (Y-Y'):= (f^{n''}| Y'')^{-1}(\bigcup_{S\in  \cD(\boxdot(Y-Y'))} S_{\boxdot_\pm (Y-Y')}),\quad  \forall \pm\in \{-,+\}.$$
Indeed $f^{n''}(Y'')\subset \bigcup_{S\in \mathcal H} T_{Y''}(S)\subset \cD(\boxdot(Y-Y'))\Subset \tilde \cD(\boxdot(Y-Y'))$. This enables us to see that the above preimage of the union over $\cD(\boxdot(Y-Y'))$ is equal to the one of the union over $\tilde \cD(\boxdot(Y-Y'))$. As $\cD(\boxdot(Y-Y'))$ is defined by a topological condition, by \cref{C0 Dboxdot}, it comes that pre-admissibility conditions 2 and 3 are also topological.
\end{rema}
%
%
%
%
\begin{prop}\label{disjoint box} If the parabolic product  is {weakly pre-admissible} for the data defined in \ref{def pre admissible}, then the set $Y''\boxdot_\pm(Y-Y')$ is a box.
If both parabolic products $\boxdot_- $ and $\boxdot_+$ are weakly pre-admissible, then the boxes  $Y''\boxdot_-(Y-Y')$ and $Y''\boxdot_+(Y-Y')$ are disjoint. 
\end{prop}
We will prove this proposition in \cref{proof disjoint box} p. \pageref{proof disjoint box}.
The parabolic product is more subtle than the simple product. We will need an extra condition to ensure the admissibility of a pre-admissible parabolic product: that the pair $( Y''\boxdot_\pm(Y-Y'), n''+n_\boxdot +n)$ is a piece. Namely we need that $Y$ satisfies sharper expansion  estimates than those given by piece's \cref{piecedef} $(i)$ and some distortion bounds.  This will be obtained by asking $(Y,n)$ and $(Y',n')$ to satisfy a pure combinatory condition: the strong regularity. 

The interest of this combinatory definition is to enable us to define algebraically the products of pieces which are well defined and satisfy these sharper expansion estimates on combinatorially and topologically defined open set of maps. In the parameter selection this will enable us to know when the parameter varies which pieces will be defined and varies continuously with the parameter. The formalism introduced in the next section is designed for this. 

%
%
%
%
%
%

\section{Symbolic formalism and strongly regular Hénon-like endomorphisms}\label{section SR}

Let  us recall some algebraic definitions which will be hopefully helpful to understand the notations and terminologies of our definition of strong regularity.
\subsection{Algebraic preliminary}\label{Algebraic preliminary}
\begin{defi}[Pseudo-monoid]\index{Pseudo-monoid}
A \emph{pseudo-monoid} $(\sM, \otimes)$ is an algebraic structure consisting of a set $\sM$ together with a binary operation $\otimes$ defined on a subset $D(\otimes) \subset \sM \times \sM$, which  satisfies the following axioms:
\begin{itemize}
\item \emph{Closure}:  For all $(\sa, \sb) \in D(\otimes)$, the result operation  $a \otimes b$ is also in $\sM$.
\item \emph{Associativity}: for all $(\sa, \sb) \in D(\otimes)$ and $(\sb, \sc)\in D(\otimes)$, we have:  $\sa \otimes (\sb\otimes \sc)=(\sa \otimes \sb)\otimes \sc $.
\item \emph{Identity element}: there exists an element $\se\in \sM$ such that for every element $\sa\in \sM$, we have  $(\sa,\se)\in D(\otimes)$ and $\sa\otimes \se=   \sa$.
\end{itemize}
\end{defi}
A simple example of pseudo-monoid is the semi-group $(\N,+)$ where $D(+)=\N$.

An immediate consequence of \cref{1-piece are pseudo} is:
\begin{exem} The set $\mathcal P_1$ of pieces  of a quadratic map endowed with the operation $\star$ with as domain $D(\star)$ the set of on admissible pairs for $\star$  is a pseudo-monoid.
\end{exem}

\begin{exem} The set $\mathcal P_2$ of pieces  of a Hénon-like endomorphism endowed with the operation $\star$ with as domain $D(\star)$ the set of on admissible pairs for $\star$  is a pseudo-monoid.
\end{exem}

To define the strongly regular Hénon-like endomorphisms we are going to define some combintatorial rules using a countable alphabet $\hat \sA$ which does not depend on the dynamics. 
\begin{defi}[Symbolic pseudo-monoid]\index{Symbolic pseudo-monoid}
Let $\hat \sA$ be a countable alphabet and let $\sR\subset \hat \sA^{(\N)}$ be a set of words in the alphabet $\hat \sA$. Then the set  $\sR$ endowed with the concatenation law is pseudo-monoid:  
Given two words $\sa, \sb \in    \sR$, we denote by $\sa\cdot \sb$ the concatenation of these two words, and define $D(\cdot):=    \{(\sa, \sb )\in    \sR^2: \sa\cdot \sb\in \sR\}$. The pair $(\sR, \cdot)$ is a pseudo-monoid with neutral element $\se$ the empty word. 
\end{defi}
The symbolic pseudo-monoids will be useful to do rigorously the parameter selection. They will encode some pieces of any 0-strongly regular Hénon-like endomorphism using an homomorphism:
\begin{defi}[Homomorphism of pseudo-monoids]
A \emph{homomorphism} \index{homomorphism   of pseudo-monoid}  from a pseudo-monoid $(\sM, \otimes)$ to a pseudo-monoid $(\sM',\otimes')$
 is a map  $\phi: \sM\to \sM'$ which satisfies:
\[\phi(\sa\otimes \sb)= \phi(\sa)\otimes' \phi(\sb)\qand \phi(\se)=\se'\; .\]
\end{defi} 
The homomorphism which will associate a piece to a symbolic word will be injective. 

A simpler example of (non-injective) homomorphism is the following. For $i\in \{1,2\}$, the map $\mathcal P_i\to \N$ which associates to a piece its order is a homomorphism of pseudo-monoid. This actually endows these pseudo-monoids with a structure of graded pseudo-monoid:
\begin{defi}[Graded pseudo-monoid] A pseudo-monoid $(\sM, \otimes) $ is \emph{graded} if it is endowed with a homomorphism to $(\N, +)$:\index{Graded pseudo-monoid}
$$\sa\in (\sM, \otimes) \mapsto n_\sa\in (\N, +).$$ \index{Graded pseudo-monoid}
A homomorphism of graded pseudo-monoid $\phi:(\sM, \otimes,n)\to (\sM, \otimes',n')$ is a homomorphism of pseudo-monoid which leaves invariant the order: $n'_{\phi(\sa)}=n_\sa$ for every $\sa\in \sM$. 
\end{defi}
This algebraic structure of the symbolic pseudo-monoid will be enriched with more structure which will take care of the parabolic operations $\boxdot_+$ and $\boxdot_+$.
 This will be shed light using an order relation called the \emph{right divisibility} $|$, which will be crucial for the parameter selection. The order relation will enable us to define in \textsection \ref{section righ divisibility} an ultra-metric distance on $\sA^{(\N)}$ (using a notion of GCD on $\hat \sA^{(\N)}$). This distance will bound from above the distance of two horizontal curves by using only their combinatorial definitions. Let us precise the definition of order relation in this setting:
\begin{defi}[Ordered pseudo-monoid]
A  pseudo-monoid $(\sM, \otimes) $ is \emph{ordered} if it is endowed with an order relation $\preceq$ satisfying the following property:\index{Ordered pseudo-monoid}

If  $(\sa, \sb, \sc)\in \sM^3$ is such that $\sa\preceq \sb$ and $(\sa, \sc), (\sb, \sc)\in D(\otimes)$ then $\sa\otimes \sc\preceq \sb \otimes \sc$. \index{Ordered pseudo-monoid}\end{defi}
Although the order relation on the symbolic monoid will be defined combinatorially, its restriction to the words associated to pieces
 will be equivalent to the following:
\begin{exem}
The pseudo-monoid $(\mathcal P_2, \star)$ is ordered by:
\[(Y',n')\preceq (Y,n)\Leftrightarrow (n'\le n\qand  f^{n-n'}(Y)\subset Y')\; .\]
\end{exem}

\subsection{The combinatorial rules}\label{grammar}
In the previous sections, we defined pieces for every symbol in $\sA_0$, and three operations $\star$, $\boxdot_-$ and $\boxdot_+$ on the pieces. 
These operations will be used following combinatorial   rules in the definition of the strong regularity.  We are going to define an alphabet $\hat \sA$ and several subsets of $\hat \sA^{(\N)}$ to encode these combinatorial rules. 
The alphabet $\hat \sA$ does not depend on $\hat f$ but only on $M$; this is an object in the category of symbols. Such a way of presentation enables us to shed light on the combinatorial rules that we will use and to split the combinatorial part of the proof to the topological part.\footnote{In the earlier version of this text, this symbolic alphabet was presented right after the definition of the strong regularity. It is put here before thanks J.C. Yoccoz and M. Shishikura who suggested me to extract the pure combinatorial part of the argument. I also might have been influenced by \cite{Y15}, where the combinatorial structure was presented at the very beginning of his note.} 

In the next section, we will associate to some words $\sg \in \hat \sA^{(\N)}$ a two-dimensional piece $(Y_\sg, n_\sg)$ if  some purely topological and combinatorial conditions are satisfied by $f$. These topological and combinatorial  conditions will imply nice analytical and geometrical  properties of the pieces. 

An important point is that this symbolic setting enables us to define combinatorially the continuation of the piece $Y_\sg$ when the parameter vary, in such a way that the analytical estimate (e.g. expansion of vectors in $\chi_h$) are preserved when the parameter varies. It will be used in a crucial way in the parameter selection to define the set of pieces which will persist for some parameter intervals. 
\bigskip

The alphabet $\hat \sA$  is defined  as an increasing union $\hat \sA=\bigcup_{k\ge 0} \hat \sA_k$ of alphabets $\hat \sA_{k+1}\supset \hat \sA_k\supset \cdots \supset \hat \sA_0:= \sA_0$   defined inductively. We recall that:
\begin{eqnarray*}
\sY_0:= \{s_\pm^j: \pm\in \{-,+\}, 2\le j\le M-1\}\qand 
 \sA_0:= \sY_0\sqcup \{\ss_-^M, \ss_+^M\}\; .\end{eqnarray*}
 For $\ss\in \sA_0$, we put $n_{\ss}= j$ if $\ss= \ss_\pm^j$.

 The induction on $k$ will associate to each symbol $\sa\in \hat \sA_k$ an integer $n_\sa\ge 0$. Given $\sa_1 \cdots \sa_m\in \hat  \sA^{(\N)}_k$ we put:
 \[\sa_1 \cdots \sa_m\in \hat \sA^{(\N)}_k=\bigcup_{m\ge 0} \hat \sA_k^m  \mapsto \sum_j n_{\sa_j}\; .\]
  The empty word is denoted by $\se$ and belongs to $ \hat \sA^{(\N)}_k$ for every $k$. We put $n_\se=0$. It is the neutral element for the pseudo-monoids $ \hat  \sA^{(\N)}_k$  endowed with the concatenation rule $\cdot \; $.
The inductive definition of $(\hat \sA_k)_k$ needs a few definitions. 
\begin{defi}[Prime and complete words] 
Let $\sg\in \hat \sA_k^{(\N)}$. It is the concatenation of finitely many  letters $\sa_i\in \hat \sA_k$.   We say that $\sg$ is \emph{prime} if $\sa_i\notin \sA_0$ for every $i<m$. The word $\sg$ is \emph{complete} if $\sa_m$ belongs to $\sA_0$. \index{Prime word} \index{Complete word}\end{defi}
\begin{defi}[Prime decomposition]
A word $\sg\in \hat \sA_k^{(\N)}$ has \emph{depth} $j$ if it contains exactly $j$ letters in $\sA_0$. \index{Depth of a word} If $\sg$ is complete then there exists a unique chain of prime, complete words $ \sg_1, \dots, \sg_j\in\hat  \sA_k^{(\N)}$ such that   $\sg= \sg_1\cdots \sg_j$. This is called the \emph{prime decomposition} of $\sg$. \index{Prime decomposition of a word}
\end{defi}

 We notice that a word which is prime has depth at most $1$, and a word which is complete has depth at least $1$.

Let us denote by $\ss_-^{\odot k}$ the concatenation of $k$-symbols $\ss_-^2$: $\ss_-^{\odot k} := \ss_-^2 \cdots  \ss_-^2\; .$
Given a word $\sg\in \hat \sA_k^{(\N)}$, we put 
$$\aleph (\sg):= \left\lfloor\frac{n_{\sg}}{12}+\frac M {24}\right\rfloor\; .$$

\begin{defi}[Regular words]\label{def regular words} A word  $\sg=\sa_1\cdots \sa_m \in \hat \sA_k^{(\N)}$ is \emph{regular} if the letter  $\sa_1$ belongs to $\sA_0$ and for every $2\le j\le m$ it holds:\index{Regular word}
\begin{equation}\label{regular ineq}n_{\sa_j}\le 2^{M}\sum_{m=1}^{j-1}n_{\sa_m}\; . \end{equation}
 A word  $\sg=\sa_1\cdots \sa_m \in \hat \sA_k^{(\N)}$ is \emph{weakly-regular} if the first letter satisfies  $n_{\sa_1}\le M\cdot 2^M$ and the same inequality \eqref{regular ineq}  holds true for  every $2\le j\le m$.
 \index{Regular word}
\end{defi}
We notice that a chain which is regular is weakly regular. Moreover, if we erase the first letter of a regular word, then the resulting word is weakly regular.

\begin{defi}[Strongly regular words]\index{Strongly regular words}\label{defi Strongly regular words} A word  $\sg \in \hat \sA_k^{(\N)}$ with prime decomposition $\sg=\sg_1\cdots \sg_m$ is \emph{strongly regular}  if it is complete and:\index{Strongly regular word}
\[\sum_{i\le  j: \sg_i\notin \sY_0} n_{\sg_i}\le 2^{-\sqrt{M}} \sum_{i\le  j} n_{\sg_i}, \quad \forall j\le m\; .\]\end{defi}
\begin{prop}\label{ordre SR}If $\sg$ is strongly regular with prime decomposition  $\sg=\sg_1\cdots \sg_m$ then for every $i\le m$, it holds:
\begin{equation*} n_{\sg_i}\le \max \left(2^{-\sqrt{M}}(1-2^{-\sqrt{M}})^{-1}n_{\sg_1\cdots \sg_{i-1}}, M-1\right) \qand n_{\sg}\le m\cdot ( M-1)(1+2^{1-\sqrt M}) .\end{equation*}
In particular a strongly regular word is regular. 
\end{prop}
\begin{proof}
The left inequality is a consequence of the bound $n_\sa\le M-1$ for every $\sa\in \sY_0$. To prove the second inequality, we consider the $J:= \{i\in \{1,\dots, m\}: \sg_i\in \sY_0\}$.  Then we have:
\[n_{\sg}\le (M-1)\cdot card( J)+  2^{-\sqrt{M}}(1-2^{-\sqrt{M}})^{-1} \cdot (M-1)\cdot card( J)\]
Then we conclude by bounding $ card( J)$ by $m$ and $ 2^{-\sqrt{M}}(1-2^{-\sqrt{M}})^{-1}$ by  $2^{1-\sqrt{M}}$.
\end{proof}

\begin{defi}[Common words]
A word  $\sg=\sa_1\cdots \sa_m \in \hat \sA_k^{(\N)}$ is \emph{common} if for every $0\le j\le m$ with $\sa_j\in \sA_0$ and $m':=\aleph (\sa_1\cdots \sa_j)$ it holds: 
\[\sa_{j+1}\cdots \sa_{j+m'}\neq  \ss_-^{\odot m'}
\text{ if }m\ge m'+j
\qand \sa_{j+1}\cdots \sa_{j+m'+1}\neq  \ss_+\cdot \ss_-^{\odot m'}\text{ if }m\ge m'+1+j
.\]\index{Common word}
\end{defi}
We are now ready to define the alphabet. \index{Alphabet $\hat \sA$}\index{Alphabet $\hat \sA_k$}
\begin{defi}[{$\hat \sA_k$} and $\hat \sA$]\label{sAk} We recall that $\hat \sA_0:= \sA_0$. Let $k\ge 1$.
The alphabet $\hat \sA_{k}$ is the union of $\hat \sA_{k-1}$ with the symbols $\boxdot_- (\sg'-\sg)$ and $\boxdot_+ (\sg'-\sg)$ among all the pairs of complete words $(\sg',\sg)\in (\sA_{k-1}^{(\N)})^2$ such that $\sg$ is strongly regular, common and of depth at most $k$ whereas $\sg=\sg'\cdot \sg''$ with $\sg''$ prime. Put:
\[n_{\boxdot_- (\sg'-\sg)}=n_{\boxdot_+ (\sg'-\sg)}=M+1+n_{\sg'}\; .\]
\end{defi}
We notice that $\sg'$ is also strongly regular and common; its depth is at most $k-1$. 
\begin{exem} Let us give the explicit expression of the first alphabets:
\begin{itemize}
\item  $\hat \sA_0= \sA_0$,
\item  $\hat \sA_1= \sA_0\cup\{ \boxdot_\pm (\se -\ss): \ss\in \sY_0\}$,
\item  For every $1\le k\le 2^{\sqrt M}$, the strongly regular words of depth $k$ are made only by simple pieces. Thus we have: 
$$\hat \sA_k = \sA_0\cup\bigcup_{0\le j\le k-1}\{ \boxdot_\pm (\sg -\sg\cdot \ss): \ss\in \sY_0\text{ and } \sg\in \sY_0^j\}\; .$$
\end{itemize}
\end{exem}
\begin{rema}\label{finite} We notice that $\hat \sA_k$ is finite for every $k$, and so $\hat \sA$ is countable. Also $\{\sa\in \hat \sA: n_\sa\le m\}$ is finite for every $m\ge 0$.  
\end{rema}

The alphabet $\sA$ defined for the one-dimensional case is included in $\hat \sA$.  The inclusion is strict: we have $(\hat \sA_{k+1}\setminus \hat \sA_k)\cap \sA$ at most of cardinality $2$, whereas $(\hat \sA_{k+1}\setminus \hat \sA_k)_k$ grows super exponentially. Nevertheless, in dimension 2, one could show that the number of letters we will use in  $((\hat \sA_{k+1}\setminus \hat \sA_k)\cap \sA)_k$ will grow at most exponentially fast with a factor close to $1$.

\subsection{Puzzle structure in dimension 1}\label{subsection 3.3}
The following section aims to make the reader comfortable with our combinatorial formalism by reformulating the definition of strongly regular quadratic maps. This formalism will be used intensively in dimension 2. However, none of the following one-dimensional statements will be used to prove the main theorem.
Let $P(x)=x^2+a$ be a quadratic map such that the first return time $M$ of the critical value $a$ in $I_\se$ is large. 
\begin{defi}[Definitions of $\sA_k$ and $\sG_k$]
 We define by induction on $k\ge 0$, a subset 
$\sA_k\subset \hat \sA_k$ of letters and a subset    
 $\sG_k\subset \hat \sA_k^{(\N)}$ of words $\sg$ associated to a pieces $(I_\sg, n_\sg)$.
For $k=0$ the set $\sA_0$ was already defined. The set  $\sG_0:= \sA_0^{(\N)}$ is formed by the words $\sg$ either equal to the neutral symbol $\se$ or to the concatenation of $j$-letters in $\sA_0$. If $\sg=\se$, it is associated to the piece $(I_\se,0)$.  If $\sg=\sa_1\cdots \sa_j\in \sA^j_0$, it is associated to the puzzle piece:
\[(I_\sg, n_\sg):=(I_{\sa_1}, n_{\sa_1})\star \cdots \star (I_{\sa_m}, n_{\sa_m})\; .\]

\medskip
Let $k\ge 1$. The set $\sA_k$ is the union of $\sA_{k-1}$ with the set of  symbols $\sa\in \hat \sA_k$ of the form 
$\sa= \boxdot_\pm (\sb-\sc)$ such that $\sb, \sc\in \sG_{k-1}$ and the pair of pieces $(I_\sb, n_\sb)$ and $(I_\sc, n_\sc)$ is admissible for the parabolic product. We put $(I_\sa, n_\sa):= \boxdot_\pm (I_\sb-I_\sc)$. 

A word $\sg\in \sA_{k}^{(\N)}$ belongs to $\sG_{k}$  if $\sg=\sg_1\cdots \sg_m$ is a concatenation of letters in $\sA_k$ and such that  the following $\star$ product is admissible:
\[(I_{\sg_1},n_{\sg_1}) \star  \cdots \star (I_{\sg_m},n_{\sg_m}) \]
Note that $(\sG_k)_k$ is increasing. We put $\sG:= \bigcup_{k} \sG_k$ and $\sA:= \bigcup_k \sA_k$. 
\end{defi}
We notice that the sets $\sG_k$ and $\sA_k$ can be defined even if $P$ is \emph{not} strongly regular.\medskip

The following sheds light on the correspondence between the combinatorial properties of the words in $\sG$ and those of their associated pieces.
\begin{prop}\label{primecomple=puzzle}\label{primeword=prime piece}
A word $\sg\in \sG$ is complete iff $(I_{\sg}, n_{\sg})$ is a puzzle piece. 
A complete word  $\sg\in \sG$ is prime  iff the puzzle piece  $(I_{\sg}, n_{\sg})$ is prime. 
\end{prop}
Before proving this proposition, let us state an immediate consequence of it.
\begin{coro}\label{coro correspondance formalism}
For every word $\sg\in \sG$, it holds:
\begin{itemize}
\item the piece $(I_\sg, n_\sg)$ is a common puzzle piece iff the word $\sg $ is common and complete.
\item the piece $(I_\sg, n_\sg)$ is a strongly regular puzzle piece iff  $\sg $ is strongly regular.
\item If $\sg$ is complete, the depth of $(I_\sg, n_\sg)$ is equal to the depth of $\sg$.\end{itemize}
 \end{coro}
 Another consequence is:
 \begin{coro}[Alternative definition of strongly regular quadratic maps]\label{def SR quadratic 2}
A quadratic map $P$ is strongly regular iff there exists $(\sc_k)_{k\ge 0}\in \sG^\N$  such that:
\begin{enumerate}[($SR_1$)]
\item  $P^{n_\boxdot}(0)$ belongs to $\bigcap_k I_{\sc_k}$. 
\item The word  $\sc_k$ is strongly regular, for every $k$. 
\item The word  $\sc_k$ is common with depth $k$, for every $k$.
\end{enumerate}
\end{coro}
\begin{proof}
By \cref{coro correspondance formalism} this reformation implies obviously the initial \cref{defSRquadra} of  strongly regular quadratic maps.  To show the other direction, using again \cref{coro correspondance formalism}, it suffices to show the next lemma.\end{proof}
\begin{lemm}\label{puzzle piece of sr quadratic map}
If $P$ is strongly regular, then for every puzzle piece $(I,n)$ in $I_\se$, there exists $\sg\in \sG$ such that $(I,n)=(I_\sg, n_\sg)$.\end{lemm} 
\begin{proof} We proceed by induction on $n$. If $I$ intersects the interior of $I_\sa$ for $\sa\in \sA_0$, then $I$ is included in $I_\sa$ by the nested property of puzzle pieces. Then by the induction hypothesis, the puzzle piece $(f^{n_\sa}(I), n-n_\sa)$ equals  $(I_{\sg'}, n_{\sg'})$ for $\sg'\in \sG$ and so $(I,n)=(I_{\sg}, n_\sg)$ with $\sg= \sa\cdot \sg'\in \sG$.  Otherwise, $I$ is included $I_\boxdot$. It cannot contain $0$, so it is included in $I_\boxdot\cap \R^\pm$ with $\pm\in \{-,+\}$. Then $(I', n'):=(f^{n_\boxdot}(I), n_\boxdot)$ is a puzzle piece included in $I_\se$. 
 There exists $k$ maximal such that $I'$ is included in $I^{(k)}$. By maximality of $k$ and the nested property of the puzzle pieces, it comes that $I'$ is included in $I^{(k)}\setminus int \,  I^{(k+1)}$. 
 Then $(I, n)$ is of the form $(\boxdot_\pm (I^{(k)}\setminus  I^{(k+1)}), n_\boxdot + n_{I^{(k)}})\star (I'', n'')$, 
 with $(I'', n'')$ a puzzle piece of order $n''<n$. By induction $(I'', n'')=(I_{\sg''}, n_{\sg''})$ and by using \cref{coro correspondance formalism}, 
 it holds $(I^{(k)},  n_{I^{(k)}})= (I_{\sc_k}, n_{\sc_k})$
  for a word $\sc_k\in \sG$ which is strongly regular and common of depth $k$. Then we use sub-lemma \ref{lemma:pp and prime word implies prime pp}, which implies that   $(I^{(k)},  n_{I^{(k)}})= (I_{\sc_{k+1}}, n_{\sc_{k+1}})$
  with $\sc_{k+1}\in \sG$. Again  \cref{coro correspondance formalism}, implies that $\sc_{k+1}$ is strongly regular and common of depth $k+1$. Thus $(\boxdot_\pm (I^{(k)}\setminus  I^{(k+1)}), n_\boxdot + n_{I^{(k)}})=(I_\sa, n_\sa)$ with $\sa= \boxdot(\sc_k-\sc_{k+1})\in \sA$ and so $(I,n)=(I_{\sa\cdot \sg''},n_{\sa\cdot \sg''})$ with $\sa\cdot \sg''\in \sG$. 
\end{proof}
A first interest of this reformulation is that it will be generalizable in dimension 2. Another interest of such a formulation (and of  the combinatorial formalism)  is its ability to study the combinatorial structure of strongly regular maps. From such a study, we will deduce analytic estimates on the pieces. Indeed:  
\begin{prop}\label{para are hyp}
Let $\sa\in \sA$. Then it holds:
\begin{equation*}
\forall z\in I_\sa\text{ and }j\le n_\sp: \\
|D P ^{n_\sa}(z)| \ge 2^{\frac{n_\sa-j}{3} } |D P^j(z)|\; .
\end{equation*}
\end{prop}
\begin{proof}
The case where $\sa\in \sY_0$ is given by \cref{simple are hyp}. The case $\sa\in \sA_0\setminus \sY_0$ is given by \cref{coroyoc3}. The case $\sa\in \sA\setminus \sA_0$ will be stated  in a more general form in \cref{Crutial prop}.
\end{proof}
An immediate consequence is:
\begin{coro}\label{preAspelling}
For every $\sg\in \sG$, the piece $(I_\sg, n_\sg)$ satisfies:
\begin{equation*}
\forall z\in I_\sg\text{ and }j\le n_\sp: \\
|D P ^{n_\sg}(z)| \ge 2^{\frac{n_\sg-j}{3} } |D P^j(z)|\; .
\end{equation*}
\end{coro}
Together with \cref{puzzle piece of sr quadratic map} this implies:
\begin{coro}\label{Coroh} 
If $P$ is strongly regular, then every puzzle piece $(I,n)$  it holds:
\[\forall z\in I\text{ and }j\le n: \\
|D_z P ^{n}(z)| \ge 2^{\frac{n-j}{3} } |Dz P^j(z)|\; .\]
\end{coro}
\begin{coro}\label{coro Pesin}
Let $\mathcal {SR}$ be the set of points in $I_\se$ which belongs to the segment of a  strongly regular piece of arbitrarily large order. Then it holds:
\[\liminf_{n\to \infty} \frac1n \log \| D_x P^n\|\ge \frac{\log2}3-2^{1-\sqrt M}\log4\; .\]  
\end{coro}
\begin{proof}
Let $x\in \mathcal {SR}$ and $(I'_j, n'_j)_j$ be a sequence of puzzle pieces such that $x$ belongs to 
$I'^{(j)}$ with $(I'^{(j)}, n'^{(j)})= (I'_1, n'_1)\star \cdots \star (I'_j, n'_j)$. By \cref{Coroh} it holds:
\[\left\{ \begin{array}{c}
\log \|D P^{n'_j}(P^{n'^{(j-1)}}(x)) \|\ge \frac{\log2}3 n'_{j}\, ,\\
{\log \|D P^{n'^{(j)}}(x) \|}\ge \frac{\log2}3{n'^{(j)}}\, .\end{array}\right.\]
As $DP\le 4$ along the orbit of $x$, the first inequality implies that $ \log \|D P^{m}(x') \|\ge - 2 n'_{i}$ for every $m\le n'_{i}$, $x'\in I'_{i}$ and $i\ge 0$. Finally we infer that the chain $(I'_i, n'_i)_i$ is strongly regular and so for $i$ sufficiently large it holds ${n'_{i}}\le 2^{-\sqrt M}n'^{(i)}$. Putting together these inequalities, we obtain:
\begin{multline*}{\log \|D P^{n'^{(i-1)}+ m}(x) \|}
\ge  \frac{\log2}3 n'^{(i-1)}-  \log4\cdot  n_i' \\
\ge  \frac{\log2}3 n'^{(i-1)}-2^{-\sqrt M} \cdot \log4\cdot  n'^{(i)} 
\ge  (\frac{\log2}3 -2^{-\sqrt M+1}\log4  )(n'^{(i-1)}+m)
 \; .\end{multline*}
\end{proof}
This implies as for Yoccoz' strong regularity definition:
\begin{coro}\label{CEP} 
If $P$ is strongly regular, then it satisfies the Collet-Eckmann  Condition $(\mathcal {CE})$. 
\end{coro}
The combinatorial formalism enables also to define immediatly the $k$-combinatorial interval, which are parameter intervals on which the symbolic set $\sG_k$ is constant, and so for which the pieces persist (with the same analytical estimate by \cref{preAspelling}). Really, an analytic definition of such parameter intervals turned out to be more tricky to state in dimension 2. Moreover, using mostly combinatorial arguments, we will evaluate the Lebesgue measure of the union of the parameter intervals for which $\sG_k$ is given by a  `$k$-strongly regular map', from which we will deduce that strongly regular maps are abundant:  
\begin{theo}
There exists $\Lambda_0\subset \R$ such that for every $a\in \Lambda_0$, the map $P_a$ is strongly regular and $\leb\, \Lambda_0>0$.
\end{theo}
This theorem is a consequence of Theorem \ref{SRabundant} stated below.
\begin{proof}[Proof of \cref{primecomple=puzzle}]
There are four implications to show. Two of them are given by lemmas \ref{pp implies complete} and \ref{pp and prime word implies prime pp}. These lemmas will be used to prove the two remaining implications in lemmas \ref{complete word implies puzzle piece} and \ref{last1dlemma}. 
\begin{lemm}\label{pp implies complete}
For every $\sg\in \sG$ if $(I_{\sg}, n_{\sg})$ is a puzzle piece, then $\sg$ is complete.
\end{lemm}
\begin{proof}
If $\sg=\sb_1\cdots \sb_k$ is not complete, then its last letter $\sb_k\in\sA$ is parabolic. Thus the piece $(I_{\sb_k}, n_{\sb_k})$ is parabolic and $P^{n_\sg}(I_\sg)\subset P^{n_{\sb_k}}(I_{\sb_k})$ is strictly included in $I_\se$ by fact \ref{primeend}. Thus $(I_\sg, n_\sg)$ is not a puzzle piece.
\end{proof}
\begin{lemm}\label{pp and prime word implies prime pp}
If $\sg\in \sG$ is such that $(I_\sg, n_\sg)$ is a puzzle piece and $\sg$ is prime, then $(I_\sg, n_\sg)$ is a prime puzzle piece.
\end{lemm}
\begin{proof}
If $(I_\sg, n_\sg)$ is not prime, there exist two puzzle pieces 
$(I,n)$ and $(I',n')$ such that   $(I_\sg, n_\sg)=(I,n)\star (I',n')$, with $I\neq I_\se\neq I''$. As the puzzle pieces are nested or disjoint,  it holds $I_\sg\subset I$. Then we apply the following lemma proved below. 
\begin{sublemm}\label{lemma:pp and prime word implies prime pp}
If  $(I,n)$ is a puzzle piece such that $I\supset I_\sg$ with  $\sg\in \sG$ complete, then there exists $\sg'\in \sG$ such that $(I,n)=(I_{\sg'}, n_{\sg'})$ and there exists $\sg''\in \sG$ such that $\sg= \sg'\cdot \sg''$. 
\end{sublemm}
By \cref{pp implies complete}, the word $\sg'$ is complete, and so $\sg$ is not prime. 
\end{proof}
\begin{proof}[Proof of sublemma \ref{lemma:pp and prime word implies prime pp}]
We assume that $(I,n)$ is not the neutral piece $(I_\se, 0)$ (since otherwise the lemma is trivial).  We proceed by induction on the number $k$ of $\sA$-letters of $\sg=\sa_1\cdots \sa_k$.  If $k=1$, then $\sg\in \sA_0$ and so $(I_\sg, n_\sg)$ is prime. Thus $(I,n)=(I_\sg, n_\sg)$. Let $k>1$. If $\sa_1\in \sA_0$, then $I \  I_{\sa_1}$ because $(I_{\sa_1}, n_{\sa_1})$ is a  prime puzzle piece. Then $(I,n) = (I_{\sa_1}, n_{\sa_1})\star (I', n')$ with $I'\supset I_{\sa_2\cdots \sa_k}$ and we conclude by induction.   If $\sa_1=\boxdot_\pm(\sb- \sc)$, then $I$ intersects the interior of $I_\boxdot$ and so $I\subset I_\boxdot\setminus \{0\}$. Thus $n\ge n_\boxdot$ and $(P^{n_\boxdot}(I), n-n_\boxdot)$ is still a puzzle piece. Also $P^{n_\boxdot}(I)$ must intersect the interior  of $I_\sb\setminus I_{\sc}$. By definition of admissible parabolic product, $I_\sb$ and $I_\sc$ are segments of puzzle pieces. Thus each of them is nested or disjoint from $P^{n_\boxdot}(int\, I)$. But  $P^{n_\boxdot}(I)$ cannot contain  $I_\sb$ nor $I_\sc$ because otherwise $I$ would contain $0$. Thus  $P^{n_\boxdot}(I)$ is included in $I_\sb\setminus I_{ \sc}$ and so $I$ is included in $I_{\sa_1}$; we conclude then by induction as above. 
\end{proof}
A third implication stated in \cref{primeword=prime piece} is given by the following.
\begin{lemm}\label{complete word implies puzzle piece}
If a word $\sg\in \sG$ is complete then $(I_{\sg}, n_{\sg})$ is a puzzle piece. 
\end{lemm}
\begin{proof} 
Put $\sg= \sb_1\cdots \sb_k$ with $\sb_i\in \sA$.  Let us show by induction on $k$ that $(I_\sg, n_\sg)$ is a puzzle piece.  
  If $k=1$, then  $\sg=\sb_k=\sb_1$ belongs to $\sA_0$, and so $(I_{\sg}, n_{\sg})$ is a puzzle piece.
Assume that $k\ge 2$ and put  $\sg':=\sb_2\cdots \sb_k$. 
Observe that $\sg':=\sb_2\cdots \sb_k$ is in $\sG$ and 
 $(I_\sg, n_\sg)= (I_{\sb_1}, n_{\sb_1})\star (I_{\sg'}, n_{\sg'})$ by associativity of the $\star$-product. 
 As $\sg'$ is complete,  by induction, the piece $(I_{\sg'}, n_{\sg'})$ is a puzzle piece. If $\sb_1$ is in $\sA_0$, then $(I_\sg, n_\sg)$ is a $\star$-product of two puzzle piece, and so  by \cref{rema puzzle dim 1}, it is a puzzle piece. 
 
 If $\sb_1\notin \sA_0$, then it is of the form $\sb_1=\boxdot_\pm(\sc-\sc')$, with $\sc, \sc'\in \sG$ such that 
 $(I_\sc, n_\sc)$ and $(I_{\sc'}, n_{\sc'})$ are puzzle pieces with $I_\sc\supset I_{\sc'}$. By definition $\hat \sA$, it holds that $\sc'=\sc\cdot \sc''$, with $\sc''$ prime. 
 We notice that $(P^{n_{\sc}}(I_{\sc'}), n_{\sc'}-n_{\sc})=(P^{n_{\sc}}(I_{\sc'}), n_{\sc''})$ is a puzzle piece which is included in $(I_{\sc''}, n_{\sc''})$. Thus $(I_{\sc''}, n_{\sc''})$ is a puzzle piece.  By \cref{pp and prime word implies prime pp}, the puzzle piece $(I_{\sc''}, n_{\sc''})$ is prime. As  the puzzle pieces $(I_{\sc''}, n_{\sc''})$ and $(P^{n_{\sc}}(I_{\sc'}), n_{\sc''})$ are nested and have the same order, they are equal: $I_{\sc''}=P^{n_{\sc}}(I_{\sc'})$.
   By Fact \ref{primeend}, the segment  $I^{\sb_1}:=P^{n_{\sb_1}}(I_{\sb_1})$ is a component of $I_\se\setminus P^{n_\sc}(int\, I_{\sc'})= I_\se\setminus I_{\sc''}$.  By admissibility, the segment  $I^{\sb_1}$ intersects the interior of the puzzle piece $I_{\sg'}$. Thus $I_{\sg'}$ cannot be included in $I_{\sc''}$ and since the latter is prime, $I_{\sg'}$ cannot contained strictly $I_{\sc''}$. Thus $I_{\sg'}$ is disjoint to the interior of $I_{\sc''}$ and  so $I_{\sg'}$ is included in  $I^{\sb_1}$. This implies that  
$(I_\sg, n_\sg)$ is a puzzle piece.
\end{proof}
The next lemma proves the fourth and the last implication of \cref{primeword=prime piece}. \end{proof}
\begin{lemm}\label{last1dlemma}
A complete word $\sg\in \sG$ is prime if  $(I_\sg, n_\sg)$ is prime. 
\end{lemm}
\begin{proof}
If $\sg\in \sG$ is not prime, then there are complete words  $\sg'$ and $\sg''$ in $\sG$ such that $\sg=\sg'\cdot \sg''$.  By \cref{complete word implies puzzle piece}, $(I_{\sg'}, n_{\sg'})$ and $(I_{\sg''}, n_{\sg''})$ are  non-trivial puzzle pieces. Note that  $(I_{\sg}, n_{\sg})=(I_{\sg'}, n_{\sg'})\star (I_{\sg''}, n_{\sg''})$ is not prime. 
\end{proof}

We recall that $\mathcal P_1$ denotes the set of the pieces for the quadratic map $P$. For the sake of completeness let us finish this section by the following:
\begin{prop}\label{reciavenir}
The map $ \sg \in \sG\mapsto (I_\sg, n_\sg)$ is an injectif  homomorphism of graded pseudo-monoid  from $(\sG, \cdot)$ into
$(\mathcal P_1, \star)$.
\end{prop} 
\begin{proof}
We already noticed that $ \sg \in \sG\mapsto (I_\sg, n_\sg)$ is homomorphism of graded pseudo-monoid  from $(\sG, \cdot)$ into
$(\mathcal P_1, \star)$. To show its injectiveness by induction, it suffices to show that for every $\sa\neq \sb\in \sA$, it holds $int(I_\sa)\cap  int(I_\sb)=\varnothing$. If $\sa$ or $\sb$ belong to $\sA_0$ it is clear. Otherwise, both are of the form $\boxdot_\pm (\sc_k-\sc_{k+1})$ and  $\boxdot_{\pm'} (\sc_j-\sc_{j+1})$, where $\sc_m$ denotes the prime piece of depth $m$ which contains $P^{n_\boxdot}(0)$. As
 $I_\sa$ intersects $int\, I_\sb$, they must be on the same side of $0$ and so $\pm'=\pm$. Furthermore, the image by $P^{n_\boxdot}$ of the interior of these segments must intersect each other. Thus $I_{\sc_k}-I_{\sc_{k+1}}$ intersects $I_{\sc_j}-I_{\sc_{j+1}}$. This implies that $j=k$ and so $\sa=\sb$. 
\end{proof}
\subsection{Puzzle struture and strongly regular endomorphisms in dimension 2}\label{section Strong regularity in dimension 2}
Let $f$ be $0$-strongly regular (see \textsection\ref{setting}). The strong regularity condition on $f$ will be stated using the following  subset $\sR\subset \hat \sA^{(\N)}$ and its action onto the spaces of graph transforms and pieces. 

\begin{defi}[Definition of $\sR_k$ and $\tilde \sR_k$]\label{admissibleword}\index{$\sR_k$, $\tilde \sR_k$}
\index{Admissible regular words}\index{Admissible weakly regular words}
 We define by induction on $k\ge 0$, two subsets $\sR_k$ and $\tilde \sR_k\subset \hat \sA^{(\N)}$ of words $\sg$ associated to a piece $(Y_\sg, n_\sg)$ and an extension algorithm $T_\sg$. By definition, $T_\sg$  satisfies  $f^{n_\sg}(S\cap Y_\sg)\subset  T_\sg (S)\in \cH$ for every $S\in \mathcal H$.
\medskip

Let $\tilde \sR_0=\sR_0:= \sA_0^{(\N)}$. Every word $\sg\in \sR_0$ is either the neutral letter $\sa=\se$ or a product of $j$-letters in $\sA_0$ for $j\ge 1$. In \cref{productsimplepiece} p.\pageref{simplespieceindim2}, we defined the puzzle piece $(Y_{\sg}, n_{\sg})$ and in \cref{productsimplepiece2}, its extension algorithm  $T_\sg : S\in \mathcal H \mapsto  f^{n_\sg}(Y_\sg\cap S)\in \mathcal H$.
 \medskip

Let $k\ge 1$. A word $\sg\in \hat \sA_{k}^{(\N)}$ belongs to $\sR_{k}$ (resp.  $\tilde \sR_{k}$) if it is regular (resp. weakly regular) and one of the two following conditions hold true:
\begin{enumerate}[$(i)$]
\item $\sg=\sg_1\cdots \sg_m$ is a concatenation of words $\sg_1, \sg_2, ..., \sg_m$ in $\sR_{k-1}$ (resp. in $\tilde \sR_{k-1}$) such that if $m\ge 2$ the following $\star$-product is admissible:
\[(Y_{\sg_1},n_{\sg_1}) \star  \cdots \star (Y_{\sg_m},n_{\sg_m})\; . \]
We denote by $(Y_{\sg},n_{\sg})$ its product. Put $T_\sg:= T_{\sg_m}\circ \cdots \circ T_{\sg_1}$. 
\item $\sg= \sa \boxdot_{\pm}(\sb-\sc)$  with $\pm\in \{-,+\}$ and where $\sa, \sb, \sc$ are words in $\sR_{k-1}$ (resp. $\sa \in \tilde \sR_{k-1}$), the letter $\boxdot_{\pm}(\sb-\sc)$ belongs to $\hat \sA_{k}$, the pieces  $(Y_\sa, n_\sa), (Y_\sb, n_\sb), (Y_\sc, n_\sc)$ and the extension algorithm  $T_\sa$ form a 4-tuple  (resp. weakly) admissible for the parabolic product $\boxdot_\pm$.
We denote $(Y_{\sg},n_{\sg}):= (Y_\sa \boxdot_\pm (Y_\sb-Y_{\sc}), n_\sa+n_\boxdot+n_\sb)$ and put $T_{\sg }:= T_{\boxdot_\pm(Y_\sb-Y_\sc)} \circ T_\sa$
(see prop. \ref{graphtransformpara}).

\end{enumerate}
Note that the sequences  $(\sR_k)_k$ and $(\tilde \sR_k)_k$ are increasing. Also 
$\sR_k\subset \tilde \sR_k$ for every $k$.  We put $\sR:= \bigcup_{k\ge 0} \sR_k$ and $\tilde \sR:= \bigcup_{k\ge 0} \tilde \sR_k$. \index{$\sR$, $\tilde \sR$}
\end{defi}
We recall that the set $\mathcal P_2$ of pieces endowed with the $\star$-product, the space of graph transforms $C^0(\mathcal H, \mathcal H)$ endowed with the composition rule $\circ$, and the set of symbols $\tilde \sR$ endowed with the concatenation $\cdot$ are pseudo-monoids.
A direct consequence of \cref{admissibleword} $(i)$ is:
\begin{fact}\label{fact:funct}
The maps $\sg\in (\tilde \sR, \cdot)\mapsto (Y_\sg, n_\sg)\in (\mathcal P_2, \star)$  and $\sg\in (\tilde \sR, \cdot)\mapsto T_\sg\in (C^0(\mathcal H, \mathcal H),\circ)$ are homomorphisms of pseudo-monoids.\end{fact}
\begin{rema}
By definition of $\sR_k$, for every word $\sg=\sa_1\cdots \sa_m\in \sR_k$, it holds $\sa_i\in \sA_{k-1}$ if $i<m$ and $\sa_m\in \sA_k$. By \cref{sAk} of $\sA_k$, if $\sa_i\notin \sA_0$ then $\sa_i$ is of the form $\sa_i= \boxdot_\pm (\sb-\sc)$ with $depth\,\sb  +1=depth\, \sc\le k$. If $i<k$, it holds furthermore   $depth \, \sc \le k-1$.
\end{rema}
Let us emphasis that the sets $\sR$ and $\tilde \sR$ are well defined for \emph{any} $0$-strongly regular map $f$. The set $\sR$ of regular words will be useful to define the strongly regular maps. For such maps, the set $\sR$ will be also useful to encode the dynamics of a set of positive Lebesgue measure of points.
The set $\tilde \sR$ of weakly regular words is technical: it will be useful only for the proofs. Here\footnote{In \cite{Berentropy}, it has been used to show the existence of a uniform lower bound $m>0$ on the Lyapunov exponent of every invariant measure of any strongly regular map of the form $(x,y)\mapsto (x^2+y+a, 0)+B(x,y)$.}
 it will deserve to show the abundance of strongly regular mappings. 

The following is the 2-dimensional counterpart of \cref{complete word implies puzzle piece}. It will shown in  \cref{rigidPP_proof} \cpageref{rigidPP_proof}.
\begin{prop}\label{rigidPP}  For every $\sg \in \sR$, the piece 
 $(Y_\sg, n_\sg)$ is a puzzle piece if and only if $\sg$ is complete. 
\end{prop}
\medskip

The definition of strong regularity is based on vertical and horizontal, stretched curves defined via the space $(\sR, \cdot)$. 
These curves are defined by taking limits of words in $\sR$. There are two kinds of limits possible: negative or positive. The negative limits will be used to define combinatorially horizontal, unstable Pesin manifolds.  The positive limits will be used to define  combinatorially stretched, vertical, local stable Pesin manifolds.  We will use both to define the strong regularity condition on the dynamics $f$, by asking a tangency condition between them. 
 \begin{defi}\index{$\overrightarrow
 \sR$} Let $\overrightarrow \sR$ be the set of 
positive one-sided sequences $\sc:= \sa_1\cdots \sa_m \cdots \in \hat \sA^\N$ which are the concatenation of infinitely many words in $\sR$: There exists an increasing sequence $(n_i)_i$ satisfying for every $i$: $\sa_1\cdots \sa_{n_i}\in \sR$ and $\sa_{1+n_i}\cdots \sa_{n_{i+1}}\in \sR$.
For every $\sc=\sa_1 \cdots \sa_{m} \cdots \in \overrightarrow \sR$, put
 $$W^s_\sc= \bigcap_{m\ge 0} Y_{\sa_1 \cdots \sa_{m}}\qand W^s_{\sr \cdot \sc}= \bigcap_{m\ge 0} Y_{\sr \cdot \sa_1 \cdots \sa_{m}}.$$\end{defi}
\begin{prop}
\label{Pesin stable}
For every $\sc\in \overrightarrow \sR$, the sets $W^s_\sc$ and $W^s_{\sr \cdot \sc}$ are vertical stretched curves and local Pesin stable manifolds which satisfy for every unit, tangent vector $u$:
$$  \|Df^n(u)\|\le   b^{n/2}\; ,\quad \forall n\ge 1\; .$$
\end{prop}
\begin{proof} 
As $\sc$ is in $\overrightarrow \sR$, there are infinitely many $m$ such that $\sa_1 \cdots \sa_{m} $ is complete and so such that $(Y_{\sa_1 \cdots \sa_{m} }, n_{\sa_1 \cdots \sa_{m} })$ is a puzzle piece by \cref{rigidPP}. Thus \cref{Pesin stable1}.4 implies the statement. 
\end{proof}
Conversely, we consider:
\begin{defi}[
Space  $\overleftarrow \sR$
]\label{def_arr_sR} \index{$\arr \sR$}
Let $\arr \sR$ be the set of negative one-sided sequences  $\st:= \cdots \sa_{-i} \cdots \sa_0\in \hat \sA^{\Z^-}$ which are the concatenation of infinitely many words in $\sR$:
There exists an increasing sequence $(n_i)_i$ satisfying for every $i$: $\sa_{1-n_i}\cdots \sa_{0}\in \sR$ and $\sa_{1-n_{i+1}}\cdots \sa_{-n_{i}}\in \sR$.
\end{defi}


The following proposition shows that the horizontal curves combinatorially encoded by  $\overleftarrow \sR$ are  Pesin local  unstable manifolds (see \cref{pesin} \cpageref{pesin}). 
\begin{prop}\label{arr Rk2Wu}
For every $\st=\cdots \sa_{-k}\cdots \sa_0 \in \overleftarrow  \sR$,  the set:
 $$\hat W_u^\st:= \bigcap_{\{k: \sa_{-k}\cdots  \sa_0 \in \sR\}} \{ T_{\sa_{-k}\cdots  \sa_0}(S): S\in \mathcal H\}$$
 is a horizontal stretched curve which extends  the following horizontal curve:
\[ W_u^\st:= \bigcap_{\{k: \sa_{-k}\cdots  \sa_0 \in \sR\}} f^{n_{\sa_{-k}\cdots  \sa_0}}(Y_{\sa_{-k}\cdots  \sa_0})
\subset \hat W_u^\st
 \; .\] 
Moreover, the following is a Pesin local unstable manifold:
$$\overleftarrow W_u^\st:=\{ (z_i)_{i\le 0}\in \overleftarrow M_f :  \exists k\le 0 \text{ arbitrarily large s.t. } \sg:=\sa_{k}\cdots \sa_0\in \sR\text{ and }z_{-n_\sg} \in Y_\sg\}\; .$$
and its $0$-coordinate projection of $\overleftarrow W_u^\st$ is equal to $W_u^\st$. 
\end{prop}
To prove this, we use the following generalization of  \cref{contractionPP} \cpageref{contractionPP}:
\begin{lemm} \label{contractingrate}
For every $\sg \in \tilde \sR$, the map  $T_\sg:S\in \mathcal H\mapsto S^\sg$ is $b^{n_\sg/3}$-contracting. 
\end{lemm}
\begin{proof} We prove this proposition by induction on $k$ such that $\sg\in \tilde \sR_k$. By \cref{contractionPP}, for every $\sg\in \sR_0$, the map $T_\sg$ is $b^{n_\sg/3}$-contracting. 
The step $k\to k+1$ is an immediate consequence of the induction  in the case $(i)$ of the definition of $\sR_{k+1}$. 
It is given by \cref{graphtransformpara} \cpageref{graphtransformpara} for the case $(ii)$. 
\end{proof}
\begin{proof}[Proof of \cref{arr Rk2Wu}]
By definition of $\arr \sR$, there exists   $(\sg_{-m})_{m\ge 0}\in \sR^\N$ 
such that $\st = \cdots \sg_{-m} \cdots \sg_{0}$. 
By \cref{contractingrate}, the set $\{ S^{{\sg_{-m}\cdots \sg_0}}: {S\in \mathcal H}\}$ is the image of $\mathcal H$ by a composition of contractions of factor $b^{n_{\sg_{-m}}/3 }\cdots b^{n_{\sg_{0}}/3 }$. By compactness of $\cH$, the subset  
$\{ S^{{\sg_{-m}\cdots \sg_0}}: {S\in \mathcal H}\}$ is compact in $\cH$. Its diameter is smaller than $2\theta \cdot b^{n_{\sg_{-m}\cdots \sg_0}/3}$, and decreases with $m\ge 0$. Hence the intersection $\bigcap_{m\ge 0}\{ S^{{\sg_{-m}\cdots \sg_0}}: {S\in \mathcal H}\}$ consists of a unique horizontal stretched curve $\hat W_u^{\st}$.
We notice that:
$$  W_u^\st = \bigcap_{m\ge 0} f^{n_{\sg_{-m}\cdots \sg_0}}(Y_{\sg_{-m}\cdots \sg_0})\subset \bigcap_{m\ge 0} \bigcup_{S\in \mathcal H} S^{\sg_{-m}\cdots \sg_0}=\hat W_u^\st\; .$$

By the regularity, the sequence $\st=\cdots \sa_i\cdots \sa_0$ displays infinitely many letters in $\sA_0$. Thus by $\arr f$-invariance of the set of Pesin unstable manifolds, we can assume that $\sa_0$ is in $\sA_0$. Then $\sg_{-i}\cdots \sg_0$ is complete for every $i$. Thus by \cref{rigidPP}, the pair $(Y_i, n_i)=(Y_{ \sg_{-i}\cdots \sg_0}, n_{\sg_{-i}\cdots \sg_0})$ is a puzzle piece. Furthermore its holds $f^{n_{i+1}-n_i}(Y_{i+1})\subset Y_i$ for every $i$. Thus  we can apply \cref{prop pesin instable gen} which gives the last statement of the proposition.  
\end{proof} 

\begin{defi}[Strongly regular Hénon-like map]\label{defSRHL}\index{Strong regular Hénon-like map}
The map $f$ is strongly regular if 
for every $\st\in \overleftarrow \sR$, 
there exists $\sc^\st = \sa_0^\st \cdots  \sa_m^\st \cdots \in  \overrightarrow \sR$  and $(m_k)_k$ such that:
\begin{enumerate}[($SR_1$)]
\item  The map $f^{n_\boxdot }|Y_\boxdot\cap \hat W_u^\st$
 is tangent to $W^s_{\sc^\st}$:
\begin{equation*}
\exists \zeta\in \hat W_u^\st\cap Y_\boxdot\text{ such that } \xi:= f^{n_\boxdot}(\zeta)\in W^s_{\sc^\st}\qand 
Df^{n_\boxdot} (T_\zeta  \hat W_u^\st) \subset T_{\xi} W^s_{\sc^\st}\; ,
\end{equation*}\index{Condition $(SR_1)$ in dim 2}
with $T_\zeta \hat W_u^\st$ and $T_{\xi} W_{\sc^\st}^s$ the tangent space of $\hat W_u^\st$ and $W_{\sc^\st}^s$ at $\zeta$ and $\xi$. 
\item The word $\sc_k:= \sa_0^\st \cdots  \sa_{m_k}^\st$ is strongly regular for every $k\ge 1$.
 \index{Condition $(SR_2)$ in dim 2}
\item The word  $\sc_k^\st$ is common with depth $k$ for every $k\ge 1$.
\end{enumerate}
\end{defi}

\begin{figure}[h!]
 \centering
 \includegraphics[width=10cm]{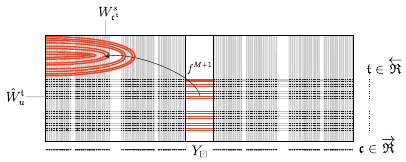}
 \caption{A map is strongly regular is some combinatorially defined Pesin stable and unstable manifolds are tangent.}
\label{firstidea}\end{figure}
Let us comment the axioms of this definition.

For every $\st \in \overleftarrow \sR$, the curve $\hat W_u^\st$  contains the canonical projection $W_u^\st$ of the Pesin local unstable manifold $\overleftarrow W_u^\st$. 
The tangency point $\xi\in \hat W_u^\st$ claimed in $(SR_1)$ does not belong necessarily to $W_u^\st$.

The tangency point $\xi$ of  $\hat W_u^\st$ given by $(SR_1)$ corresponds to the so-called critical point in other works on the dynamics of the Hénon maps \cite{BC2, MV93, WY01, Ta11}. An important point of the presented combinatorial formalism is that $\hat W_u^\st$ and $W^s_\st$ are here combinatorially and topologically defined, as we will see in the next subsection (in the all other works those are analytically defined).

Among one-dimensional dynamics, Condition $(SR_1)$ restated in \cref{def SR quadratic 2} was stating that the critical value belongs to a combinatorially defined Pesin set. Analogously, among surface endomorphisms, Condition $(SR_1)$ of \cref{defSRHL} states that any combinatorially defined Pesin unstable manifold are tangent to some combinatorially defined Pesin stable manifold. This is the first time that such geometrical picture (see figure \ref{firstidea})  is proposed.

Conditions $(SR_1-SR_2-SR_3)$ of \cref{defSRHL}  are the canonical generalizations of the same conditions for one-dimensional dynamics (see \cref{def SR quadratic 2}).   Conditions $(SR_1, SR_2, SR_3)$ of strong regularity  should be seen as a system of axioms. It is not easy to show that there exists one diffeomorphism of the plane which satisfies these axioms. The main result  of this work is the existence of strongly regular dynamics with positive probability among any family $(f_{a\, B})_{a\in \R}$ with $f_a(x,y)= (x^2+a,0)+B(x,y,a)$ with $B\in C^2([-3,3]^3,\R^2)$ small:
\begin{theo}[Abundance of strongly regular mappings]\label{SRabundant}
Strongly regular maps are abundant in the following meaning: For every $\epsilon>0$, there exists $b>0$, such that for every $B$ of $C^2$ norm less than $b$, there exist $\eta>0$ and a subset $\Pi_B\subset [-2,-2+\eta]$ with $\frac{\leb \Pi_B}{\leb [-2,-2+\eta]}\ge 1-\epsilon$  such that for every $a\in \Pi_B$, the map $f_{a\, B}$ is strongly regular.
\end{theo}
This theorem will be shown in \cref{proof abundance}.  A second new result of this work is that every strongly regular endomorphism leaves invariant an ergodic, physical, SRB probability measure. 
We will be shown in \cref{SRB}:
\begin{theo}[Existence of SRB measure]\label{existenceSRB}
If $f$ is strongly regular then it leaves invariant an ergodic, physical, SRB probability measure $\mu$.
\end{theo}

\begin{rema}[Maximal entropy measure]
If $f$ is a diffeomorphism, then the union  $\bigcup_{\st\in \arr \sR} W^\st_u$ is disjoint. The intersection
 $\Lambda = \bigcup_{\sc\in \arr \sR}  W^\st_u\cap \bigcup_{\sc\in  \overrightarrow \sR} W^s_\sc$ is endowed with a canonical Markov structure. 
 Indeed, given $\sg\in \sR\setminus \{\se\}$ which is not the concatenation of two non-trivial regular words,  we put $\sL_\sg:=\{\sc=\sg\cdot \sc'\in  \overrightarrow \sR: \text{ with } \sc'\in   \overrightarrow \sR\}$. 
In \cite{Berentropy}, we showed (in the case   strongly regular Hénon-like diffeomorphisms of the form $f(x,y)=(x ^2+a+{b} y, 0) +B(x, {b} y)$) that 
$(\sL_\sg)_\sg$ is a strongly positive recurrent  Markovian partition of $\Lambda$ whose induced times is conjugated is the first return time in $\Lambda$. Moreover we showed that any ergodic probability measure whose entropy is not too small has its support included in the orbit of $\Lambda$ and furthermore, that most of the periodic orbits intersect $\Lambda$. From this we deduced that $f$ displays a unique probability measure of maximal entropy which is supported by the orbit of  $\Lambda$ and which is equi-distributed on the periodic points.
\end{rema}

\subsection{Topological and combinatorial nature of the Puzzle structure}\label{subsection 3.5}
Let us focus on the properties of $\sR$. 
  \begin{Claim}
The map $\sg\in (\sR, \cdot) \mapsto (Y_\sg, n_\sg)\in (\mathcal P_2, \star)$ is a homomorphism of graded  pseudo-monoids which is {\bf injective}.
\end{Claim}
The functorial property was already observed in Fact \ref{fact:funct}.
 The injectiveness stated is not obvious, and given by the next proposition \ref{injectif}.
 Let us notice before this, that in general the homomorphism is not onto. Actually $\sR_k$ index a subset of pieces whose  good analytical properties are implied by  pure topological and combinatorial conditions as we will see in Claim \ref{combitop}.

\begin{prop}\label{injectif} For all $\sg, \sg'\in \tilde \sR$, the pieces $(Y_\sg,n_\sg)$ and $(Y_{\sg'},n_{\sg'})$ are nested or with disjoint interior.
Furthermore, if $Y_{\sg'}\subset Y_\sg$, then there is $\sg''\in \sA^{(\N)}$ such that   $\sg'= \sg\cdot \sg''$. Moreover,  $Y_{\sg'}= Y_\sg$ if and only if $\sg=\sg'$.
\end{prop}
Before proving this proposition, let us state the converse:
\begin{lemm}\label{facttronc}
For every $k\ge 0$ and $\sg=\sa_1\cdots \sa_m\in \tilde \sR_k$ with $\sa_i\in \hat \sA_k$,  for every $j\le m$, the word $\sa_1\cdots \sa_j$ belongs to $\tilde \sR_k$ and it holds:
\[Y_{\sa_1\cdots \sa_j}\supset Y_{\sa_1\cdots \sa_m}\; .\] 
\end{lemm} 
\begin{proof} The lemma is an immediate consequence of an induction on the number of letters of $\sg$ via the rules $(i)$ and $(ii)$ of \cref{admissibleword}. \end{proof}
\begin{proof}[Proof of \cref{injectif}]
By \cref{facttronc}, to prove the proposition, it suffices to show by induction on $k\ge 0$  that if $\sg_-,\sg_+\in \tilde \sR_k$ are words with the same number $m$ of letters, then:  
$$Y_{\sg_-}\cap int(Y_{\sg_+})\neq\varnothing\Longleftrightarrow Y_{\sg_-}=Y_{\sg_+}\; .$$
Only the `ìf'' part of this equivalence is not obvious.
If $k=0$ then $ \sg,\sg'\in \sA_0^m$, and the induction hypothesis follows  from the fact that pieces indexed by $\ss\in \sA_0$ have disjoint interiors.

 Let $k\ge 1$.    Let us put  $\sg=\sg_0\cdot \sa$ and  $\sg'=\sg'_0\cdot \sa'$ for $\sg_0, \sg_0'\in \sR_{k-1}$ and $\sa, \sa'\in \sA_k$. The words $\sg_0=\sg_0'$ have the same number $m-1$ of letters. If $m=1$, then $\sg_0=\sg_0'=\se$. Otherwise, as $Y_{\sg}$ intersects the interior of    $Y_{\sg'}$,  by \cref{facttronc}, the box $Y_{\sg_0}$ intersects the interior of  $Y_{\sg'_0}$. Then by induction on $m$, ${\sg_0}=\sg_0'$.
  Let us show that $\sa=\sa'$.  First note that $f^{n_{\sg_0}}(Y_\sg)$ is included in $Y_\sa$ if $\sa\in \sA_0$ or in $Y_\boxdot$ otherwise. The same occurs for $f^{n_{\sg_0}}(Y_{\sg'})$ with ${\sa'}$ instead of $\sa$. Thus if $\sa\in \sA_0$ then $\sa'=\sa$ (and we are done!), and if $\sa\notin \sA_0$, then $\sa'\notin \sA_0$. 
In the later case, we have $\sa= \boxdot_\pm (\sb-\sc)$ and $\sa'= \boxdot_{\pm'} (\sb'-\sc')$. Also $f^{n_{\sg_0}+n_\boxdot}(Y_{\sg})$ is included in $Y_{\sb}\setminus Y_{\sc}$ and  $f^{n_{\sg_0}+n_\boxdot}(Y_{\sg'})$ is included in $Y_{\sb'}\setminus Y_{\sc'}$. 

By pre-admissibility of the parabolic product, the connected set  $
f^{n_\boxdot}(T_{\sg_0} (\R_\se\times \{0\}\cap Y_\boxdot)$
 intersects only the left component of the sets:  $\partial^s Y_\se$, 
$\partial^sY_{\sb}$, $\partial^sY_{\sb'}$, $\partial^sY_{\sc}$ and $\partial^sY_{\sc'}$. 
Thus  $Y_{\sb}$, $Y_{\sb'}$, $Y_{\sc}$ and $Y_{\sc'}$ are nested. 
Up to swapping the notations, we assume that $Y_{\sb}\supset Y_{\sb'}$. 
Also $(Y_{\sb}\setminus Y_{\sc})\cap (Y_{\sb'}\setminus Y_{\sc'})$  contains the non-empty set $f^{n_\boxdot}(Y_\boxdot \cap f^{n_{\sg_0}}(int Y_{\sg}\cap Y_{\sg'}))$. Thus 
\begin{equation}\label{inclusions} Y_{\sb}\supset Y_{\sb'} \supsetneq Y_{\sc'} \supset Y_{\sc}
\quad \text{ or }  \quad 
 Y_{\sb}\supset Y_{\sb'} \supsetneq Y_{\sc} \supset Y_{\sc'}\; .\end{equation}
 Now we recall that $\sb, \sb', \sc, \sc'$ are complete words by \cref{sAk} \cpageref{sAk}.  By the  inclusions of \eqref{inclusions} and the induction hypothesis, 
if $\sb\neq \sb'$, there are non-empty complete words $\sb_1$ and $\sb_2$ such that  $\sb'= \sb\cdot \sb_1$ and $\sc=\sb\cdot \sb_1\cdot \sb_2$. This is a contradiction with the fact $\sc$ is the concatenation of $\sb $ with a prime, complete word. The same argument shows that $\sc=\sc'$.   Finally we recall that $Y_{\sg_0}\boxdot_+(Y_\sb-Y_\sc)$ is disjoint from $Y_{\sg_0}\boxdot_-(Y_\sb-Y_\sc)$ by \cref{disjoint box} thus $\pm=\pm'$ and so $\sa=\sa'$. 
\end{proof}
The argument of the latter proof shows moreover that for every $\sg\in \tilde \sR$ and $j\ge 2$, there are at most two letters $\sa_{j,-}$ and $\sa_{j,+}$ in $\hat \sA$ such that $\sg\cdot \sa_{j,\pm }$ are in $\tilde \sR$ and $n_{\sa_{j,\pm}}=j$. Moreover every letter in $\sA$ has its order at least $2$.  This implies:
\begin{coro}\label{ican} The recurrence relation 
$i_{can}(\se)=\varnothing$ and  $i_{can}(\sg\cdot \sa_{j,\pm})=i_{can}(\sg)\cdot (\pm j)$ defines an inclusion $i_{can}:\tilde \sR\hookrightarrow  (\Z\setminus \{-1,0,1\})^{(\N)}$. 
The cardinality of $\{\sg\in \tilde \sR: n_\sg\le j\}$ is at most $2^j$ for every $j\ge 0$.
 \end{coro}
\begin{proof} Let $C_j$ be $ Card \{ (n_i)_{i}\in (\Z\setminus \{-1,0,1\})^{\N)}: \sum |n_i|\le j\}$. By induction on $j$ we assume that $C_i\le 2^i$ for every $i<j$. Then it holds:
\[C_j\le 2+\sum_{k=2}^{j-1} 2\cdot C_{j-k} \le 2+ \sum_{k=2}^{j-1} 2^{j-k+1}=2^j\; .\]
\end{proof}
Let us explain the following.
\begin{Claim} \label{combitop} The definition of $\sR_k$ is purely topological and combinatorial.\\The strong regularity condition is purely combinatorial and topological. \end{Claim}
As $\sR_k$ define $\arr \sR$ and $\overrightarrow \sR$, and since tangencies are topological conditions, the second statement of this claim is implied by the first. 
To explain the first statement, let us describe the nature of conditions $(i)$ and $(ii)$ of \cref{admissibleword}.

Condition $(i)$ is purely topological: indeed the admissibility of the product $(Y_{\sg_1},n_{\sg_1}) \star  \cdots \star (Y_{\sg_m},n_{\sg_m})$ means that $\bigcap_i f^{-n_{\sg_1\cdots \sg_i}}(Y_{\sg_{i+1}})$ has non empty interior. 

Condition $(ii)$ sates that  $\sR_k$ includes all the words of the form 
$\sg=\sa\boxdot_\pm(\sb-\sc)$ such that $\sg$ is regular, $\sa, \sb, \sc$ are in $\sR_{k-1}$, $\boxdot_\pm(\sb-\sc)\in \sA_k$ and such that the tuple $[(Y_\sa, n_\sa), (Y_\sb, n_\sb), (Y_\sc, n_\sc),T_\sa]$ is admissible for the parabolic product. Such a condition (see def. \ref{def pre admissible})  assumes first the pre-admissibility of this tuple. Actually, the first condition of the pre-admissibility \cref{def pre admissible} is implied by the combinatorial rules. 
Indeed, by regularity of $\sg$, it holds $M+1+n_\sb\le 2^M n_\sa$. Furthermore, the words $\sb$ and $\sc$ are strongly regular and so complete. Thus by \cref{rigidPP}, the pair $(Y_\sb, n_\sb)$ and $(Y_\sc, n_\sc)$ are puzzle pieces.  Furthermore, $\sc= \sb\cdot \sb'$ for a word $\sb'\in \sA^{(\N)}$, and so by \cref{facttronc}, the box $Y_{\sc}$ is included in $Y_{\sb}$.

Thus the pre-admissibility condition of such  a tuple is given only by conditions 2 and 3 of \cref{def pre admissible}, which are purely  topological by \cref{pre-admi=topo}. Furthermore, the two latter topological conditions imply also the admissibility of the tuple by the following proposition proved in \cref{proof of Crutial prop} p. \pageref{proof of Crutial prop}:
\begin{prop}\label{Crutial prop}
Let $\sa :=\boxdot_{\pm}(\sb-\sc)\in \hat \sA$  with $\pm\in \{-,+\}$ such that $\sb, \sc$ belongs to $\sR$. Then 
for every $S\in \tilde \cD(\boxdot(Y_\sc-Y_\sc'))$,  it holds:
\[\forall z\in S_{\boxdot_{\pm}(\sb-\sc)}, \quad \forall u\in T_z S, \quad \forall k\le n_\sa, \quad 
\|D_zf^{n_\sa}(u)\|\ge 2^{k/3}\|D_zf^{n_\sa-k}(u)\|\; .\]
Moreover, if there exists  $\sg\in \tilde \sR$,  such that  the parabolic product $\boxdot_\pm (Y_\sb-Y_\sc)$ is {pre-admissible} for the piece $(Y_\sg, n_\sg)$, then the pair   $(Y_\sg \boxdot_\pm (Y_\sb-Y_{\sc}), n_\sg+n_\sa)$ is a piece.
\end{prop}
%

\part{Properties of strongly regular map}
\section{Proof of the abundance of strongly regular map: the parameter selection}\label{proof abundance}
In the last part we defined several subsets of $\R$, $\R^2$, $\hat \sA^{(\N)}$ and  $\hat \sA^{\Z^-}$ depending on $f$ or $P$.  As now the dynamics is going to vary, we shall display the dependence on $f$ of these subsets. For instance will denote $I_\se(P)$, $Y_\se(P)$, $\sR(f)$, $\arr \sR_k(f)$ etc.

\subsection{Ideas of the proof of the parameter selection}\label{Ideas of proof of the parameter selection}
\paragraph*{\bf The one-dimensional case}
Let $P(x)=x^2+a$ be a quadratic map such that the first return time $M$ of $a$ in $I_\se(P)$ is large. Let $I_{\mathcal L}(P)$ be the subset of points in $I_\se(P)\subset \R$  which belong to  the support of  strongly regular and common pieces of arbitrarily large orders. By \cref{coro Pesin}, for every $x\in I_{\mathcal L}(P)$, it holds $\liminf_{\infty} \frac1n \log\| D_xP^n\|>0$. Thus $I_{\mathcal L}(P)\subset I_\se(P)$ is a combinatorially defined Pesin set. 
We recall that $P$ is strongly regular iff $P^{M+1}(0)$ belongs to $I_{\mathcal L}(P)$. A consequence of next \cref{geo intuition para selec} is:
\begin{prop}\label{prop3.10}
Let $P$ be strongly regular. Then it holds for $M$ large:
\[\frac{\leb \, I_{\mathcal L}(P)}{\leb\,  I_\se(P)} =1+o(1)\; .\] 
\end{prop}
Thus at fortiori, it sounds intuitive that $P^{M+1}(0)$ belongs to $I_{\mathcal L}(P)$ with ``positive probability''; or in other words,   that strongly regular mappings are abundant. To prove this, our plan of proof is the following. First we will show  that the strongly regular pieces move uniformly slower than the critical value when the parameter $a$ varies. It is slightly more tricky here: all strongly regular pieces will not persist for every parameter $a$, however those of order $\le \max(M, 2^{\sqrt{M}}k)$ will persist when $P^{M+1}(0)$ belongs to a same strongly regular piece of depth $k$. This local constance of the combinatory will enable us to use estimates of the proof of  \cref{prop3.10} to deduce the abundance of strongly regular quadratic maps. 
\paragraph*{\bf The H\'enon-like case}
Let $f$ be $0$-strongly regular (see \textsection\ref{setting}).
Let $\sL(f)$ be the set of $\sc= (\sa_i)_{i\ge0}\in \overrightarrow \sR(f)$ such that $\sa_0\cdots \sa_m$ is strongly regular and common for infinitely many $m$. We put:
$$\mathcal L (f) := \bigcup_{\sc\in \sL(f)} W^s_{\sc}(f)\; .$$

\begin{defi}[$K^u(f)$]\index{$K^u(f)$}\label{Ku}
Let $K^u(f):= \pi_f (\mathcal L (f))$ with $\pi_f$ the map defined in 
\cref{Pesin stable1} which is $Lip$-close to the first coordinate projection, sends each $W^s_\sc$ to a point for every $\sc\in \overrightarrow \sR(f)$ and  whose restriction to $\R\times \{0\}$ is equal to the identity. 
\end{defi}
For every $\st\in \arr \sR$, we recall that the horizontal stretched curve 
$\hat W^\st_u$ is the graph of a function $2\theta$-$C^{1+Lip}$-small. 
This induces a parametrization of $f^{n_\boxdot}(\hat W^\st_u\cap Y_\boxdot)$ close to be of the form $\{(K\cdot x^2+K', 0): x\in I_\se(f)\}$. 
Thus $\pi_f\circ f^{n_\boxdot}(\hat W^\st_u\cap Y_\boxdot)$ is equal to a segment $[\alpha^0(f), \xi^\st (f)]$ where $\alpha^0(f)$ is the left endpoint of $I_\se(f)$ and $\xi^\st(f)\in I_\se(f)$. One can think $\xi^\st(f)$ as the point whose fiber $\pi_f^{-1}(\xi^{\st}(f))$ is `tangent' to $ f^{n_\boxdot}(\hat W^\st_u\cap Y_\boxdot)$. 
 This defines a subset of $\R$:
\[K^s(f):= cl\{\xi^\st(f) : \st \in \arr \sR\}\subset I_\se(f)\; .\]
We remark that the H\'enon-like map $f$ is strongly regular if and only if $K^s(f)$ is included in $K^u(f)$. Hence the following question is natural to study the abundance of strongly regular mappings:
\begin{ques}
Under which conditions, a translation of a precompact subset $K\subset \R$ is included in a precompact subset $\tilde K\subset \R$ for a set of positive Lebesgue measure of translations:
\[\leb \, \{\tau\in \R : K+\tau \subset \tilde K\}>0\; ?\]
\end{ques}
This question motivated the following result:
\begin{theo}[Thm 2.1  \cite{BM13}]\label{BM13}
Let  $K$ and $\tilde K$ be two precompact subsets of $\R$ satisfying the following properties: 
\begin{enumerate}[$(i)$]
\item  There exist  $0<d<1$ and $C_K>0$ such that $K$ can be covered by $C_K \epsilon^{-d}$ $\epsilon $-balls for every $\epsilon>0$,
\item  the lengths $(l_n)_n$ of the bounded components of $\R\setminus K$ satisfy:
\[\sum_{n: l_n> \diam K}(\diam K+l_n)+2C_K \sum_{n: l_n \le \diam K} (l_n)^{1-d} < \diam \tilde K - \diam K\; .\]
\end{enumerate}
Then the set of translations $\tau\in \R$ such that $K+\tau \subset \tilde K$ has positive Lebesgue measure:
\[\leb\{\tau\in \R: k+\tau\subset \tilde K,\; \forall k\in K\}>0\; .\] \end{theo}
\begin{rema} The first hypothesis of the theorem implies that $K$ has box dimension at most $d$. In the second hypothesis, we have $\tilde K=[a,b]\setminus \sqcup_n (a_n, b_n)$ with $l_n= b_n-a_n$ and $ \sum_n l_n^{1-d}<\infty$.\end{rema}

Our strategy to prove the abundance of strongly regular Hénon-like endomorphisms is first to show  that the strongly regular pieces move uniformly slower than all `critical values' $\xi^\st(f_a)$  when the parameter $a$ varies. Then we will define $\breve K^u(f_a)\subset K^u(f_a)$ which satisfies with $K^s(f_a)$ the assumptions $(i)$ and $(ii)$ of \cref{BM13}, whenever $f_a$ is strongly regular:
\begin{prop}\label{geo intuition para selec}
For every $0$-strongly regular map $f$,  it holds:
\begin{enumerate}
\item The set  $K^s(f)$ can be covered by $2^j$ balls of  radius $4^M \cdot b^{1+j/8M}$ for every $j\ge 0$. 
\item   If furthermore $f$ is strongly regular, there exists $\breve K^u(f)\subset \tilde K^u(f)$ such that $\leb(I_\se\setminus \breve K^u(f))$ is small when $M$ is large and the length $(l_n)_n$ of the bounded components of $I_\se(f)\setminus \breve K^u(f)$ satisfies:
\[\sum_n l_n^{1-d}=o(1)\quad\text{with  } d= 2^{-\sqrt M-3}. \]
\end{enumerate}
\end{prop}
The second part of this proposition will be shown in \cref{proof: geo intuition para selec} p. \pageref{proof: geo intuition para selec}. The first part will be restated in \cref{defsRk'} p. \pageref{defsRk'}.

Observe that the first statement implies that the diameter of $K^s(f)$ is smaller than $2\cdot 4^M\cdot b$ and the box dimension of $K^s(f)$ is smaller than $8M/|\log_2 b|$ with capacity  $C_{K^s(f)}\le 1$.
On the other hand, the second statement implies that the diameter of $\tilde K^u(f)$ is close to $2\approx \diam I_\se(f)$ and that assumption $(ii)$ of \cref{BM13} is satisfied. Thus  \cref{BM13} implies that  $K^s(f)+\tau$ is included in $\tilde K^u(f)$ for a set of numbers $\tau $ of positive Lebesgue measure. 

Thus, it sounds  intuitive that $K^s(f)\subset \breve K^u(f)$ with ``positive probability''; or equivalently that strongly regular mappings are abundant. 
Again it is a rough idea of the parameter selection: when the parameter $a$ varies, the geometries of both $K^s(f)$ and $\breve K^u(f)$ vary. 
To overcome this difficulty we will define in \textsection \ref{section $k$-Strong regularity} some combinatorially defined neighborhood of the set of strongly regular maps, called $k$-strongly regular, as those on which $\sR_k$ is constant. On such sets of maps we will show that strongly regular pieces of depth $\le   2^{\sqrt{M}}k $  as well as a $b^{1+k/(8M)}$-dense set of points in $K^s(f)$ vary smoothly with $f$. This will enable us to use similar estimates to those of the proof of  \cref{geo intuition para selec} to deduce the abundance of strongly regular Hénon-like maps. 

\begin{rema}\label{Dream}
In \cite{BM13} we studied several toy models for $\tilde K^u(f)$.  This let me hope that the strongly regular theory could be adapted to contain an example of attractor of the same dimension as the one of the initial H\'enon conjecture. 
\end{rema}

\subsection{$k$-Strong regularity}\label{section $k$-Strong regularity}
In order to prove \cref{SRabundant} on abundance of strongly regular maps, we will work with $0$-strongly regular map $f$ which satisfies moreover a truncated version of the strong regularity condition. 
This will be the definition of \emph{$k$-strongly regular dynamics}. The idea is basically to look at the subsets $\arr \sR_k$  of $\arr \sR$ of infinite concatenation of words in $\sR_k$, and to ask that  $\hat W^\st_u$ is in critical position with  $Y_{\sc^\st_{k}}$ for every $\st \in \arr \sR_k$.
\begin{defi}[Space  $\overleftarrow \sR_k$
]\label{def_arr_sR_k} 
\index{ $\arr \sR_k$}For every $k\ge 0$, 
let $\arr \sR_k$ be the set of negative one-sided sequences  $\st:= \cdots \sa_{-i} \cdots \sa_0\in \hat \sA^{\Z^-}$ which are the product of infinitely many words in $\sR_k$:
There exists an increasing sequence $(n_i)_i$ satisfying for every $i$: $\sa_{1-n_i}\cdots \sa_{0}\in \sR$ and $\sa_{1-n_{i+1}}\cdots \sa_{-n_{i}}\in \sR_k$.
\end{defi}
The union $\arr \sR':= \bigcup_k \arr \sR_k $ is in general strictly  included in $\arr \sR $, because it  could exist a sequence $\cdots \sg_k\cdots \sg_0\in \arr \sR$ with $\sg_k\in \sR\setminus \sR_k$ for every $k$.  However we have:
\begin{prop}\label{sRk dense} 
The set $ \{ W^\st_u: \st \in \arr \sR'  \}$ is dense in  $ \{ W^\st_u: \st \in \arr \sR\}$ for the topology induced by the space of horizontal stretched curves $\mathcal H$.\end{prop}
\begin{proof}  Indeed 
for every $\st=\cdots \sa_m\cdots \sa_0\in \arr \sR$, there exists $m$ large such that $ \sg :=\sa_m\cdots \sa_0\in \sR$. Thus there exists $k\ge 0$ such that  $ \sg \in \sR_k$. We can complete the latter by the sequence constantly equal to some $\sa\in \sA_0$ to form an element of $\arr \sR_k$:
$\st':= \cdots \sa\cdots \sa\cdot \sg\in \arr \sR_k$. By \cref{contractingrate}, the curves $\hat W^\st_u$ and $\hat W^{\st'}_u$ are $b^{n_\sg/3}$ close. 
\end{proof}
We recall that a horizontal curve $S$ is in critical position with a puzzle piece $(Y,n)$ if $f^{n_\boxdot}(S\cap Y_\boxdot)$ intersects $Y\setminus \partial_{\aleph(n)} Y$ and exactly one component of $\partial_{\aleph(n)} Y$ (see def. \ref{defi Critical position} p. \pageref{defi Critical position}). Also if $\sc_j\in \sR$ is strongly regular, then it is complete by definition and so by \cref{rigidPP}, the pair $(Y_{\sc_j}, n_{\sc_j})$ is a puzzle piece. 

\begin{defi}[ $k$-Strongly regular Hénon-like map]\label{defSRk}\index{$k$-Strongly regular Hénon-like map}
A $0$-strongly regular map $f$ (see \textsection\ref{setting}) is $k\ge 1$-strongly regular 
%
%
 if for every $1\le j\le k$, for every $\st\in \arr \sR_{k-1}$,  there exists $\sc_j^\st\in   \sR_{j-1}$  complete such that:
\begin{enumerate}[($SR_1^j$)]
\item  The horizontal stretched curve $\hat W_u^\st$ is in critical position with $(Y_{\sc_j^\st}, n_{\sc_j^\st})$.
\item The word $\sc_j^\st$ is strongly regular with depth $j$.
\end{enumerate}
\end{defi}
\begin{rema} \label{cjck}
By definition of the critical position, for every $i\le j\le k$, the boxes $Y_{\sc_i^\st}$ and  $Y_{\sc_j^\st}$
 are nested and so by  \cref{injectif}, there exists $\sg\in \hat \sA$ 
 such that $\sc_{j}^\st =\sc_i^\st\cdot \sg$.
 Then by definition of the critical position, it holds:
 \begin{enumerate}[($SR_3^j$)]
\item  The word $\sc_j$ is common for every $j\le k$.
\end{enumerate}
\end{rema}
 The following encodes the whole combinatorial part of the latter definition:
\begin{defi}[$k$-Strongly regular structure]
The $k$-strongly regular structure of $f$ is $(\sR_{k}, \arr \sR_{k-1}, (\sc_k^\st)_{\st\in \arr \sR_{k-1}})$.   We note that:
\[ \sR_{k}\subset \hat \sA^{(\N)}\; , \quad \arr \sR_{k-1}\subset \hat \sA^{\Z^-}\qand (\sc_k^{\st})_{t\in \arr \sR_{k-1} }\in \prod_{{\arr \sR_{k-1}}} \sR_{k-1}\; .\]
This structure belongs to the category of symbols, and is such that via the homomorphisms pure topological conditions are satisfied for  the dynamics $f$.
\end{defi}

The following proposition implies the uniqueness of this structure:
\begin{prop}[Uniqueness of $k$-strongly regular structure]\label{UNiqueness Structure} If $f$ is $k$-strongly regular, then for every $\st \in \arr \sR_{k-1}$, there is a unique strongly regular words $\sc_k\in  \sR_{k-1}$ such that $\hat W_u^\st$ is in critical position with $(Y_{\sc_k}, n_{\sc_k})$. In particular  the word $\sc_k^\st\in  \sR_{k-1}$ is uniquely defined. 
\end{prop}
We recall that the depth of a word $\sg \in \hat \sA^{(\N)}$ is the number of its letters in $\sA_0$. Also if a curve $W$ is in critical position with a piece $(Y,n)$, then the curve $W^{\boxdot}:=f^{n_\boxdot}(W\cap Y_\boxdot)$ intersects only the left component of $\partial^s Y$. Thus the latter proposition is an immediate consequence of the following:
\begin{lemm}\label{inclusion Yck}
Let $(Y_{\sg},n_\sg)$ and $(Y_{\sg'}, n_{\sg'})$ be two  pieces defined by two complete words  $\sg, \sg'$ in $\sR$.
Let  $W$ be a horizontal stretched curve such that $f^{n_\boxdot} (W\cap Y_\boxdot)$ intersects only the left component of both $\partial^s Y_\sg$ and   $\partial^s Y_{\sg'}$. It holds:
\begin{itemize}
\item If the depth  $\sg'$ is at least the depth of $\sg$, then $Y_{\sg'}\subset Y_\sg$, and there exists a words $\sg''\in \sA^{(\N)}$ such that $\sg'= \sg \cdot \sg''$.
\item If the depths of $\sg$ and $\sg'$ are equal, then $\sg=\sg'$ and $Y_\sg= Y_{\sg'}$.
\end{itemize}
 \end{lemm}
\begin{proof}
As $f^{n_\boxdot} (W\cap Y_\boxdot)$ is connected, the boxes $Y_{\sg}$  and $Y_{\sg'}$ are nested. Then by \cref{injectif},  $Y_{\sg'}\subset Y_\sg\Rightarrow \exists  \sg''\in \hat \sA^{(\N)}, \sg'= \sg\cdot \sg''$ and 
$Y_\sg\subset Y_{\sg'}\Rightarrow \exists  \sg''\in \hat \sA^{(\N)}, \sg= \sg'\cdot \sg''$. This proves the first part of the lemma. Also, as $\sg$ and $\sg'$ are complete so is $\sg''$. Thus if $\sg''\neq \se$ then the depth of $\sg''$ is positive and so the depths of $\sg$ and $\sg'$ are different. 
\end{proof}
For the parameter selection, we will need to understand the combinatory of the mapping, and to this end we will use the following similar lemma: 
\begin{lemm}\label{lemma same piece} Let $f$ be $k$-strongly regular.  We have:
\begin{enumerate}
\item  If $\st, \st'\in \arr \sR_{k-1}$ have their curves $\hat W_u^{\st}$ and $\hat W_u^{\st'}$ which are $2\theta^k$-close, then $\sc_j^{\st}=\sc_j^{\st'}$ for every $j\le k$.
\item If $\sg\cdot \sa\in \sR_{k-1}$, with $\sa=\boxdot_\pm (\sc'-\sc)$, then with $j\le k$ the  depth  of $\sc'$ it holds  $\sc'=\sc_j^\st$ for every $\st\in \arr \sR$ of the form $\st=\st'\cdot \sg$.
\end{enumerate}
\end{lemm}
\begin{proof} Put $\sc_j:= \sc_j^\st$.
We recall that the width of each component of $\partial_m Y_{\sr \cdot \sc_j}$ is $\ge 4^{-M-n_{\sc_j }-2m}$ by \cref{estim_epaisseur}. We recall that $n_{\sc_j}\le M\cdot  j$ by \cref{ordre SR} p. \pageref{ordre SR}. 
With $m=\aleph( n_{\sc_j })=\lfloor \frac{n_{\sc_j}}{12} +\frac M{24}\rfloor$ and $j\le k$, 
this thickness is larger than $4^{-(M+1)(j+1)}\gg b\cdot \theta^k$. 
By definition of the critical position, the curve $f(\hat W_u^{\st}\cap Y_\boxdot)$ intersects only one component of 
$Y_{\sr \cdot \sc_j}\setminus \partial_m Y_{\sr \cdot \sc_j}$. Thus 
$f(\hat W_u^{\st'}\cap Y_\boxdot)$ intersects only one component of $\partial^s Y_{\sr \cdot \sc_j}$ and so $\sc_j^{\st'}=\sc_j$ by  \cref{inclusion Yck}.
This proves the first item of the lemma. For the second item, we recall that $S\in \mathcal H\mapsto S^\sg$ is $b^{n_\sg/3}$-contracting by
 \cref{contractionPP}.  Note that $j\le n_{\sc'}\le n_\sa \le  2^M n_\sg$ by definition of the regular words \ref{def regular words}.  Thus $b^{n_\sg/3}$ is small compared to $\theta^j$ and so the first item implies the second. 
\end{proof}
The concept of $k$-strong regularity is designed to prove the abundance of strongly regular dynamics. To this end, we will use the following:
\begin{prop}\label{SRkimpliesSRinfty} If a map is strongly regular for every $k\ge 0$, then it is strongly regular.
\end{prop}
\begin{proof}
Let  $f$ be $k$-strongly regular for every $k\ge 0$. We put $\arr \sR':= \bigcup_{k\ge 0} \arr \sR_k$ and take  $\st\in \arr \sR'$. Note that $\st$ belongs to $\arr \sR_{k-1}$ for $k$ large enough. By $k$-strong regularity, there exists $\sc_k^\st  \in \sR$ which is strongly regular and of depth $k$ such that $\hat W^{\st}_u$ is in critical position with $Y_{\sc^\st_k}$.  By \cref{cjck}, $\sc_k^\st$ is common and  for every $j\ge k$, it holds $\sc_j^\st= \sc_k^\st \cdot \sg$ for $\sg\in \hat \sA^{(\N)}$. Thus there is
a sequence $\sc^\st= \sa_1\cdots \sa_m\cdots  \in \overrightarrow \sR$ such that for every $k$, there exists $m(k)$ satisfying $\sc_k^\st = \sa_1\cdots \sa_{m(k)}$ and it holds:
\[Y_{\sc^\st _0}\supsetneq\cdots \supsetneq Y_{\sc^\st_k}\supsetneq Y_{\sc^\st_{k+1}}
 \supsetneq\cdots\]
   As $\hat W^{\st}_u$ is in critical position with all these boxes, the map $f^{n_\boxdot }|Y_\boxdot\cap \hat W_u^{\st'}$ is tangent to $W^s_{\sc^\st}:= \bigcap_k Y_{\sc^\st_k}$ as stated in $(SR_1)$. By construction of $\sc^\st$, conditions $(SR_2)$ and $(SR_3)$ are also satisfied for every $\st \in \arr \sR'$. 
To show that $f$ is strongly regular, it remains only to prove that conditions  $(SR_1-SR_2-SR_3)$ are satisfied for every $\st \in \arr \sR\setminus \arr \sR'$. We infer the following property of $K^u$ (defined in  \ref{Ku}) shown below:
\begin{lemm}\label{SRktoSR}
The set $K^u\subset \R$ is compact.
\end{lemm}
By \cref{sRk dense}, for every $\st\in \arr \sR'$, there exists  $(t_n)_n$ such that 
$(\hat W_u^{\st_n} )_n$ converges to $\hat W_u^\st$. Thus the right endpoint of $\pi_f(f^{n_\boxdot} (\hat W_u^{\st_n}\cap Y_\boxdot)$ converges to the one of $\pi_f(f^{n_\boxdot} (\hat W_u^{\st} \cap Y_\boxdot)$. As $\pi_f(f^{n_\boxdot} (\hat W_u^{\st_n} \cap Y_\boxdot)$ belongs to $K^u$ for every $n$, it comes that 
 $\pi_f(f^{n_\boxdot} (\hat W_u^{\st} \cap Y_\boxdot)$ belongs to $K^u$. In other words, the map $f^{n_\boxdot }|Y_\boxdot\cap \hat W_u^{\st}$ is tangent to $W^s_{\sc}$ for $\sc$ strongly regular and common.
 \end{proof}
 \begin{proof}[Proof of \cref{SRktoSR}]
There are finitely many strongly regular and common  words of depth $k$ in $\sR$.  The images by $\pi_f$ of their associated pieces is a finite union of segments (which is compact). The intersection over $k$ of all these compact subset is compact and equal to $K^u$.\end{proof}

\begin{rema}\label{SRtoSRk} Conversely, it is immediate to show that $k$-strongly regular maps  are $j$-strongly regular for every $j\le k$. Also  a strongly regular map is $k$-strongly regular because  a strongly word $\sc_j\in \sR$ of depth $j$ is necessarily in $\sR_{j-1}$, as we will prove in \cref{link_btw_N_c&R_k}.  
\end{rema} 
A consequence of the proof of \cref{SRkimpliesSRinfty} is that $(SR_1-SR_2-SR_3)$ does not have to be checked for every $\st\in  \arr \sR$, but only for a dense subset  such as $ \arr \sR_0\odot \sR:=\{\st_0 \cdot \sg: \sg \in \sR, \st_0 \in \arr \sR_0\}\subset \arr \sR$ (see the proof of \cref{sRk dense}  for the density):
\begin{coro} The map $f$ is strongly regular if and only if
for every $\st \in \arr \sR_0\odot \sR$, there exists $\sc^\st=\sa_0 \cdot \sa _i\cdots \in \overrightarrow  \sR$ and $(m_k)_k\in \N^\N$ satisfying that:
\begin{enumerate}[$(SR_1)$]
\item The $f^{n_\boxdot}|Y_\boxdot\cap W^\st_u $ is tangent to $W^s_{\sc^\st}$, 
\item The word $\sc_k^\st:= \sa_0\cdots \sa_{m_k}$ is strongly regular for every $k\ge 1$.
\item The word  $\sc_k^\st$ is common with depth $k$ for every $k\ge 1$.
\end{enumerate}
\end{coro}
The above corollary states the definition strong regularity used in\cite{Berentropy}.
 \medskip
 
Let us now consider the $C^2$-{family} of maps $(f_a)_a$ satisfying the assumptions of \cref{setting}.

\medskip 

 \begin{defi}[$k$-Combinatorial interval]\index{$k$-combinatorial interval}\label{$k$-combinatorial interval}
 Let $\Omega_0$ be interval formed by the parameter $a$ at which $f_a$ is $0$-strongly regular. It is a $0$-combinatorial interval. 
 
Let $w\subset \Omega_0$ be an interval such that $f_a$ is $k$-strongly regular for every $a\in w$ with structure:
$$\left(\sR_{k}(f_a), \arr \sR_{k-1}(f_a), (\sc_k^\st(f_a))_{\st\in \arr \sR_{k-1}(f_a)}\right)\; .$$

The interval $w$ is \emph{$k$-combinatorial} if this structure is independent of $a\in w$. In other words, there exist $\sR_k(w)\subset \hat \sA^{(\N)}$, $\arr \sR_{k-1}(w)\subset \hat \sA^{\Z^-}$,  and  $(\sc_{k}^\st(w))_{t\in \arr \sR_{k-1}(w)}\in \prod_{{\arr \sR_{k-1}(w)}} \sR_{k-1}(w)$ such that for every $a\in w$, it holds 
\[\sR_k(f_a)=\sR_k(w)\, , \;\;  \arr \sR_{k-1}(f_a)=\arr \sR_{k-1}(w) \text{ and } (\sc_{k}^\st(f_a))_{\st\in \arr \sR_{k-1}(f_a)}= (\sc_{k}^t(w))_{\st\in \arr \sR_{k-1}(w)}\, .\]
\end{defi}
\begin{rema}
A $k$-combinatorial interval is also a $k'$-combinatorial interval for any $k'<k$.\end{rema}

\medskip

 We recall that for every $\sg\in \sR_k(w)$, the boundary $\partial^s Y_\sg(f_a)$ is formed by two arcs of $W^s(A)$. Thus by hyperbolic continuation these arcs persist for the perturbation $f_{a'}$, with $a'$ close to $a$. Together with $\{y=\pm \theta\}$ they bound a subset $  \tilde Y_\sg(f_{a'})$ close to $ Y_\sg(f_{a})$ that we call the \emph{hyperbolic continuation} of $ Y_\sg(f_{a})$. The next proposition states that for every $a'\in w$, this hyperbolic continuation is equal to the box  $ Y_\sg(f_{a'})$ defined by the structure $\sR_k(f_{a'})$: 
  $ Y_\sg(f_{a'})=  \tilde  Y_\sg(f_{a'})$.
\begin{prop}
Let $w$ be a $k$-combinatorial interval and $a\in w$. Then for every $\sg \in \sR_k(w)$, the family of boxes $(Y_\sg(f_{a'}))_{a'\in w}$ is the hyperbolic continuation of $ Y_\sg(f_{a})$. In particular, the curves  $\partial^s Y_\sg(f_{a'})$ depends $C^2$ on ${a'\in w}$.
\end{prop}
\begin{proof}
We proceed by induction on $k$. For $k=0$, we recall that $\sR_0(w)=\sA_0^{(\N)}$ and for $\sa\in \sA_0$, the box $Y_\sa(f_{a})$ is defined using the hyperbolic continuation of arcs of $W^s(A; \hat P)$, so are the boxes $(Y_\sg, n_\sg)$ for $\sg\in \sA_0^{(\N)}$ by definition of the $\star$-product. 
 
For $k\ge 0$, assume that for every $\sg\in \sR_k(w)$, the box $Y_\sg(f_{a'})$ is the hyperbolic continuation of  $Y_\sg(f_{a})$ for every $a'$ in a $k+1$ combinatorial interval $w\ni a$. 
Let $\sg\in \sR_{k+1}(w)$. If $\sg:= \sg_1\dots \sg_j$ with $\sg_1,\dots, \sg_j\in \sR_k(w)$.   By induction, note that the box $Y_{\sg}(f_{a'}):= \bigcap_{i=0}^{j-1} f_{a'}^{-n_{\sg_1}\cdots - n_{\sg_i}}(Y_{\sg_{i+1}}(f_{a'}))$ is still the hyperbolic continuation of $Y_{\sg}(f_{a})$. 
 If $\sg= \sa\boxdot_\pm (\sb-\sc)$, then the box $Y_\sg(f_{a'})$ is one of the two components of $(f^{n_\sa}| Y_\sa(f_{a'}))^{-1} (f^{M+1}_{a'}|Y_\boxdot)^{-1} cl(Y_\sb-Y_\sc)(f_{a'})$ and so  by induction, it is still the hyperbolic continuation of $Y_{\sg}(f_{a})$. 

 As the arc of $W^s(A; f_a)$ depends $C^2$ on $a$, we obtain the same regularity for  $a'\mapsto \partial^s Y_\sg(f_{a'})$. 
\end{proof}

\subsection{Transversality of the hyperbolic continuations}\label{Section: Transversality of the hyperbolic continuations}
We state here   that the hyperbolic continuations of the strong regular pieces move slower than the ``critical values''. 

\paragraph*{\bf The one-dimensional case} We recall that $P_a(x)=x^2+a$.
We shall show that the motion w.r.t $a$ of the hyperbolic continuation of any strongly regular piece $ (I_\sc(P_a),n_\sc)$  for $P_a$ is ``slower" than the one of $P_a^{M+1}(0)$.
As $a$ belongs to $ I_\sr(P_a)=[\alpha_M(a),\alpha_{M-1}(a)]$, to study this difference of speed we shall work in this interval. 
We recall that  $I_{\sr\cdot \sc}(P_a)$ denotes the support of the puzzle piece equal to $(I_\sr(P_a),M)\star (I_{\sc}(P_a),n_\sc)$, then it suffices to show that the motion of the endpoints of $I_{\sr\cdot \sc}(P_a)$ is different to 1, as given by:
\begin{prop}\label{trans dim1}
For every strongly regular word $\sc$ for $P_a$, if $I_{\sr \star \sc}(P_a)=: [x^-(a), x^+(a)]$, then for $M$ large, it holds:
\[\partial_a x^\pm(a) = \frac13 +O(2^{- M})\; .\]
\end{prop}
We will  generalize and prove this proposition in \cref{prop9.3} below:
\paragraph*{\bf The H\'enon-like case}
Let $(f_a)_a$ be satisfying  the assumptions of \textsection \ref{setting}.
Given $\sc\in\sR(f_a)$, we put $(Y_{\sr\cdot \sc}(f_a), n_{\sg}):=(Y_\sr(f_a), M)\star (Y_\sc(f_a), n_\sc)$. Note that $n_{\sg}= M+n_\sc$.  We recall that {$k$-combinatorial intervals} were defined in \cref{$k$-combinatorial interval} p. \pageref{$k$-combinatorial interval}.
\begin{prop}\label{preprop9.3}
Let $w$ be  a $k$-combinatorial interval for $(f_a)_a$.  Let $a\in w$ and let $\sc\in \sR_k(w)$ be strongly regular, and let $(Y_\sc(f_a), n_\sc)$ be its a puzzle piece. 
Then there exist two $C^2$-functions $\rho_\pm \colon w\times [-\theta, \theta] \to \R$ such that: 
\begin{enumerate}[$(i)$]
\item $ \partial^sY_{\sr\cdot \sc}(f_a) =\bigcup_\pm  \{(\rho_\pm (a, y), y): y\in [-\theta, \theta]\}$,
\item $\partial_a \rho_\pm = \frac13 +O(2^{- M})$ in the $C^0$-topology.
\end{enumerate}
\end{prop}
\begin{proof}
The proof contains one part which is combinatorial and one which is analytical. We prove here the following combinatorial part:
\begin{lemm}\label{decompo reguliere} Let $k\ge 1$ and let  $\sc$ be a strongly regular word in $\sR_k$. Then, there exist complete, regular words $\sg_1, \dots, \sg_m\in \sR_k$ such that:
\begin{enumerate}
\item   the concatenation of the regular, complete words $\sg_i$ is equal to $\sc$ : $\sc=\sg_1\cdots \sg_m$,
\item for every $i$,   either $\sg_i\in \sY_0$ or $n_{\sg_{i}}\le 2^{-\sqrt{M}/2} \sum_{j\le i} n_{\sg_j}$.
\end{enumerate}
\end{lemm}
\begin{proof}
Let $\sc=\sp_1\cdots \sp_m$ be the prime decomposition of $\sc$ (each $\sp_i$ is complete and prime). 
For every $i\le n_\sc$, put $X_i=1$ if there exists $l$ such that $i\in [ n_{\sp_1\cdots \sp_l}, n_{\sp_1\cdots \sp_{l+1}}-1]$ with $\sp_{l+1}\in \sA_0$ and put $X_i= 0$ otherwise.  

 By definition of strongly regular words, it holds for every $i$:
\[\sum_{j\le n_{\sp_1\cdots \sp_i}} X_j
=  n_{\sp_1\cdots \sp_i} - \sum_{j\le i: \sp_i\notin \sY_0 } n_{\sp_j}\ge
(1-2^{-\sqrt M}) n_{\sp_1\cdots \sp_i} \; .\]
This implies that $\sum_{j\le m} X_j\ge (1-2^{-\sqrt M}) m$ for every $m\le n_\sc$. 
We use now the following lemma with $A=1$, $c_2= 1-2^{-\sqrt M}$ and $c_1=1-2^{-\sqrt M/2}$. 
\begin{lemm}[Pliss  \cite{Pl72}]\label{Pliss}
Given $A\ge c_2> c_1>0$  such that for any real numbers $X_1,\dots, X_k$ in $(-\infty , A]$ satisfying $\sum_{i=1}^k X_i \ge c_2 k$, there exists $l\ge   \frac{c_2-c_1}{A-c_1} k$ and $1= n_1<\cdots < n_l< k$ such that 
\begin{equation}\label{pliss} \sum_{i=n_j }^{m} X_i \ge c_1 (m-n_j+1) \text{ for all } m\ge n_j\text{ and } j= 1,\dots , l.\end{equation}
\end{lemm}
This lemma implies that there exists a set of integers $n_i$ of density $\ge  (2^{-\sqrt M/2}-2^{-\sqrt M}) 2^{\sqrt M/2}=1-2^{-\sqrt M/2} $ of integers $n_i$ such that:
\begin{equation}\label{pliss2}\sum_{p=n_i}^j X_p\ge (1-2^{-\sqrt M/2} )(j-n_i+1)\quad \forall j\ge n_{i}\; .\end{equation}
Thus we can split $\sc$ as a maximal product $\sg= \sg_1\cdots \sg_m$ such that for every $j$, there exists $i$ satisfying that $n_i$ belongs to $[n_{\sg_1\cdots \sg_{j-1}}, n_{\sg_1\cdots \sg_{j-1}} +n_{\sb_1})$ with $\sb_1$ the first letter of $\sg_j$. By \eqref{pliss2}, we notice that $X_{1+n_i}=1$ and so  the first letter $\sb_1$ of $\sg_j$ is in $\sA_0$. This implies that $X_i=1$ for every $i\in [n_{\sg_1\cdots \sg_{j-1}}, n_i]$.  Thus  \eqref{pliss2} is satisfied with $n_i= n_{\sg_1\cdots \sg_{j-1}}$. This implies that the word $\sg_j=: \sb_1\cdots \sb_{m(j)}$ satisfies:
\[(1- 2^{-\sqrt M/2}) n_{\sb_1\cdots \sb_i}\le  \sum_{q=n_{\sg_1\cdots \sg_{j-1}}}^{n_{\sg_1\cdots \sg_{j-1}\cdot \sb_1\cdots \sb_i}} X_{q}
= n_{\sb_1\cdots \sb_i} - \sum_{ q\le i : \sb_{q}\notin \sA_0}n_{\sb_{q}}\quad \forall i\le m(j)\; .\]
In particular $\sg_j=: \sb_1\cdots \sb_{m(j)}$ is regular as stated in \cref{decompo reguliere}. Furthermore, as the set $\{n_j: j\}$ has density $\ge 1-2^{-\sqrt M/2}$, the last  property of \cref{decompo reguliere} holds true. 
\end{proof}
We recall that a given $\sg_i\in \sR_k$ complete, the pair $(Y_{\sg_i}, n_{\sg_i})$ is a puzzle piece by \cref{primecomple=puzzle}.  Thus the latter lemma together with the following combinatory free proposition (proved in \cref{proof prop9.3} P. \pageref{proof prop9.3}) implies \cref{preprop9.3}:
%
\begin{prop}\label{prop9.3}
Let $(f_a)_a$ be satisfying  the assumptions of \textsection\ref{setting}. Let $I$ be a parameter interval and let $(Y(f_a), n)= (Y_0(f_a), n_0)\star \cdots \star (Y_m(f_a), n_m)$  be a product of puzzle pieces for $f_a$, each of which persists for every $a\in I$ and  satisfies:
\begin{itemize}
\item $(Y_0, n_0)=(Y_\sr, n_\sr)$,
\item the box $Y_i$ is included in $Y_\se$, for every $1\le i\le m$,
\item either $(Y_i(f_a), n_i)= (Y_\ss(f_a), n_\ss)$ for $\ss\in \sY_0$ or  $n_{i}\le 2^{-\sqrt{M}/2} \sum_{1\le j\le i} n_{j}$, for every $1\le i\le m$.
\end{itemize}
Then there exist two $C^2$-functions $\rho_\pm \colon I\times [-\theta,  \theta] \to \R$ such that: 
\begin{enumerate}[$(i)$]
\item $ \partial^sY(f_a) =\bigcup_\pm  \{(\rho_\pm (a, y), y): y\in [- \theta,  \theta]\}$,
\item $\partial_a \rho_\pm = \frac13 +O(2^{- M})$.
\end{enumerate}
\end{prop}
 \end{proof}
 
Given $\st \in \arr \sR(f_a)$, we cannot consider the hyperbolic continuation of $\hat W^\st_u(f_a)$, because this curve contains the ``artificial part''  $\hat W^\st_u(f_a)\setminus  W^\st_u(f_a)$. However, the extension algorithm of \cref{graphtransformpara} P. \pageref{graphtransformpara} will be chosen such that the following proposition holds true: 
 \begin{prop}\label{transversity de Wu}
 Let $(f_a)_a$ be satisfying  the assumptions of \cref{setting}.
 Let $\st\in \sA^{\Z-}$ and $w\in \R$ such that $\st\in \arr \sR (f_a)$ for every $a\in w$. Then $\bigcup_{a\in w}\{a\}\times  \hat W_u^{\st }(f_a)$  is a Lipschitz surface in $\R^3$, and any vector tangent to it makes an angle less than $\theta$ with the plane $\R^2\times \{0\}$.  
\end{prop}
 This proposition we will be proved in \cref{deplacementdescourbes} p. \pageref{deplacementdescourbes}. 

Let us recall that $\hat W_u^{\st }(f_a)$ is a horizontal curve and so it is the graph of a function $x\in I_\se(P_a) \mapsto w_u^{\st }(a,x)$ which is $\theta-C^{1+Lip}$-small. Thus the image by $f_a$ of the surface $\bigcup_{a\in w}\{a\}\times  \hat W_u^{\st }(f_a)$ is:
\[\{ f_a(x, w_u^{\st }(a,x)): x\in I_\se(f_a)\}=
\{ (x^2+a ,0)+B_a(x, w_u^{\st }(a,x)): x\in I_\se(f_a)\}\; .\]
It is a family $\theta$ -$C^{1+Lip}$-close to  $(\{(x^2+a,0): x\in I_e(P_a)\})_a$.
\subsection{Geometry of the stable and unstable transverse spaces}\label{section Geometry of the stable and unstable transverse spaces}
We recall that the transverse space to the stable lamination is $K^u(f)=\pi_f(\mathcal L(f))$ with $\mathcal L(f)=\bigcup_{\sc\in \sL(f)} W^s_\sc(f)$ and $\sL(f)$ the subset of $\overrightarrow\sR(f)$ formed by the strongly regular and common sequences.  
We are going to study the Lebesgue measure and the $L^d$ norm of the gaps of a combinatorially defined subset $\breve K^u(f)\subset K^u(f)$. Namely, we are going to define $\overrightarrow \sS(f)\subset \hat \sA^\N$ such that $\breve K^u(f)=\pi_f(\bigcup_{\sc\in \sL\cap \overrightarrow \sS} W^s_\sc(f))$. The Lebesgue measure of $\breve K^u(f)$  will be shown positive by counting the number of its gaps and their lengths. Such a combinatorial way of evaluating the Lebesgue measure appears in \cite{Ts93, Ta11}. However, a new (and unexpected) aspect of our proof is that we will not use distortion estimates along the pieces nor the so-called large deviation argument. Instead we will use very sharp bounds on the expansion of the pieces associated to symbols in $\{\ss\in \sY_0: n_\ss\le M/2\}$, given by \cref{sharp bounds} stated below. This leads to the following definition:
\begin{defi}[Greatly regular word]
A word $\sg= \sa_1 \cdots \sa_m\in \hat \sA^{(\N)}$ is \emph{greatly regular} if:\index{Greatly regular word}
\begin{equation*}
\sum_{n_{\sa_j}> \frac M2,  \; j\le m} n_{\sa_j} \le\delta \sum_{j\le m} n_{\sa_j},\quad \forall j\le m \text{ with }\delta= 2^{-\sqrt {2 M}}.
 \end{equation*}
\end{defi}

Let us now consider $0$-strongly regular map $f$. As $f$ will not vary during this section, we do not display the dependence  on $f$ in the combinatorial subsets $\sR=\sR(f)$ and $\overrightarrow{\sR}=\overrightarrow{\sR}(f)$. 
\begin{defi}[$\sS$, $\overrightarrow \sS$, $\sS(> n)$, $\sS( m)$]\index{$\sS$, $\overrightarrow \sS$, $\sS(> m)$, $\sS( m)$}
Let $\sS$ be the subset of $\sR$ formed by words which are greatly regular. Let $\overrightarrow \sS$ be the subset of sequences $(\sa_i)_i\in \overrightarrow \sR$ such that $\sa_1\cdots \sa_j$ is greatly regular for some $j$ arbitrarily large.  Let $\sS(> m)$  (resp. $\sS( m)$)
be the subset of $\sS$ formed by the words $\sg$ of order $n_\sg> m$ (resp. $n_\sg = m$).
 \end{defi}
\begin{prop}\label{sS-sL} 
A sequence $(\sa_i)_i\in \overrightarrow{ \sS}$ is in $\sL$ if $\sa_1\cdots \sa_m$ is common for arbitrarily large $m$.
\end{prop}
\begin{proof}
By definition of greatly regular words, for every  $(\sa_i)_i\in \overrightarrow{ \sS}$, there exists an arbitrarily large $J$ such that 
$\sa_J$ belongs to $\sY_0$. Then the word  $\sa_1\cdots \sa_J$ is complete.  Let $\sp_1\cdots \sp_m$ be its prime decomposition. For each $j$, let $\sp_j= \sp_j'\cdot \sb_j$ with $\sb_j\in \sA_0$.  Again by definition of greatly regular words, it holds for every $j\le m$:
\[\sum_{i\le j} n_{\sp'_i}\le 2^{-\sqrt{2M}} \sum_{i\le j} n_{\sp_i}\; .\]
Note that if $\sp_i\notin \sA_0$, then $n_{\sp'_i}\ge M+1>M\ge n_{\sb_i}$. Thus 
$n_{\sp_i}/2\le n_{\sp'_i}$ and:
\[\sum_{i\le j: \sp_i\notin \sA_0} n_{\sp_i}\le 2^{1-\sqrt{2M}} \sum_{i\le j} n_{\sp_i}\; .\]
We infer that  $2^{1-\sqrt{2M}}$ is small compared to 
$2^{-\sqrt{M}}$, and so $\sa_1\cdots \sa_J$ is strongly regular. 

As there exists $I\ge J$ such that $ \sa_1\cdots \sa_I$ is common, the word $\sa_1\cdots \sa_J$ is also common and so $(\sa_i)_i$ is in $\sL$.\end{proof} 
A consequence of the proof if:
\begin{coro}\label{coro 4.29}A complete, greatly regular word is strongly regular. 
\end{coro}
To state the sharp estimate on the expansion of greatly regular pieces, we consider the  Riemannian metric $g$ whose value at $(x,y)\in \R^2$ is:
$$g_{(x,y)}:(u,v)\mapsto {\frac{u^2}{\beta^2-x^2}+ v^2}\; .$$\index{Riemannian metric $g$}
We denote by  $\|\cdot \|_g$ the norm associated to this metric. It is different to the Euclidean norm  $\| \cdot  \|: (u,v)\mapsto \sqrt{u^2+v^2}$.  In \cref{coro pour greatly} and \cref{Expansion SR piece}, we will show the following sharp estimates:
 \begin{prop}\label{sharp bounds}
 Let $f$ be $0$-strongly regular. It holds:
 \begin{enumerate}
 \item   For every  $\sa\in \sY_0$ and for every $z\in Y_\sa $ such that $n_\sa\in \{2,\dots,  \lfloor  M/2\rfloor \}$, it holds:
$$1-2^{-M} \le \frac1{n_\sa}\log_2 \frac{\|\partial_x f^{n_\sa}(z)\|_g}{\|\partial_x \|_g} \le 1+2^{-M}\; .$$
\item For every  strongly  regular  $\sg\in \sR$ and $z\in Y_{\sr\cdot \sg}$, it holds:
$$ 1-\frac1{\sqrt M} \le \frac1{2M+n_\sg}\log_2 \|\partial_x f^{M+n_\sg}(z)\| \; .$$
\end{enumerate}
\end{prop}
Note that the first bound from above is stated with the norm $\|\cdot \|_g$ whereas the second is stated with the Euclidean norm. 
A consequence of the first bound is:
\begin{coro}\label{expansion SSR}For every $0$-strongly regular map $f$, for every $\sa_1\cdots \sa_m\in \sR$, for every $z\in Y_\sg $, it holds:
\[ (1-2^{-M}) (n_\sg- n_*) +  (\frac13 -\frac1{\sqrt M}) {n_*}  
   \le \log_2 \|\partial_x f^{n_\sg}(z)\|_g 
   \quad \text{with }n_*:= \sum_{n_{\sa_i}> M/2} n_{\sa_i}\; . \]
\end{coro}
\begin{proof}
We have $\log_2 \|\partial_x f^{n_\sg}(z)\|_g $ greater than $ (1+2^{-M}) (n_\sg- n_*)$ plus the expansion (for the metric $g$) corresponding to the symbols of orders $\ge M/2$. The times spent in the latter symbols is $n_*$.   We recall that for every $\sq\cdot \sb\in \sR(f)$, by expansion stated in \cref{piecedef}.$(ii)$ of a piece, it holds:
 \[ \frac 13 \le \frac1{n_\sb}\log_2 \|\partial_x f^{n_\sb}(f^{n_\sq}(z))\|\; . \]
If $n_\sb\ge M/2$, as the metric $g$ is bounded on $Y_\se$, it holds:
 \[ \frac 13-\frac1{\sqrt{M}} \le \frac1{n_\sb}\log_2 \|\partial_x f^{n_\sb}(f^{n_\sq}(z))\|_\sg\; . \]
\end{proof} 
For the parameter selection, the following will be useful:
\begin{prop}\label{link_btw_N_c&R_k} If $\sg\in \sR$ is strongly regular, then $\sg$ belongs to $\sR_{\lfloor 2^{-1-\sqrt{M}} \cdot n_\sg \rfloor}$. 

If $\sg\in \sS$, then $\sg$ belongs to $\sR_{\lfloor 2^{-1-\sqrt{2M}} \cdot n_\sg\rfloor}$. 
\end{prop}
\begin{proof} We prove the two statement by the same argument. To this end, we put $\delta= 2^{-\sqrt {2M}}$ if $\sg\in \sS$ and $\delta= 2^{-\sqrt {M}}$ otherwise.  Let $\sg =\sa_1\cdots \sa_m$ with $\sa_i\in \hat \sA$  and note that:
 \[\sum_{ \sa_i\notin\sA_0} n_{\sa_i}\le \delta n_\sg\; .\]
  If $n_\sg\le M/\delta$, then $n_{\sa_i}\le M$ and so $\sa_i\in \sA_0$ for every $i$. This implies that $\sg\in \sR_0$.
  
   If $  n_\sg\ge M/\delta $, then $\sum_{ \sa_i\notin\sA_0} n_{\sa_i}\le \delta n_\sg$ and every $\sa_i\notin \sA_0$  is  of the form $\square_\pm(\sc-\sc')$ with:
  \[n_{\sc} =  n_{\sa_i}-M-1\qand n_{\sc'}\le M +(n_{\sa_i}-M-1)(1+2^{1-\sqrt M})< n_{\sa_i}(1+2^{1-\sqrt M})-1\; .\]
  Thus both $\sc$ and $\sc'$ have an order at most  $n_\sg (1+2^{1-\sqrt M})  \delta $ which is small compared to $n_\sg$. Thus by induction, both $\sc$ and $\sc'$ belong to  $\sR_{ \lfloor \delta n_{\sc'}/2\rfloor}\subset  \sR_{ \lfloor \delta n_{\sa_i}(1+2^{1-\sqrt M})/2\rfloor}$.
  
   As $n_{\sa_i}(1/2+2^{-\sqrt M}) \delta $ is small compared to $depth(\sc')\le (n_{\sa_i}-M-1)/2+1$ it comes that $\sa_i$ belongs to $\hat \sA_{depth(\sc')}\subset  \hat \sA_{\lfloor (n_{\sa_i}-M-1)/2+1\rfloor}=\hat \sA_{\lfloor (n_{\sa_i}-M+1)/2\rfloor}$.
    Let us now denote by $(i_j)_{1\le j\le N}$ the increasing sequence of integers such that $\sa_{i_j}$ does not belong to $\sA_0$. We have:
 \begin{itemize}
 \item    $\sa_1\cdots \sa_{-1+i_1}\in \sR_0$ and   $\sa_1\cdots \sa_{i_1}\in \sR_{ \lfloor \frac{n_{\sa_1}-M+1}2\rfloor}$,
      \item    $\sa_1\cdots \sa_{-1+i_2}\in \sR_{\lfloor  \frac{n_{\sa_1}-M+1}2 +1\rfloor}$ and   $\sa_1\cdots \sa_{i_2}\in \sR_{\lfloor 
\frac{n_{\sa_1}-M+1  +  (n_{\sa_2}-M+1}2+1\rfloor}$,
            \item \dots
            \item $\sa_1\cdots \sa_{i_N}\in \sR_{\lfloor N-1+\sum_j   \frac{n_{\sa_i}-M+1}2  \rfloor}$ and $\sg \in \sR_{\lfloor N+\sum_j   \frac{n_{\sa_i}-M+1}2  \rfloor}= \sR_{\lfloor \sum_j   \frac{n_{\sa_i}-M+3}2\rfloor}$.
\end{itemize}
Using $\sum_j n_{\sa_{i_j}}\le \delta n_\sg $, we obtain  that  $\sg$ belongs to     $\sR_{\lfloor \frac12 \sum_j n_{\sa_{i_j}} \rfloor}\subset \sR_{\lfloor \frac \delta 2n_\sg \rfloor}$.   
\end{proof}

We are going to study how fat $\breve K^u (f)= \pi_f(\bigcup_{\sc \in \overrightarrow \sS\cap \sL} W^s_\sc)$ is. To this end we shall study the number and the size of the components of 
$\mathcal E:= Y_\se \setminus \bigcup_{\sc \in \overrightarrow \sS} W^s_\sc	$ and those of
$\mathcal E':= \bigcup_{\sc \in \overrightarrow \sS\setminus \sL} W^s_\sc$
.  Observe that: 
\begin{Claim}\label{Expression des gaps} It holds $\breve K^u(f)= I_\se \setminus \pi_f(\mathcal E)\cup \pi_f(\mathcal E')\; $ and
\[\mathcal E:= \bigcup_{\sc \in \sS}  Y_\sc \setminus 
 \bigcup_{\sc' \in \sS(> n_\sc )} Y_{\sc'}\qand
  \mathcal E' = int^{Y_\se} \bigcup_{\sc'\in \{\sc\cdot \ss_-^{\odot \aleph (\sc)},\sc\cdot \ss_+\cdot \ss_-^{\odot \aleph (\sc)} \}\cap \sS}  Y_{\sc'}\; .\]
  with $int^{Y_\se} $ the interior in the topology of $Y_\se$ induced by $\R^2$.
 \end{Claim}
\begin{proof}The first equality is obtained by noting that $
Y_\se\setminus \bigcup_{\sc \in \overrightarrow \sS\cap \sL} W^s_\sc= \mathcal E \cup  \mathcal E'$. To prove the new expression of $\mathcal E$, we notice that a point is in $\mathcal E$ iff it belongs to a box $Y_\sc$ for a greatly regular word $\sc\in \sS$ but it does not belong to the box $Y_{\sc'}$ for a greatly regular word $\sc'\in \sS$ with order $n_{\sc'}>n_\sc$. To prove the new expression of $\mathcal E'$, we recall that by 
\cref{sS-sL}, $\overrightarrow \sS\setminus \sL$ is formed by the sequence $\bar \sc:= (\sa_i)_i\in \overrightarrow \sS$ which are not common. This means that there exists $(\sa_i)_{1\le i\le m} \in \bigcup_{\sc\in \sS} \{\sc\cdot \ss_-^{\odot \aleph (\sc)},\sc\cdot \ss_+\cdot \ss_-^{\odot \aleph (\sc)} \}\cap \sS$ for a certain $m$. 
\end{proof} 
   To  evaluate the length of the components of $\pi_f(\mathcal E)$ and $\pi_f(\mathcal E')$, we will use:
\begin{defi} Let  $\leb_g$ be the measure on $[-\beta, \beta]$ with density $\frac1{\sqrt{\beta^2-x^2}}$:\index{$\leb_g$}
\[  \leb_g(E):= \int_E (\beta^2-x^2)^{-1/2} dx\quad \text{for every Borel subset } E\subset \R\; .\]
\end{defi}
We will see that to each $\sc\in \sS$ is canonically associated at most 3 gaps of $\breve K^u(f)$ via Claim \ref{Expression des gaps}. We will bound their $\leb_g$-length by an exponential function of the integer $N_\sc$ defined below, and the number of the $\sc\in \sS$ will be bounded by a smaller exponential function of $N_\sc$. From this we will obtain \cref{geo intuition para selec}. 
\begin{defi}[$N_\sc$]
For every  $\sc=\sa_1\cdots \sa_m\in \sS$, let $N_\sc \ge n_\sc + \lfloor M/2\rfloor +1$ be minimal such that:
\[
N_\sc-n_\sc+ \sum_{i: n_{\sa_i}> M/2}  n_{\sa_i}  > \delta \cdot N_\sc\; 
\text{ with }\delta= 2^{-\sqrt{2 M}}.\]
\end{defi} 
Note that $N_\sc$ is greater than any $n_{\sc'}$, with $\sc'= \sc\cdot \sa\in \sS$ with $\sa\in \hat \sA$. 
\begin{rema}\label{remark on Nc}
If $N_\sc\le \frac{M}{2\delta }$ then $N_\sc-n_\sc= \lfloor \frac M2 +1\rfloor$. 
\end{rema}
\begin{prop}\label{ordre SSR}
For every $\sc\in \sS$ different to $\se$, it holds:
$$N_\sc \le \max(2 \delta n_\sc, \lfloor \frac M2\rfloor +1)+n_\sc\qand N_\sc \le (m +1)\cdot \lfloor M/2+1\rfloor\; ,$$
where $m$ is the number of $\hat \sA$-letters of $\sc$. 
\end{prop}
\begin{proof}
If $N_\sc-n_\sc>1+\lfloor M/2\rfloor $, then $N_\sc> M/(2\delta)$ and $n_\sc$ is large by definition of $N_\sc$. Then we have $N_\sc-n_\sc\le \delta N_\sc+1= \delta n_\sc +\delta( N_\sc-n_\sc)+1$. Thus:
\[N_\sc-n_\sc\le \frac{\delta n_\sc+1}{1-\delta}\le 2\delta n_\sc\; .\]
The second statement is proved by the same argument as for \cref{ordre SR}.
\end{proof}
Given $\sc\in \sS$ and  $\sc'\in \{\sc\cdot \ss_-^{\odot \aleph (\sc)},\sc\cdot \ss_+\cdot \ss_-^{\odot \aleph (\sc)} \}\cap \sS$, the following bounds the length of $\pi_f(Y_{\sc'})\subset I_\se\setminus \breve K^u(f)$ in function of 
$N_\sc$, as involved in the union $\mathcal E'$ of gaps corresponding of the common condition:
 \begin{prop}\label{badcommon}
 Let $\sc \in \sS$ and put $m:= \aleph(\sc)=\left\lfloor\frac{n_{\sc}}{12}+\frac M{24}\right\rfloor$. For every $\sc'\in 
 \{\sc  \cdot \ss_-^{m}, \sc  \cdot \ss_+ \cdot \ss_-^{m}\}\cap \sS$, with 
 $\delta= 2^{-\sqrt {2 M}}$,   it holds:
\begin{multline*}
\log_2 \frac{\leb_g \pi_f (Y_{\sc'})}{\leb_g I_\se}\le  -(1- 0,66\cdot \delta)\cdot N_\sc+\left\{\begin{array}{cc}
 0 &\text{if }N_\sc\ge  M/(2\delta)\\
 \lfloor M/2\rfloor -2  \lfloor M/24\rfloor+1&\text{otherwise.}\end{array}\right.
 \end{multline*}
  \end{prop}
  \begin{proof}
  By \cref{expansion SSR}, for every $z\in Y_{\sc'}$ it holds:
  \[
\log_2  \|\partial_x  f^{n_{\sc'}}(z)\|_g\ge ((1-\delta)(1-2^{-M})+ \delta(\frac1 3 -M^{-1/2}))n_{\sc}+ 
2m(1-2^{-M})\; .
\]
If $N_\sc\ge  M/(2\delta)$ then $n_\sc\ge M/\delta$, and so $m\gg M/2$. Then by the mean value theorem, it holds $2m\gg N_\sc-n_\sc$. This gives:
  \[
\log_2  \|\partial_x  f^{n_{\sc'}}(z)\|_g\ge  (1- 0,99\cdot\delta + \frac\delta 3)\cdot n_\sc+(N_\sc-n_\sc)(1-2^{-M})
\ge (1- 0,99\cdot\delta + \frac\delta 3)\cdot N_\sc \; .
\]

If $N_\sc<  M/(2\delta)$ then $N_\sc-n_\sc= \lfloor \frac M2 +1\rfloor$ by \cref{remark on Nc}. 
Thus   $2m\ge 2\lfloor M/24\rfloor $ and by the mean value theorem, it holds $2m\ge 2\lfloor M/24\rfloor -\lfloor M/2\rfloor -1+N_\sc-n_\sc$. Then we conclude similarly. 
\end{proof}
The following gives a bound from above of the length of the components of $\pi_f(\mathcal E')$ corresponding to the greatly regular condition:
 \begin{prop}\label{length badcomp} Let $f$ be $k$-strongly regular. 
Let $\sc\in \sS(f)$ be such that $N_\sc \le \max(M/(2\delta),  k/\delta)$. Then the set $\pi_f (Y_\sc \setminus  \bigcup_{\sc' \in \sS(> n_\sc )} Y_{\sc'})$ is a segment of $I_\se$ satisfying:
\[-\log_2 \frac{\leb_g \pi_f (Y_\sc \setminus  \bigcup_{\sc' \in \sS(> n_\sc )} Y_{\sc'})}{\leb_g I_\se}\ge (1- 0,66\cdot\delta )\cdot N_\sc \; .\]
Moreover $\sc$ belongs to $\sR_{k}$ and $Y_\sc \setminus  \bigcup_{\sc' \in \sS(> n_\sc )} Y_{\sc'}=Y_\sc \setminus  \bigcup_{\sc' \in \sS(> n_\sc )\cap \sR_{k}} Y_{\sc'}$.
\end{prop} 
\begin{proof}
 We recall that $\pi_f|I_\se\times\{0\}$ is the canonical inclusion onto $I_\se$. 

{\bf Case $N_\sc \le  \frac{M}{2\delta}$.} This implies that $n_\sc< \frac{M}{2\delta}$. By definition of greatly regular words, $\sc$ is a concatenation of symbols in $\{\ss\in \sY_0: n_\ss\le  M/2\}$. By \cref{remark on Nc},  $N_\sc-n_\sc=  \lfloor M/2\rfloor  +1$. As $\sc$ is a product of symbols in $\sA_0$, for every $\ss\in \sA_0$ is admissible from $\sc$. Consequently:
\[Y_\sc \setminus  \bigcup_{\sc' \in \sS(> n_\sc )} Y_{\sc'}=cl(Y_\sc\setminus 
\bigcup_{\ss\in \sY_0: n_\ss\le  \lfloor M/2\rfloor } Y_{\sc\cdot \ss} )
=(f^{n_\sc}|  Y_\sc)^{-1}(Y_\boxdot\cup  \bigcup_{\ss\in \sA_0: n_\ss \ge \lfloor M/2\rfloor  +1} Y_\ss)\; .
\]
Thus  $I_\se \times \{0\}$ intersects $Y_\sc \setminus  \bigcup_{\sc' \in \sS(> n_\sc )} Y_{\sc'}$ at a segment. By  \cref{coro pregreatly} P. \pageref{coro pregreatly}:
\begin{equation}\label{demi mauvais}-\log_2 \frac{\leb_g \pi_f (Y_\boxdot\cup  \bigcup_{\ss\in \sA_0: n_\ss \ge \lfloor M/2\rfloor  +1} Y_\ss)}{\leb_g I_\se}\ge (1- 2^{-M})(\lfloor M/2\rfloor  +1)\; .\end{equation}
Using \cref{sharp bounds}.1 for the product $\sc$ of symbols of order $\le M/2$, it comes,  $\log_2 \frac{\|\partial_x f^{n_\sc}(z)\|_g}{\|\partial_x \|_g}\ge  (1- 2^{-M})n_\sc$. Then with  \eqref{demi mauvais} and $\lfloor M/2\rfloor  +1=N_\sc-n_\sc$,  we obtain:
\[-\log_2 \frac{\leb_g \pi_f (Y_\sc \setminus  \bigcup_{\sc' \in \sS(> n_\sc )} Y_{\sc'})}{\leb_g I_\se}\ge (1- 2^{-M})N_\sc> (1- 0,66\cdot\delta )\cdot N_\sc \; .\]
{\bf Case $N_\sc >  \frac{M}{2\delta}$}. Let $\sc = \sa_1\cdots \sa_m$ with $\sa_i\in \hat \sA$ and put $n_*=\sum_{n_{\sa_i}>M/2} n_{\sa_i}$. It holds: 
\begin{equation}\label{ineq N}N_\sc-n_\sc +n_* =\max(\lfloor \frac M2\rfloor, \lfloor \delta N_\sc\rfloor)+1 = \lfloor \delta N_\sc\rfloor+1\ge \lfloor \frac M2\rfloor+1\; .\end{equation}
Sub-case $N_\sc-n_\sc \le M+1$. For the same reason as above, we have: 
\[Y_\sc \setminus  \bigcup_{\sc' \in \sS(> n_\sc )} Y_{\sc'}=
(f^{n_\sc}|  Y_\sc)^{-1}(Y_\boxdot\cup  \bigcup_{\ss\in \sA_0: n_\ss \ge  N_\sc -n_\sc} Y_\ss)\; .
\]
As $N_\sc -n_\sc\in [M/2+1, M+1]$,  by  \cref{coro pregreatly} P. \pageref{coro pregreatly}, it holds:
$$ \leb_g \pi_f(Y_\boxdot\cup  \bigcup_{\ss\in \sA_0: n_\ss \ge  N_\sc -n_\sc } Y_\ss)\le 2^{-(N_\sc-n_\sc) /3}\cdot \leb_g I_\se\; .$$
Thus by  \cref{expansion SSR},  it holds:
\[-\log_2 \frac {\leb_g  \pi_f (Y_\sc \setminus  \bigcup_{\sc' \in \sS(> n_\sc )} Y_{\sc'})}{\leb_g I_\se}\ge 
 (1-2^{-M}) ( n_\sc-n_*) +  (\frac13 -\frac1{\sqrt M})n_* + \frac {N_\sc-n_\sc}3\]
\begin{equation}\label{calcul refait} \ge  (1-2^{-M}) N_\sc
 + (1-2^{-M}) ( -N_\sc+n_\sc-n_*) +  (\frac13 -\frac1{\sqrt M})(N_\sc-n_\sc+n_* )\end{equation}
\[\ge  (1-2^{-M}) N_\sc +  (\frac13 -\frac1{\sqrt M}-1+2^{-M})(N_\sc-n_\sc+n_* )\; .\]
Then by \eqref{ineq N},  $N_\sc-n_\sc +n_* =  \lfloor \delta N_\sc\rfloor+1>\delta \cdot N_\sc$ and so:
\[-\log_2 \frac{\leb_g \pi_f (Y_\sc \setminus  \bigcup_{\sc' \in \sS(> n_\sc )} Y_{\sc'})}{\leb_g I_\se}>
(1- 0,66\cdot\delta)\cdot N_\sc \; .\]
Sub-case  $N_\sc-n_\sc > M+1$.  We recall that $N_\sc- n_\sc\le N_\sc- n_\sc+n_{*}=\lfloor \delta N_\sc\rfloor+1$.  Also by \cref{ordre SSR}, $N_\sc- n_\sc$ is at most $2\delta n_\sc$. As $f$ is $k$-strongly regular with $N_\sc \le \max (M/(2\delta), k/\delta)$ and $N_\sc > M/(2\delta)$, it comes $k\ge \delta N_\sc$ and so:
\begin{equation}\label{kNcnc} 2k\ge 2\delta N_\sc \ge 2\delta n_\sc \ge N_\sc-n_\sc\; .\end{equation}

 We recall that $ I_\se \times \{0\}$ is equal to $W^u_{1/2}(A)= W_u^{\st_0}$ with $\st_0=\cdots \ss_-\cdots \ss_-\in \arr \sR_0$ by Fact \ref{defi WuA} \cpageref{defi WuA}. We recall that $W^\st_u:=f^{n_\sc}(Y_\sc \cap W^{\st_0}_u)$ with $\st:= \st_0\cdot \sc$~. The curve  $W^\st_u$ is in critical position with a piece associated to a strongly regular word ${\sc_i^\st}\in \sR$ of depth $i$, for every $i\le k$. The piece  ${\sc_i^\st}$ has order at least $2i$, and so $n_{\sc_k^\st}\ge 2k\ge N_\sc-n_\sc$ by \eqref{kNcnc}. Thus there exists $j< k$ minimal such that:
 \[ M+1 + n_{\sc_j^\st }\ge  N_\sc-n_\sc\; .\]
 For every $i<j$, the curve $\hat W^\st_u$ is in critical position w.r.t.  $(Y_{\sc^\st_{i+1}}, n _{\sc^\st_{i+1}})$.  Thus, if 
$f^{M+1}(W^\st_u \cap Y_\boxdot) $ intersects  
 $int\,  Y_{\sc^\st_i}\setminus  Y_{\sc^\st_{i+1}}$, the corresponding letter  $\sa = \boxdot_\pm (\sc^\st_i-\sc^\st_{i+1})$ satisfies that 
 $\sc\cdot \sa$ belongs to $\sS$. Note that $N_{\sc\cdot \sa}=N_{\sc}$, and so $\sc\cdot \sa$ belongs to $\sR_k$ by \cref{link_btw_N_c&R_k}. This implies:
 $$I_\se\times \{0\} \cap Y_\sc \setminus  \bigcup_{\sc' \in \sS(> n_\sc )} Y_{\sc'} =(f^{n_\sc}| W^{\st_0}_u\cap Y_\sc)^{-1}(f| W^\st_u)^{-1}(Y_{\sr \cdot \sc_j^\st})\; .$$
By \cref{Pesin stable1} and horizontality of $\hat W_u^\st$, there exists $\phi$ $Lip$-close to the identity of $\R$ and $\psi$ $C^{1+Lip}$-close to the first coordinate projection $(x,y)\mapsto x$ such that    $\pi_f\circ  f|\hat W_u^\st: z\in \hat W_u^\st\mapsto \phi( \psi(z)^2+a)$.
 On the other hand,  $\pi_f(Y_{\sr \cdot \sc_j^\st})$ is an interval of length at most 
$2^{-(2M+n_{\sc_j})(1-1/\sqrt M)}\leb\, I_\se$ by \cref{sharp bounds}.2.
Thus its preimage by $\pi_f \circ  f| W_u^\st$ is a segment, and its length is bounded by:
$$ (1+o(1))\sqrt{2^{-(2M+n_{\sc_j})(1-1/\sqrt M)}\leb\, I_\se}\le  2^{-(2M+n_{\sc_j})/3}\; .$$
 As the Riemannian metric $g$ is bounded on $Y_\se$ and $M$ is large, the $\leb_g$ measure of this segment  is at most $2^{-(N_\sc-n_\sc) /3}\leb_g I_\se$.  
Again, by  \cref{expansion SSR} this implies that the set  $\pi_f(Y_\sc \setminus  \bigcup_{\sc' \in \sS(> n_\sc )} Y_{\sc'})$ is an interval, with $g$-length satisfying:
 \[-\log_2 \frac{\leb_g \pi_f (Y_\sc \setminus  \bigcup_{\sc' \in \sS(> n_\sc )} Y_{\sc'})}{\leb_g I_\se}>(1-2^{-M})(n_\sc -n_*)+(\frac13 -\frac1{\sqrt M})n_*  
 + \frac {N_\sc-n_\sc}3\]
 and then we conclude as we did in  \eqref{calcul refait}. Furthermore, we observe that $Y_\sc \setminus  \bigcup_{\sc' \in \sS(> n_\sc )} Y_{\sc'}=Y_\sc \setminus  \bigcup_{\sc' \in \sS(> n_\sc )\cap \sR_{k}} Y_{\sc'}$.
\end{proof}
Now we infer the following which gives a bound on cardinality of greatly regular words $\sc$ with the same $N_\sc$:
\begin{prop}\label{card bad comp}
For every $N\ge 0$, the set $\{\sc \in \sS: N_\sc= N\}$ has cardinality whose $\log_2$ is at most:
\[
\left\{\begin{array}{ll} 
\le (1-\delta \cdot 0.999) \cdot N,& \text{if } N\ge \frac{M}{2\delta}\\
&\\
\le N-M/2 ,& \text{if } N< \frac{M}{2\delta}\\\end{array}
\right.
\; .\]
\end{prop}
\begin{proof}
First we recall \cref{ican}. Given $\sc\in \sR$ and $j\ge 0$, there are 0, 1 or 2 symbols $\sa\in \hat \sA$ such that  $n_\sa= j$ and $\sc\cdot \sa$ belongs to $\sR$. If there are one or two symbols, then they are of the form $\ss_\pm^j$ or $\boxdot_\pm (\sc_i-\sc_{i+1})$. Note that $n_{\sa}\ge 2$. 
Thus the formula $i_{can }(\sc\cdot \sa) = i_{can} (\sc)\cdot (\pm j)$ 
defines by induction on the number of letters of $\sc$ an injection:
\[i_{can}: \sR\hookrightarrow (\Z\setminus \{-1,0,1\})^{(\N)}\; .\]
If $\sc$ belongs to $\sS$, we consider the concatenation of 
$i_{can}(\sc)$ with the symbol $N_\sc-n_\sc\ge \lfloor M/2\rfloor +1$. This defines:
\[j_{can}: \sc\in \sS\to i_{can}(\sc)\cdot (N_\sc-n_\sc)\in (\Z\setminus \{-1,0,1\})^{(\N)}=:R\; .\]\label{jcan}
We notice that $j_{can}$ is an inclusion of the set $\{\sc \in \sS: N_\sc= N\}$ into $ E(N)$ with 
\[E(N) :=\left\{ (n_j)_{j}\in R: \sum_j  |n_j| =N\; \& \; \sum_{ |n_j|> M/2}|n_j| = \lfloor \max( \delta N,M/2) \rfloor +1\right\}\; .\]
Then the bounds on the cardinality follows from the next lemma.
\end{proof}
\begin{lemm}[Lem. 5.4 \cite{BM13}]\label{cardSN}
The cardinality of $E(N)$ satisfies:
$$N^3 2^{N-\delta N}(e^2 M^2/(4\delta))^{2(\delta N+1)/M}, \quad\text{with } \delta= 2^{-\sqrt {2 M}}\; .$$
\end{lemm}
We recall the proof in  \cref{apendix combi} p. \pageref{apendix combi}. 
We are now ready to:
\begin{proof}[Proof of the second item of  \cref{geo intuition para selec}]\label{proof: geo intuition para selec}
Let $\breve K^u(f):= \pi_f( \bigcup_{\sc \in \overrightarrow \sS\cap \mathcal L} W^s_\sc)$.  We have obviously $ \breve K^u(f)\subset  K^u(f)=\pi_f( \bigcup_{\sc \in \mathcal L} W^s_\sc)$. 
Let us show that the Lebesgue measure of $\breve K^u(f)$  is positive.
By propositions \ref{length badcomp} and \ref{badcommon}, it holds:
\begin{multline*}\frac{\leb_g(I_\se\setminus  \breve K^u(f))}{\leb_g(I_\se)}\le  3
\sum_{N\ge M/(2\delta)} Card\, \{\sc\in \sS: N_\sc=N\} \cdot 2^{-N (1- 0,66\cdot\delta)}\\
+(2^{\lfloor M/2\rfloor-2\lfloor M/24\rfloor+2}+1)
\sum_{N<M/(2\delta)} Card\, \{\sc\in \sS: N_\sc=N\} \cdot 2^{-N (1- 0,66\cdot\delta)}\; .\end{multline*}
Then we infer \cref{card bad comp}. This gives:
\begin{multline}\label{def:uN}\frac{\leb_g(I_\se\setminus  \breve K^u(f))}{\leb_g(I_\se)}\le  
\sum_{N} u_N \\ \text{with }\left\{\begin{array}{ll}
u_N= (2^{\lfloor M/2\rfloor-2\lfloor M/24\rfloor+2}+1)\cdot 2^{-M/2}\cdot 2^{- 0,66\cdot\delta\cdot N}& \text{ if }N<\frac{M}{2\delta}\\
u_N= 3\cdot 2^{-N
  \frac\delta 3}& \text{ if }N\ge \frac{M}{2\delta}\end{array}\right.
\; .\end{multline}
This measure is small because:
\[  
\sum_{N\le \frac{M}{2 \delta}}2^{-M/2}\cdot 2^{
-0,66\cdot \delta \cdot N  }\le 
\frac{M}{2 \delta}2^{-M/2}=\frac{M}{2 }2^{-M/2+\sqrt{2 M}}=o(2^{-\lfloor M/2\rfloor+2\lfloor M/24\rfloor})\; ,
\]
and 
\[\sum_{N> \frac{M}{2 \delta}}2^{-N  \frac\delta 3}\le  
\frac{2^{ -\frac M 6}}{1-2^{- \frac\delta 3}}\sim \cdot  \frac3{\delta\ln 2} 
\cdot 2^{-\frac{M}6}=\frac 3 {\ln 2}\cdot
2^{\sqrt{M}-\frac{M}6}= o(1)\; .\]
Thus $\leb_g(I_\se\setminus  \breve K^u(f))$ is exponentially small when $M$ is large. 

\bigskip

Let us now now study the $L_p$-norm  $\sum_n l_n ^p$ where 
$(l_n)_n$  are the $\leb_g$-length of the gaps of $I_\se\setminus \breve K^u(f)$.  We are interested if this series converges for $p=1-d<1$.  From the above discussion, the convergence of $\sum_n l_n^{1-d}$ is equivalent to the convergence of the following series:
\[  \sum_N 
2^{(1-\delta \cdot 0.999) \cdot N}
2^{-N (1- 0,66\cdot\delta)(1-d)}\]
which converges if the following is positive:
 \[ (1- 0,66\cdot\delta )(1-d)-(1-\delta \cdot 0.999)>
\frac\delta 3 -d  (1- 0,66\cdot\delta ) \]
 this is equivalent to 
\[d<\frac\delta {3(1- 0,66\cdot\delta)}\; .\]
\end{proof}
The proof of the first statement of  \cref{geo intuition para selec} follows from a more general proposition:
\begin{prop}\label{defsRk'}
For every $k$ there exists a projection $\pi_k: \hat \sA^{\Z^-}\to \hat \sA^{\Z^-}_{\lfloor k \cdot 2^{-M}\rfloor}$ such that  for every  $f$ which is $\lfloor k\cdot 2^{-M}\rfloor$-strongly regular,
the set $\arr \sR_k'(f):=\pi_k(\arr \sR(f))$ satisfies:
 \begin{enumerate}
\item $\arr \sR_k'(f)$ is included in $\arr \sR_{\lfloor k\cdot 2^{-M}\rfloor}(f)$.
\item  For every $\st\in \arr \sR(f)$, with $\st':= \pi_k(\st)$, it holds
$ d(\hat W^{\st'}_u(f) , \hat W^{\st}_u(f))\le  
4\theta \cdot b^{ \lfloor  \frac{k 2^{-M-3}}{M}\rfloor }\; .$
\item It holds $ \log_2\,  Card( \arr \sR_k'(f))\le {2^{-M}k}\; .$
\item For every $\lfloor k\cdot 2^{-M}\rfloor$-strongly regular $\bar f$, if $\sR_{\lfloor k\cdot 2^{-M}\rfloor}(f)=\sR_{\lfloor k\cdot 2^{-M}\rfloor}(\bar f)$ then $\arr \sR_k'(f)=\arr \sR_k'(\bar f)$.
\end{enumerate}
\end{prop}
The proof of this proposition will occupy the full section \ref{Structure of the transversal space to the unstable manifolds}. 
Let us emphasis that the map $\pi_k$ is purely combinatorially defined and this  independently of the dynamics $f$. Thus  $\arr \sR'_k(f)$ is a combinatorially defined subset of $ \arr \sR(f)$. This proposition  aims to defined the $\epsilon$-dense set  $\{S^\st: \st\in  \arr \sR'_k(f)\}$ of $\{S^\st: \st\in  \arr \sR(f)\}$
(as stated in items (1-2-3)).  
Up to my knowledge, all the previous proof of the abundance of non-uniformly hyperbolic basic sets (of Hénon-like maps) involve also some $\epsilon$-dense sets of unstable manifolds, however they define them analytically (e.g. using the estimates on the expansion of some vectors by the dynamics) and not from pure combintatory. A tricky point is that these estimates might vary with the parameter, and so the corresponding set of curves also: some dramatic jump of curves may occur.  To avoid such dramatic scenario, a solution proposed in \cite{PY09} is to attach the full  structure of pieces and curves to parameter intervals which are inductively defined. As a matter of fact, their definition of strongly regular mappings depends on the choice of the family of maps and need furthermore a much  longer induction to be defined (as in \cite{BC2,MV93,WY01,Ta11}). This is not the case here: the definition of strongly regular mappings dependent only on the mapping (and not on the family of maps); this improvement is obtained thanks to the 4th item of the above proposition. 

\bigskip

Before proving \cref{defsRk'}, let us show how all what we already stated imply  our main theorem on the abundance of the strongly regular mappings.
\subsection{Proof of Theorem \ref{SRabundant} on abundance of strongly regular maps}\label{section final proof abundance SR }
Let $(f_a)_{a\in w}$ be a $C^2$-family of maps satisfying the assumptions of \textsection \ref{setting}. 
By the trasversality stated in \cref{trans dim1}, the maximal parameter interval $\Omega_0:=(a_-, a_+)$ satisfies that $P^M_{a_+}(a_+)$ belongs to the right endpoint of $I_{ \ss_-^{\odot \lfloor M/24\rfloor}}(P_{a_+})$ and $P^M_{a_-}(a_-)$ belongs to the right endpoint $-\alpha_0$ of $I_\se(P_{a_-})$. 

To prove Theorem  \ref{SRabundant} it suffices to show the existence of a Borel subset $\Omega\subset \Omega_0$ such that:
\begin{enumerate}[(1)]
\item  for every $a\in \Omega$, the map $f_a$ is strongly regular,
\item the relative measure $\leb (\Omega_0\setminus  \Omega)/\leb(\Omega_0)$ is small.  
\end{enumerate}
By \cref{SRkimpliesSRinfty}  and \cref{SRtoSRk}, this is equivalent to show the existence of a nested sequence $(\Omega_k)_{k\ge 0}$ of Borel subsets $w_k$ of $w$ such that:
\begin{enumerate}[(1')]
\item  for every $k$, for every $a\in \Omega_k$, the map $f_a$ is $k $-strongly regular,
\item  the sum $\sum_{k\ge 0}  \frac{\leb (\Omega_{k}\setminus \Omega_{k+1})}{\leb (\Omega_{0})}$ is small.
\end{enumerate}
Indeed \cref{SRkimpliesSRinfty}  implies that $\Omega=\bigcap_k \Omega_k$ is formed by strongly regular maps, and an elementary computation shows that (\emph{2'}) implies (\emph{2}).

We will give an `induction free' definition of $\Omega_k$ by introducing the following notion:
\begin{defi}[Greatly regular map]
For $k\ge 1$, a  $0$-strongly regular $f$  is \emph{$k$-greatly regular} if for every  $1\le j\le k$ and $\st\in \arr \sR_{j-1}'(f)$, there exists 
$\sc^\st_{j}\in \sR_{\lfloor  2^{-\sqrt{M}}(j-1)  \rfloor}(f)$ such that
\begin{enumerate}[$(GR_1^j)$]
\item   The curve $f(\hat W^\st_u )$ intersects  $Y_{\sr\cdot \sc_{j}^\st}$ without the $\theta^j$-neighborhood of $ \partial_{\aleph(\sg_j)}Y_{\sr\cdot \sc^\st_{j}}$ and $f(\hat W^\st_u )$ does not intersect the left component of $\theta^{j}$-neighborhood of $\partial_{\aleph(\sc^\st_{j})}Y_{\sr\cdot \sc^\st_{j}}$.
\item The word $\sc^\st_{j}$ is  greatly regular, complete and with depth $j$.
\end{enumerate}
\end{defi}
This definition state topological conditions on objects defined by the combinatory of 
$\lfloor 2^{-\sqrt{M}}(k-1) \rfloor$-puzzle structure, and such that the following holds:
\begin{prop} \label{sufficient SRk}
If $f$ is $k$-greatly regular then it is $k$-strongly regular. 
\end{prop}
\begin{proof}
As a complete greatly regular word is strongly regular by \cref{coro 4.29},  it suffices to show that for every $j\le k$, for every $\bar \st\in \arr \sR_{k-1}$, there exists a greatly regular, complete word $\sc_{j}\in \sR_{j-1}(f)$ of depth $j$ such that the curve  $\hat W^{\bar \st}_u$ is in critical position with $(Y_{\sc_{j}},n_{\sc_{j}})$. 

By  \cref{defsRk'}.2, there exists $\st\in \arr \sR'_{j-1}(f)$ such that $\hat W^{\st}_u$ is $4\theta \cdot b^{ \lfloor  (j-1) 2^{-M-3}/M\rfloor }$-close to $\hat W^{\bar \st}_u$. Then $f(\hat W^{\st}_u)$ and $f(\hat W^{\bar \st}_u)$ are $b \cdot 4\theta \cdot b^{ \lfloor  (j-1) 2^{-M-3}/M\rfloor }$-close for the Hausdorff distance.   By $(GR_1^j)$,
there exists a greatly regular, complete word  $\sc_j\in  \sR_{\lfloor  2^{-\sqrt{M}}j  \rfloor}(f)$ of depth $j$ such that the curve $f(\hat W^{\st}_u\cap Y_\boxdot) $ intersects $Y_{\sr\cdot \sc_{j}}$ without the $\theta^j$-neighborhood of $\partial_{\aleph(\sc_{j})}Y_{\sr\cdot \sc_{{j}}}$ and does not intersect the left component of $\theta^j$-neighborhood of $\partial_{\aleph(\sc_{j})}Y_{\sr\cdot \sc_{{j}}}$. 
As the 
distance between $f(\hat W^{\st}_u) $ and $f(\hat W^{\bar \st}_u) $ is small compared to $\theta^j$, the curve $\hat W^{\bar \st}_u$ is in critical position with $(Y_{\sc_{j}}, n_{\sc_{j}})$. 
\end{proof}
\begin{defi}[$\Omega_k$]\label{Omega_k} \index{$\Omega_k$}
For every $k\ge 1$, let $\Omega_k$ be the set of parameters $a$ at which
the map $f_a$ is $k$-greatly regular. 
 \end{defi}
 We remark that $(\Omega_k)_k$ is a decreasing sequence of compact subsets of $\Omega_0$.
 \begin{prop}
 Every component $w$ of $\Omega_k$ is a $k$-combinatorial interval.
 \end{prop}
 \begin{proof}
By \cref{sufficient SRk}, for every $a\in w$, the map $f_a$ is $k$ strongly regular. 
Let us show by induction that the $k$-puzzle structure is constant on $w$. This is obvious for $k=0$. Let $k>1$. By induction, the structure $\sR_{k-1}(f_a)$, $\arr \sR_{\max(k-2,0)}(f_a)$ is constant among $a\in w$.  Thus $\arr \sR_{k-1}(f_a)$ is also constant. Assume for the sake of contradiction, the existence of  $\st\in \arr \sR_{k-1}(w)$  such that $\hat W_u^\st(f_a)$ and $\hat W_u^\st(f_{a'})$ are in critical position with respectively 
$(Y_{\sc}(f_a), n_\sc)$ and $(Y_{\sc'}(f_{a'}, n_{\sc'})$ for two different strongly regular words $\sc, \sc'\in \sR_{k-1}$  of depth $k$. Note that for every $a''\in [a,a']$, the set 
 $Y_{\sc}\setminus \partial_{\aleph{\sc}}Y_{\sc}(f_{a''}) $ is disjoint from
$Y_{\sc'}\setminus \partial_{\aleph{\sc'}}Y_{\sc'}(f_{a''}) $ and at lower bounded positive distance. Using the fact that strongly regular words of depth $k$ form a finite set, we can take the limit $a'\to a$ such that this property occurs and we obtain a contradiction with the continuity of $a''\in w\mapsto \hat W_u^\st(f_{a''})$ given by the induction. 
 \end{proof}
 

We are going to prove \cref{SRabundant} by bounding the number and the lengths of the components of $\Omega_k\setminus \Omega_{k+1}$. To this end, the following will be used many times:
\begin{lemm}[Transfer phase-parameter spaces]\label{transfer} Let $k\ge 0$ and let $w$ be a component of $\Omega_{k}$.  Let $(\sc_-, \sc_+)\in \sR_{k}(w)$ be strongly regular words 
and let $Y_\sc(f_a)$ be the box bounded by the left and right component of respectively $\partial^sY_{\sc_-}(f_a)$
and $\partial^sY_{\sc_+}(f_a)$. For $\st\in \arr \sR_{k}(w)$, let $w'$ be the subset of parameters $a\in w$ such that $f(\hat W^\st_u (f_a))$ intersects the right component of $\partial^s Y_{\sr\cdot \sc}(f_a)$ but not the left component. 
Then $w'$ is an interval, and there exists a universal constant $C$ such that its length satisfies:
\[\frac{\leb\, w'}{\leb\, \Omega_0 } \le C \cdot \max_{a\in w} \leb_g\, (I_\sc (f_a))\quad \text{with }I_\sc(f_a):= Y_\sc(f_a)\cap \R\times \{0\}\]   
\end{lemm}
\begin{proof}
By \cref{prop9.3}, there exists a family $(\pi_{\sc, a})_a$ of functions 
$\pi_{\sc, a}: \R\times [- \theta,  \theta] \to \R$, which is $C^1$-close to the family of the first coordinate projections, and such that  for every $a$, $\pi_{\sc, a}|\R\times \{0\}$ is the canonical inclusion and $\pi_{\sc, a}(\partial^s Y_{\sr\cdot \sc}(f_a))$ is equal to a pair of points $\{x_a^-, x_a^+\}$, satisfying:
\[\partial_a x_a^-=\frac 13 +o(1)\qand \partial-a x_a^+ =\frac13 +o(1)\]  

 For every $\st\in  \arr \sR_k(w)$, let $L(\st, a)$ be the left endpoint   of $\pi_{\sc, a}(f_a(W^\st_u))$.  By \cref{transversity de Wu}, here exists $\eta$ small when $M$ is large such that:
\[(1-\eta)h \le L(\st, a+h)-
L(\st, a)\le (1+\eta) h\quad \text{when } h\to 0\; .\] 
 
The two latter motions are uniformly transverse, and the difference of the speed is close to $2/3$. Thus $w'$ is a (possibly empty) interval, and its length is at most $\max_{a\in w} \leb \, \pi_{\sc, a} (Y_{\sr\cdot \sc}(f_a))$ times a factor close to $3/2$.  We will see in  \cref{pre Expansion SR piece}, that $\leb \, \pi_{\sc, a} (Y_{\sr\cdot \sc}(f_a))$ is of the same order as  $4^{-M} \leb_g \, I_{\sc}(f_a)$.  This gives:
\begin{equation}\label{tranfers eq 1} \exists a\in w: \leb(w')\asymp   4^{-M}\leb_g \, (I_{\sc}(f_a))\; .\end{equation}
 Using the above bound with $f_a=\hat P_a$, $Y_{\sc_-}=Y_{\ss_-^{\aleph(0)}}$ and $Y_{\sc_+}=Y_\se$, we obtain:
  \begin{equation}\label{tranfers eq 2} \leb(\Omega_0)\asymp   4^{-M}\; .\end{equation}
Equations \eqref{tranfers eq 1} and \eqref{tranfers eq 2} imply the sought result. 
\end{proof}
\begin{defi}[ $\sS\sR^{(k)}$]
Given $f$ 0-strongly regular, let $\widehat{\sS\sR}^{(k)}(f)$ be the set of greatly regular words $\sg\in \sR(f)$ of depth $k$ and let ${\sS\sR^{(k)}}(f)$ be the subset of $\widehat{\sS\sR}^{(k)}(f)$ formed by complete words.
\end{defi}
\begin{prop} \label{order greatly regular}
Every $\sg\in \widehat{\sS\sR}^{(k)}(f)$ has its order $n_\sg\le  \lfloor \frac M2\rfloor\cdot (1 +2^{1-\sqrt {2M}})k$. 
Moreover, $\widehat{\sS\sR}^{(k)}(f)$ is included in $\sR_{ \lfloor 2^{-\sqrt {2M}} k \frac M2\rfloor}(f_a)\subset \sR_{ \lfloor 2^{-\sqrt {M}} (k-1)\rfloor}(f_a)$.
\end{prop}
\begin{proof}
The first statement follows from the same argument as for \cref{ordre SR}. 
The second is implied by the first and  \cref{link_btw_N_c&R_k} which states that any  $\sg\in \sS(f_a)$ belongs to  $\sR_{ \lfloor 2^{-1-\sqrt {2M}} n_\sg\rfloor}$.\end{proof}
A first consequence of \cref{transfer} and \cref{order greatly regular} is the following:
\begin{coro}\label{distance entre compo} Two different components $w_+$ and $w_-$ of $\Omega_k$ are at least $\leb(\Omega_0)  4^{ - 2Mk}$-distant. 
\end{coro}
\begin{proof}
Let $k\ge 1$ and put $k':=\lfloor k2^{-\sqrt M}\rfloor$. If  $w_+, w_-$ are in different components of $\Omega_{k'}$, then we can use the bound given by the induction at step $k'$. Otherwise, $w_+$ and $w_-$ are in a same component $w_0$ of $\Omega_{k'}$. Thus  by \cref{defsRk'}.3 and \cref{order greatly regular},  for every $\pm\in \{-,+\}$, it holds $\arr \sR_k'(w_\pm)=\arr \sR_k'(w_0)$ and $ \sS\sR^{(k)}(w_\pm)=\sS\sR^{(k)}(w_0)$.  Every  $\st\in \arr \sR'_k(w_0)$ satisfies $(GR_1^k)$  with a same $\sc_k^\st(w_\pm)\in  \sS\sR^{(k)}(w)$ for every $a\in w_\pm$. By \cref{transfer}, 
the set of parameters $a\in w_0$ such that each $\st\in \arr \sR'_k(w_0)$ satisfies $(GR_1^k)$  with $\sc_k^\st(w_\pm)$ is an intersection of intervals, and so it is connected. Consequently, there exists $\st\in \arr \sR'_k(w_0)$ such that $\sc^\st(w_+)\neq \sc^\st(w_-)$ and so the pieces $Y_{\sc^\st(w_+)}$ and $Y_{\sc^\st(w_-)}$ are disjoint. By \cref{estim_epaisseur}, the width of each component of  $\partial_{\aleph(\sc^\st (w_+)} Y_{\sc^\st(w_+)}$ is at least $4^{-n_{\sc^\st(w_-)}-2\aleph({\sc^\st(w_-)})}$. Thus \cref{transfer} and \cref{order greatly regular} imply the sough lower bound.
\end{proof}
Lemma \ref{transfer} will enable us to bound the component of $\Omega_k\setminus \Omega_{k-1}$ using the estimate of propositions \ref{length badcomp} and \ref{badcommon}. We also need a bound on the number of components. Our strategy is to use -- as for the proof of \cref{card bad comp} --  the inclusions $i_{can}$ (see \cref{ican}) and $j_{can}$ of $\sR_k(f_a)$ into $(\Z\setminus \{0,\pm\})^\N$. However if such maps are injective on each $\sR_k(f_a)$, they are not injective on $\bigcup_{a\in \Omega_k}\sR_k(f_a)$. We are going to show their injectivity when restricted to the set of used symbols.


\begin{prop}\label{La nouvelle}
For any component $w$ of $\Omega_{\lfloor k 2^{-M}\rfloor}$, element $\st\in \sR'_k(w)$, parameters  $a_+, a_-\in w$, words  $\sg_+\in \widehat{\sS\sR}^{(k)}(f_{a_+})$  and $\sg_-\in \widehat{\sS\sR}^{(k)}(f_{a_-})$, if:
\begin{itemize}
\item  $i_{can}(\sg_+)=i_{can}(\sg_-)$,
\item   $\forall \pm\in \{-,+\}$, the curve  $f_{a_\pm}(W^\st)$ intersects exactly one component of $\partial_s Y_{\sr \cdot \sg_\pm}$,
\end{itemize}
then  $a$ and $a'$ belong to the same component of $\Omega_{\lfloor k 2^{-\sqrt M}\rfloor}$ and $\sg_+=\sg_-$.
\end{prop}
\begin{proof}
First let us recall that each $ \sg_\pm $ belongs to $\widehat{\sS\sR}^{(k)}(f_{a_\pm})\subset \sR_{\lfloor (k-1) 2^{-\sqrt M}\rfloor}(f_{a_\pm})$ by \cref{order greatly regular}. The proposition is shown by induction on $k$.

 If $k< 2^{\sqrt M}$,  then $\lfloor k 2^{-\sqrt M}\rfloor=0$ and so 
for each $a_\pm$, the word $\sg_\pm$ belongs to $\sR_{0}(\Omega_0)$. Then the proposition follows from the injectivity of $i_{can}|  \sR_{0}(\Omega_0)$ (see \cref{ican}). 

Let $k\ge  2^{\sqrt M}$ and assume that the proposition holds true at step $k-1$.  
Let $\sg_\pm= \sg_\pm'\cdot \sg''_\pm$ with $\sg_\pm'\in \widehat{\sS\sR}^{(k-1)}(f_{a_\pm})$ and $\sg''_\pm$ which begin with a symbol in $\sA_0$. 
Note that $i_{can}(\sg_\pm')$ is the initial segment  of $i_{can} (\sg_\pm)$ which ends before the last integer of absolute value $\le M$. Thus $i_{can }(\sg'_+)=i_{can }(\sg'_-)$. 
Also $Y_{\sg'_\pm}$ contains $Y_{\sg_\pm}$, and so by induction $a_+$ and $a_-$ belong to the same component $w$ of $\Omega_{\lfloor (k-1) 2^{-\sqrt M}\rfloor}$. Again by injectivity of $i_{can}|  \sR_{\lfloor (k-1) 2^{-\sqrt M}\rfloor}(w)$ and since $\sg_-, \sg_+$ belong to $\sR_{\lfloor (k-1) 2^{-\sqrt M}\rfloor}(w) $ by \cref{order greatly regular},  it comes that $\sg_+$ and $\sg_-$ are equal to a same $\sg\in \sR_{\lfloor (k-1) 2^{-\sqrt M}\rfloor}(w) $. 
Furthermore, by \cref{transfer},  with $\sc= \sc^-=\sc^+= \sg$, it holds:
\[|a-a'| \le {\leb\, \Omega_0 } \max_{a\in w} \leb_g\, (I_\sg (f_a))\quad \text{with }I_\sg(f_a):= Y_{\sg}(f_a)\cap \R\times \{0\}\]   
Now we infer that $\leb_g\, (I_\sg (f_a))\le 2^{-k}$ by \cref{expansion SSR} (because  $n_{\sg}\ge 2k$ and $k$ is large). Thus $|a-a'|\le {\leb\, \Omega_0 }\cdot 2^{-k}$, and so by \cref{distance entre compo}, it comes that $a$ and $a'$ belongs to the same component of $\Omega_{\lfloor k 2^{-\sqrt M}\rfloor}$. 
\end{proof}

Still we need to bound the number of component of $\Omega_{k2^{-M}}$ to use the previous previous proposition. To this end we use the following:
\begin{prop}\label{card pi0 Omegak}
The number of components of $\Omega_k$ is at most $2^{k\frac M 2(1+ 2^{3-\sqrt{M}})}$.
\end{prop}
\begin{proof}
For every $k$, let $C_k$ be the cardinatility of the components of $\Omega_k$ and let $k':=\lfloor k 2^{-\sqrt{M}}\rfloor$. Let $\pi_0(\Omega_{k'})$ be the set of components of $\Omega_{k'}$ and let $w\in \Omega_{k'}$. 
 There are at most $C_{k'}$ such components. 

We are going to bound the number $C_k(w)$ of components of $\Omega_k\cap w$ and we will use the recurrence relation:
\begin{equation} \label{recu rel card}
 C_k\le C_{k'}\cdot \max_{w\in \pi_0(\Omega_{k'})} C_k(w)\; .\end{equation}
The interest of the above recurrence relation is that for each $w\in \pi_0(\Omega_{k'})$, the sets $\arr\sR_k'(f_a)$ and $\sS\sR^{(k)}(f_a)$ do not depend on $a\in w$ by \cref{defsRk'}.4 and \cref{order greatly regular}:  $\arr \sR_k'(f_a)=:\arr \sR_k'(w)$ and 
$\sS\sR^{(k)}(f_a)= \sS\sR^{(k)}(w)$. 
 
We notice that $\Omega_k\cap w$ is equal to the following intersection:
\[\Omega_k\cap w =  \bigcap_{ \st \in \arr \sR_k'(w)} \Omega_k^\st\cap w,\]
where $ \Omega_k^\st$ is the subset of parameters $a\in w$ for which there exists $\sc_k^\st(a) \in \sS\sR^{(k)}(w)$ satisfying $GR_1^k$ with $\st$ (and $GR_2^k$ by definition of $\sS\sR^{(k)}(w)$). By \cref{transfer}, for each $\sc_k\in  \sS\sR^{(k)}(w)$, the set of parameter for which $\sc_k^\st(a)=\sc_k$ is an interval.  

By \cref{order greatly regular}, the order of every $\sg \in \sS\sR^{(k)}(w)$  is at most $\frac M2(1 +2^{1-\sqrt{2M}})k$. Thus by \cref{ican}, the cardinality of $\sS\sR^{(k)}(w)$ is at most $2^{ \frac M2(1 +2^{1-\sqrt{2M}})k}$. Consequently the number of components of $\Omega_k^t$ is at most $2^{ \frac M2(1 +2^{1-\sqrt{2M}})k}$.  As the sets $\Omega_k^t$ are subsets of $ \R$, the number of components $C_k(w)$ of
$\bigcap_\st \Omega_k^\st$ is at most:
\[C_k(w)\le Card\, \arr \sR_k'(w) \cdot \max_{\st} Card\,  \pi_0   (\Omega_k^t)\le 
Card\, \arr \sR_k'(w) \cdot 2^{ \frac M2(1 +2^{1-\sqrt{2M}})k+1}
\; .\]
Now we infer that the cardinality of  $\arr \sR_{k}'(w) $ is at most $2^{2^{- M}k}$ by \cref{defsRk'}.3 to obtain:
\[ \log_2 C_k(w)\le  {2^{- M}k} + { \frac M2 (1+2^{1-\sqrt{2M}})k}\; .\] 
By \eqref{recu rel card}, this gives the following recurrence relation:
\[\log_2 C_k\le \log_2 C_{k'} + 2^{- M}k + \frac M2 (1+2^{1-\sqrt{2M}})k
\; .\]
Using this formula inductively with the relation $k'=\lfloor k 2^{-\sqrt M}\rfloor\le k\cdot  2^{-\sqrt M} $, it comes:
\[\log_2 C_k\le \sum_{ n\ge  0 }
(\frac M2(1 +2^{1-\sqrt{2M}})+2^{- M})\cdot 
k \cdot 2^{-n\sqrt{M}}
\; .\]
And so:
\[\frac{\log_2 C_k}{k} \le 
(\frac M2(1 +2^{1-\sqrt{2M}})+2^{- M})
 \sum_{ n\ge  0 } 2^{-n\sqrt{M}}\le  \frac M2(1 +2^{3-\sqrt{M}})\; .\]
\end{proof} 
We recall that the proof of  \cref{geo intuition para selec} p. \pageref{proof: geo intuition para selec} was  obtained using bound of the width of $Y_\sc \setminus  \bigcup_{\sc' \in \sS(> n_\sc )\cap \sR_k} Y_{\sc'}$ in function of $N_\sc$ for every $\sc\in \sS$. This will be used to obtain a bound on the difference of pieces  given by \emph{complete} words thanks to the following: 
\begin{lemm}\label{lemm relou}
Let $k\ge 1$,  $k'= \lfloor 2^{-\sqrt{M}} k\rfloor$,  and 
let $f$ be $k'$-strongly regular and $\sc\in \sS\sR^{(k-1)}$.
\[Y_\sc\setminus \bigcup_{\sc'\in \sS\sR^{(k)} }Y_{\sc'} = 
\bigcup_{\sq=\sc\cdot \sg \in \widehat{\sS\sR}^{(k-1)}\cap \sR_k} Y_\sq \setminus \bigcup_{\sq'\in \sS(>n_\sq)\cap \sR_k} Y_{\sq'}\; .\]
\end{lemm}
\begin{proof}
First let us recall that $N_\sc\le (M/2+1)(k+1)$. Thus $N_\sc \le \max(M/(2\delta),  k'/\delta)$, with $\delta=2^{-\sqrt{2M}}$. Moreover  for every $\sq=\sc\cdot \sg \in \widehat{\sS\sR}^{(k-1)}$, it holds $N_\sq= N_\sc$.  Thus by the last statement of \cref{length badcomp}, it holds $\sq\in \sR_k$ and 
\[\bigcup_{\sq=\sc\cdot \sg \in \widehat{\sS\sR}^{(k-1)}\cap \sR_k} Y_\sq \setminus \bigcup_{\sq'\in \sS(>n_\sq)\cap \sR_k} Y_{\sq'}=\bigcup_{\sq=\sc\cdot \sg \in \widehat{\sS\sR}^{(k-1)}}  Y_\sq \setminus \bigcup_{\sq'\in \sS(>n_\sq)} Y_{\sq'}\; .
\]
Let us show that the latter set is equal to $Y_\sc\setminus \bigcup_{\sc'\in \sS\sR^{(k)} }Y_{\sc'}$. 
If $z\in Y_\sc\cap \bigcup_{\sc'\in \sS\sR^{(k)} }Y_{\sc'} $, then 
$z\in Y_{\sc'}$, with $\sc'=\sc\cdot \sg\in \sS$ with $\sg$ complete and prime. In other words $\sg=\sg'\cdot \ss$ with $\sg'$ of depth $0$ and $\ss\in \sA_0$.  Then with $\sq= \sc \cdot \sg'$, it holds that $z$ belongs to $Y_\sq$ and to $\bigcup_{\sq'\in \sS(>n_\sq)} Y_{\sq'}$. Thus $z$ does not belong to $\bigcup_{\sq=\sc\cdot \sg \in \widehat{\sS\sR}^{(k-1)}}  Y_\sq \setminus \bigcup_{\sq'\in \sS(>n_\sq)} Y_{\sq'}$.

If $z\in Y_\sc\setminus \bigcup_{\sc'\in \sS\sR^{(k)}} Y_{\sc'}$, then let $\sq=\sc\cdot \sa_1\cdots \sa_j\in \sS$ be such that $z\in Y_\sq$ and $j$ is maximal. Note that $\sq \in \widehat{\sS\sR}^{(k-1)}$. Also by maximality of $j$, it holds $z\in  Y_\sq \setminus \bigcup_{\sq'\in \sS(>n_\sq)} Y_{\sq'}$. 
\end{proof}
\begin{proof}[Proof of Theorem \ref{SRabundant}]

To prove the theorem it suffices to show inequality $(2')$ which states that the sum $\sum_{k\ge 0}  \frac{\leb (\Omega_{k}\setminus \Omega_{k+1})}{\leb (\Omega_{0})}$ is small. We denote by $C$ a computational constant independent of $M$. Let $l_k:= \frac{\leb\,  (\Omega_{k}\setminus \Omega_{k+1})}{\leb\, \Omega_0}$ and $k'=\lfloor 2^{-\sqrt M} k\rfloor $. 
Given $\sc\in \sS(f_a)$, we put $I_\sc := Y_{\sc}\cap \R\times \{0\}$. If $\sc$ is furthermore complete, we put $\partial_{\aleph(\sc)} I_\sc := \partial_{\aleph(\sc)} Y_{\sc}\cap \R\times \{0\}$.

\noindent {\bf Case $k< 2^{M}$.} By \cref{defsRk'}.3, the set $\arr \sR_{k}'(f_a)$ does not depend on $a\in \Omega_0$ and it is formed by a unique element $\st $. 

Let $(w_i)_i$ be the components of $\Omega_{k'}$. By \cref{order greatly regular}, it holds $\widehat{\sS\sR}^{(k)}(f_a)\cup \widehat{\sS\sR}^{(k-1)}(f_a)\subset  \sR_{k'}(w_i)$ for every $i$ and $a\in w_i$.  Let $\widehat{\sS\sR}^{(k)}(w_i, \st)$ and $\widehat{\sS\sR}^{(k-1)}( w_i, \st)$ be the subsets of words $\sc$ in resp. $\widehat{\sS\sR}^{(k)}(w_i )$ and $\widehat{\sS\sR}^{(k-1)}( w_i )$ such that $f_a(W^\st_u)$ intersects the right component of $\partial^sY_{\sr\cdot \sc}(f_a)$ but not its left component for a certain $a\in w_i$. 

By \cref{transfer} we have
$l_k\le C l_k'+Cl_k''$ with
\begin{align*}l_k'&:=
\sum_i 
\sum_{\sc\in \sS\sR^{(k-1)}(w_i,\st )} \max_{a\in w_i}  \leb_g (I_\sc
\setminus \bigcup_{\sc'\in \sS\sR^{(k)}(w_i,\st )}I_{\sc'})(f_a)\; ,\\
l_k''&:= \sum_i \sum_{\sc\in \sS\sR^{(k )}(w_i,\st )} \max_{a\in w_i} \leb_g(\partial_{\aleph(\sc)}I_{\sc})(f_a)\; .
\end{align*}
The first inequality bounds  the measure of the set of parameters $a$ such that $f_a(W^\st_u)$ does not intersect the right component but not the left component of $\partial^s Y_{\sr \cdot \sc}$, with $\sc\in  \sS\sR^{(k)}$. The second inequality bounds the measure of the set of parameter $a$ such that $f_a(W^\st_u)$  intersects only the left component of $\partial^s Y_{\sr \cdot \sc}$ but $W^\st_u$ is not in critical position with $(Y_\sc, n_\sc)$. Actually, by taking the constant $C$ slightly larger, the latter inequality ensure also the $\theta^k$-room stated in  $(GR_1^k)$ because $\theta^k$ is small compared to the length of each component of $\partial_{\aleph(\sc)}I_{\sc}$. 
 In the bound of $l_k'$ we infere \cref{lemm relou} to obtain:
\[l_k'\le 
\sum_i 
\sum_{\sq\in \widehat{\sS\sR^{(k-1)}}(w_i,\st )} \max_{a\in w_i}  \leb_g (I_\sq
\setminus
 \bigcup_{\sq'\in \sS(>n_\sq)\cap \sR_{k'}(w_i)}I_{\sc'})(f_a)\]
Now we proceed as in the proof of \cref{geo intuition para selec} p. \pageref{proof: geo intuition para selec}:  we use \cref{length badcomp}  which imply:
\[l_k'\le 
\sum_i 
\sum_{\sq\in \widehat{\sS\sR^{(k-1)}}(w_i,\st )} 2^{-(1-0,66\cdot \delta)\cdot N_\sq}\]
For every $N$, by \cref{La nouvelle}, the  cardinality of $\sum_i Card\{\sq\in \widehat{\sS\sR^{(k-1)}}(w_i,\st ): N_\sq=N\}$ is the same of the one of $\bigcup_i i_{can} \{\sq\in \widehat{\sS\sR^{(k-1)}}(w_i,\st ): N_\sq=N\}$. The latter was bounded during the proof of \cref{card bad comp}, by the one of $E_N$. The  cardinality of $E_N$ was bounded by \cref{cardSN}. This leads to the following bound:
\[
\log_2 \sum_i Card\{\sq\in \widehat{\sS\sR^{(k-1)}}(w_i,\st ): N_\sq=N\} \le \left\{\begin{array}{ll} 
\le (1-\delta \cdot 0.999) \cdot N,& \text{if } N\ge \frac{M}{2\delta}\\
&\\
\le N-M/2 ,& \text{if } N< \frac{M}{2\delta}\\\end{array}
\right.
\; .\]
Thus we obtain the same bound as  in 
\eqref{def:uN} p. \pageref{def:uN}, where $u_{N}$ was defined:
\[ \sum_{k<2^{M}} l_k'\le \sum_{N\ge 0} u_N= o(1)\; .\]
And similarly  we have:
\[ \sum_{k<2^{M}} l_k''\le \sum_{N\ge 0} u_N= o(1)\; .\]
\noindent {\bf Case $k\ge 2^N$.} Put $k'':= \lfloor k2^{-M}\rfloor$ and let 
 $\pi_0(\Omega_{k''})$ be the set of connected components of $\Omega_{k''}$. 
Now $\arr \sR_k'(f_a)$ depends a priori on the component $w\in \pi_0(\Omega_{k''})$ which contains $a$. Furthermore, the set  $\arr \sR_k'(w)$ is in general not trivial.
Thus we have:
$l_k\le C l_k'+Cl_k''$ with: 
\begin{align*}l_k'&:= \sum_{\substack{w\in \pi_0(\Omega_{k''})\\ \st \in \arr \sR_k'(w)}}\quad 
\sum_{\substack{w_i\in \pi_0(\Omega_{k'}\cap w)\\\sc\in \sS\sR^{(k-1)} (w_i,\st )}} \quad \max_{a\in w_i}  \leb_g (I_\sc
\setminus \bigcup_{\sc'\in \sS\sR^{(k)}(w_i,\st )}I_{\sc'})(f_a)\\
l_k''&:= \sum_{\substack{w\in \pi_0(\Omega_{k''})\\ \st \in \arr \sR_k'(w)}}\quad 
\sum_{\substack{w_i\in \pi_0(\Omega_{k'}\cap w)\\\sc\in \sS\sR^{(k)} (w_i,\st )}}  \quad   \max_{a\in w_i} \leb_g(\partial_{\aleph(\sc)}I_{\sc})(f_a)
\end{align*}
Using the same argument as for the case $k<2^N$, we obtain:
\[l_k'+l_k''\le   \sum_{\substack{w\in \pi_0(\Omega_{k''})\\ \st \in \arr \sR_k'(w)}}\quad \sum_{N\ge k} u_N\]
We recall that the cardinality of $\arr \sR_k'(w')$ is $\le 2^{k\cdot 2^{-M} }$ by \cref{defsRk'}.3. By  \cref{card pi0 Omegak}
the number of components of $\Omega_{k''}$ is at most $2^{k2^{-M}  M }$.  Furthermore,  $u_j=3\cdot 2^{-j \delta/3}$ with $\delta=  2^{-\sqrt {2M}}$. This implies:
\[ l_k'+l_k''\le  2^{k\cdot 2^{-M} } 2^{k2^{-M}M}
\sum_{j\ge k} 2^{-j \delta}\le 
2^{2 k2^{-M} M }
  \frac{2^{-\delta k}}{1-2^{-\delta }}\le   \frac{2^{-\delta k/2}}{\delta \ln 2}\; .
\]
Now we sum this among $k$ to obtain:
\[ \sum_{k\ge 2^M} l_k' +l_k''\le  2\frac{2^{-\delta 2^M/2}}{(\delta \ln 2)^2 }=o(1)\; .
\]
\end{proof}
\section{Structure of the transversal space to the unstable manifolds}
\label{Structure of the transversal space to the unstable manifolds}
In this section we are going to define a combinatorial  and arithmetical structure on $\arr \sR$ from which we will deduce the proof of \cref{defsRk'}. As we mentioned,  this is a new and key argument for our proof of the parameter selection. 
\subsection{Right divisibility on $\hat \sA^{(\N)}$}\label{section righ divisibility}

We are going to define a partial order on $\sA^{(\N)}$ called \emph{the right divisibility}\index{Right divisibility} and denoted by  $|$. 
It aims to define a combinatorial upper bound of the $C^1$-distance  between horizontal curves $S^\st$ and $S^{\st'}$, for $\st, \st'\in \overleftarrow \sR$,  using \cref{contractionPP} and \cref{graphtransformpara} $(ii)$ and $(iii)$. Item $(iii)$ of \cref{graphtransformpara} will make the divisibility relation different from its usual meaning in the pseudo-monoid theory.  It takes care of the relation between each parabolic symbol $\boxdot_\pm (\sc_i-\sc_{i+1})$ and word $\sc_i$. This will define an ``arithmetic'' distance $dist$ on $\overleftarrow \sR$, so that $\st\mapsto \hat W_u^\st\in \mathcal H$ is $\theta$-contracting  and $\overleftarrow \sR$ is of small Haudorff dimension. 

Beyond this motivation, the following section is purely combinatory, and in particular does not depend on any mapping $f$ of $\R^2$.

\begin{defi}[Right divisibility] We define a relation between two elements $\sg$ and $\sg'\in \hat \sA^{(\N)}$, denoted by $\sg |\sg'$  by induction on $n_\sg\ge 0$.  If $\sg |\sg'$, we will say that $\sg$ is \emph{right-divisible} by $\sg'$.  
   If $\sg$ is the empty word $\se$ (and so $n_\sg=0$), then $\sg |\sg'$ iff $\sg'=\se$. 
 If $n_\sg>0$, then $\sg|\sg'$ if one of the following conditions hold:
\begin{enumerate}[$(D_1)$] 
\item ${ \sg}=\sg'$ or $\sg'=\se$ with $\se$ the empty word, 
\item $ \sg$ is of the form $ \boxdot_\pm (\sc_l-\sc_{l+1})$ and satisfies $\sc_l|\sg'$ with $\sc_l, \sc_{l+1}\in \sA^{(\N)}$,
\item there are splittings $ \sg=\sg_3\cdot \sg_2\cdot \sg_1$ and $\sg'=\sg'_2\cdot \sg_1$ into words $\sg_1, \sg'_2$ of $\hat \sA^{(\N)}$ such that $\sg_2|\sg_2'$ and $n_{\sg_3}+n_{\sg_1}\ge 1$.   
\end{enumerate}  
We notice that in $(D_2)$, it holds $n_{\sc_l}< n_\sg$ and in $(D_3)$ it holds $n_{\sg_2}< n_\sg$. Thus the induction on $n_{ \sg}$ is well defined.
\end{defi}
\begin{exem}\label{exemsimple}Let $\sg= \sa_1\cdots \sa_j$ with $\sa_i\in \sA_0$ for every $i$. Then for every $\sg'\in \hat \sA^{(\N)}$, it holds $\sg|\sg'$  iff  $\sg'= \sa_{j'}\cdots \sa_j$ for $j'\le j$ or $\sg'=\se$. Indeed we cannot use the rule $(D_2)$. 
\end{exem}
\begin{exem}\label{CvsNC} If $\sg = \sa_1\cdots \sa_j\boxdot_+(\sa_{j+1}\cdots \sa_{j+j'}-\sa_{j+1}\cdots \sa_{j+j'+1})$, with $\sa_i\in \sA_0$ for every $1\le i\le j+j'+1$, then 
$\sg|\sg'$ iff one of the following equality occurs:
\begin{itemize}
\item $\sg' = \sa_i\cdots \sa_j\boxdot_+(\sa_{j+1}\cdots \sa_{j+j'}-\sa_{j+1}\cdots \sa_{j+j'+1})$ for $i\le j$,
\item $\sg' = \boxdot_+(\sa_{j+1}\cdots \sa_{j+j'}-\sa_{j+1}\cdots \sa_{j+j'+1})$,
\item $\sg' = \sa_{j+i}\cdots \sa_{j+j'}$ for $i\ge 1$.
\end{itemize}
\end{exem}

The following Lemma is useful to show properties of the right divisibility:
\begin{lemm}\label{lemma for the division} For all $\sg, \sg'\in\hat \sA^{(\N)}\setminus \{\se\}$ with $\hat \sA$-spellings $\sg=\sa_{-j}\cdots \sa_0$ and $\sg'=\sa'_{-j'}\cdots \sa_{-1}'\cdot  \sa'_0$, it holds that $\sg|\sg'$ iff there exists $m\le  \min(j,j')$ such that $\sa_{-n}=\sa'_{-n}$ for every $n< m$ and 
$\sa_{-m}| \sa'_{-j'}\cdots \sa_{-m}'$.   
\end{lemm}
\begin{proof} We proceed by induction on $n_{\sg}$. 
If $n_\sg\le M$, then $\sa_{-j},\dots, \sa_0$ are in $\sA_0$ and we saw in \cref{exemsimple} that $\sg'= \sa_{-j'}\cdots  \sa_0$ and $j'\le j$. Thus the result holds with $m=j'$: we have indeed  $\sa_{-m}|\sa'_{m}$ by $(D_1)$. 
If  $n_{\sg}\ge M+1$, there are two possibilities:
\begin{itemize}
\item $\sg$ is equal to a single letter $\sa_0=\boxdot_{\pm} (\sc_l- \sc_{l+1})$ and so the lemma is satisfied with $m=0$,
\item $\sg$ can be split via  $(D_3)$ and so the lemma follows from the induction hypothesis.
\end{itemize}\end{proof}
Let us illustrate this lemma by a discussion on the nature of the last letter of a pair of words $(\sg,\sg')$ such that $\sg|\sg'$. 
\begin{coro}\label{divi complete}For every pair of non empty words $\sg, \sg'\in \sA^{(\N)}\setminus \{\se\}$ such that $\sg|\sg'$,  with $\sa_0$ and $\sa'_0$  the last $\hat \sA$ letters of respectively $\sg$ and $\sg'$, it holds:
\begin{itemize}
\item If $\sa_0\neq\sa'_0$ then $\sa_0$ is of the form $\square_\pm(\sc-\sc')$, $\sa'_0\in \sA_0$  and $\sc|\sg'$.  
\end{itemize}
\end{coro}
\begin{proof}
By Lemma \ref{lemma for the division}, $m$ cannot be greater than $0$ since otherwise $\sa_0=\sa_0'$. Thus $m=0$, and so $\sa_0|    \sg'$. 
%
\end{proof}
As announced, we have:
\begin{prop}The right divisibility relation $|$ is a partial order on $\hat \sA^{(\N)}$.
\end{prop}
\begin{proof} 
The reflexivity follows from the fact that $(D_2)$ and $(D_3)$ reduce the order. 
Let us prove the transitivity of the divisibility relation. Let $\sg,\sg',\sg''\in \hat \sA^{(\N)}$ be such that $\sg|\sg'$ and $\sg'|\sg''$. We are going to prove that $\sg|\sg''$ by induction on $n_{\sg}$.
We put $\sg= \sq\cdot \sa_0$, $\sg'=\sq'\cdot \sa'_0$ and $\sg'' =\sq''\cdot \sa''_0$, with $\sa_0,\sa_0',\sa_0''\in \hat \sA$. 

If $\sa_0=\sa_0' =\sa_0''$ then we remove this last letter to $\sg, \sg', \sg''$ and we still have the same divisibility relations by $(D_3)$. Thus we can conclude by induction on $n_{\sg}$.

If $\sa_0=\sa_0'$ but $\sa_0' \not =\sa_0''$, by \cref{divi complete} it holds  $\sa_0=\sa_0'$ and $\sa_0'|\sg''$. Using $(D_3)$ we get that $\sg| \sg''$.
If $\sa_0\not =\sa_0'$, by \cref{divi complete} it holds that $\sa_0$ is of the form $\boxdot _{\pm} (\sc_j-\sc_{j+1})$ and $ \sc_j|\sg'$. By induction on $n_\sg$, it comes$ \sc_j|\sg''$ and so $\sg| \sg''$.\end{proof}

The following states some other fundamentals properties of the divisibility relation. 
\begin{prop}\label{propdivi}  For all $\sg,\sg',\sg''\in \hat \sA^{(\N)}$, we have:
\begin{itemize}
\item[$(i)$] $\sg|\sg'\Rightarrow n_{\sg}\ge n_{\sg'}$ with equality only if $\sg=\sg'$,
\item[$(ii)$]  $\sg|\sg'$ and $\sg | \sg''$ and  $n_{\sg'}\ge n_{\sg''}\Rightarrow \sg'|\sg''$,
\item[$(iii)$] $\sg|\sg' \Leftrightarrow \sg\cdot \sg''| \sg'\cdot \sg''$.\end{itemize}
\end{prop}  
\begin{proof} Property $(iii)$ is an immediate consequence of Condition $(D_3)$. Property $(i)$ is shown easily by induction on $n_{\sg}$. To prove Property $(ii)$, we write the $\hat \sA$-spelling of the words $\sg$, $\sg'$, $\sg'' $:
  \[\sg=\sa_{-j}\cdots \sa_0,\quad  \sg'=\sa'_{-j'}\cdots \sa'_0,\quad \sg''=\sa''_{-j''}\cdots \sa''_0.\]
  By Lemma \ref{lemma for the division}, there exist maximal $m'\le\min(j,j')$ and $m''\le \min(j,j'' )$ such that:
  \begin{itemize}\item $\sa_{-n}= \sa'_{-n}$ (resp. $\sa_{-n}= a''_{-n}$) for every $n< m'$ (resp. $n< m''$),
  \item $\sa_{-m}|\sa'_{-j'}\cdots \sa'_{-m'}$ and $\sa_{-m''}|\sa''_{-j''}\cdots \sa''_{-m''}$.
  \end{itemize}
  If $m'>m''$ then $\sg'|\sg''$ by $(D_3)$, and we are done.  Similarly if $m'<m''$ then $\sg''|\sg'$, then by  $(i)$ and the assumption of $(ii)$, it holds $n_{\sg'}=n_{\sg''}$ and $\sg'=\sg''$; this contradicts $m'<m''$. 

If $m'=m''$ then the order of $\sa'_{-j'}\cdots \sa'_{-m'}$ is at least the order of $\sa''_{-j''}\cdots \sa''_{-m''}$. Also to show that $\sg'|\sg''$ it suffices to show that $\sa'_{-j'}\cdots \sa'_{-m'}|a'' _{-j''}\cdots \sa'_{-m'}$ by $(iii)$. 

If $\sa_{m}$ is in $\sY_0$ then both non-empty words  $\sa_{j'}'\cdots \sa'_{-m'}$ and $\sa_{j''}''\cdots \sa''_{-m'}$ must be equal to $\sa_{m}$ and we are done. If $\sa_{-m}\notin \sY_0$ then it is of the form $\boxdot_\pm (\sc_l-\sc_{l+1})$.  If $\sa_{j'}'\cdots \sa'_{-m'}= \sa_{-m}$ then we are done. Otherwise, by \cref{divi complete}, the word $\sc_l$ is divisible by both $\sa'_{-j'}\cdots \sa'_{-m'}$ and  $\sa''_{-j''}\cdots \sa''_{-m'}$. We conclude by induction on $n_{\sg}$.  
  \end{proof} 
  
  By properties $(i)$ and $(ii)$, we can define:
\begin{defi}[GCM] 
The \emph{greatest common divisor}\index{Greatest common divisor, $\wedge$, $\nu$} of $\sg$ and $\sg'$ is the element $\sd\in \hat \sA^{(\N)}$ dividing both $\sg$ and $\sg'$ with maximal order. We denote $\sd=:\sg\wedge \sg'$. 
For $\sg,\sg'\in\hat \sA^{(\N)}$, we put $\nu(\sg,\sg')=:n_{\sg\wedge \sg'}$.
\end{defi}

Let $\hat \sA^{\Z^-}$ be the set of negative, one-sided infinite sequences $\cdots \sa_{-j}\cdots \sa_0$ of letters in $\hat \sA$. Given $\st=\cdots \sa_{-i}\cdots \sa_0, \st'=\cdots \sa'_{-i}\cdots \sa'_0\in \hat \sA^{\Z^-}$ we observe that 
$(\nu(\sa_{-i}\cdots \sa_0, \sa'_{-i}\cdots \sa'_0 ))_{i\ge 0}$ is a non-decreasing  sequence. Let $\nu(\st, \st')$ be its limit (which is possibly equal to the infinity). 
\begin{defi}[Distance $dist$]\label{dist}\index{dist}  Let $dist: (\st, {\st'})\in \hat \sA^{\Z^-}\times\hat \sA^{\Z^-}\mapsto  b^{\frac{\nu(\st, \st')}{4}}$.
\end{defi}
\begin{prop}The function $dist$ is an ultra-metric distance.
\end{prop}
\begin{proof}
We observe that $dist $ is symmetric. Let us show that  $dist(\st, \st')=0$ implies $\st=\st'$. This is equivalent to show that  if $\nu(\st, \st')=\infty$ then $\st=\st'$. Put $\st= \cdots \sa_{-j}\cdots \sa_0$ and $\st'= \cdots \sa'_{-j}\cdots \sa'_0$. Let us show that  $a_0=\sa_0'$. To this end, we consider $\sg\in \hat \sA^{(\N)}$ such that $n_{\sg}\ge \max(\sa_0, \sa_0')$,  $\sa_{-j}\cdots \sa_{0}|\sg$ and  $\sa_{-j}'\cdots \sa_{0}'|\sg$ for some large $j$. By \cref{divi complete}, this implies that both  $\sa_0$ and $\sa_0'$ are equal to the last letter of $\sg$. Thus $\sa_0=\sa_0'$. Now we use $(D_3)$ to obtain $\nu( \cdots \sa_{-j}\cdots \sa_{-1}, \cdots \sa'_{-j}\cdots \sa'_{-1})=\nu( \cdots \sa_{-j}\cdots \sa_{0}, \cdots \sa'_{-j}\cdots \sa'_{0}) -n_{\sa_0}= \infty$ and so by the same argument we obtain $\sa_{-1}'= \sa_{-1}$. And so one by induction, we show that $\st=\st'$. 

Let us show that $dist $ is ultra-metric and so satisfies the triangle inequality. Let $\st= \cdots \sa_{-j}\cdots \sa_0, \st'= \cdots \sa'_{-j}\cdots \sa'_0$ and $\st''= \cdots \sa''_{-j}\cdots \sa''_0$ be distinct. Then for $j$ sufficiently large, with $\sg:= \sa_{-j}\cdots \sa_0$,  $\sg':= \sa'_{-j}\cdots \sa'_0$, and  $\sg'':= \sa''_{-j}\cdots \sa''_0$, it holds
$\nu(\st, \st')= \nu (\sg,\sg')$ and $\nu(\st', \st'')= \nu (\sg',\sg'')$. Assume for instance $\nu (\sg,\sg')\ge \nu (\sg',\sg'')$. We notice that $\sg'$ is divisible by both $\sg\wedge \sg'$ and $\sg'\wedge \sg''$. Thus by \cref{propdivi}.$(ii)$, it holds $\sg\wedge \sg'|\sg'\wedge \sg''$. Consequently both $\sg$ and $\sg''$ are divisible by $\sg'\wedge \sg''$. Then it comes that $\nu(\st, \st'')\ge \nu(\sg', \sg'')=\nu(\st', \st'')$ and so $dist$ is ultra-metric. 
\end{proof}

Let us say that $\st=\cdots \sa_{-j}\cdots \sa_0$ is \emph{right divisible} by  $\sg \in \hat \sA^{(\N)}$ if $\sa_{-j}\cdots \sa_0|\sg$ for some sufficiently large $j$ (actually $j=n_\sg$ suffices). Then we write $\st|\sg$.  

The following displays an arithmetic property of $dist$:
\begin{prop}\label{prepa completudesR}For every $\sg\in \sA^{(\N)}$, there exists $\bar \st\in \hat \sA^{\Z^-}$ such that the set $B:=\{\st\in \hat \sA^{\Z^-}: \st|\sg\}$ is equal to the ball centered at $\bar \st$ with radius  $b^{(n_\sg-\frac12)/4}$. In particular $B$  is both open and closed. \end{prop}
\begin{proof}
We recall that in an ultra metric space, the balls with strictly positive radius are both open and closed. Thus it suffices to find $\bar \st $ such that $B':=\{\st: dist(\st,\bar \st)< b^{n_\sg/4-1/8}\}$ is equal to $B$.

Let $\bar \st$ which is divisible by $\sg$. For instance take $\bar \st = \cdots \ss_-\cdots \ss_-\cdot \sg$. 
Obviously, for any $\st$ divisible by $\sg$, the distance between $\bar \st $ and $\st$ is at most $b^{n_\sg/4}<b^{n_\sg/4-1/2}$. Thus $B'$ contains $B$. 

Conversely, for any $\st\in \hat \sA^{\Z^-}$ which is not in $B$, then
$\nu(\bar \st,\st)\le n_\sg- 1 $ and so $\st$ belongs to the complement of $B'$. Indeed,  otherwise $\nu(\bar \st,\st)\ge n_\sg$ and so $\st$ and $\bar \st $ would be divisible by a certain $\sg'$ with $n_{\sg'}\ge n_\sg$, and as $\st|\sg'$ and $\st|\sg$, by \cref{propdivi} $(ii)$, it comes that $\sg'$ is divisible by $\sg$ and so $\st$ is divisible by $\sg$. A contradiction. 
\end{proof}
\begin{prop}\label{prepa completudesR2}
The space $\hat \sA^{\Z^-}$ endowed with the distance $dist$ is a complete metric space.\end{prop}
\begin{proof}
Let us show that any Cauchy sequence $(\st^m)_m$ converges. As  $(\st^m)_m$ is a Cauchy sequence, for every $m\ge 0$, there exists $\sg^m$ with order $\ge m$ such that every  $\st^k$ is divisible by $\sg^m$ for $k$ sufficiently large. Let us chose the sequence $(\sg^m)_m$ such that 
$(n_{\sg^m})_m$ is increasing. We notice that for every $m$, for $k$ sufficiently large in function of $m$, the element $\st^k$ is divisible by both 
$\sg^m$ and $\sg^{m+1}$. Thus by \cref{propdivi} $(ii)$, the word $\sg^{m+1}$ is divisible by $\sg^m$ for every $m$. 
Let $\sa_{-j(m)}^m\cdots \sa_0^m:= \sg^m$ be the $\hat \sA$ spelling of $ \sg^m$. Let us show by induction on $i$ the following hypothesis:
\begin{itemize}[$(H_i)$]\item 
For every $i\ge 0$, there exists $N(i)\ge 0$ such that  $i\le j(m)$ for every $m\ge N(i)$ and the sequence  $(\sa_{-i}^m)_{m\ge N}$ is constantly equal to a single letter $\sa_i\in \hat \sA$. 
\end{itemize}
Let us start with the case $i=0$. We observe that by \cref{divi complete}, given $k> m$, if $\sa_0^k\neq \sa_0^m$, then $\sa^k_0$ is of the form $\sa^k_0=\boxdot_\pm (\sc-\sc')$ with $\sc | \sa^m_0$ and $\sa^m_0\in  \sA_0$.  Then $\sa_0^k$ is not $ \sA_0$ and so by \cref{divi complete}, for every $j\ge k$, it holds $\sa_0^j=\sa_0^k$ .   Consequently, the sequence $(\sa_0^m)_m$ is indeed eventually constant. 

Let us assume the induction hypothesis $(H_i)$ for  $i\ge 0$. Then for $m$ large, the word $\sg^m$ is of the form $\sg'^m\cdot \sa_{-i-1}^m\cdot \sa_{-i}\cdots \sa_0$, with $\sa_{-i}\cdots \sa_0$ independent of $m$ and $\sa_{-i-1}^m\in \hat \sA$ for every $m$. By \cref{propdivi} $(iii)$, it holds that $\sg'^m\cdot \sa_{-i-1}^k$ is divisible by  $\sg'^m\cdot \sa_{-i-1}^m$ for every $k\ge m$. By proceeding as in the step $i=0$, we get that $(\sa_{-i-1}^m)_m$ is eventually constant.  This accomplishes the induction.

Therefore, we proved the existence of a 
$\st=\cdots \sa_{-i}\cdots \sa_0\in \hat \sA^{(\N)}$ such that for every $i\ge 0$, $m\ge N(i)$, the element $\st^m$ is divisible by $\sa_{-i}\cdots \sa_0$. Consequently, the sequence $(\st^m)_m$ converges to $\st$.
\end{proof}

In order to bound from above the box dimension of $\arr \sR$, we are going to endow the elements of $\hat \sA^{\Z^-}$ with favorable divisors. This is a combinatorial counterpart of the concept of favorable times in \cite{BC2}. However the purpose and the construction are here rather different. 

\begin{defi}[Favorable words and divisors]  \index{Favorable word}
A word  $\sg=\sa_1\cdots \sa_m \in \hat \sA_k^{(\N)}$ is \emph{favorable}\index{Favorable word} if it is weakly regular and its first letter letter $\sa_1$ belongs to $\hat \sA_1=\sA_0\cup\{ \boxdot_\pm (\se -\ss): \ss\in \sY_0\}$:
\[ \sa_1\in \hat \sA_1\qand  n_{\sa_j}\le 2^{M}\sum_{m=1}^{j-1}n_{\sa_m}\quad j\ge 2\; . \] 
The set of \emph{favorable divisors} $\tau_\st$ of $\st\in \hat \sA^{\Z^-}$ consists of the words $\sg\in {\hat \sA}^{(\N)}$ which are favorable and divide $\st$:
\[\tau_\st :=\{\sg\in  {\hat \sA}^{(\N)}: \st|\sg \text{ and } \sg \text{ is favorable}\}\; .\]
\end{defi}
\begin{rema}\label{Order} By \cref{propdivi} $(i)$ and $(ii)$, the set $\tau_\st$ is ordered by the divisibility relation: for every $\sg, \sg'\in \tau_\st$, it holds $\sg| \sg'$ if $n_\sg\ge n_\sg'$ and $\sg = \sg'$ if  $n_\sg= n_\sg'$.
\end{rema}

\begin{prop}\label{f-times}
Let $\st\in \hat \sA^{\Z^-}$. Then the following properties hold true:
\begin{enumerate}[$(i)$]
\item There exists a unique $\sg\in \tau_\st$ such that $\sg\in \hat \sA_1$. Note that $n_\sg\le M+1$. 
\item If $\sd\in \tau_\st$ satisfies $n_\sd\ge M+2$, then there exists $\sd'\in \tau_\st$ such that $n_\sd> n_{\sd'}\ge n_\sd/(M+2)$. 
 \item  If $\st, \st'\in  \hat \sA^{\Z^-}$, then their favorable divisors of order at most the order $\nu(\st, \st')$  of their greatest common divisor are equal: 
 $$\{\sd \in \tau_\st: n_\sd\le n_{\nu(\st, \st')}\} = \{\sd \in \tau_\st: n_\sd\le n_{\nu(\st, \st')}\}$$
\end{enumerate}
\end{prop}
\begin{proof}
Statement $(iii)$ is an immediate consequence of \cref{propdivi} $(ii)$. 

Let us prove $(i)$. Let $\sa_0$ be the last letter of $\st$. If $\sa_0$ belongs to $\hat \sA_1$, then $\sa_0$ is favorable and divides $\st$.  Otherwise, the last letter $\sa_0$ is of the form $\boxdot_\pm(\sc-\sc')$,  with $n_\sc\ge 2$ strongly regular and so complete. Thus the last letter $\sa_0'$ of $\sc$ is in $\sA_0$. Then $\st$ is divisible by $\sa_0'$.
Thus there is at least one favorable divisor in $\hat \sA_1$. As the divisor of $\st$ are ordered by the divisibility relation and as the elements of $\hat \sA_1$ are prime between each-other, there is at most one favorable divisor in $\hat \sA_1$.

Let us prove $(ii)$ by induction on $n_\sd\ge M+2$.  Let $\sd= \sa_{-j}\cdots \sa_0$ be the $\hat \sA$-spelling of $\sd$. We recall that $\sa_{-j}$ belongs to $\hat \sA_1$ and since $n_\sd\ge M+2$, we have $j\ge 1$.
We notice that $\sd_1:= \sa_{-j}\cdots \sa_{-1}$ is favorable. 
If $n_{\sd_1}\ge M+2$, then by induction there exists $\sd_1'$ with $n_{\sd_1}> n_{\sd_1'}\ge n_{\sd_1}/(M+2)$ and $\sd_1|\sd_1'$. We notice that $\sd$ is divisible by $\sd':= \sd_1'\cdot \sa_0$  and:
$$
n_{\sd'}:= n_{\sd_1'}+n_{\sa_0}\ge n_{\sd_1} /(M+2)+n_{\sa_0}> n_\sd/(M+2)
$$
If $n_{\sd_1}<M+2$ then $\sd_1$ belongs to $\hat \sA_1$ and   $n_{\sd_1}\le M+1$. 

If $\sa_0$ belongs to $\hat \sA_1$, then $\sd':= \sa_0$ is regular, divides $\sd$ and satisfies $n_{\sa_0}\ge 2$. Thus:
\[(M+2) n_{\sa_0}\ge M+2+  n_{\sa_0}\ge n_\sd= n_{\sd_1} +n_{\sa_0}\; .\]
\indent If $\sa_0$ does not belong to $\hat \sA_1$,  then $\sa_0$ is of the form $\sa_0=\boxdot_\pm(\sc-\sc')$ with $\sc$ a strongly regular word of order $\ge 2$. Thus $\sc$ is favorable and divides $\sd$. Also it holds:
\[(M+2) n_\sc\ge 2M+2+n_\sc\ge n_\sd= n_{\sd_1}+M+1+n_\sc\; .\]
\end{proof}
\begin{coro}\label{sAZ compact} The space  $(\hat \sA^{(\Z^-)}, dist)$ is compact. 
\end{coro}
\begin{proof} By \cref{finite} \cpageref{finite}, 
 the set $\{\sa \in \hat \sA: n_\sa\le m\}$ is finite for every $m\ge 1$. Thus the cardinality $C_m$ of $\{\sg \in \hat \sA^{(\N)}: n_\sg\le m\}$ is finite for every $m\ge 1$. By \cref{f-times}, the finite set $\{\st \cdot \sg \in \hat \sA^{\Z^-}: n_\sg \le (M+2) m, \st = \cdots \ss_-\cdots \ss-\}$ is $b^{m/4}$ dense, and so $(\hat \sA^{(\Z^-)}, dist)$ is totally bounded.  Together with the completeness of  $(\hat \sA^{(\Z^-)}, dist)$  given by \cref{prepa completudesR2}, this implies the sought  compactness.
\end{proof}

\subsection{Application of the divisibility to Hénon-like endomorphism}\label{section Application of the divisibility to Henon-like endomorphism}
Let $f$ be $0$-strongly regular (see   \cref{setting}). 
 We recall that $\sR$ and $\tilde\sR$  were defined in \cref{admissibleword} \cpageref{admissibleword} and $\arr \sR$ was defined in \cref{def_arr_sR} \cpageref{def_arr_sR}. 

\begin{defi}[Alphabet $\sA$] Let $\sA\subset \hat \sA$ be minimal such that $\tilde \sR\subset \sA^{(\N)}$. \index{Alphabet $\sA$ in dimension 2}\end{defi}
\begin{defi}[$T_\sa: \cD_\sa\to \cI_\sa$]\label{defi Ta}\index{$T_\sa: \cD_\sa\to \cI_\sa$}
If $\sa\in \sA_0$, we defined a map $T_\sa: S\in \mathcal H \mapsto S^\sa$ in \cref{admissibleword}. Let $ \cD_\sa:=\mathcal H$ be its domain and let  $\mathcal I_\sa$ be the image. If $\sa\in \sA\setminus \sA_0$ it is of the form $\sa=\boxdot_\pm (\sc-\sc')$ for $\sc, \sc'\in \sR$ such that the domain $\cD_\sa:= \tilde \cD(Y_\sc-Y_{\sc'})$ defined in \cref{graphtransformpara} non empty. Let $T_\sa:\;  S\in \cD_\sa\mapsto S^{\boxdot(Y_\sc-Y_{\sc'})}$ and let $\mathcal I_\sa$ be its image. Given $\sg=\sa_1\cdots \sa_m \in \sA^{(\N)}$ we denote $T_\sg:= T_{\sa_m}\circ \cdots\circ T_{\sa_1}$.
\end{defi}

An easy induction on the length of words shows the following:
\begin{fact}
If  $\sg\in \tilde \sR$ is divisible by $\sd\in \hat \sA^{(\N)}$ then $\sd$ belongs to $\sA^{(\N)}$.
\end{fact}

 As we mentioned, the metric $dist$ was designed for the following:
\begin{prop}\label{lip}
The function $\st\in \overleftarrow {\sR}\rightarrow \hat W^\st_u\in \mathcal H$ is $4\theta$-Lipschitz, for the $C^1$-distance on the space of flat stretched curves $\mathcal H$.
\end{prop} 
\begin{proof}
For every $\sg\in \sA^{(\N)}$, we   define  the subset $\cV_\sg$ of $\mathcal H$ by induction on the number $m$ of $\sA$-letters of $\sg=\sa_1\cdots \sa_m$, for every $j\ge n_\sg$: 
 \begin{enumerate}
\item If $m=0$, then $\sg=\se$ and put $\cV_\sg= \mathcal H$. 
 \item If $m\ge 1$, put $\sg= \sg'\cdot \sa$ and let  $\cV_\sg$ be the  $3\theta b^{n_\sg/3}$-neighborhood of $T_{\sa} (\cV_{\sg'} \cap \cD_\sa)$.
 \end{enumerate}
 Note that $\cV_\sg$ and $\mathcal I_\sg$ might be empty. An immediate induction shows:
\begin{fact}\label{fact1Vg}
The open set $\cV_\sg$ contains the $3\theta b^{n_\sg/3}$-neighborhood of $\mathcal I_\sg$. 
\end{fact}
We recall that the diameter of $\mathcal H$ is at most $4\theta$ and that the graph transform  $T_\sa$ is $b^{n_\sa/3}$-contracting for every $\sa\in \sA$ by propositions \ref{contractionPP} and \ref{graphtransformpara}.$(ii)$. Thus:
 \begin{enumerate}
 \item If $m=0$, the diameter of $\cV_\sg$ is at most $4\theta $.
 \item if $m>1$, the diameter of $\cV_\sg$ is at most $b^{n_\sa/3} \diam \cV_{\sg'}+3\theta b^{n_\sg /3}$.
 \end{enumerate}
As  $b^{n_\sa/3} 4\theta b^{n_{\sg'}/4}+3\theta b^{n_\sg /3}\le  4\theta b^{n_{\sg}/4}$  {when }$n_\sa=n_\sg-n_{\sg'}>0$, an induction shows:
\begin{fact}\label{fact2Vg} The diameter of $\cV_\sg$ is at most $4\theta b^{n_{\sg}/4}$.\end{fact}
In view of facts \ref{fact1Vg} and \ref{fact2Vg}, the next lemma implies  \cref{lip}.\end{proof}
\begin{lemm}  For every $\hat \sg, \sg\in \sA^{(\N)}$ such that $\hat \sg|\sg$, it holds $\cV_{\hat \sg} \subset \cV_\sg$. 
\end{lemm}
\begin{proof} We prove the lemma by induction on $n_{\hat \sg}$. If $\hat \sg\in \sY_0$, then $\hat \sg$ must be equal to $\sg$ or $\se$ and it suffices to recall that $\mathcal I_{\hat \sg}\subset \cV_{\hat \sg}\subset \mathcal H=\cV_\se$. 
If $\hat \sg$ is of the form $\boxdot_{\pm}(\sc-\sc')$, either $\sg=\hat \sg$ and we proceed likewise, or $\sc|\sg$. In this case, we recall that by \cref{graphtransformpara}.$(iii)$, the set 
$\mathcal I_{\hat \sg}$ is included in the $\theta b^{n_\sc/3}$-neighborhood of   $\mathcal I_\sc$. As $\cV_{\hat  \sg}$ is included in the $\theta b^{n_{\hat \sg}/3}$-neighborhood of $\mathcal I_{\hat \sg}$ and $3\theta b^{n_{\hat \sg}/3}<\theta b^{n_\sc/3}$, the set $\mathcal V_{\hat \sg}$ is included in the $3\theta b^{n_{\sc}/3}$-neighborhood of $\mathcal I_{\sc}$. We recall that $\cV_\sc$ contains in the $3\theta b^{n_\sc/3}$-neighborhood of $\mathcal I_\sc$ by fact \ref{fact1Vg}. Therefore  $\cV_{\hat  \sg}$ is included in $\cV_\sc $.  
As $\sc|\sg$, the set $\cV_\sc$ is included in $\cV_\sg$. Thus $\cV_{\hat \sg}$ is included in $\cV_\sg$. 
\medskip 

Now we take $\hat \sg= \hat \sg' \cdot \sa$ with $\hat \sg'\neq \se$ and $\sa\in \hat \sA$.

 If $\sa| \sg$, then by induction it holds $\cV_\sg\supset \cV_\sa$. Also $\cV_\sa$ is the $3\theta b^{n_\sa/3}$-neighborhood of $\mathcal I_\sa$, thus $\cV_\sa$ contains $\cV_{\hat \sg}$ equal to  $3\theta b^{n_\sg/3}$-neighborhood of $T_{\sa} (\cV_{\hat \sg'} \cap \cD_\sa)\subset \mathcal I_\sa$. Therefore $\cV_\sg\supset \cV_\sa\supset \cV_{\hat \sg}$. 

If $\sa\not | \sg$, then  $\sg= \sg'\cdot \sa$ and $\hat \sg'| \sg'$ by \cref{divi complete}. Then by induction 
$\cV_{\hat \sg'}\subset \cV_{\sg'}$, and so $T_\sa(\cV_{\hat \sg'}\cap \cD_\sa)\subset T_\sa(\cV_{\hat \sg}\cap \cD_\sa)$. Thus the $3\theta b^{n_{\hat \sg}/3}$-neighborhood $\cV_{\hat \sg}$ of $T_\sa(\cV_{\hat \sg'}\cap \cD_\sa)$  is contained in the $3\theta b^{n_{\sg}/3}$-neighborhood $\cV_{\sg}$ of $T_\sa(\cV_{\sg'}\cap \cD_\sa)$.
 \end{proof}     

We recall that a piece $(Y_\sg, n_\sg)$ is associated to each weakly regular word $\sg\in \tilde \sR$. In particular $\tilde \sR$ might contain words of the form $\boxdot_-(\se -\ss)=\se \boxdot_-(\se -\ss)$ for $\ss\in \sY_0$ because its order is smaller than $2^M$, without no more assumptions than the $0$-strong regularity. This might lead us to consider pieces of the form $Y_{ \boxdot_-(\se -\ss)}$. 
\begin{defi}\index{$\sF $}
Let $\sF $ be the set of favorable words $\sg$ which are in  $\tilde \sR $. 
\end{defi}

The following gives both a useful  geometric application and an interpretation of the favorable divisors:    
\begin{prop}\label{stab sRk} Let $f$ be $0$-strongly regular. Then for every $\sg\in \sR$, for every $\sb\in\hat \sA^{(\N)}$ such that 
  $\sb$ is favorable and  $\sg| \sb$, it holds $\sb\in \sF$ and $f^{n_{\sg}-n_\sb}(Y_\sg)\subset Y_\sb$. 
  \end{prop}
\begin{proof}
We proceed by induction on $n_\sg$. If $\sg$ is formed by a single letter, then $\sg\in \sA_0$, and so its has only two divisors : itself and $\se$ which both belong to $\sR\subset \tilde \sR$. Let us assume the induction hypothesis. Let $\sg=\sg'\cdot \sa$, with $\sa\in \hat \sA_k$ and $\sg'\neq \se$.
 Then we recall that the divisors of $\sg$ are the divisors $\sd$ of $\sa$ and those of the form $\sd'\cdot \sa$ with $\sd'$ a divisor of $\sg'$. 
 
If $\sa\in \hat \sA_1$, then $\sa$ is divisible only by $\sd\in \{\sa, \se\}\subset \sF$.  Note also  that  $f^{n_\sg-n_\sa}(int \, Y_\sg)\subset int\, Y_\sa$ by definition of the $\star$-product.
Let us now consider  a divisor of $\sg$ the form $\sd'\cdot \sa$ with $\sg'| \sd'$  and $\sd'$ favorable. Note that $\sd'\cdot \sa$ is favorable.  By induction, $f^{n_{\sd'}}(Y_{\sd'})\supset f^{n_{\sg'}}(Y_{\sg'})\supset f^{n_{\sg}-n_\sa}(Y_{\sg})$. Thus $f^{n_{\sd'}}(Y_{\sd'})$ intersects the interior of $Y_\sa$. Thus the $\star$-product $(Y_{\sd'}, n_{\sd'})\star (Y_\sa, n_\sa)$ is admissible and $\sd'\cdot \sa$ belongs to $\sF$. 

If $\sa\notin \hat \sA_1$.  Then $\sa$ is of the form $\sa=\boxdot _\pm (\sc-\sc')$ with $\sc, \sc'\in \sR$ strongly regular. If $\sd$ is   a favorable divisor of $\sa$ then it is  a favorable  divisor of $\sc$. Hence by induction, $\sd$ belongs to $\sF$. Moreover, the $f^{n_\sc-n_\sd}(Y_\sc)$ is a subset of $Y_\sd$. By definition of the parabolic product, it holds $f^{n_\sg-n_\sc}(Y_\sg)\subset Y_\sc$. Thus $f^{n_\sg-n_\sd}(Y_\sg)$ is a subset of $Y_\sd$. This proves the induction when $\sd$ is a divisor of $\sa$. 

Now consider the case where $\sd'\cdot \sa=\sd$ is a favorable divisor of $\sg'\cdot \sa=\sg$. By definition, $\sd'\cdot \sa$ is weakly regular and $\sg'|\sd'$. By induction $\sg'\in \sF$. It remains to show that $\sd'\cdot \sa$ belongs to $\tilde \sR$ to obtain $\sd\in \sF$. 
Let us show that $\sd'\cdot \sa$ is weakly regular and such that $\sd'\cdot \sa$ belongs to $\sF$. By \cref{Crutial prop}, it suffices to show that $\boxdot_\pm (Y_{\sc}-Y_{\sc'})$ is pre-admissible from $(Y_{\sd'}, n_{\sd'})$.  The case $\sd'=\sg'$ is obvious, let us assume $\sd'\neq \sg$.  By definition of the favorable words it holds $n_\sa\le 2^M n_{\sd'}$. By definition of the regularity of $\sg$, it holds that $S\in \mathcal H\mapsto S^{\sg'}$  has its image $\mathcal I_{\sg'}$ in $\cD(\boxdot (Y_{\sc}-Y_{\sc'})$. We recall that $diam\, \mathcal I_{\sg'}\le 4\theta b^{n_{\sg'}/3}$ and by \cref{lip},  the image of $\mathcal I_{\sd'}$ of $S\in \mathcal H\mapsto S^{\sd'}$ is in the $4\theta b^{n_{\sd'}/4}$-neighborhood of  $\mathcal I_{\sg'}$. Thus $\mathcal I_{\sd'}$ is included in the $5\theta b^{n_{\sd'}/4}$-neighborhood of $\cD(\boxdot (Y_{\sc}-Y_{\sc'})$. As $n_\sa\le 2^M n_{\sd'}$, $\mathcal I_{\sd'}$ is included in the $\theta^{n_\sa}$-neighborhood $\tilde \cD(\boxdot(Y_\sc-Y_{\sc'})$ of $\cD(\boxdot(Y_\sc-Y_{\sc'})$.  
Secondly by induction it holds $ f^{n_{\sg'}-n_{\sd'}}(Y_{\sg'})\subset Y_{\sd'}$.
  As $f^{n_{\sg'}-n_{\sd'}}(Y_\sg)\subset  f^{n_{\sg'}-n_{\sd'}}(Y_{\sg'})$ is contained in $\bigcup_{S\in \tilde \cD(Y_\sc-Y_{\sc'})} S_\sa$, it comes that
   $ f^{n_{\sg'}-n_{\sd'}}(Y_{\sd'})$ intersects the latter set at more than one arc of $W^s(A)$. 
 Consequently $\boxdot_\pm (Y_{\sc}-Y_{\sc'})$ is pre-admissible from $(Y_{\sd'}, n_{\sd'})$.
\end{proof}

Conversely, we will show that for every $\sg, \sb\in \sF$, if $f^{n_\sg-n_\sb}(Y_\sg)$ intersects the interior of $Y_\sb$ then $\sg|\sb$  in \cref{lemme_qui_torche} p. \pageref{lemme_qui_torche} 
\begin{coro} \label{f-times2}Let $f$ be 0-strongly regular.  For every $\st\in \arr \sR$, it holds $\tau_\st\subset \sF$. 
\end{coro}
The set $\sF$ will be useful to define combinatorially a subset of $\arr \sR_k$, of cardinality $\le 2^j$, which is $5 \theta b^{j/8M}$-dense and does not depend on $k\ge \max(0,j-M)$. In order to do so, let us show the following:
\begin{prop}\label{appartenance de P_j} 
If $f$ is $k$-strongly regular, then for every $\st\in \arr \sR_0$, $\sa_0\in \sA_0$  and $\sg\in \sF$ with $k =  \max(0, n_\sg-\lfloor M/2\rfloor )$, the sequence  $\st\cdot \sg$ belongs to $\arr \sR_k$.
\end{prop}
\begin{proof} Let $\sg=\sa_1\cdots \sa_m$ and $\st= \cdots \sa_{-i }\cdots \sa_{0}$.  We notice that the word $\sa_0\cdot \sg$ is regular. To show that $\st \cdot \sg$ is in $\arr \sR_k$, it suffices to prove by induction on $m$ that $\sa_0\cdot \sg$ belongs to $\sR_k$.   We recall that $\sa_0\in \sA_0$. 

Let $m=1$. If $\sa_1$ belongs to $\sA_0$, then $\sa_0\cdot \sa_1$ belongs to $\sR_0$.  If $\sa_1$ belongs to $\hat \sA_1\setminus \sA_0$, then $\sa_1$ is of the form $\boxdot_\pm(\se-\ss)$ for $\ss\in \sY_0$, $n_\sg=M+1$ and so $k= \lfloor M/2\rfloor +1$.  
Furthermore, as $\sa_1=\se\cdot \boxdot_\pm(\se-\ss)$ belongs to $\tilde \sR$, the tuple $[Y_\se, Y_\se, Y_{\ss}, T_\se =id_\mathcal H]$ is weakly admissible for the parabolic product $\boxdot_\pm$. This implies that every $S\in \mathcal H$ is $\theta^{n_\ss}$-close to be in critical position with $ Y_{\ss}$. By $1$-strong regularity,   $W_u^\st$ is in critical position with a simple piece, which must be $(Y_\ss, n_\ss)$ by uniqueness (see \cref{inclusion Yck}). By $\lfloor M/2\rfloor$-strong regularity of $f$, there exists $\sq\in \sY_0^{\lfloor M/2\rfloor }$, such that $W_u^\st$ is in critical position with  $(Y_{\ss\cdot \sq}, n_{\ss\cdot \sq})$. The boundary condition of the later position implies that any curve which is $4\theta$-close to  $W_u^\st$  is also in critical position with $(Y_\ss, n_\ss)$. Thus any horizontal curve in $\mathcal H\supset T_{\sa_0}(\mathcal H)$ is in $\cD_{\sa_1}$. This implies that $\sa_0\cdot \sa_1$ belongs to $\sR_1$. 

Let $m\ge 2$. Let $\sg= \sg'\cdot \sa_m$ . We notice that $\sg'$ belongs to $\sF$. Thus by induction $\sa_0\cdot \sg'$ belongs to $\sR_{k'}$, with  $k'=  \max(0, n_{\sg'}-\lfloor M/2\rfloor)$. 
As $(Y_{\sa_0},n_{\sa_0}) $ is a puzzle piece and $(Y_{  \sg'}, n_{ \sg'})\star (Y_{\sa_m}, n_{\sa_m})$ is admissible, the product  $(Y_{\sa_0\cdot \sg'}, n_{\sa_0\cdot\sg'})\star (Y_{\sa_m}, n_{\sa_m})$ is admissible. Thus $\sa_0\cdot \sg$ belongs to $\sR_k$. 

If $\sa_m\notin \sA_0$, then it is of the form  $\sa_m=\boxdot_\pm(\sc-\sc')$. Note that the order of $\sc$ is smaller than $k-\lfloor M/2\rfloor -1$. 
Thus the depth of $\sc'$ is smaller than $k-\lfloor M/2\rfloor $. By induction, $\st \cdot \sg'$ belongs to $\arr \sR_{k-1}$. By strong regularity, the curve $W_u^{\st\cdot \sg'}$ is in critical position with a piece of the same depth as $\sc'$.  By uniqueness, $W_u^{\st\cdot \sg'} $ is in critical position with $(Y_{\sc'}, n_{\sc'})$. Again by $k$-strong regularity, there exists $\sc''=\sc'\cdot \sq\in \sR_{k-1}$ with $depth (\sq)= \lfloor M/2\rfloor$ such that $W_u^{\st\cdot \sg'} $ is in critical position with  $(Y_{\sc''}, n_{\sc''})$. The boundary condition of the later position, implies that any curve which is $\theta^{n_{\sc''}}$-close to  $W_u^\st$  is also in critical position with $(Y_{\sc''}, n_{\sc''})$. The order $n_\sc$ and so $n_{\sc''}$ are bounded by a linear function of $n_{\sg'}$ by the weak regularity condition. 
 Thus, by $4\theta\cdot b^{n_{\sa_0\cdot \sg'}/3}$-contraction of $T_{\sa_0\cdot \sg'}$, any curve in $T_{\sa_0\cdot \sg'}(\mathcal H)$ is in $\cD_{\sa_m}$.
  As $\boxdot_\pm(Y_\sc-Y_{\sc'})$ is weakly admissible from $(Y_{\sg'}, n_{\sg'}), T_{\sg'}$ (because $\sg$ belongs to $\tilde \sR$) it comes that the tuple  $[(Y_{\sa_0\cdot \sg'}, n_{\sa_0\cdot \sg'}), (Y_{\sc}, n_\sc), (Y_{\sc'}, n_{\sc'}), T_{\sa_0\cdot \sg'}]$ is admissible for $\boxdot_\pm$. Consequently $\sa_0\cdot \sg'$ is in $\sR_k$. 
\end{proof}

In order to define $\pi_k$ and $\arr \sR_{k}'$ introduced in  \cref{defsRk'}, let us  denote by $\st_A:= \cdots \ss_-\cdots \ss_-$ the infinite sequence of $\hat \sA^{\Z^-}$ constantly equal to $\ss_-$. We notice that the curve $W_u^{\st_A}$ is the half unstable manifold
 $W^u_{1/2}(A)$ defined in Fact \ref{defi WuA}.

\begin{defi}[map $\pi_k$ and set $\arr \sR_k'$] \index{$\pi_k$}\index{$\arr \sR_k'$}
Let $\pi_k:\hat \sA^{\Z'}\mapsto \hat \sA^{\Z^-}$ be the projection which sends $\st$ to $\st_A\cdot \sg$, with $\sg$  the favorable divisor of $\st$ with maximal order $\le 2^{-M}\cdot k$: 
\[ \pi_k: \st \in \hat \sA^{\Z'}\mapsto \st_A\cdot \sg\in \hat \sA^{\Z'}\text{ with } \sg\in \tau_\st \text{ maximal s.t. }\sg \le k\cdot 2^{-M}\; .\]
Given a $0$-strongly regular Hénon-like endomorphism $f$, we put:
\[\arr \sR_k'(f):= \pi_k(\arr \sR(f))\; .\] \end{defi}

We are now able to prove \cref{defsRk'}.
\begin{proof}[Proof of \cref{defsRk'}]
The first item  is a direct consequence of \cref{appartenance de P_j}. 

Second item. If $k<M 2^{M+3}$, then $\lfloor k 2^{-M-3}/M\rfloor=0$. Thus it suffices to recall that that the diameter of $\mathcal H$ is at most $4\theta$. 
If $k\ge M 2^{M+3}$, then by Proposition \ref{f-times} $(ii)$, there exists $\sg\in \tau_t$ which is of order in $[k 2^{-M-1}/M, k 2^{-M}]$. Note that $\st_A\cdot \sg$ belongs to $\arr \sR_k'$ by \cref{appartenance de P_j}. Moreover, by \cref{dist} of $dist$, the point $\st_A
\cdot \sg$ is $b^{k 2^{-M-3}/M} $ close to $\st$. Then we conclude by using the $4\theta$-Lipschitzity of $\st' \mapsto \hat W^{\st'}_u$ stated in \cref{lip}.  

Third item. By \cref{appartenance de P_j}, the cardinality of $\arr \sR_k'$ is  at most the cardinality of $\{\sg\in \tilde \sR: n_\sg \le k\cdot 2^{-M}\}$. By  \cref{ican}, it is at most $2^{k\cdot 2^{-M}}$. 

Fourth item. If $\st \in  \arr \sR_k'(\bar f)$, then $\st$ belongs to  $\arr \sR_{\lfloor k\cdot 2^{-M}\rfloor} (\bar f)$ by the first item. Thus $\st$ belongs to  $\arr \sR_{\lfloor k\cdot 2^{-M}\rfloor} (f)\subset \arr \sR(f)$. As $\pi_k$ is a projection ($\pi_k\circ \pi_k= \pi_k$), it holds that $\pi_k(\st)=\st$. Thus $\st $ belongs to  $\sR_k'(  f)$. 
  \end{proof}
We recall that the \emph{box dimension} of $(\arr \sR_k,dist)$ is $\limsup_{\epsilon\to 0} -\log N(\epsilon)/\log\epsilon$, where $N(\epsilon)$ is the minimal number of  $\epsilon$-balls to cover $\arr \sR_k$.  By the second and third items of \cref{defsRk'}, we can take for every $k\ge 1$:
\[\log_2 \epsilon= \log_2(4\theta b)+ \lfloor k 2^{-M-3}/M\rfloor \log_2 b\qand \log_2 N(\epsilon)= 2^{-M} k\; .\]
This gives:
\begin{prop}\label{HDT*}If $f$ is strongly regular, then the box dimension of $(\arr \sR,dist)$ is at most $-\frac{8 M}{\log_2 b}$.
\end{prop}

\begin{rema} We recall that first $M$ is assumed large and then $b$ is assumed small in function of $M$. Hence the box dimension is small  in function of $b$.
\end{rema}
\begin{rema} The above estimate is very coarse. It should be possible to define a better combinatorial distance on $\arr \sR$, such that  $t\mapsto  W_u^\st$ is $1$-Lipshitz and whose Hausdorff dimension is nearly $\log 2 /\log b$.
When Yoccoz was preparing his last lecture at Coll\`ege de France with the present work \cite{Y15}, he stated definitions such that the GCD of 
two different simple pieces $\ss^i_\pm$ and $\ss^j_\pm$ of orders $i$ and $j$, 
should be $\min(i,j)-1$ and not 0 as we did. This should help to remove the factor $M$ in the above estimate on the box dimension.
  \end{rema}
\subsection{Compactification of the transversal space to the long unstable manifolds}\label{section Compactification of the transversal space to the long unstable manifolds}
Let $f$ be strongly regular. Let $\arr \sR_c$ be the set of $\st=\cdots \sa_{-m}\cdots \sa_0\in \arr \sR$ such that $\sa_0\in\sA_0$. Let $\sF_c$ be the set of $\sg=\sa_{-m}\cdots \sa_0\in \sF$ such that $\sa_0\in \sA_0$. 

\begin{defi}[Space $\sT$] \index{Space $\sT$}
Let $\sT$ be the closure of the set $\arr \sR_c$ in $\hat \sA^{\Z^-}$ endowed with the distance $dist$. 
\end{defi}
\begin{prop}
It holds:
$\sT=\{\st\in \hat \sA^{\Z^-}: \forall N\exists \sg\in \sF_c\;, n_\sg\ge N\; ,\;  \st| \sg\}\; .$
\end{prop}
\begin{proof}Let $\sT':=\{\st\in \hat \sA^{\Z^-}: \forall N\exists \sg\in \sF_c\;, n_\sg\ge N\; ,\;  \st| \sg\}$. We want to show that $\sT'$ is equal to $\sT$. 

We recall that \cref{appartenance de P_j}  implies 
that $\st\cdot \sg\in \arr \sR_c$ for every $\st\in \arr \sR_0$ and $\sg\in \sF_c$. Thus the closure of $\arr \sR_c$ contains the closure of $\sT'$. Moreover, by  \cref{f-times}, every point $\st\in \arr \sR_c$ is divisible by a favorable $\sg$ of arbitrarily large order and which belongs to $\sF$ by  \cref{stab sRk}. Moreover by \cref{divi complete}, the element $\sg$ belongs to $\sF_c$. Thus the closure of $\arr \sR_c$ is equal to  the closure of $\sT'$. 

Consequently,  it suffices to show that $\sT'$ is closed. We have:
\[\sT'= \bigcap_{N\ge 0} \{\st\in \hat \sA^{\Z^-}:\exists \sg\in \sF_c\;, n_\sg\ge N\; ,\;  \st| \sg\}\; .\]
By \cref{stab sRk} and \cref{divi complete},  this is equivalent to ask for every $m\ge 1$, the existence of $\sg\in \sF_c$ such that $n_\sg \in [m, 2Mm]$ and $\st| \sg$. Thus 
$$\sT'= \bigcap_{m\ge 1}\bigcup_{\{\sg\in \sF_c: n_\sg\in[m,2M m]\}} \{\st\in \hat \sA^{\Z^-}: \st| \sg\}\; .$$
We recall that each set $\{\st\in \hat \sA^{\Z^-}: \st| \sg\}$ is closed by \cref{prepa completudesR}. As $\{\sg\in \hat \sA^{(\N)}: n_\sg\in[m,2M m]\}$ is finite, the set  $\bigcup_{\{\sg\in \sF: n_\sg\in[m,2M m]\}} \{\st\in \hat \sA^{\Z^-}: \st| \sg\}$ is a finite union of closed set by \cref{prepa completudesR} and so $ \sT'$ is closed. 
\end{proof} 
The following associates to each element of $\sT$ a different Pesin unstable manifold. 
\begin{prop}\label{compactsT}
For every $\st\in \sT$, it holds:
\begin{enumerate}[(i)]
\item The limit $W^\st_u:=\lim_{\st'\in \arr \sR_c \to \st } W^{\st'}_u$ is a horizontal stretched curve.
\item  
The limit $\arr W^\st_u:=\lim_{\st'\in \arr \sR_c \to \st } \arr W^{\st'}_u$  is a Pesin local unstable manifold.
\item For every $\st \neq \st'$ in $\sT$, the curve  $\arr W^\st_u$ and  $\arr W^{\st'}_u$ are disjoint. 
\item The union $\bigcup_{\sT}  \arr W^\st_u $ is homeomorphic to the product  $\sT\times I_\se$. The first coordinate of this homeomorphisms is the composition 
$\arr M_f\to \R^2\to \R$  of the $0$-coordinate projection $\arr M_f\to \R^2$ with the first coordinate projection $\R^2\to \R$.
\end{enumerate}
\end{prop}
\begin{proof}
$(i)$ By \cref{rigidPP}, for every $\st'\in \arr \sR_c$, the horizontal curve $W_u^{\st'}$ is stretched. In particular it holds $W^{\st'}_u=\hat  W^{\st'}_u$.   By using the Lipschitzity of $\st' \mapsto \hat W^\st_u$ given by \cref{lip}, it comes that $W_\st^u$ is a well defined horizontal stretched curve for $\st'\to \st$. 

$(ii)$ Let $\st\in \sT$ and let $(\sg_i)_i\in \sF_\sc^\N$ such that $(n_{\sg_i})_i$ is increasing and $\st| \sg_i$ for every $i$. We notice that $\sg_{i+1}| \sg_i$ for every $i$ by \cref{Order}. 
Also $\{\st'\in \hat \sA^{\Z^-}: \st'| \sg\}$ is open by \cref{prepa completudesR}. Thus for every $i$, for every $\st'\in \sR_\sc$ close to $\st$, it holds $\st'|\sg_i$. Then by \cref{stab sRk}, the $-n_{\sg_i}$-coordinate of $\arr W^{\st'}_u$ is in $Y_{\sg_i}$. Thus it holds:
\[\arr W^\st_u:=\lim_{\st'\in \arr \sR_c \to \st } \arr W^{\st'}_u \subset \{(z_i)_i\in \arr M_f:\forall i,\;  z_{-n_{\sg_i}}\in Y_{\sg_i}\}\;.\]
Furthermore by the same proposition, for every $i$, it holds $f^{n_{\sg_{i+1}}-n_{\sg_i}}(Y_{\sg_{i+1}})\subset Y_{\sg_{i}}$. As each $\sg_i$ is complete, the piece $(Y_{\sg_i}, n_{\sg_i})$ is a puzzle piece by \cref{rigidPP}. Thus by \cref{prop pesin instable gen}, the following set is a Pesin local unstable manifold:
\begin{equation}\label{inclusion pesin unstable}
\{(z_i)_i\in \arr M_f:\forall i,\;  z_{-n_{\sg_i}}\in Y_{\sg_i}\}\;.\end{equation}
Furthermore, this set projects homeomorphically onto a horizontal stretched curve by the 0-coordinate projection. The same occurs for $\arr W^\st_u$ by continuity of the projection and the last statement of \cref{arr Rk2Wu}.
Consequently, the inclusion in \eqref{inclusion pesin unstable} is an equality. In particular, $\arr W^\st_u$ is a Pesin local unstable manifold.
 
$ (iv)$ The latter argument shows also that $\st\in \sT\mapsto \arr W^{\st}_u$ is continuous. For every $x\in I_\se$, let $w^\st(x)$ be the point of $\arr W^{\st}_u$ with $0$-coordinate in $\{x\}\times \R$.  We notice that $(\st,x)\in  \sT\times I_\se\mapsto w^\st(x)$ is continuous. As $\sT\times  I_\se$ is compact, to show that the latter map is a homeomorphism, it suffices to show $(iii)$. 
 
$(iii)$ If $\st$ and $\st'$ are different, there exists $\sg$ and $\sg'$ in $\sF$ such that $\st|\sg$, $\st'|\sg'$ and none of the words $\sg, \sg'$  divide the other. By \cref{stab sRk}, the curves $\arr W^{\st}_u$ and $\arr W^{\st'}_u$ are included in respectively $\{ (z_i)_{i\le 0}: z_{-n_\sg}\in Y_\sg\cap S\}$ and $\{ (z_i)_{i\le 0}: z_{-n_{\sg'}}\in Y_{\sg'}\cap S\}$. The two latter sets are disjoint by the next proposition.
\end{proof}
\begin{lemm}[\cite{Berentropy} Lem. 3.3
]\label{lemme_qui_torche} 
For all $\sp\in \sF$ and $\sg\in \sF$ if $f^{n'}( int\, Y_\sp)$ intersects $Y_\sg$ for $n'+n_\sg\ge n_\sp$, then $\sg=\sg'\cdot \sg''$ with $n_\sp=n'+n_{\sg'}$ and  $\sp|\sg'$. In particular, if $n'= n_\sp-n_\sg$, then $\sp|\sg$. 
 \end{lemm}
\begin{proof} If $n'=0$ then $int\, Y_\sp$ intersects $Y_\sg$ and so the proposition is given by \cref{injectif}. Assume that $n'>0$. 

We proceed by induction on the number of letters of $\sp$.
If $\sp$ is a single letter.  Then $\sp$ is in $\hat \sA_1$ and its unique return time in $Y_\se$ is $n'=n_{\sp}$. Thus $\sg'=\se$ satisfies the proposition. 

Assume that $\sp$ is formed by more than one letter. Put $\sp=\sp'\cdot  \sa$ with $\sa\in \hat \sA$.

 If $n'\le n_{\sp'}$, then we use the induction hypothesis with $\sp'$ instead of $\sp$. This implies the existence of $\sg'' $ such that $\sp'| \sg''$ and $\sg$ begins with $\sg''$. Note also that $f^{n_{\sg''}}(int\, Y_{\sg})$ intersects $f^{n_{\sp'}}(Y_\sp)$. Let $\sa'$ be the next letter of $\sg$ after $\sg''$. We want to show that $\sa'=\sa$. If $\sa\in \sA_0$, then $f^{n_{\sg''}}(int\, Y_\sg)$ intersects $Y_\sa$ and so is included in $Y_{\sa'}$. Hence,  the letter $\sa'$ cannot be parabolic and  $int\, Y_{\sa'}$ intersects $Y_\sa$. Thus  ${\sa'}=\sa$ . 
If $\sa\notin \sA_0$, then it is of the form $\sa=\boxdot_\pm(\sb-\sc)$. Thus $f^{n_{\sg''}}(Y_\sg)$ intersects $Y_\boxdot$ and so is included in $Y_{\boxdot}$. 
Thus ${\sa'}$ must be parabolic.
Put  $\sa'=\boxdot_\pm(\sb'-\sc')$.
 Moreover $f^{n_{\sg''}+n_\boxdot }(int\, Y_\sg)$ is included in $ Y_{\sb'}\setminus Y_{\sc'}$ 	and intersects $Y_\sb\setminus Y_{\sc}$. 
 By the same argument as for \cref{injectif}, it comes that ${\sa'}=\sa$. 

If $n'>n_{\sp'}$ and $\sa\in \hat \sA_1$. Since the first return time of $Y_\sa$ in $Y_\se$ is $n_{\sa}$, it comes that $n'=n_{\sp'}+n_\sa=n_\sp$ and so $\sg'=\se$ carries the lemma. 
If $n'>n_{\sp'}$ and $\sa=\boxdot_\pm(\sb-\sc)$,  then $n'\ge  n_{\sp'}+n_\boxdot$. Also $f^{n'}(Y_\sp)$ intersects $f^{n'}(Y_\sg)\subset f^{n'- n_{\sp'}-n_\boxdot}(Y_\sb)$. Thus we can apply the induction hypothesis with $\sb$ instead of $\sp$. This gives $\sg'$ s.t. $\sb|\sg'$ and so $\sp|\sg'$ because $\sp|\sb$.

Finally, if $n'=n_\sp-n_\sg$,  using $n_\sg=n_{\sg'}+ n_{\sg''}$ and $n_\sp=n'+n_{\sg'}$, it comes that $n_{\sg''}=0$, or equivalently $\sg''=\se$ and so $\sg=\sg'$. Thus we obtain $\sp|\sg$.  
\end{proof}
\section{Structure along the unstable manifolds}\label{SRB}
In this section we shall prove \cref{rigidPP} stating that complete words in $\sR$ define puzzle pieces and \cref{existenceSRB} stating that a strongly regular map leaves invariant an SRB measure which is ergodic and physical. 

\subsection{One-dimensional pieces and proof of \cref{rigidPP}}\label{section rigidPPP}
Let $f$ be $0$-strongly regular.
We are going to define one-dimensional pieces on each  horizontal curve $W^\st_u$ for $\st \in \arr \sR$.  
\begin{defi}[One-dimensional piece] \label{defi piece curve}
A \emph{one-dimensional piece} \index{one-dimensional piece in dimension 2}
$(S, n)$  is the data of an integer $n$ and a horizontal curve $S$ such that:
\begin{enumerate}[(1)]
\item  the endpoints of $S$ are transverse intersection points of $S$ with two arcs of $W^s(A)$,
\item the curve $S$  is sent by  $f^{n}$ into a horizontal, stretched curve $S'$,
\item for all $z\in S$, $w \in T_z S$ and $j\le n$, it holds$ \|D_{z}f^{n}(w)\|\ge 2^{\frac{n-j}{3}}\cdot \|D_zf^j({w})\|$.
\end{enumerate}
The piece $(I,n)$ is a \emph{puzzle piece} if $I$ is sent by $f^{n}$ \emph{ onto} the stretched curve $S'$.\end{defi}
\begin{exem}[Simple symbol admissible for a curve]\label{simple piece of a curve}
For every $\sa\in \sA_0$ and $S\in \mathcal H$, with $S_\sa:= S\cap Y_\sa$, the pair $(S_\sa, n_\sa)$ is a puzzle piece. We denote $S^\sa:=T_\sa(S)= f^{n_\sa}(S_\sa)$. We say that $\sa$ is \emph{admissible} from $S$. 
 \end{exem}
\begin{exem}[Parabolic symbol admissible for a curve]\label{para admi curve}
A parabolic symbol $\sa:=\boxdot_\pm(\sc-\sc')\in \hat \sA$ is \emph{admissible} from a horizontal curve $S$ if $\sc, \sc'$ belong to $\sR$ and $S$ belongs to $\tilde \cD(\boxdot(Y_\sc-Y_{\sc'}))$ (see def. \ref{domaine} p. \pageref{domaine}).
Put $S_{\sa}:= S_{\boxdot_\pm(Y_\sc-Y_{\sc'})}$. Also, in \cref{defi Ta}, we defined the horizontal, stretched  curve $T_\sa(S)$ which contains $f^{n_\sa}(S_\sa)$. Put $S^\sa:=T_\sa(S)$.
\begin{prop}\label{crutial for S}
Let $\sa:=\boxdot_\pm(\sc-\sc')\in \hat \sA$ such that $\sc, \sc'$ belong to $\sR$ and $S$ belongs to $\mathcal D_\sa:= \tilde \cD(\boxdot(Y_\sc-Y_{\sc'}))$. Then the  pair  $(S_\sa, n_\sa)$ is a piece.
\end{prop}
\begin{proof}
The pair $(S_\sa,n_\sa)$ satisfies properties $(2)$ and $(3)$ by respectively \cref{Crutial prop} and \cref{graphtransformpara}.$(i)$.
By \cref{remark pour la tranversalite}  the horizontal stretched curve $S$ is sent transversally to the arcs $\partial^sY_\sc \cup \partial^s Y_{\sc'}$ of $W^s(A)$, and the endpoints of $S_\sa$ are sent to two of these transverse intersection points. In particular property $(1)$ is satisfied.  
\end{proof}
\end{exem}
In \cref{defi Ta}, for $\sg\in \sA^{(\N)}$, we defined a subset $ \cD_\sg\subset \mathcal H$ and a graph transform $T_\sg: S\in \cD_\sg\to S^\sg\in \mathcal H$. 
\begin{defi}[Admissible word  from a curve]\label{admissiblewordfromcurve}
A word $\sg=\sa_1\cdots \sa_m\in \hat \sA^{(\N)}$ is admissible from a horizontal stretched curve  $S$ if $S\in D_\sg$ and 
the set:
\[S_\sg := (f^{n_{\sa_1}}| S_{\sa_1})^{-1}\circ (f^{n_{\sa_2}}|S^{\sa_1}_{\sa_2})^{-1} \cdots (f^{n_{\sa_m}}| S_{\sa_m}^{\sa_1\cdots \sa_{m-1}})^{-1}(S^\sg)\; \]
 is a segment of $S$ which contains more than one point. 
\end{defi}
We note that every $\sg\in \tilde \sR$ is admissible from every $S\in \mathcal H$.

By \cref{simple piece of a curve} and \cref{crutial for S}, it holds:
\begin{prop}\label{admissiblewordfromcurve3}
If $\sg$ is admissible from a horizontal stretched curve  $S$, then $(S_\sg, n_\sg)$ is a piece.
\end{prop}
\begin{prop}\label{prime are disjoint}
If $\sg$ and ${\sg'}$ are prime, complete words in $\hat \sA^{(\N)}$  which are admissible from $S\in \mathcal H$, then either $\sg={\sg'}$ or the interior of $S_\sg$ and $S_{{\sg'}}$ are disjoint.
\end{prop}
\begin{proof}
We proceed by induction on the number of letters of $\sg$. This enables us to assume $\sg$ prime.

 If $\sg$ is formed by a unique letter, then it belongs to $ \sA_0$ because $\sg$ is complete. Then $S_\sg\subset Y_\sg$ is disjoint from the interior of $\bigcup_{\sA_0\setminus \{\sg\}} Y_\ss\cup Y_\boxdot$. Thus $\sg'$ cannot begin with a parabolic symbol (since this implies $S_{\sg'}\subset Y_\boxdot$) nor a letter in $\sA_0\setminus \{\sg\}$. Hence the first letter of ${\sg'}$ must be $\sg$. As ${\sg'}$ is complete and prime, it is equal to $\sg$. 

Let us assume the induction hypothesis. Put $\sg= \sa\cdot \sg_0$ and ${\sg'}= \sa'\cdot {\sg'}_0$, with $\sa, \sa'\in \hat \sA$. If $\sa$ belongs to $\sA_0$, then $\sg$ is formed by a unique letter (because it is prime and complete); we already carried this case. Otherwise, $\sa$ is of the form $\sa= \boxdot_\pm (\sc-\sc\cdot \sb)$, with $\sc, \sc\cdot \sb\in \sR$. 
Likewise $\sa'$ is of the form $\sa'= \boxdot_\pm (\sc'-\sc'\cdot \sb')$, with $\sc, \sc\cdot \sb\in \sR$. 
By \cref{inclusion Yck} p. \pageref{inclusion Yck}, the boxes $Y_\sc$, $Y_{\sc'}$, $Y_{\sc\cdot \sb}$ and $Y_{\sc'\cdot \sb'}$ must be nested. By proceeding exactly as in the proof of \cref{injectif}, We obtain that $\sc=\sc'$, $\sc\cdot \sb= \sc\cdot \sb$ and 
$\sa= \sa'$. Then, by induction, we conclude that $\sg={\sg'}$. 
\end{proof}
The following will imply \cref{rigidPP}:
 \begin{prop}\label{coro a ques} If $\sg$ is a complete word and admissible from $S\in \mathcal H$, then $(S_\sg, n_\sg)$ is a puzzle piece.
 \end{prop}
 \begin{proof} We proceed by induction on the number of letters of $\sg$. This enables us to assume $\sg$ prime. If $\sg$ is formed by a single letter then it belongs to $\sA_0$, and we already saw in \cref{simple piece of a curve} that it defines a puzzle piece of $S$. 
 
 If $\sg= \sa\cdot {\sg'}$ with $\sg'\neq \se$, then $\sa$ cannot be in $\sA_0$ because $\sg$ is prime and complete. Thus $\sa$ is of the form $\sa= \boxdot_\pm (\sc-\sc\cdot \sb)$ where $\sb$ is a prime complete word (by definition of the strongly regular words \ref{defi Strongly regular words}). 
 
To prove the proposition it suffices to show that $S^\sa_{\sg'}$ is included in $f^{n_\sa}(S_a)$. 
 
 By \cref{graphtransformpara}.$(iii)$,  the curves $S^\sa$ and $S^{ \sc}$ are $\theta b^{n_\sc/3}$-close. We recall that $\sc\cdot \sb$ belongs to $\sR$ by \cref{admissibleword}$(ii)$. Let $y_0$ be the $y$-coordinate of the intersection point of $f(S_\sa)$ with $Y_{\sr\cdot \sc\cdot \sb}$. We can apply the following lemma with  $S'=S^\sa$, $\sp=\sc$,  $\sq=\sb$ and $\tilde S= f^{n_{\sr\cdot \sc}}(\R\times \{y_0\}\cap Y_{\sr \cdot \sc})$:
\begin{lemm}\label{lemme tres util}
For every $\sp\cdot \sq\in \sR$ and $\tilde S,S'\in \mathcal H$ which are  $\theta b^{n_{\sp/3}}$-close and  such that $\tilde S$ is of the form $\tilde S=S''^\sp$ for $S''\in \mathcal H$.  
Then the words $\sq$ is admissible from $S'$.  Moreover,
if the left (resp. the right) endpoint of $\tilde S_\sq$ belongs to $S'$ then it is the left (resp. right) endpoint of $S'_\sq$.   
 \end{lemm}
This lemma will be shown below. It implies that  $\sb$ is admissible from $S^\sa$. Let  $C$ be the right component of $\partial^s Y_{\sr\cdot \sc\cdot \sb}$. 
Let $z_0$ be the intersection point of $C$ with  $f(S_\sa)$ and let  $z_1:=f^{M+n_\sc}(z_0)$. Then $z_1$ is an endpoint of $f^{n_\sa}(S_a)$. Also by construction of $\tilde S$, $z_1$ is an endpoint of $\tilde S_\sb$.  Thus by the second part of the lemma,  $z_1$ is an endpoint of $S^\sa_\sb$. Note that furthermore, $f^{n_\sa}(S_a)$  and  $S^\sa_\sb$ are at different sides of $z_1$. 
Consequently, the interior of $f^{n_\sa}(S_a)$ is a component of 
$S^\sa\setminus S^\sa_\sb$. As $\sa\cdot \sg'$ is admissible from $S$, the segment $S^\sa_{\sg'}$ intersects the interior of  $f^{n_\sa}(S_a)$ and so the complement of $S_\sb^\sa$. Thus by \cref{prime are disjoint} applied to the prime words $\sg'$ and $\sb$,  the segment  $S^\sa_{{\sg'}}$ is disjoint from from the interior of $S^\sa_\sb$ and so it is included in $f^{n_\sa}(S_a)$.
\end{proof}
 \begin{proof}[Proof of \cref{lemme tres util}]  
 We are going to show by induction on the number $j$ of $\hat \sA$-letters in $\sq=\sq_1\cdots \sq_j$ that $\sq$ is admissible from $S'$ and  such that:
 \begin{enumerate}[$(i)$]
 \item for each endpoint $z_1$ of $f^{n_\sq}(\tilde S_\sq)$ there is an endpoint $z_1'$ of $f^{n_\sq}(S'_\sq)$ which is linked to $z_1$ by an arc of $W^s(A)$. 
 \item if an endpoint of $\tilde S_\sq$ belongs to $S'$, then it is also an endpoint of $S'_\sq$.
  \end{enumerate}
We notice that  $(ii)$ implies the last statement of \cref{lemme tres util}. 
If $j=0$, we have $\sq=\se$ and the induction hypothesis is obvious. 
Let $j>0$ and assume the induction hypothesis is valid for $j-1$ and put $\sq= \sg\cdot \sa$ with $\sa\in \hat \sA$.
Put:
\[W:= \tilde S^{\sg}, \quad W':= S'^{\sg}, \quad \breve W:= f^{n_{\sg}}(\tilde S_{\sg})\qand \breve W':= f^{n_{\sg}}(S'_{\sg})\; .\]
 By $(i)$ at step $j-1$, the endpoints of $\breve W$ and $\breve W'$ are linked by arcs of $W^s(A)$. 

If $\sa\in \sA_0$, then $W$ and $W'$ belong to the domain $\mathcal D_\sa=\mathcal H$. Also the interior of $Y_\sa$ intersects $\breve W$. 
Furthermore,  $\partial^s Y_{\sa}$ is formed by two arcs of $W^s(A)$ which are vertical, stretched curves. Thus by coherence of the arcs and $(i)$, the interior of $Y_\sa$ intersects $\breve W'$. Thus $\sq$ is admissible from $S'$. Let us prove $(i)$ and $(ii)$ at step $j$.

If $\partial^s Y_{\sa}$ does not intersect $\breve W$, then $\breve W$   is included in $Y_\sa$ and the same occurs for $\breve W'$.
Thus $\tilde S_\sq=\tilde S_{\sg}$ and $S'_\sq=S'_{\sg}$ and so $(ii)$ holds true by induction. Furthermore, 
$f^{n_\sq}(\tilde S_\sq)=f^{n_\sa}(W)$ and $f^{n_\sq}(\tilde S_\sq)=f^{n_\sa}(W)$ and so $(i)$ holds with the image by $f^{n_\sa}$ of the arcs given by $(i)$ at step $j-1$. 

If a component $C$ of $\partial^s Y_{\sa}$  intersects $\breve W$, then this occurs at a unique point $z$. Also the same occurs   with  $\breve W'$ for a unique point $z'$. 
Furthermore the preimage $z_0$ by $f^{n_{\sg}}|\tilde S_{\sg}$ of $z$ is an endpoint  $\tilde S_{\sq}$ and the preimage  $z_0'$ by $f^{n_{\sg}}|S'_{\sg}$ of $z'$ is an endpoint  $S'_{\sq}$.
 If $z_0$ is in $S'$, then by definition of the simple and parabolic operation, $z$ is in $W'$. Thus by uniqueness of the intersection point of $C$ with $W'$, it holds $z=z'$, and by uniqueness of the preimage $z_0=z_0'$. This prove $(ii)$. 
Also the image $z_1=f^{n_\sa}(z)$ and  $z_1'=f^{n_\sa}(z')$
are endpoints of respectively $f^{n_\sq}(\tilde S_\sq)$ and $f^{n_\sq}(S'_\sq)$. We notice that $z_1$ and $z_1'$ are linked by the arc $f^{n_\sa}(C)$. 
\medskip 

If $\sa\notin \sA_0$, then it is of the form $\sa=\boxdot(\sc-\sd)$. 
 By definition of $\sR\ni \sg\cdot \sa$, the curve $W=\tilde S^{\sg}$ belongs to  $\cD(\boxdot_\pm(Y_\sc-Y_\sd)$ and $n_\sa\le 2^M n_{\sp\cdot \sg}$.  Thus $\theta^{n_\sa}\ge \theta b^{(n_p+n_\sg)/3}$. Also the distance between  $W$ and $W'$ is at most $\theta b^{(n_p+n_\sg)/3}$ by assumption of the lemma and $b^{n_\sg/3}$-contraction of $T_\sg$. 
   Thus $W'$ belongs to $D_\sa=\tilde \cD(\boxdot_\pm(Y_\sc-Y_\sd)\Supset \cD(\boxdot_\pm(Y_\sc-Y_\sd)$.
As $\sq$ is admissible from $\tilde S$, the segment $W_\sa$ intersects the interior of $\breve W$. To show that $\sq$ is admissible from $S'$ is suffices to show that 
   $W'_\sa$ intersects the interior of $\breve W'$. 
To see this, we proceed as in the case $\sa\in \sA_0$: 
the endpoints of $f(W_\sa)$ and $f(W_\sa')$ are pairwise linked by arcs of $W^s(A)$ which are vertical stretched curves and the endpoints of $f(\breve W)$ and $f(\breve W')$ are pairwise linked by arcs of $W^s(A)$ by the induction hypothesis $(i)$. The coherence of the arcs of $W^s (A)$ implies that $\sq$ is admissible from $S'$. 
Furthermore, the same discussion with the box 
  $cl(Y_\sc\setminus Y_\sd)$ instead of $Y_\sa$, and the curves $f(W_\sa)$, $f(W_\sa')$, $f(\breve W)$ and $f(\breve W')$ instead of respectively  of $W_\sa$, $W_\sa'$, $\breve W$ and $ \breve W'$ in the case $\sa\in \sA_0$ shows $(i)$ and $(ii)$. 
 \end{proof}

\begin{proof}[Proof of Proposition \ref{rigidPP}]
\label{rigidPP_proof}
Let us show that $\sg\in \sR$ is complete iff $(Y_\sg,n_\sg)$ is a puzzle piece. If $\sg$ is not complete, then $\sg$ finishes by a symbol of the form $\boxdot_\pm (\sc-\sc')$, with $\sc, \sc'\in \sR$. Thus $f^{n_\sg}(Y_\sg)$ is included in $f^{n_\sc}(Y_\sc\setminus Y_{\sc'})$ which does not connect the two components of $\partial^s Y_\se$. Thus $(Y_\sg, n_\sg)$ cannot be a puzzle piece.

Let $\sg\in \sR$ be complete. Then by \cref{coro a ques}, for every $S\in \mathcal H$, $f^{n_\sg}(\partial S_\sg)\subset \partial^s Y_\se$. Consequently:
\[f^{n_\sg}(\partial^s Y_\sg) = \bigcup_{S\in \mathcal H}f^{n_\sg}(\partial S_\sg)\subset \partial^s Y_\se\; \]
and so $(Y_\sg, n_\sg)$ is a puzzle piece.
\end{proof}

\subsection{Distortion bound along the unstable manifold}\label{section pour la distortion}
The following combinatorial  property will enable us to display  nice distortion bounds:
\begin{defi}\index{Perfect}\label{perfect} A word $\sg:= \sa_1\cdots \sa_m\in \hat \sA^{(\N)}$ is \emph{perfect} if for every $i$ such that $\sa_i\notin \sY_0$, it holds:
\[\sum_{j=i+1}^m n_{\sa_j}\ge  16 n_{\sa_i}\; .\]
We denote by $\sP$ the set of perfect words. \index{$\sP$}
\end{defi}
We notice that a perfect word $\sg$ is complete. Thus if it is admissible from a horizontal curve $S$, then $(S_\sg,n_\sg)$ is a puzzle piece by \cref{coro a ques}.  Note that if $\sa_1\cdots \sa_m$ is perfect, then $\sa_j\cdots \sa_m$ is perfect for every $j\in [1,m]$. We will prove:

\begin{prop}\label{distparfaitcommon} 
Let $\sg\in \sP$ be a perfect word which is admissible from a horizontal curve $S$.  Then the pair $(S_{\sg}, n_{\sg})$ is a puzzle piece and for every $z,z'\in S_\sg$, it holds:
 
\[\log \frac{ \|D f^{n_\sa}|T_z S\|}{\|D f^{n_\sa}|T_{z'} S\| 
}\le 40\; .\]
\end{prop}
The proof that $(S_{\sg}, n_{\sg})$ is a puzzle piece was given in \cref{coro a ques}, the distortion bound will be proved in \cref{distparfaitcommon2} \cpageref{distparfaitcommon2}. Given $\st\in \sT$ we denote by $\leb$ the Lebesgue measure on $W^\st_u$. 
To prove the existence of an SRB measure, we shall bound the Lebesgue measure of the set $\mathcal E^j(\st)$ formed by the points $z\in  W_u^{\st}$ which do not belong to a piece defined by $\sP$ and of order $\le j$:
\[\mathcal E^j(\st):= W_u^{\st}\setminus \bigcup \{(W^\st_u)_{\sg}:\;  \sg\in \sP  \text{ is admissible from } W_u^{\st} \text{  s.t. } n_{\sg}\le j\}\; .\]
\begin{prop}\label{heart} Let $f$ be strongly regular. 
For every $\st\in \arr \sR$, and $M\le j$, the following estimate holds:
\[\frac 1j \log_2 \leb\, \mathcal E^j(\st)\le  -\frac 1{52}\; .\]
\end{prop}
We will prove this proposition at the end of this part. 

For every $\st\in \sT$, let $\sP(\st)\subset \hat \sA^{(\N)}$ be the set perfect words $\sa_1\cdots \sa_m$  which are admissible from $W^\st_u$ and such that $\sa_1\cdots \sa_j$ is not perfect for every $j<m$. We notice that for every $\sg\neq \sg'\in \sP(\st)$, the segments $(W^\st_u)_\sg$ and $(W^{\st'}_u)_\sg$ have disjoint interiors. 

\begin{prop}\label{semi-continuous} Let $f$ be strongly regular. Then 
 the following map is lower semi-continuous:
\begin{equation*} N: (\st, z)\in \bigcup_{\st \in \sT}   \{\st \}\times W^\st_u \mapsto \left\{
\begin{array}{cl}
n_\sg  & \text{if }  z\in  int\, (W^\st_u)_{\sg} \text{ for } \sg\in \sP(t). \\
0 &\mathrm{otherwise \;}.
 \end{array}\right.\end{equation*}
Moreover, there exists $L$ such that for every $\st\in \sT$, 
\[\int_{z\in W^\st_u}N(\st, z)\leb\le L\; .\]
\end{prop}
\begin{proof}
The semi-continuity along each horizontal curve is easy.  The semi-continuity transversally to these curves is more tricky.
Using the same argument as \cref{SRkimpliesSRinfty}, 
based on the continuity of $\st\in \sT \mapsto W^\st_u$ and the compactness of $K^u$ (see \cref{SRktoSR}), the curve  $W^\st_u$ satisfies $(SR_1-SR_2 -SR_3)$, for every $\st \in \sT$. 
Moreover by \cref{lemma same piece}, for every $n\ge 0$, there exists $\eta>0$ such that for every $\st, \st'\in \sT$, with $d(\st, \st')<\eta$, it holds:
\[\{\sg\in \hat \sA^{(\N)}: \sg \text{ admissible from }\hat W^\st_u\}= 
\{\sg\in \hat \sA^{(\N)}: \sg \text{ admissible from }\hat W^{\st'}_u\}\; .\]
and so:
$$\{\sg \in \sP(\st):n_\sg\le n \}=\{\sg \in \sP(\st'):n_\sg\le n \}\; .$$
An easy induction shows also that if $\sg$ is admissible from $S$ and $S'\in \mathcal H$ which are close, then $S_\sg$ and $S_{\sg'}$ are close. 
This implies that $N$ is lower semi-continuous. Furthermore, by  using \cref{heart}, we obtain the second part of the proposition:


\begin{equation}\label{unequattriemmenum}
\leb\, \{z: N(\st,z)> j\}=  \\leb\, \mathcal E^j(\st)\le
\lim_{n\to \infty}  \leb\, \mathcal E^j(\st_n)\le 2^{- j/50}
.\end{equation}
\end{proof}
\subsection{Construction of the SRB measure}\label{Construction SRB}
Let us consider the map:
 \[\begin{array}{ccl}
\arr F:
\bigcup_{\st \in \sT} \arr  W^\st_u   &\rightarrow &\quad \bigcup_{\st \in \sT} \arr W^\st_u \\
&&\\
\arr z=(z_i)_{i\le 0}&\mapsto &\left\{\begin{array}{cl}
\arr f^{n_\sg}(\arr z)&\mathrm{if}\; z_0\in int\, (W^\st_u)_\sg ,\; \sg\in \sP(\st),\\
                         \arr z & \mathrm{otherwise.}\end{array}\right.\end{array}\]
We remark that $\arr F$ is measurable, since it is the continuous map $\arr f$ at the measurable return time function $N$ defined in  \cref{semi-continuous}.
\begin{defi}[Same first itinerary]
For every $\st\in \sT$, we say that $\arr z=(z_i)_i$ and $\arr  z'=(z_i')_i $ in $\arr W^\st_u$ have \emph{the same first itinerary} if there exists $\sg\in \sP(\st)$ such that $z_0$ and $z'_0$ belong  to $(W^\st_u)_\sg$. They have the same $j\ge 1$-first itinerary if furthermore $\arr F(\arr z)$ and $\arr F(\arr  z')$ in $\arr W^{\st\cdot \sg}_u$ have the same $j-1$-first itinerary. \end{defi}

We identify $\bigcup_{\st \in \sT} \arr  W^\st_u $ to $\sT\times I_\se$ via the  homeomorphism given by \ref{compactsT}.$(iv)$.  A direct consequence of the distortion bound of \cref{distparfaitcommon} and the expansion property of \cref{defi piece curve} is:

\begin{prop}\label{distortionborne} The map $\arr F$ is uniformly expanding and has bounded distortion along unstable leaves:\begin{itemize}
\item[(1)] $|\partial_{x} (\arr F^k)	(\st, x)|\ge 2^{\frac{k}{3}}$, for every $k\ge 1$, $\st\in \sT$ and Leb. a.e. $x\in I_\se$. 
\item[(2)] $|\partial_x (\arr F^k)(\st, x)|/|\partial_x (\arr F^k)	( \st, x')|\le 40$ for any $k\ge 1$, $( \st, x), ( \st, x')\in \sT\times I_\se\approx \bigcup_{\st \in \sT} \arr  W^\st_u $ with the same $k$-first itinerary. 
\end{itemize} \end{prop}
We are now ready to show that every strongly regular dynamics leaves invariant an SRB probability measure.
\begin{proof}[Proof of \cref{existenceSRB}]We are going to proceed as in \cite{BV}. 
Let $\mathcal M$ be the $\sigma$-algebra on $\sT\times I_\se$ generated by the products $A\times B$ of Borel sets $A\subset \sT$ and $B\subset  I_\se$. Given a Borel measure $\nu$ on $\sT\times I_\se$, let 
$\hat \nu\otimes \leb$ be the measure on $\mathcal M$ defined by:
\[(\hat \nu\otimes \leb )(A,B)= \hat \nu(A)\cdot \leb(B) ,\]
where $\hat \nu$ is the push forward of $\nu$ via the projection $\sT\times I_\se\to \sT$. 

The existence of an SRB measure for $f$ will follow from the existence of an $\arr F$-invariant measure $\mu$ on $\sT\times I_\se$ of the form $\mu=\rho \cdot m\otimes \hat \mu$, with $\rho$ bounded. 
Let us fix any $\st\in\sT$. Let $\leb_\st$ be the product $\delta_\st\otimes \leb$, with $\delta_\st$ the Dirac measure of $\sT$ at $\st$. Let us recall the following lemma:

\begin{lemm}[Lemma 4.9 in \cite{BV}] There exists $K_0>0$ such that given any $\st\in\sT$, the sequence $\lambda_n=\arr F^n_* \leb_\st$ of push forward measures satisfies:
\[\lambda_n(A\times B)\le K_0 \cdot \hat \lambda_n(A)\cdot \leb(B),\]
for every $n\ge 1$, $A\times B\in \mathcal M$.
\end{lemm}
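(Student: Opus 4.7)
The plan is to decompose $S^t$ into $n$-step cylinders under $R$ and then apply the uniform distortion bound of Proposition~\ref{distortionborne}(2) along unstable leaves. For a sequence $\underline\alpha=(\alpha_1,\ldots,\alpha_n)$ with $\alpha_1\in\mathcal Y(S^t)$, $\alpha_2\in\mathcal Y(S^{t\cdot\alpha_1})$, $\ldots$, $\alpha_n\in\mathcal Y(S^{t\cdot\alpha_1\cdots\alpha_{n-1}})$, I would let $S^t_{\underline\alpha}\subset S^t$ denote the set of points whose $n$ first $R$-iterates visit these pieces in order. By the second regular property, the cylinders $S^t_{\underline\alpha}$ partition $S^t$ up to a set of zero $m$-measure, and by construction $R^n$ restricts to a diffeomorphism of $S^t_{\underline\alpha}$ onto the whole leaf $S^{t\cdot\underline\alpha}:=S^{t\cdot\alpha_1\cdots\alpha_n}$.

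Unpacking the pushforward, for any measurable $A\subset[-1,1]$ and $B\subset T$,
\[\lambda_n(A\times B)=\sum_{\underline\alpha\,:\,t\cdot\underline\alpha\in B} m\bigl((R^n|_{S^t_{\underline\alpha}})^{-1}(A\cap S^{t\cdot\underline\alpha})\bigr).\]
On each cylinder I apply the change of variables $x=R^n(y)$ and invoke Proposition~\ref{distortionborne}(2) to get the pointwise estimate $|\partial_1 R^n(y)|^{-1}\le K\,m(S^t_{\underline\alpha})/m(S^{t\cdot\underline\alpha})$ for every $y\in S^t_{\underline\alpha}$: indeed, bounded distortion forces the derivative to stay within a factor $K$ of its average $m(S^{t\cdot\underline\alpha})/m(S^t_{\underline\alpha})$. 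Since every flat stretched curve is the graph over $[-1,1]$ of a function of small $C^1$-norm, the lengths $m(S^{t'})$ are bounded below by a constant close to $2$, so
\[m\bigl((R^n|_{S^t_{\underline\alpha}})^{-1}(A\cap S^{t\cdot\underline\alpha})\bigr)\le K'\,m(A)\,m(S^t_{\underline\alpha}).\]

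Summing over all cylinders with $t\cdot\underline\alpha\in B$ and observing that $\sum_{\underline\alpha:t\cdot\underline\alpha\in B}m(S^t_{\underline\alpha})$ is exactly $\hat\lambda_n(B)=m(\{y\in S^t:\pi_2 R^n(y,t)\in B\})$ yields the claimed inequality $\lambda_n(A\times B)\le K'\,m(A)\,\hat\lambda_n(B)$. I expect the only delicate step to be the distortion bound itself: although individual $R$-returns may correspond to puzzle pieces of arbitrarily large order $n_\alpha$, the composed map $R^n$ on a cylinder must have distortion uniform in both $n$ and in the chosen cylinder. This is precisely what Proposition~\ref{distortionborne}(2) provides, and it rests in turn on the $\star$-stability of the Dist-property of regular puzzle pieces established earlier.
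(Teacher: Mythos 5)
The paper gives no proof of this lemma; it is simply quoted as Lemma~4.9 of \cite{BV}, with the absolute continuity of accumulation points then delegated to Corollary~4.10 of the same reference. Your argument — decompose $S^t$ into $n$-step Markov cylinders, apply the uniform bounded distortion of Proposition~\ref{distortionborne}(2) to convert the derivative at a point into the ratio $m(S^t_{\underline\alpha})/m(S^{t\cdot\underline\alpha})$, use that every stretched curve has length bounded below, and sum — is exactly the standard proof of this estimate and is essentially what \cite{BV} do. The one identification you pass over quickly but which is harmless here is that $m_t = m\times\delta_t$ is the \emph{chart} Lebesgue measure on $S^t$, not arclength, so the derivative $\partial_1 R^n$ in Proposition~\ref{distortionborne} is the chart derivative and the change of variables is literally the one you wrote; with that understood, the proof is correct.
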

Since the measurable sets $A\times B$ generate the $\sigma$-algebra $\mathcal M$, the measure $\lambda_n$ is absolutely continuous along the unstable leaves, with Radon Nikodyn density $\rho_n$ bounded by $K_0$. Moreover the same is true for the sequence:
\[\mu_n:= \frac{1}{n} \sum_{j=0}^{n-1} \lambda_j= \frac{1}{n}\sum_{j=0}^{n-1} \arr F^n_* \leb_\st,\]
because $\mu_n(A\times B)=\frac1n\sum_j \lambda_j(A\times B)\le \frac{K_0 \leb(A)}{n} \sum_j \hat \lambda_j(B)=K_0 \leb(A) \hat \mu_n(B)$.

As $\arr F^n_* \mu_n-\mu_n=\frac{1}{n}(\lambda_n-\lambda_0)$, any accumulation point $\mu$ of $\mu_n$ is a $\arr F$-invariant measure. Such an accumulation point exists by compactness of the space of the probability measures. The absolute continuity of $\mu$ is shown in Corollary 4.10 of \cite{BV}.

We fix such an $\arr F$-invariant, absolutely continuous measure $\mu$.
We recall that by \cref{semi-continuous}, the integral $\int_{W_u^\st}  N d\leb $ is uniformly bounded among $\st \in \sT$.  Thus by absolute continuity, the  integral 
 $\int_{\bigcup_\sT\arr W^\st_u} N d\mu  = \sum_{0}^\infty \mu (\{N>j\})$ is finite. 

\[\mathrm{Let}\quad \arr \mu:= \frac{ \sum_{0}^\infty \arr f_*^j(\mu|N>j)}{\sum_{0}^\infty \mu (\{N>j\})}.\]

We remark that $\arr \mu $ is $\arr f$-invariant. Furthermore $\arr\mu$ restricted to $\bigcup\arr W^\st_u\approx \sT\times I_\se$ is absolutely continuous.  In particular $\arr \mu$ is an SRB measure.

Let $\nu$ be the push forward of $\arr \mu$ by the zero-coordinate projection $(z_i)_i\mapsto z_0$. The measure $\nu $ is invariant by $f$.  It is well known that a hyperbolic SRB measure of a $C^{1+\alpha}$-diffeomorphism is physical. As we deal with endomorphisms, we shall re-prove the physicallity of $\mu$ in our case.

\bigskip

\noindent{\bf Proof that $\nu$ is ergodic and physical}. 
The proof will use the map $\pi_f$ defined in \cref{Pesin stable1} \cpageref{Pesin stable1}.

Let $E$ be an $\arr f$-invariant set of $\arr \mu$ measure positive. By construction of $\arr \mu$, the set $E$ intersects $\bigcup_{\st\in \sT}\arr W^u_\st$ at a set of positive measure. For every $\st\in \sT$, let $E^\st=E\cap \arr W^\st_u$.  We identify $E^\st$ with its 0-coordinate projection in $W^\st_u$. 
\begin{lemm}\label{preergo}
There exists a sequence $(\st_i)_i\in \sT^\mathbb N$ such that $\leb(\arr W_u^{\st_i}\setminus E^{\st_i})\rightarrow 0$.
\end{lemm}
\begin{proof} Let $\st\in \sT$ be such that $E^{\st}$ has positive Lebesgue measure. By (\ref{unequattriemmenum}) p. \pageref{unequattriemmenum}, there exists a density point $\arr z\in E^{\st}$ and  a sequence of perfect, complete words $(\sg_i)_i$ admissible from $W_u^\st$ and such that $\{\arr z\}= \cap_{n\ge 0} (W_u^\st)_{\sg_n}$ and:
\[\lim_{n\to \infty}\frac{\leb [ (W_u^\st)_{\sg_n}\cap E^\st]}{\leb    (W_u^\st)_{\sg_n}}= 1\; .\] 
We remark that  $E^{\st\cdot \sg_i}\supset  f^{n_{\sg_i}}(E^\st\cap (W_u^\st)_{\sg_i})$ and so the sequence $(\st_i)_i\in \sT^\mathbb N$ is convenient by the distortion bound of the perfect pieces.
\end{proof}
By this lemma, there exists $\st\in \sT$ such that $\pi_f(E^{\st})$ is arbitrarily closed to be full in $I_\se$ in the sens of $\leb$. As $\breve K^u(f)\subset I_\se$ is nearly full by \cref{geo intuition para selec}.2, the subset $F:=\pi_f(E^{\st})\cap \breve K^u(f)\subset I_\se$ is near to be full by the Lipschitz property of $\pi_f$ stated in  \cref{Pesin stable1}.1.  

Let us apply this reasoning with $E$ an union of ergodic components and $E'$ another union of ergodic components of $\arr \mu$. Then there are points in $E$ and $E'$ which belongs to the same stable manifold $W^s_\sc$ for a certain $\sc\in \overrightarrow\sR$.   By \cref{Pesin stable1}.4, all the points in $W^s_\sc$ are asymptotic and so $E$ and $E'$ must have an ergodic component in common. A contradiction. Therefore, $\arr \mu$ is ergodic. 

Now let us apply this reasoning for $E$ equal to the set of points which are $\arr \mu$-generic. By Lipschitzness of the lamination $\bigcup_{\sc\in \overrightarrow \sR} W^s_\sc$ stated in propositions \ref{Pesin stable1}.1, the preimage by $\hat F:=\pi_f^{-1}(F)\subset Y_\se$  has   Lebesgue measure close to the one of $Y_\se$. By \cref{Pesin stable1}.4, each of the points in $\hat F$ has its orbit which is asymptotic to one of a point in $E^\st$ and so which is generic for $\nu$. Thus the basin of $\nu$ contains the set  $\hat F$ of positive Lebesgue measure, and so $\nu$ is physical. 
\end{proof}
\begin{proof}[Proof of \cref{heart}]
For every $\st\in \arr \sR$ and $1\le i\le j$, let $\mathcal E^{i,j}(\st)$ be the subset of points in $W_u^\st$ which do not belong to a puzzle piece defined by $\sg \in \sP$ of order  $n_\sg $ in $[i,j]$. We note that  $\mathcal E^{j}(\st)=\mathcal E^{1,j}(\st)$. Put:
\[P_i^j:= \sup_{t\in \arr \sR}\leb\big(\mathcal E^{i,j}(\st)\big)\; .\] It suffices to show by induction on $j$ that $P^j_1\le 2^{-j/52}$ for every $j\ge M$.
 
For $j= M-1$, the Proposition is a consequence of Lemma \ref{propestimates1} \cpageref{propestimates1}. Furthermore by the same lemma, there exists a universal constant $C>0$ such that:
\[P^j_1\le C\cdot 2^{-j}\le C\cdot 2^{-j/52}, \quad \forall j\le M-1\, .\]
Let $j>M$.  Let us remark the following formula:
\begin{equation}\label{P^i_j}
P^j_{i+1}\le P_i^j +  40\cdot  P_1^{j-i}\; .\end{equation}

Indeed if $z\in \mathcal E^{i+1,j}(\st)\setminus \mathcal E^{i,j}(\st)$ then the point $z$ belongs to a piece defined by $\sg'\in \sP$ such that $n_\sg=i$ and $f^i(z)$ does not belong to the support of a perfect word of order in $[1,j-i]$. The distortion estimate of \cref{distparfaitcommon}  gives formula (\ref{P^i_j}).
 
Using inductively (\ref{P^i_j}) it comes:
\begin{equation} P_i^j\le 40 \sum_{l=j-i+1}^j P_1^l\; .\end{equation}
The induction hypothesis at step $j'<j$ gives:
\begin{equation}\label{eq5} P_i^{j'}\le 40 C\sum_{l=j-i+1}^j 2^{-l/52} \le 40\cdot  C \frac{2^{-(j'-i+1)/52}}{1-2^{-\frac 1{52}}}\; .\end{equation}

Let $\sa_{-1}^\pm := \ss_M^\pm$ and $n_{-1}=M$. 
For $m\ge 0$,  let $\sa_m^\pm:= \square_\pm (\sc_m^\st-c_{m+1}^\st)$ and let $n_m$ be its order. Let $l\ge 0$ be maximal such that $n_l\le \frac{j}{16}$.
Let $\epsilon_{l+1}$ be the Lebesgue measure of the subset of points in $z\in W^\st_u\cap Y_\boxdot $ such that $f^{n_\boxdot}(z)$ is in $Y_{\sc_{l+1}}$ : 
 \[\epsilon_{l+1}:=\leb \left(W^\st_u\cap (f^{n_\boxdot} |Y_\boxdot)^{-1}(Y_{\sc_{l+1}})\right)\; .\]
 By the horizontal expansion along the parabolic pieces and the pieces in $\ss_M^\pm$,  we have:
\begin{equation} \leb\, \mathcal E^j(\st)\le 2\sum_{m=-1}^l 
2^{-\frac{n_{m}}{3} }
P_{16\cdot  n_{m} }^{j-{n_{m}}}+\epsilon_{l+1}\; .\end{equation} 
Using (\ref{eq5}) with $j'=j-n_{m}$, it comes: 
\begin{equation} P_1^j\le \frac{80\cdot C}{1-2^{-\frac 1{52}}} \sum_{m=-1}^l  2^{-\frac{n_{{m}}}{3} } 
 2^{- (j- 17\cdot n_{m}+1)/52}
+\epsilon_{l+1} \; ,\end{equation}  
\begin{equation} 
P_1^j\le \frac{80\cdot C\cdot   2^{-\frac j{52}}}{1-2^{-\frac 1{52}}} \sum_{k\ge M }^l   
 2^{(\frac{17}{52}-\frac13)k}
+\epsilon_{l+1} \le C'  2^{-\frac j{52}} 2^{(\frac{17}{52}-\frac13) M}
+\epsilon_{l+1} \; ,\end{equation}  
where $C'$ is a universal constant. Therefore:
\begin{equation}P_1^j \le \frac{1}{2} 2^{- j /52} +\epsilon_{l+1} \; .\end{equation}  
 As  $\pi_f\circ f|W_u^\st$ is equal to the composition of a map $C^1$-close to identity with $(x,y)\mapsto x^2+a$ and then $Lip$-map close to the first coordinate projection, the bound $\epsilon_{l+1}$ is smaller than the square root of 
$2 \cdot 2^{-(2M+ n_{\sc_{l+1}})(1-1/\sqrt M)}$ by \cref{Expansion SR piece} \cpageref{Expansion SR piece}.
Then we infer that  $n_{l+1}:=n_{\sc_{l+1}}+M+1$ is greater than $j/16$ because $n_l$ was maximal with the property $n_l\le j/16$. Thus it comes that 
$\epsilon_{l+1}$ is at most $\frac 12 2^{- j/52}$. \end{proof}

\part{Analytics bounds}
\section{Estimate on the expansion of simple and strong regular pieces}\label{section Estimate on the expansion of simple and strong regular pieces}
\subsection{Yoccoz' one-dimensional bounds}

Let $P_a(x)=x^2+a$ with $a\le 0$. 

In all estimates that follow, we use the letter $C$ to denote various constants {\bf independent of $M$}. The dependence on $M$ in the estimates will always be explicit.

We recall that the fixed points of $P_a$ are denoted by $\alpha_0(a)$ and $\beta(a)$. The sequence $(\alpha_i(a))_a$ of preimage of $\alpha_0(a)$ was defined \cref{defalphai} \cpageref{defalphai}. 

By \textsection 2.2 of \cite{Y95}, for $a=-2$, it holds:
\begin{equation}\label{valeur en -2}\alpha_n(-2)= -2\cos\frac\pi{3\cdot 2^n}\qand\tilde \alpha_n(-2)= -2\cdot \sin\frac{\pi }{3\cdot 2^{n+1}}\; .\end{equation}

\begin{prop}[Prop. 3.1 \cite{Y95} ]\label{propdefcM}
\begin{enumerate}
\item For $a \in [-2,-\frac 32]$, the preimage  $\alpha^{(m)}(a)$ belongs to $ [-2,-\frac 32]$ for $m>0$ and satisfies 
 $1/3 \leq \partial \alpha^{(m)}/\partial a \leq 1/2 \;$
 for $m\geq 0$.
 \item
 For $m\ge 1$, the equation
$P_a(0)= a = \alpha^{(m)}(a)$
has a unique root $a^{(m)}$ in $[-2, -\frac 32]$. This root is simple. The sequence $(a^{(m)})_{m>1}$ is decreasing. For $a \in (a^{(m+1)}, a^{(m)})$, the critical value $P_a(0)$ belongs to the interval
$(\alpha^{(m)} (a),\, \alpha^{(m-1)} (a))$.
\item The sequence $(a^{(m)})_{m>1}$ converges to $-2$. More precisely, one has, for some constant $C>0$ and all $m>1$
$$ C^{-1} 4^{-m} \leq a^{(m)} +2 \leq C \,4^{-m}.$$
\end{enumerate}
\end{prop}
An immediate consequence of \eqref{valeur en -2} and the latter proposition is
\begin{coro}\label{coro alphaM}For $a\in (a_{M} , a_{M-1} )$,
it holds $4^{-M}/C\le 2-\beta(a)\le C\cdot 4^{-M}$ and 
$4^{-M}/C\le \beta(a)-\alpha_m(a)\le C\cdot 4^{-M}$ for every $M-\le m\le M$. 
\end{coro}
Let $a\in (a_{M} , a_{M-1} )$ or equivalently assume that the first return time of the critical point $0$ in $I_\se$ is $M+1$. We have:
\begin{eqnarray*}
\alpha_{M}<& a=P_a(0)& < \alpha_{M-1},\\
- \alpha_{M-n}<& P_a^{n+1}(0)&<- \alpha_{M-n+1}, \qquad {\rm for}\; 1<n<M, \\
\alpha_0 <& P_a^{M+1}(0)&<- \alpha_0 .
\end{eqnarray*}

\smallskip

We define for $1\leq n \le M$ preimages $\wt \alpha_{n} \in I_\se$ by the conditions
$$P_a(\pm \,\wt \alpha_{n})= \alpha_{n-1}, \qquad \wt \alpha_{n} <0.$$

The finite sequence defined by these conditions verifies $\wt  \alpha_{1} = \alpha_0 $ and $\wt \alpha_{n} < \widetilde \alpha_{n+1}$ for $1 \leq n \le  M$. We remark that for $2\le  n \le  M$ :
$$I_{\ss^-_{n}} = [\widetilde \alpha_{n-1}  ,  \widetilde \alpha_{n}] \qand I_{\ss^+_{n}} = [- \widetilde \alpha_{n}  ,  - \widetilde \alpha_{n-1}].  $$


\smallskip

\begin{prop}[Prop. 3.4 \cite{Y95}]\label{propestimates1}
For $a \in (a^{(M)} , a^{(M-1)} )$, the following estimates hold:
\begin{eqnarray*}
C^{-1} 4^{-M} \leq &\beta(a) + P_a(0) &\leq C4^{-M},\\
C^{-1} 4^{-n} \leq &\alpha_{n}(a) - P_a(0) &\leq C 4^{-n}, \qquad \hbox{for} \; 0 \leq n < M-2,\\
C^{-1} 2^{-n} \leq& \vert \wt \alpha^{(n)}(a)\vert  &\leq C 2^{-n},  \qquad \hbox {for} \; 0 < n < M-1.
\end{eqnarray*}
\end{prop}

For $a \in (a^{(M)} , a^{(M-1)} )$, $x \in [-\beta(a), \beta(a)]$ we define
$$ h_a(x) := (\beta^2-x^2)^{-1/2}. $$

\begin{prop}[Prop. 3.5 \cite{Y95}]\label{propestimates2}
\begin{enumerate}
\item
The derivative of $P_a$ satisfies
$$ \vert DP_a (x) \vert  = 2 {h_a(x) \over h_a(P_a(x))} \; \Big(1 + {\beta + a \over x^2}\Big)^{-1/2}. $$
\item For $n>0$, $x \in [\alpha_{n}, \alpha_{n-1}]$
$$\Big\vert \log  \vert  DP^n_a(x)  \vert -\log  2^n {h_a(x) \over h_a(P_a^n(x))} \Big\vert \leq C \, n \, 4^{-M}.$$
\item For $1<n<M-1$, $x \in I_{\ss_n^{\pm}}$
$$\Big\vert \log \vert  DP^n_a(x)  \vert -\log  2^n {h_a(x) \over h_a(P_a^n(x))} \Big\vert \leq C4^{n-M}.$$
\end{enumerate}
\end{prop}

\begin{lemm}[lem. 2.8 \cite{Y95} ]
\label{distbound1}
For every piece $(I, n)$ such that there exists an interval $\hat I\supset I$ which is sent bijectively onto $[\alpha_1,-\alpha_1]\supset I_\se$. Then for every $x\in I$:
\begin{equation}\label{distU} |D\log |{DP_a^{n}(x)}||\le C_0 |DP_a^{n}(x)|,\end{equation}
with $C_0:= 2|\alpha_0-\alpha_1|^{-1}$. \end{lemm}
Note that  \cref{distU} holds true for any $(I,n)=(I_\sa, n_\sa)$ among $\sa\in \sY_0$.
We recall that $\alpha_0\approx -1$ and $\alpha_1\approx -\sqrt{3}$ when $M$ is large. Thus $C_0\approx 2/(\sqrt 3 -1)=2.7320\cdots$.

\subsection{Consequence of Yoccoz estimate in dimension 1}
 Here is a consequence of \cref{propestimates2}:
 \begin{coro}\label{coroYoc} For every $\ss\in \sY_0$, it holds for every $k\le n_{\ss}$: 
 $$|DP_a^{n_\ss}(x)|\ge 2^{k/2}  |DP_a^{n_\ss-k}(x)|\; .$$
 \end{coro}
 \begin{proof}
 Put $n:=n_\ss$.   By \cref{propestimates2}.2, for every $x\in I_{\ss_n^\pm}$ and $ k<n$, it holds:
  \begin{equation}\label{rondstar} 2^{k-Ck 4^{-M}}\sqrt{\beta^2 -  P_a^n(x)^2 \over 
\beta^2 -  P_a^{n-k}(x)^2}
  \le \left\vert \frac{ DP^n_a(x)}{DP^{n-k}(x)}  \right\vert\le
2^{k+Ck 4^{-M}}
\sqrt{\beta^2 -  P_a^n(x)^2 \over 
\beta^2 -  P_a^{n-k}(x)^2}
.\end{equation}  
As $| DP^n_a(x)|\le  |DP^{n-k}_a(x)|\le \beta$ and $k\le M$, we obtain:
  $$2^{k-CM 4^{-M}}  \le \left\vert \frac{ DP^n_a(x)}{DP^{n-k}(x)}\right\vert\; .$$
To prove the inequality at $k=n$, first we note that both $P^n(x)$ and $x$ are smaller than $1$ (because they are in $I_\se$, while $\beta\in [2-C4^{-M},2]$ by \cref{coro alphaM}. Thus  
$$\sqrt{\beta^2 -  P_a^n(x)^2 \over 
\beta^2 -  x^2}\ge \sqrt{3 -C4^{-M}\over 
4}= \frac{\sqrt{3}}2  -C4^{-M}>\frac{\sqrt 2}2 2^{Cn4^{-M}}$$
Then the sought inequality is a consequence \eqref{rondstar} (which is also valid for for $k=n$ by \cref{propestimates2}.3).
\end{proof}

Here is a consequence of \cref{coro alphaM} and   \cref{propestimates1}:
\begin{coro}\label{coroyoc2}
For every $x\in [\alpha_M, \alpha_{M-1}]$, it holds
\[ \frac{4^{M}}C \le |D_xP^M| \le  4^M\; .\]
For every $x\in [\alpha_{M-1}, \alpha_{M-2}]$, it holds
\[ \frac{4^{M-1}}C \le |D_xP^{M-1}| \le  4^{M-1}\; .\]
\end{coro}
\begin{proof} The upper bound is given by $DP_a|[-2,2]\le 4$. 
We recall that by \cref{coro alphaM}  $|\beta-\alpha_M|$ and $|\beta-\alpha_{M-2}|$ are of the order of $4^{-M}$. Thus for $x\in [\alpha_M, \alpha_{M-2}]$, $h_a(x)$ is of the order of $2^{M}$. On the other hand, $h_a| I_\se$ is bounded. Thus  \cref{propestimates2}.2  implies the corollary. 
\end{proof}
 Here is a consequence of the two above corollaries and \cref{distbound1}:
\begin{coro}\label{coroyoc3}
If $P^M(a)\in I_\se\setminus I_{\ss_-^{\odot \lfloor M/24\rfloor }}$, then for  every $x\in I_{\ss_M^\pm}$ and $k\le M$: 
 $$|DP_a^{M}(x)|\ge {2^{k/2}}\cdot |DP_a^{M-k}(x)|\; .$$

\end{coro}
\begin{proof}The same argument as for \cref{coroYoc} shows the case  $k<M$. It remains the case $k=M$. As the orbit $I_{\ss_-^{\odot \lfloor M/24\rfloor }}$ is included in $[\alpha_1,0]\subset [\sqrt 3,0]$, the length of $I_{\ss_-^{\odot\lfloor M/24\rfloor }}$ is at least $(2\sqrt 3)^{-M/12}$. Using the distortion bound of \cref{distbound1} for the piece $([\alpha_{M-1},\alpha_{M-2}], M-1)$, the length of $I_{\sr\cdot \ss_-^{\odot \lfloor M/24\rfloor }}$ is at least 
$|D_{P(x)}P^{M-1}|^{-1} \cdot (2\sqrt 3)^{-M/12}/C$.  Thus $|x|\ge \|D_{P(x)}P^{M-1}\|^{-1/2} \cdot  (2\sqrt 3)^{-M/24}/C$. It comes by \cref{coroyoc2}:
\[ |D_{x}P^{M}|\ge \frac{(2\sqrt 3)^{-M/24}}C \sqrt{|D_{P(x)}P^{M-1}| }\ge 
\frac{(2\sqrt 3)^{-M/24}}C 2^{M-1}\ge 2^{M/2}\; .\]
\end{proof}
The following is an immediate consequence of \cref{propestimates2}:
\begin{coro}\label{coro M'}
For every piece $\ss_n^\pm\in \sY_0$ with $n\le M/2$ and $x\in I_{\ss_n^\pm}$, it holds:
  $$2^{n(1-2^{-M-1})}\le \left\vert  DP^n_a(x) {h_a(P_a^n(x)) \over h_a(x) } \right\vert\le 2^{n(1+2^{-M-1})}\; .$$
\end{coro}
We recall that  $\leb_g$ is the measure on $[-\beta, \beta]$ with density $\frac1{\sqrt{\beta^2-x^2}}$:\index{$\leb_g$}
\[  \leb_g(E):= \int_E (\beta^2-x^2)^{-1/2} dx\quad \text{for every Borel subset } E\subset \R\; .\]
\begin{prop}\label{reste M'}With $M':=  \lfloor M/2\rfloor$, it holds:
$$C\cdot 2^{-M} \ge \log_2 \frac{\leb_g(I_\se \setminus \bigcup_{\ss\in \sY_0: n_\ss\le M/2}I_{\ss})}{\leb_g I_\se}-M'\le C\cdot 2^{-M}\; .$$
\end{prop}
\begin{proof}
By \cref{coro M'}, the set $E=\bigcup_{\ss\in \sY_0: n_\ss\le M/2}I_{\ss}$ satisfies:
\[2 \sum_{j=2}^{\lfloor M/2\rfloor } 2^{-n(1-2^{-M-1})}\leb_g\, I_\se\le \leb_g(E) \le 2 \sum_{j=2}^{\lfloor M/2\rfloor } 2^{-n(1+2^{-M-1})}\leb_g\, I_\se\; .\]
Thus with $M':= \lfloor M/2\rfloor$:
 \[2^{1-2(1-2^{-M-1})} \frac{1-2^{-M'(1-2^{-M-1})}}{1-2^{-1+2^{-M-1}}}\le \frac{\leb_g(E)}{\leb_g(I_\se)}\le 2^{1+2(1-2^{-M-1})} \frac{1-2^{-M'(1+2^{-M-1})}}{1-2^{-1+2^{-M-1}}}\; .\]    
 We develop using $2^{-1+2^{-M}}= 2^{-1}+O(2^{-M})$ and 
 $2^{-M'+M'2^{-M-1}}= 2^{-M'}+2^{-M'}O(M'2^{-M})=2^{-M'}+O(2^{-M})$:
  \[  (1-C\cdot 2^{-M})({1-2^{-M'}})\le \frac{\leb_g(E)}{\leb_g(I_\se)}\le  (1+ C\cdot 2^{-M})({1-2^{-M'}})\; .\]  \end{proof}  
  
\subsection{Application to two-dimensional pieces}\label{app to 2}
\begin{proof}[Proof of \cref{simplespieceindim2} \cpageref{simplespieceindim2}]
Let $\ss\in \sA_0\cup\{\sr\}$. To show that the pair $(Y_\ss, n_\ss)$ is a puzzle piece, it remains to prove properties $(i)$ and $(ii)$ of \cref{piecedef}. Namely that for every $z\in Y_\ss$, for every $w\in \chi_h$, for every $0\le m\le n_\ss$ it holds:
\begin{enumerate}[$(i)$]
\item  $\| D_zf^{n_\ss}(w)\|\ge 2^{\frac{m}3 } \|D_{f^m(z)}f^{n_\ss-m}(w)\|$,
\item $\| D_zf^{m}(w)\|\ge b^{m/6} \|w\|\; .$
\end{enumerate}
As $n_\ss$ is smaller than $M$ and since $\|Df\|\le 4$, statement $(i)$ at $m=n_\ss$ implies $(ii)$. To prove $(i)$, we recall that $Y_\ss$ is $O(b)$-close to $I_\ss\times [- \theta,  \theta]$. Also for every $(x,y)\in Y_\ss$ and unit vector $w\in \chi_h$, it holds:
\begin{itemize}
\item $f^j(z)$ is $O(\theta)$-close to $(P_a^j(x), 0)$,
\item  $D_zf^j(w)$ is $O(\theta)$-close to $(D_xP_a^j , 0)$.
\end{itemize}
Thus \cref{coroYoc}, implies $(i)$ for every $\ss\in \sY_0$.  It also implies $(i)$ for $m\le M-2$ when $\ss=\sr$. The case $\ss=\sr$ and $m\in\{M,M-1\}$ is a consequence of \cref{coroyoc2}.
The case $\ss\in \sA_0\setminus \sY_0$ is implied by \cref{coroyoc3}.
\end{proof}

\subsection{Sharp estimate on the expansion of strongly regular pieces}
We rare going to prove \cref{sharp bounds}.
Let $f$ be $0$-strongly regular.
 We recall that for every $z=(x,y)\in (-\beta, \beta)\times [-\theta, \theta]$ and $(u,v) \in \R^2$, given a vector $w\in \R$ pointed at $z$,  we defined the following Riemannian metric:
\[\|w\|_g= \sqrt{\frac{u^2}{\beta^2-x^2}+ v^2}\qand \|w\|_0= \sqrt{u^2+v^2}\; .\]

The first part of the following is an immediate consequence of  \cref{propestimates1} and the second part of \cref{reste M'}.
\begin{coro}\label{coro pregreatly}
For every $\lfloor M/2\rfloor \le j\le M+1$, the set  $ Y_\boxdot \cup \bigcup_{\ss\in \sA_0\, n_\ss\ge j} Y_{\ss}$ is a box, and it intersects $\R\times\{0\}$ at a segment of $\leb_g$-measure at most $2^{-j/3}$. Furthermore:
\begin{equation*} -\log_2 \frac{\leb_g \R\times\{0\}\cap (Y_\boxdot\cup  \bigcup_{\ss\in \sA_0: n_\ss \ge \lfloor M/2\rfloor  +1} Y_\ss)}{\leb_g I_\se}\ge (1- 2^{-M})\lfloor M/2\rfloor \; .\end{equation*}
\end{coro}

The following is consequence of \cref{propestimates2} and \cref{coro M'}:
\begin{coro}\label{coro pour greatly}
For every piece $\ss_n^\pm\in \sY_0$ and $z\in Y_{\ss_n^\pm}$, it holds:
  $$2^{n(1-(2M)^{-1/2})}\le \frac{
  \left\|  D_z f^n(u)  \right\|_g}{\|u\|_g}
  \le 2^{n(1+(2M)^{-1/2})}\; .$$
  If moreover $n\le M/2$, then  it holds:
    $$2^{n(1-2^{-M})}\le \frac{
  \left\|  D_z f^n(u)  \right\|_g}{\|u\|_g}
  \le 2^{n(1+2^{-M})}.$$
\end{coro}
An immediate consequence of \cref{coroyoc2} and $h_a|I_\se<1$ is:
\begin{coro}\label{pre Expansion SR piece}
For every  $z\in Y_{\sr}$, it holds:
  $$4^{M}/C\le \frac{
  \left\|  D_z f^M(u)  \right\|_g}{\|u\|_0}
  \le 4^{M}\; .$$
\end{coro}

We now ready to state:
\begin{prop}\label{Expansion SR piece}
Let $\sg\in \sR$ be a strongly regular word for $f$. 
Then for every $z\in Y_\sg$ and every non-zero vector  $u\in \chi_h$ it holds:
\[2^{n(1-1/\sqrt M)}\ge \frac{\|D_zf^{n_\sg}(u)\|_g}{\|u\|_g}\ge 2^{n(1-1/\sqrt M)}\; .\]

Also for every $z\in Y_{\sr\cdot \sg}$ and every unit vector $u\in \chi_h$ it holds:
\[ \|D_zf^{M+n_\sg}(u)\|_0\ge 2^{(n_\sg+2M)(1-1/\sqrt M)}\; .\]
\end{prop}
\begin{proof}
The second item follows from the first item and \cref{pre Expansion SR piece}. The first item follows from the following observations:
\begin{itemize}
\item a proportion of times $1-2^{-\sqrt M}$ is spent into the pieces in $\sY_0$, and therein the mean expansion is in $[2^{1-(2M)^{-1/2}}, 2^{1+(2M)^{-1/2}}]$ by \cref{coro pour greatly}.
\item for the remaining proportion of times the expansion is in $[2^{1/3}, 4]$ by \cref{Crutial prop}.
\end{itemize}
Thus  the mean of the expansion is $\ge 2^{(1-(2M)^{-1/2})(1-2^{-\sqrt M})}\ge 2^{1-M ^{-1/2}}$ and $\le 2^{n(1+1/\sqrt M)}$. 
\end{proof}
\section{Iterations of vertical and horizontal curves}\label{proof:regularpiece}

\subsection{Iteration of horizontal curves}
\label{proof:invariancecone}
Let us show the first item of \cref{invariancecone}. Namely, that given any piece $(Y,n)$, the cone $\chi_h|Y$ is sent by
 $Df^n$  into $\chi_h$ and if $S$ is a horizontal curve in $Y$ then $f^n(S)$ is  a horizontal curve. 
\medskip

\noindent{\bf The cone condition.} First let us show that for every $u\in \chi_h$, for every $z\in Y$, it holds  $D_zf^{n}(u)\in \chi_h$. 
If $n=0$, then the statement is obvious. Let $n\ge 1$,  $v=Df^{n-1}(u)$ and $(x,y)=f^{n-1}(z)$. We recall that $\|D_{(x,y)}f(v)\|\ge 2^{1/3}\|v\|$ by property \ref{defi_condi_hyp} of \cref{piecedef} p.\pageref{piecedef}. We recall that $D_{(x,y)}f$ is $b$-close to $(v_x,v_y)\mapsto (2x \cdot v_x,0)$. Thus $|v_x|\ge |v_y|$  since otherwise it would be contracted by $Df$ by \cref{defchvchih} of $\chi_v$. For the same reason $|x|\ge 1$. Thus by the form of $D_{(x,y)}f$, the vector $v$ is sent by $D_{(x,y)}f$ into $\chi_h$.
\begin{rema}\label{pour prod parabolic} Actually this proof shows even that $D f^n|Y$ sends $\chi_h$ into $\{(v_x,v_y): |v_y|\le {b}\cdot  |v_x|\}$. We recall also that the image $f$ -- which contains $f^n(Y)$ -- is in $\R\times [-b, b]$.\end{rema}
 \medskip
 
\noindent{\bf The flat condition.} To achieve the proof of \ref{invariancecone}.$(i)$, it remains to show that $f^n(S)$ is flat. As $f$ is of class $C^2$, by density we can assume that $S$ is of class $C^2$.  Then the result is a consequence of the following:
\begin{lemm}[Compare with Lem. 2.8 \cite{WY01}]\label{curvature}
Let $(Y, n)$ be a piece and let $S\subset Y$ be a $C^2$-curve with curvature at most $1$ such that $TS$ is included in $\chi_h$. Then $f^n (S)$ has curvature bounded by $C\cdot b$, for a universal constant $C>0$.\end{lemm}
\begin{proof}
Let $t\mapsto \gamma(t)$ be a parametrization of $S$ such that $\dot \gamma(t)\not=0$. Then the curvature of $S$ at $\gamma(t)$ is:
\[C_\gamma(t) := \frac{|\dot \gamma\times \ddot \gamma|}{|\dot \gamma|^3}(t)\; ,\]
where $u\times v= det(u,v)$. Thus the curvature of $\bar S:=f^n(S)$ at $\bar \gamma(t):= f^n\circ \gamma(t)$ is:
 
\[C_{\bar \gamma}(t) = 
\frac{
|Df^n \dot \gamma\times  Df^n\circ \ddot \gamma|+
|Df^n \dot \gamma\times 
\sum Df^{n-k-1} \circ D^2f (Df^k\circ 
\dot \gamma)^{\otimes 2}|
}{|Df^n\dot \gamma|^3}(t)\; .\]
Using that the determinant of $Df$ is smaller than $b$, it comes:
\begin{equation} \label{pour etude courbure}C_{\bar \gamma}(t) \le 
b^n\frac{| \dot \gamma\times \ddot \gamma|}{\|Df^n\dot \gamma\|^3}(t)+\sum_{k=0}^{n-1} \frac{
b^{n-k-1} \|Df^{k+1} \dot \gamma\times 
Df^2 (Df^k\circ 
\dot \gamma)^{\otimes 2}\|}{\|Df^n\dot \gamma\|^3}(t)
\; .\end{equation}
We now use Property \ref{defi_condi_hyp} of \cref{piecedef} p.\pageref{piecedef} to obtain:
\[C_{\bar \gamma}(t) \le \frac{b^n}{2^{n/3} }C_\gamma(t)+\frac{
|Df^{n} \dot \gamma\times 
Df^2 (Df^{n-1}\circ 
\dot \gamma)^{\otimes 2}|}{\|Df^n\dot \gamma\|^3}(t)+
\sum_{k=0}^{n-2} \frac{
b^{n-k-1} \| D^2f\| }{2^{\frac{n-k-1}3}\cdot 2^{\frac{2(n-k)}3}}(t)
\; .\]
The first and last term of the latter upper-bound are dominated by $b$. The second term  is at most:
$$\max_{|u|=1} 2^{-2/3} | Df( u)\times D^2f(u,u)|$$
Which is bounded by $4^2 \cdot 2^{-2/3} b$ because the second coordinates of $Df$ and $D^2f$ are both smaller than $b$ while their first coordinates are bounded by $4$.
\end{proof} 
\begin{proof}[Proof of \cref{prop pesin instable gen} ]\label{proof prop pesin instable gen}
Let us prove $1$. For every $i$, let $E_i:= \{f^{n_i}(S\cap Y_i): S\in  \mathcal H\}$.  As $f^{n_i}(Y_i)=\cup_{j\ge i}f^{n_j}(Y_j)$   is compact, it holds:
$$f^{n_i}(Y_i) = \bigcup_{S\in E_i} S=\bigcup_{S\in E_j, j\ge i} S =adh(\bigcup_{S\in E_j, j\ge i} S)\; .$$
$$W_u^{Y_\infty}= \bigcap_i adh(\bigcup_{S\in E_j, j\ge i} S)=
\bigcap_i  \bigcup_{S\in adh(\cup_{j\ge i} E_j),} S= \bigcup_{S\in \bigcap_i adh(\cup_{j\ge i} E_j),} S
\; ,$$
where the latter equality was obtained using the compactness of $\mathcal H$ for the canonical $C^1$-topology.  Thus $W_u^{Y_\infty}$ is an union of horizontal stretched curves. If there are two different curves $S$ and $S'$ in $W_u^{Y_\infty}$, then there is an $\epsilon$-ball of $\R^2$ in $W_u^{Y_\infty}$, and so in each $f^{n_i}(Y_i)$ for $i\ge 1$. This contradicts \cref{contractionPP} which states that   $E_i$ has diameter smaller than $4\theta b^{n_i/3}$. 

Let us prove $2$. Let $\overleftarrow z:= (z_i)_{i\le 0}\in \overleftarrow W_u^{Y_\infty}$ and let $ W^u_{loc}(\overleftarrow z)$ be the component of $\overleftarrow z$ in $\{(z_i')_{i\le 0} \in \overleftarrow M_f: \limsup_{-\infty} \frac1n \log d(f^n(z), f^n(z'))>0\text{ and } z_0'\in Y_\se\}$. By the proof of the first statement, for every $S\in \mathcal H$, the curve  $S_i:=f^{n_i}(S\cap Y_i)$ is close to $W_u^{Y_\infty}$ when $i$ is large. By property $(ii)$ of \cref{piecedef}, The curve $S_i$ is  $2^{-j/3}$-contracted by  $(f^{j}| f^{n_i-j}(S\cap Y_i))^{-1}$ for every $j\le i$. Hence  $\overleftarrow W_u^\st$ is included in $ W^u_{loc}(\overleftarrow z)$. 
 
 As the image of $M$ by $f$ is included into the interior of $M$, the components of $(f|M)^{-n_i}(Y_\se)$ are bounded by segments of $\R\times \{\pm\theta\}$ and by components  of the stable set of the fixed point $A$ intersected with $M$. This implies that  $Y_{n_i}$ is a component of $(f|M)^{-n_i}(Y_\se)$. Thus $\overleftarrow W_u^{Y_\infty}$ is a component of $\{(z_i')_{i\le 0} \in \overleftarrow M_f: z_0'\in Y_\se\}$, and so it contains  $ W^u_{loc}(\overleftarrow z)$. 
\end{proof}
\subsection{Distortion estimates along horizontal curves}\label{distortion estimate}

Let $f$ be $0$-strongly regular.  
In \cref{defi piece curve} we defined the one-dimensional pieces.

\begin{defi}[Distortion of a one-dimensional piece $(S,n)$]
\index{Distortion of a one-dimensional piece}
The \emph{distortion} $\Delta(S, n)$ is the minimal number $\Delta\ge 0$ such that for every $z\in S$:
\[\lim_{z'\in S\to z}
\left| \frac{\log{\|\partial_S f^{n}(z)\|} -\log \|\partial_S  f^{n}(z')\|}{ \|f^{n}(z)-f^{n}(z')\|}\right|
\le \Delta \, ,\]
with $\partial_S$ the derivative w.r.t. a unit vector tangent to $S$ oriented toward the right.
\end{defi}
In \cref{admissiblewordfromcurve}, we define the words $\sg\in \sA^{(\N)}$ which are admissible from a horizontal stretched curve $S$. We saw that they define the pieces $(S_\sg, n_\sg)$. 

Let us bound from above the distortion of the simple pieces:
\begin{prop}\label{distortion_Y0} For every $\sg\in \sY_0^{(\N)}$, with  $n_\sg\le 2M$, for every $S\in \mathcal H$,  the distortion of $(S_\sg,n_\sg)$ is at most $3$:
\[\Delta(S_\sg,n_\sg)\le 3\; .\]
\end{prop}
\begin{proof} We recall that every symbol $\sa\in \sY_0$ defines a puzzle piece which is regular in the sense of Yoccoz: there exists an interval $\hat I_{\sa}\supset I_\sa$ which is sent  bijectively by $P^{n_\sa}$ onto  $[\alpha_1,-\alpha_1]$. Thus the piece $(I_\sg, n_\sg)$ is also regular. By \cref{distbound1}, the distortion of the piece $(I_\sg, n_\sg)$ is  $\le 2.74$. As $b$ is small compare to $M$ and the distortion depends continuously on its $C^{1+Lip}$-entries, it holds $\Delta(S_\sg, n_\sg)\le 3$ for every $S\in \mathcal H$. As there are finitely many words $\sg \in   \sY_0^{(\N)}$ with order at most $2M$, we obtain the proposition. \end{proof}

 The following is rough estimate which will be useful to obtain a much finer estimate.
  \begin{lemm}\label{roughdistbound}
  Let $(S,n)$ be a one-dimensional piece. 
 Then its distortion satisfies:
 \[ \Delta(S, n)\le 3\cdot 2^{\frac {4}3 n }
  \; .
 \]
 
 \end{lemm}
 \begin{proof}
By definition of the one-dimensional pieces, for every $z\in S$ and every unit vector $u\in T_z S$, it holds:
  \begin{equation}\label{hyp_faible} 
\|D_{z}f^{n}( u)\|\ge 2^{n/3} \| u\|\; .\end{equation}
By density, we can assume $S$ of class $C^2$. Let $\gamma(t)$ be a parametrization of $S$ such that $\gamma(0)=z$, $\partial_t \gamma(0)=u$ and  $\|\partial_t\gamma(t)\|=1$ for every $t$.
Let $v= \partial^2_t \gamma(0)$. By flatness of $S$, it holds $\|v\|\le \theta$.  Note that it suffices to show:
 \begin{equation}\label{latoutepremiere} \frac{\|
\partial_t^2 (f^n\circ \gamma)(0) \|}{\|D_z f^n(u)\|^2}\le 4^{n}\; .\end{equation}
A computation gives:
\[\partial_t^2 f^n\circ \gamma=\sum_{m=0}^{n-1} D f^{n-m-1} \circ D^2 f\big (\partial_t (f^m\circ \gamma),\partial_t (f^m\circ \gamma)\big)+ Df^n \circ \partial_t^2 \gamma\; .\]
We now look at the value at $t=0$. We use the bounds $\|Df\|\le 4$ and $\|D^2f\|\le 2+2b$ stated in Fact \ref{trivial estimate} to obtain:
\[\|\partial^2_t (f^n \circ \gamma)(0)\|\le \sum_0^{n-1} ( 2+2b)\cdot 4^{n-m-1} \|D_z f^m (u)\|^2+\theta 4^{n}
\; .\]
Using inequality \eqref{hyp_faible}, it comes the sought inequality  \eqref{latoutepremiere}:
\[\frac{\|\partial^2_t (f^n \circ  \gamma)(0)\|}{\|D_zf^n (u)\|^2}\le \sum_0^{n-1} 3\cdot  4^{n-m-1} 4^{-\frac{n- m}3} +\theta 4^{n-\frac n 3}<   \sum_0^{n-1} (2+2b)\cdot  4^{\frac23 j} +\theta 4^{\frac23 n}< 3\cdot  2^{\frac {4}3n}
\; 
.\]
\end{proof} 
 We recall that by definition of the admissible words, if $\sa\cdot \sb\in \hat \sA^{(\N)}$ is admissible from $S\in \mathcal H$, then 
$\sa$ is admissible from $S$ and 
$\sb$ is admissible form $S^\sa\in\mathcal H$. The following gives a recurrence relation: 
\begin{lemm}\label{compodist}
Let $\sa\cdot \sb\in \hat \sA^{(\N)}$ be admissible from $S\in \mathcal H$. Then it holds:
\[\Delta(S_{\sa\cdot \sb}, n_{\sa\cdot \sb})\le 2^{-n_\sb/3}\cdot \Delta(S_\sa, n_\sa)+\Delta(S^\sa_\sb, n_\sb)\; .
\]
\end{lemm}
\begin{proof}
Let $z\in S_\sa$. We have:
\[\lim_{f^{n_\sa}(z')\in S^\sa\to f^{n_\sa}(z)}
\left| \frac{\log{\|(\partial_{S^\sa} f^{n_\sb})(f^{n_\sa}(z))\|} -\log \|(\partial_{S^\sa}  f^{n_\sa})(f^{n_\sa}(z'))\|}{ \|f^{n_\sb}(f^{n_\sa}(z))-f^{n_\sb}(f^{n_\sa}(z'))\|}\right|\le \Delta(S_\sb,n_\sb)\]
And so:
\begin{equation}\label{Dist1'} \lim_{z'\in S\to z}\left|
 \frac{\log{\|(\partial_{S^\sa} f^{n_\sb})(f^{n_\sa}(z))\|} -\log \|(\partial_{S^\sa}  f^{n_\sa})(f^{n_\sa}(z'))\|}{ \|f^{n_{\sa\cdot \sb}}(z)-f^{n_{\sa\cdot\sb}}(z')\|}
 \right|\le \Delta(S_\sb, n_\sb)\end{equation}
 By the expansion of this segment by $2^{n_\sb/3}$ we have for $z\in S_{\sa\cdot \sb}$:
\[\lim_{z'\in S\to z}
\left| \frac{ \|f^{n_{\sa}}(z)-f^{n_\sa}(z')\|}{\|f^{n_{\sa\cdot \sb}}(z)-f^{n_{\sa\cdot \sb}}(z')\|}\right|
\le 2^{-n_\sb/3}\]
Multiplying the latter limit with the definition of the distortion of $(S_\sa,n_\sa)$ this yields:
\begin{equation}\label{Dist2} \lim_{z'\in S\to z}
\left| \frac{\log{\|\partial_S f^{n_\sa}(z)\|} -\log \|\partial_S  f^{n_\sa}(z')\|}{ \|f^{n_{\sa\cdot \sb}}(z)-f^{\sa\cdot \sb}(z')\|}\right|
\le 2^{-n_\sb/3}\Delta(S_\sa, n_\sa) \, ,\end{equation}
The sought inequality follows by summing inequalities \eqref{Dist1'} and \eqref{Dist2}. \end{proof}
 In \cref{perfect}, we define the perfect words. We are going to show that the distortion of any perfect words admissible from $S\in \mathcal H$  is bounded by $20$. This implies immediately  \cref{distparfaitcommon}. 
\begin{prop}\label{distparfaitcommon2} 
For every perfect word $\sg\in \hat \sA^{(\N)}$ admissible from some $S\in \mathcal H$, it holds: 
\[\Delta(S_\sg, n_\sg)\le 20\; .\]
\end{prop}
\begin{proof}
Let $\sg= \sa_1\cdots \sa_m$ with $\sa_i\in \hat \sA$ for every $i$. Put $\sg_i:=\sa_1\cdots \sa_{i}$. 
 By \cref{compodist}, it holds:
\[\Delta(S_\sg,n_\sg)\le  \sum_{j=1}^m \Delta(S^{\sg_{i}}_{\sa_i}, n_{\sa_i}) 2^{-\frac{n_{\sa_{j+1}\cdots \sa_m}}3}
\; .\]
We now apply \cref{distortion_Y0} to bound the distortion of pieces in $\sY_0$ and \cref{roughdistbound} to bound those not in $\sY_0$.  This gives:
\[\Delta(S_\sg,n_\sg)\le  \sum_{1\le j\le m: \sa_j\in \sY_0}
3\cdot 2^{-\frac{n_{\sa_{j+1}\cdots \sa_m}}3} 
+ \sum_{1\le j\le m: \sa_j\notin \sY_0} 3\cdot 
 2^{ \frac{4 n_{\sa_j}-n_{\sa_{j+1}\cdots \sa_m}}3  }\; .\]
By perfect word's \cref{perfect},  if $\sa_j\notin \sY_0$, then $ 4 n_{\sa_j}\le \frac1{4} n_{\sa_{j+1}\cdots \sa_m}$.  Consequently:
\[\Delta(S_\sg,n_\sg)\le  \sum_{1\le j\le m: \sa_j\in \sY_0}
3\cdot 2^{-\frac{n_{\sa_{j+1}\cdots \sa_m}}3} 
+\sum_{1\le j\le m : \sa_j\notin \sY_0}
3\cdot  2^{-\frac{n_{\sa_{j+1}\cdots \sa_m}}4} 
\; .\]
And so
\[\Delta(S_\sg,n_\sg)\le  \sum_{1\le j\le m}
3\cdot 2^{-\frac{n_{\sa_{j+1}\cdots \sa_m}}4} 
\le\frac{3}{1-2^{-1/4}}<18.85<20
\; .\]
\end{proof}
The following will be useful for the proof of \cref{Crutial prop}.
\begin{prop}\label{distprop}
Let $\sg\in \sR$ be a strongly regular word. For every $x,x'\in Y_\sg$, and every $u,u'\in \chi_h$ unit vectors, it holds:
\[\frac{ \| D_zf^{n_\sg}(u)\|}{\| D_{z'}f^{n_\sg}(u')\|}\le 40\cdot 2^{29\cdot 2^{-\sqrt M} n_\sg}\]
\end{prop}
\begin{proof}
We are going to use the following shown below:
\begin{lemm}\label{estimepouretreparfait}
 Let $\sg=\sa_1\cdots \sa_p\in \sR$ be a strongly regular. There exists $q\le p$ such that:
\begin{enumerate}[(1)]
\item the word $\sg'= \sa_1\cdots \sa_q$ is perfect,
\item $n_{\sa_{q+1} \cdots \sa_p}\le 17\cdot 2^{ -\sqrt M}n_{\sg} $.
\end{enumerate}\end{lemm}
We will show for \cref{ForB}, that there exists a unit vector field $\partial_y'$ on $Y_{\sg'}$ with values in $\chi_v$ such that
 \begin{equation}\label{prop partialy'} \|\partial'_y    f^{n_{\sg'}}|Y_{\sg'}\|\le (K\sqrt b^{2/3})^{n_{\sg'}} 
\qand 
 \|\partial'_y \partial_x  f^{n_{\sg'}}|Y_{\sg'}\|\le K\sqrt b.\end{equation} 
By splitting a unit  vectors  $u$ and $ u'\in \chi_h$ in the basis $\partial_x $ and $\partial_y'$, and then using the contraction of $\partial_y$ and the expansion of $\partial_x$  by $ Df^{n_{\sg}}$ for every $z\in Y_{\sg}$, it comes:
\begin{equation}\label{fulldist1} \frac{ \| D_zf^{n_{\sg'}}(u)\|}{\| D_{z'}f^{n_{\sg'}}(u')\|}\le
\frac{ \| \partial_x f^{n_{\sg'}}(z)\|}{\| \partial_x f^{n_{\sg'}}(z')\|}+K\theta\; .\end{equation}
Now we use the \cref{distparfaitcommon2}, the second part of \eqref{prop partialy'}, and $\|\partial_x Df^{n_{\sg'}}|Y_\sg \|\ge 1$ to obtain:
\begin{equation}\label{fulldist2} \frac{ \| \partial_x f^{n_{\sg'}}(z)\|}{\| \partial_x f^{n_{\sg'}}(z')\|} +K\cdot \theta\le  20\cdot \leb I_\se + K \cdot \sqrt b+K\cdot \theta \le 40
\; .\end{equation}
Now we infer that $n_\sg-n_{\sg'}\le 17\cdot 2^{ -\sqrt M}n_{\sg}$.  Also the expansion of during this times is in $[2^{(n_\sg-n_{\sg'})/3}, 4^{n_\sg-n_{\sg'}}]$. The composition rules gives:
\begin{equation}\label{fulldist3}\frac{ \| D_zf^{n_{\sg}}(u)\|}{\| D_{z'}f^{n_{\sg}}(u')\|}
\le \frac{ \| D_zf^{n_{\sg'}}(u)\|}{\| D_{z'}f^{n_{\sg'}}(u')\|}\cdot 2^{\frac53  17\cdot 2^{ -\sqrt M}n_{\sg}}\; . \end{equation}
Then \eqref{fulldist1}, \eqref{fulldist2} and  \eqref{fulldist3} implies the inequality stated in the proposition.

%
%
\end{proof}
\begin{proof}[Proof of \cref{estimepouretreparfait}]
The proof is done by using the Pliss Lemma. To this end, for every 
$1\le s\le n_\sg$, we put $\sa(s):= \sa_{m+1}$ if $s\in [n_{\sa_1 \cdots  \sa_m}+1, n_{\sa_1 \cdots  \sa_{m+1}}] $ and we consider the sequence of numbers $(X(s))_{1\le s\le n_\sg}$  :
\[s\mapsto \left\{ \begin{array}{cl}  X(s) = 1& \text{if } \sa(s)\in Y_0\\
 X(s) = 1- 17 & \text{if }\sa(s)\notin Y_0\end{array}\right.\; . \]
By definition of the strong regular chain, we have:
\[\sum_{s=1}^{n_\sg}  X(s)=
\sum_{1\le m\le q : \sa_m\in \sY_0} n_{\sa_m}
+(1-17) \sum_{1\le m\le q : \sa_m\notin \sY_0} n_{\sa_m}
\]
\[=
\sum_{1\le m\le q} n_{\sa_m}
-17 \sum_{1\le m\le q : \sa_m\notin \sY_0} n_{\sa_m}
\ge n_\sg(1- 17 \cdot 2^{-\sqrt M})\]
Thus by Pliss' Lemma \ref{Pliss}, there exist $l\ge (1-17\cdot  2^{-\sqrt M}) n_\sg$ and a sequence of integers $1\le n_1< \cdots < n_l\le  n_\sg$ such that
\[\sum_{i=m+1}^{n_j} X(i) \ge 0 \text{ for all } m\le n_j\text{ and } j= 1,\dots , l.\]
We notice that $X(n_l)=1$ and so $\sa(n_l)=:\sa_{q}\in Y_0$ for a certain $q\le p$. If we take $n_l$ maximal, then $n_l=n_{\sa_1 \cdots  \sa_{q}}$. 
Then for every $m\le q$ it holds:
\[0\le (1-17) \sum_{m\le i\le q : \sa_i\notin \sY_0} n_{\sa_i} 
+\sum_{m\le i\le q : \sa_i\in \sY_0} n_{\sa_i}=-17  \sum_{m\le i\le q : \sa_i\notin \sY_0} n_{\sa_i} 
+\sum_{m\le i\le q } n_{\sa_i}\; .\]
Thus $\sg'=\sa_1 \cdots  \sa_{q}$ is perfect. Moreover $n_{\sg}-n_{\sg'} =n_{\sg}-n_l\le n_{\sg}-l\le 17 \cdot 2^{ -\sqrt M}n_{\sg} $. 
\end{proof}

\subsection{Invariance of vertical flat curves by iterations associated to pieces}
Let us show the second item of \cref{invariancecone}. Namely, given a piece $(Y,n)$, we want to show that $(Df^n|Y)$ pulls back $\chi_v$ into itself and  
$(D_z f^{n})^{-1}(\chi_v)\subset \chi_v\; .$
Moreover, for every vertical stretched curve $\mathcal C\subset Y_\se$ which is between both components of $f^n(\partial^s Y)$, the set  $(f^{n}|Y)^{-1}(\mathcal C)$ is a vertical,  stretched  curve. 
\medskip

The proof will use a technology developed in \cite{WY01}. Let us recall their settings. 

Let $M$ be a $2\times 2$ matrix. Assuming that $M$ is not a homothety, we say that a unit vector $e$ defines the \emph{most contracted direction} of $M$ if $\|Mu\|\ge \|Me\|$ for every unit vector $u$.
Given a sequence of matrices $(M_i)_{i\ge 1}$, we denote the matrix product by $M^{(i)}:= M_i\times \cdots \times M_1$. If $M^{(i)}$ is not a homothety, then $e_i$ denotes a unit vector defining the most contracted direction of $M^{(i)}$. 
\begin{lemm}[Lem. 2.1 and Cor. 2.1 p. 16 \cite{WY01}]\label{2.1}
Let $(M_i)_i$ be a sequence of $2\times 2$ matrices. Let $K_0, b, \kappa>0$ be such that $1\gg  b\ge 0$ and $\kappa\gg \sqrt b$. Assume that:
\begin{equation}\tag{$\dag$}  |det(M_i)|\le b\qand \|M_i\|\le K_0 \qand \|M^{(i)}\|\ge \kappa^i\quad \forall i\ge 1\; .\end{equation}
 Then $M^{(i)}$ is not a homothety for every $i$ and there exists $K$ depending only on $K_0$ such that the most contracted direction satisfies:
\begin{enumerate}[$(a)$]
\item $\| e_n\times e_1\|\le \frac{Kb}{\kappa^2}$ and $\| e_n\times e_{n+1}\|\le \frac{(Kb)^n }{\kappa^{2n}}$,
\item $\| M^{(i)}e_n\|\le \left(\frac{Kb}{\kappa^2}\right)^{i}$ for every $1\le i\le n$.
\end{enumerate}\end{lemm}
\begin{lemm}[Cor. 2.2 p.16  \cite{WY01}]\label{2.2}
Under the assumptions of \cref{2.1},  if $(M_i)_i=(M_i(s))_i$ is a sequence $2\times 2$ matrices depending $C^1$ on a parameter $s\in \R^3$ and such that for every $s$, the sequence $(M_i(s))_i$ satisfies $(\dag)$ and moreover $\|\partial_s  M_i(s)\|\le K_0^i$ and $\|\partial_s  \det\,  M_i(s)\|\le K_0^i b$, then it holds:
\begin{enumerate}[$(a)$]
\item $\| \partial_s  (e_n\times e_1)\|\le \frac{Kb}{\kappa^3}$,
\item $\| \partial_s M^{(i)}e_n\|\le \left(\frac{Kb}{\kappa^3}\right)^{i}$ for every $1\le i\le n$.
\end{enumerate}
\end{lemm}

\begin{proof}[Proof of \cref{invariancecone}.$(ii)$]
\label{proof:invariancecone2}
Let $(Y,n)$ be a piece in $Y_\se$ and put $\bar Y=f^n(Y)$.
 Let $\mathcal C$ be a vertical flat, stretched curve in $Y_\se$ which is between both components of $\partial_s \bar Y= f^n(\partial_s Y)$. We would like to show that the preimage $\mathcal C'$ of $\mathcal C$ by $f^n|Y$ is a  vertical, stretched curve. By \cref{piecedef}.$(ii)$ and the invariance of $\chi_h$ given by \cref{invariancecone}.$(i)$, the map $f^n|Y$ is transverse to $\mathcal C$. Thus $\mathcal C'$ is a curve. Moreover, by assumption, for every $y\in [-\theta, \theta]$, the curve $\mathcal C$ intersects the curve $f^n(Y\cap \R\times \{y\})$ at a unique point. Thus $\mathcal C'$ intersects $\R\times \{y\}$ at a unique point and so $\mathcal C'$ is a connected stretched $C^{1+Lip}$-curve. 
 
 To show that it is vertical, by density, we can assume $\mathcal C$ of class $C^2$. Then by transversality, the curve   $\mathcal C'$ is of class $C^2$.
Let $\mathcal F$ be the foliation of $\R\times [-\theta, \theta]$ whose leaves are all parallel to $\mathcal C$. Let $\pi$ be holonomy from along $\mathcal F$ to 
$\R\times \{0\}$. We notice that if $z\in \R\times [-\theta, \theta]$, the point $\pi(z)$ is equal to $x_0+u_0$ with 
$(x_0,0)$ the intersection point of $\mathcal C\cap \R\times \{0\}$ and $u_0\in \R$ such that $z\in \mathcal C+(u_0,0)$. We notice that $\pi$ is of class $C^2$ and $C^2$-$\theta$-close to the first coordinate projection.  We remark that $\mathcal C'$  is a fiber of 
\[\pi\circ f^{n}|Y\to \R\;. \]
Given $z\in Y$, we define the sequence of matrices 
\[\left\{\begin{array}{ll} 
A_i(z):= D_{f^{i-1}(z)} f& \text{if } 1\le i\le n,\\
A_i(z):= D_{f^{n}(z)} \pi & \text{if } n< i\; .\end{array}\right.\]
We remark that the hypotheses of \cref{2.1} and \cref{2.2} are satisfied with $K_0= 4$ and $\kappa=b^{1/6}$ by $(ii)$ of \cref{piecedef}. Also $e_{n+k}(z)=e_{n+1}(z)$ for every $k\ge 1$. 
As $e_1$ is $O(b)$-$C^1$-close to $(0,1)$, it comes:
\begin{equation}\label{ForA} |e_{n+1}(z)-(0,1)|\le \frac{ K b}{b^{2/6}}= { K b^{2/3}}
\qand |\partial_z e_{n+1}(z)|\le \frac{ K b}{b^{3/6}}=K\sqrt b \; .\end{equation}
Consequently the tangent space of $\mathcal C'$ is in $\chi_h$ and its curvature is smaller than $\theta$. In other words, $\mathcal C'$ is vertical. 

Using the same argument, by taking $\pi$ with fibers equal to line  all parallel to a vector $v\in \chi_v$, we obtain that  $\cD(f^n|Y)^{-1}(v)$ is in $\chi_v$. 
\end{proof}
\begin{rema}\label{ForB}
By the second part of \cref{2.1} and \cref{2.2}, it comes:
\begin{equation} |Df^{i}( e_{n+1})(z)|\le ({ K b^{2/3}})^{i}
\qand |\partial_z  Df^{i}(  e_{n+1})(z)|\le (K\sqrt b)^{i} \quad \forall i\le n\; .\end{equation} 
  \end{rema}
  Let us now prove the following lemma which was used in the proof of \cref{Pesin stable}.
  \begin{lemm}\label{Lamination Lipschitz} Let $f$ be $0$-strongly regular. 
Let $PH$ be the set of points $z\in \R^2$ such that:
\begin{equation}\tag{$\mathcal {PH}$}\|\partial_x f^k(z)\|\ge b^{k/6},\quad \forall k\ge 0\; .\end{equation}
Then there exist $K>0$ and  $e_\infty: PH\to \R^2$ which is continuous, and such that:
\begin{enumerate}[(1)]
\item The norm $\| e_\infty(z) -(0,1)\|$ is smaller than $Kb^{2/3}$ for every $z\in PH$.
\item For every $z, z'\in PH$, it holds $\|e_\infty(z') -  e_\infty(z)\|\le K\sqrt b\| z'-z\|$. 
\item The vector $e_\infty(z)$ is $(Kb)^{2k/3}$ contracted by $Df^k$, for every $k\ge 0$. 
\end{enumerate}
 \end{lemm}
 \begin{proof} 
 We use \cref{2.1}.(a) with $M_i(z)= D_{f^i(z)}f$ and $\kappa= b^{1/6}$. The lemma implies that $(e_n|PH)_n$ converges to a continuous vector field $(e_\infty| PH)$ which is $Kb^{2/3}$-$C^0$-close to $e_0| PH:= (0,1)$ as sated in \emph{(1)}.  Also by \cref{2.1}.(b) the vector $e_\infty(z)$ is $(Kb)^{2k/3}$ contracted by $f^k$, for every $k\ge 0$, as sated in \emph{(3)}.  

Let us now prove \emph{(2)}. Let $z, z$' be in $PH$ and let $n$ be maximal such that $d(z,z')\le b^{n/6}4^{-2n}$. Then for every $z''\in [z,z']$ and $k\ge 0$,  it holds  $d(f^k(z),f^k(z''))\le b^{n/6}4^{k-2n}$. Thus for every $m\le n$, we have:
\[\|\partial_x f^m(z'')-\partial_{x}f^m(z)\|= \|\sum_{k=1}^{m-1} D_{f^{k+1} (z'')}f^{n-k-1}\circ  (D_{f^k (z'')}f
-D_{f^k (z)}f)\circ \partial_{x}f^k(z)\|\]
\[\le \sum_{k=0}^{m-1} 4^{n-k-1}\cdot   b^{n/6}\cdot 4^{k-2n+1} \cdot 4^k
\le b^{n/6}\cdot  4^{-n} \frac{4^{m}-1}{4-1}
\le  \frac{b^{n/6}}3 \le  \frac{b^{m/6}}3\; .
\]
Thus for every $z''\in [z,z']$, it holds $\|\partial_x f^m(z'')\|\ge b^{m/6}/2$ for every $m\le n$.  By \cref{2.2}.(a), for every $z''\in [z,z']$, there exists a constant $K$ such that:
\[\|\frac{d}{dz}  (e_n(z'')\times e_1(z))\|\le K \sqrt{ b}.\]
As $e_1$ is $C^1$-$b$-close to $\partial_y$, it comes that $\frac d {dz''}e_n(z'')$ is  $ K\sqrt{b}$-small for every $z''\in [z,z']$. Now we compute: 
\[\|e_\infty (z)\times  e_\infty (z')\|\le
 \|e_\infty (z)\times e_n(z)\|+
  \|e_n (z)\times e_n(z')\|+  \|e_n (z')\times e_\infty (z')\|\]
  \[\le (Kb)^{2\max(n,1)/3} + K \sqrt{b}\cdot d(z,z')  + (Kb)^{2\max(n,1)/3}\le K\sqrt b \cdot d(z,z')\; . \]
\end{proof}

\label{pesinstableprop}
\begin{proof}[Proof of \cref{Pesin stable1}]
By \cref{Lamination Lipschitz}, the vector field  $e_\infty:PH\to \R^2$ defined therein is $K\sqrt b$-Lipschitz close to the constant vector field equal to $(0,1)$. We use the Whitney extension theorem \cite{Wh34, JLS86} to extend it to a Lipschitz vector field $\tilde e_\infty$ on $\R\times [-\theta, \theta]$ which is $b^{1/3}$-Lipschitz close to $(0,1)$. 

Let $\pi_f$ be the holonomy map from $\R\times [-\theta, \theta]$ to the transverse line $\R$. We observe that $\pi_f$ is $b^{1/3}$-Lipschitz close to the first coordinate projection. We notice that the first two items of the proposition are satisfied. 

To prove the third item, we recall that a puzzle piece $(Y,n)$, 
satisfies the following property of  \cref{piecedef}.$(ii)$.
For every $z\in \partial^s Y$ and unit vector $w\in \chi_h$ it holds:
\[\|D_zf^m(w)\|\ge b^{m/6}, \quad \forall m\le n\; .\]
Furthermore, $f^n(z)$ belongs to $\partial^s Y_\se$ by definition of the puzzle piece and so $f^m(z)\in \partial^s Y_\se$ for every $m\ge n$. Also  $D_zf^n(w)$ belongs to $\chi_h$ by \cref{invariancecone}$(ii)$, and so $D_z f^m(w)$ belongs to $\chi_h$ for every $m\ge n$. Consequently:
\[\|Df^m(w)\|\ge b^{m/6}, \quad \forall m\ge 0\; .\]
This implies that $\partial^s Y$ is included in $PH$. Furthermore, every unit vector $u$ tangent to $\partial^sY$ can be written as $w= s\cdot e_\infty+ u\cdot (1,0)$. Then $Df^m(u)$ is $b^{m/2}$ close to $u Df^m(1,0)$. As $Df^m(1,0)$ goes to infinity whereas $Df^m(w)$ goes to zero, it holds $u=0$. This proves that $\partial^s Y$ is tangent to $e_\infty$. The contraction of its tangent vectors follows from \cref{Lamination Lipschitz}.3.

Let us prove the last and fourth item of the proposition. Let $C:= ((Y_k,n_k))_{k\ge 0}$ be a decreasing sequence of puzzle pieces and put $W^s_{C}:= \bigcap_{k\ge 0} Y_k$. 
 For every $k$, let $[x_k^-, x_k^+]\subset I_\se$ be such that $Y_k= \pi_f^{-1}([x_k^-, x_k^+])$. We remark that:
\[ W^s_C= \bigcap_k Y_k= \bigcap_k\pi_f^{-1}([x_k^-, x_k^+])=\pi_f^{-1}(\bigcap_k [x_k^-, x_k^+])\; .\]
We notice that $(n_k)_k$ is increasing and so $n_k\to \infty$. 
By the expansion of $\chi_h$ given by \cref{piecedef}.$(i)$, the length of $[x_k^-, x_k^+]$ is at most $2^{-n_k/3}\leb(I_\se)$. 
Thus the decreasing intersection $\bigcap_k [x_k^-, x_k^+]$ is a single point $x_C\in I_\se$ and it holds:
\[ W^s_C =\pi_f^{-1}(\{x_C\})\; .\]
For every $k$, let $C_k$ be a component of $\partial^s Y_k$. We recall that $C_k$ is a flat stretched curve in $PH$ which is tangent to $e_\infty$. By continuity of $e_\infty$ and compactness of $PH$, the same occurs for any cluster value $C_\infty$ of $(C_k)_k$. Note that $C_\infty$ is included in  $W^s_C$. As $W^s_C$ is a vertical stretched curve, it holds $W^s_C= C_\infty$. Thus $W^s_C$ is included in $PH$ and tangent to $e_\infty$. The contraction of its tangent vectors follows from \cref{Lamination Lipschitz}.3.
\end{proof}

\section{Affine-like representation and parabolic products}\label{section AF}
The affine-like representation\footnote{The present section has been added after that Yoccoz told me his intention to add this formalism in \cite{Y15}.}  was introduced in \cite{PY01,PY09} to present the bounds on the iteration of the dynamics restricted to a piece. Actually, this way of representation is inspired from the generating function, and was intensively studied by Shilnikov \cite{Sh67} and his school in the dissipative context, as the \emph{Shilnikov variable or cross map}.

\subsection{Presentation of the estimates using the affine-like representation}
Let $(Y, n)$ be a  piece and $x_1\in I_\se$. Let $\mathcal C_{x_1}$ be  the preimage by $f^{n}|Y$ of the segment $\{x_1\}\times [-\theta, \theta]$. 
 As $Y_\se= \bigcup_{x_1\in I_\se} \{x_1\}\times [-\theta, \theta]$ contains $f^n(Y)$ we have:
\[ Y= \bigcup_{I_\se} \mathcal C_{x_1}\; .\]
Furthermore, if $(Y,n)$ is a puzzle piece,  the curves  $\mathcal C_{x_1}$ are all vertical and stretched by 
\cref{invariancecone}.$(ii)$. Here is an immediate consequence of \cref{ForB} on the proof of \cref{invariancecone}.$(ii)$:
\begin{coro}
For every $i\le n$ and $x_1\in I_\se$, the curve $\mathcal C_{x_1}$ is $b^{i/2}$ contracted by $f^i$: for every unit vector $u$ tangent to $\mathcal C_{x_1}$, it holds $\|Df^i(u)\|\le b^{i/2}$.
 \end{coro}

Given $y_0\in [-\theta, \theta]$, we denote by $S_Y(y_0)$  the segment $Y\cap \R\times \{y_0\}$. By \cref{invariancecone}.$(i)$, the curve $S^{Y}(y_0):= f^{n}(S_Y(y_0))$ is  horizontal.  We have:
\[f^n(Y)= \bigcup_{[-\theta, \theta]} S^Y(y_0)\; .\]

\begin{defi}[Affine-like representation of $(Y,n)$]\index{Affine-like representation}
The affine-like representation of a piece $(Y,n)$ is the pair of functions $(\cX_0,\cY_1)$ defined by:

The intersection point of $\mathcal C_{x_1}$ with
$\R\times \{y_0\}$ is $(\cX_0(y_0,x_1),y_0)$ and the intersection point of $\{x_1\}\times [-\theta, \theta]$ with $S^Y(y_0)$ is $(x_1,\cY_1(y_0,x_1))$. We notice that:
\begin{equation}
\tag{$AL$}
f^{n}(x_0,y_0)=(x_1,y_1)\Leftrightarrow 
(x_0,y_1)=(\cX_0,\cY_1)(y_0,x_1)\; .\end{equation}

\end{defi}
The domain of $(\cX_0,\cY_1)$ is the projection $\Delta$ on the 2nd and 3rd of the graph of $f^{n}|Y$. The following sheds light on many estimates already computed. 
\begin{prop}\label{AFrepresentation} There exists a universal constant $K>0$ such that the affine-like representation $(\cX_0,\cY_1)$ of a piece satisfies for every $(y_0, x_1)\in \Delta$:
\begin{enumerate}[(1)]
\item  $|\partial_{x_1} \cY_1 (y_0, x_1)|=  b $ and $|\partial_{x_1} \cX_0 (y_0, x_1)|\le \sqrt{1+\theta^2}\cdot 2^{-n/3}$, 
\item $|\partial_{y_0} \cX_0 (y_0, x_1)|\le Kb^{2/3}$ and $|\partial_{y_0} \cY_1 (y_0, x_1)|\le b^{n}\sqrt{1+\theta^2}\cdot 2^{-n/3}$,
\item $|\partial_{x_1} \partial_{y_0} \cX_0 (y_0, x_1)|\le K\sqrt b|\partial_{x_1}  \cX_0 (y_0, x_1)|$ and $|\partial^2_{y_0} \cX_0 (y_0, x_1)|\le  K\sqrt b$,
\item $|\partial_{x_1} \partial_{y_0} \cY_1 (y_0, x_1)|\le (K b)^{n/2}$
and 
$|\partial^2_{y_0} \cY_1 (y_0, x_1)|\le (Kb)^{n/2}$,
\item $|\partial^2_{x_1} \cY_1 (y_0, x_1)|\le K b$.

.
\end{enumerate}
\end{prop}
\begin{proof}
We remark that $(\partial_{x_1} \cX_0 (y_0, x_1), 0)\in \chi_h$ is sent into $(1, \partial_{x_1} \cY_1 (y_0, x_1))$ which has second coordinate at most $ b$ by
\cref{pour prod parabolic} p. \pageref{pour prod parabolic}. Thus $|
\partial_{x_1} \cY_1 |\le  b$. Also by piece's \cref{piecedef}.$(i)$, it holds $|\partial_{x_1} \cX_0 (y_0, x_1) | \cdot 2^{n/3}\le  \sqrt{1+\theta^2}$. This proves \emph{(1)}.
 The second part of (2) is given by the following:
\[| \partial_{y_0} \cY_1|= \left |\det \left ( \begin{array}{cc}
1& \partial_{x_1} \cY_1\\
0& \partial_{y_0} \cY_1\end{array}\right) \right|
\stackrel{(AL)}=|\det Df^{n}|\cdot 
\left |\det \left( \begin{array}{cc}
\partial_{x_1} \cX_0 &0\\
\partial_{y_0} \cX_0 &1\end{array}\right) \right|
\le 
b^{n}| \partial_{x_1} \cX_0|  
 \; .\]
With $e_{n+1}$ the contracted vector defined in the proof of \cref{invariancecone}  \cpageref{proof:invariancecone2}, it holds $e_{n+1}=(\partial_{y_0}\cX_0 ,1)$ and $Df^{n}(e_{n+1})= (0,\partial_{y_0} \cY_1)$.  By \cref{ForA} p. \pageref{ForA}, it holds 
$|e_{n+1}(z)-(0,1)|\le Kb^{2/3}$
and $ |\partial_z e_{n+1}(z)|\le K\sqrt b$. 
This implies  the first part of $(2)$ and  \emph{(3)}:
\begin{multline}
|  \partial_{y_0} \cX_0| =   | e_{n+1}(z)|\le K b^{2/3}\quad ,\quad  
|  \partial^2_{y_0} \cX_0| =   |\partial_{y_0} e_{n+1}(z)|\le K\sqrt b\\ \qand   |\partial_{x_1} \partial_{y_0} \cX_0| =   |\partial_{x_0} e_{n+1}(z)|\cdot 
 |\partial_{x_1} \cX_0| \le K\sqrt b |\partial_{x_1} \cX_0| \; .
\end{multline} 
Similarly \cref{ForB} \cpageref{ForB} implies \emph{(4)}:
 \[ |\partial_{y_0}^2 \cY_1| = |\partial_{y_0}  Df^{n}(  e_{n+1})(z)|\le (K\sqrt b)^{n} \; ,\]
 \[ |\partial_{x_1} \partial_{y_0} \cY_1| = |\partial_{x_0}  Df^{n}(  e_{n+1})(z)|\cdot |\partial_{x_1} \cX_0| \le (K\sqrt b)^{n}
 \; .\]
To get \emph{(5)}, we recall that by \cref{curvature}, for every $y_0\in [-\theta,\theta]$, the curvature of the curve $\{(x_1, \cY_1(x,y_0)): x_1\in I_\se\}$ is dominated by $b$. 
\end{proof}

\subsection{Contraction of the graph transform induced by a piece}
We are going to  prove the following which implies  \cref{contractionPP}:
\begin{prop}  \label{contractionPPplus }There exists a universal constant $K>0$, such that for every puzzle piece $(Y,n)$ in $Y_\se$, the map $S\in \mathcal H\mapsto S^Y:=f^{n}(S\cap Y)\in \mathcal H$ is $ (Kb)^{n/2}$ contracting.
\end{prop}
\begin{proof}
\label{proof Contraction of the graph transform}
 Let $\rho\in C^{1+Lip}(I_\se, \R)$ be a function whose graph $S$ belongs to $\cH$.   Let $\rho^Y\in C^{1+Lip}(I_\se, \R)$ be the function whose graph is $S^Y$. 
 We want to show that $\rho\mapsto \rho^Y$ is $(K b)^{n/2}$-contracting for the $C^1$-topology. By definition of $\mathcal H$ we have:
\begin{equation}\label{norm_rho} \|D\rho\|_{C^0}\le \theta\qand  Lip(D\rho)\le (1+K\theta) \theta\; .\end{equation}
Let $(\cX_0,\cY_1)$ be the affine-like representation of $(Y,n)$. We notice that $\rho^Y$ is implicitly defined by the system
\begin{equation}\label{solution implicit} \rho^Y(x_1)= \cY_1(y_0, x_1)\qand y_0= \rho \circ \cX_0(y_0, x_1)\; .\end{equation}
As the map $(y_0, \rho, x_1)\mapsto  \rho\circ \cX_0(y_0, x_1)$ is of class $C^{1+Lip}$, with its first derivative $\theta^2$-contracting by 
\cref{AFrepresentation}.(2) and \eqref{norm_rho},
 there is a $C^{1+Lip}$-function $\cY_0$ such that:
\[y_0= \cY_0(x_1, \rho)\Leftrightarrow y_0= \rho \circ \cX_0(y_0, x_1)\; .\]
\begin{multline}\label{tunetune} \partial_{x_1} \cY_0(x_1, \rho) = 
D \rho  \circ  \partial_{x_1}\cX_0(\cY_0( x_1,\rho ), x_1)+D\rho \cdot\partial_{y_0} \cX_0(\cY_0( x_1,\rho ), x_1) \partial_{x_1} \cY_0(x_1, \rho)\\
 \partial_\rho \cY_0(x_1, \rho)=  \partial \rho  \circ  \cX_0(\cY_0( x_1,\rho ), x_1)+ D\rho \cdot\partial_{y_0} \cX_0(\cY_0( x_1,\rho ), x_1) \partial_\rho \cY_0(x_1, \rho)
 \end{multline}
 
We observe that  $\rho^Y= x_1\mapsto  \cY_1(\cY_0(x_1, \rho),x_1)]$. By the two next lemmas $\rho\mapsto \rho^Y$ is the composition of two Lipschitz maps of  constants 
$\le (Kb)^{n/2}$ and $\le \theta(1+\theta)$. This implies  the proposition.\end{proof}
\begin{lemm}\label{contraction Y Lip Kbn/2}
The map $\rho\in C^1(I_\se, [-\theta,\theta])\mapsto [x_1\mapsto  \cY_1(\rho(x_1),x_1)]$ is $(Kb)^{n/2}$-Lipschitz. \end{lemm}
\begin{proof}
It suffices to observe that the linear map 
$\partial_\rho\mapsto  [x_1\mapsto \partial_{y_0}\cY_1(\rho(x_1), x_1)\cdot \partial\rho(x_1)]$ 
has norm at most $(Kb)^{n/2}$ because $ \partial_{y_0} \cY_1$ has $C^1$-norm at most $(Kb)^{n/2}$ by \cref{AFrepresentation} (2-4).
\end{proof}

\begin{lemm}\label{X0 lip} 
The map $x_1\mapsto \cY_0(x_1, \rho)$ is $\theta(1+\theta)$-$C^1$-small and the map  $\rho \mapsto [x_1\mapsto  \cY_0(x_1, \rho)]$ is $ (1+\theta)$-Lipschitz for the $C^1$-tolopogy.
\end{lemm}
\begin{proof}By density, we can assume that $\rho$ is of class $C^2$.  We have by \eqref{tunetune}:
\begin{eqnarray}
\partial_{x_1} \cY_0(x_1, \rho)&=& \frac{D \rho  \circ  \partial_{x_1}\cX_0(\cY_0( x_1,\rho ), x_1)}{1-  D\rho \cdot\partial_{y_0} \cX_0(\cY_0( x_1,\rho ), x_1)}\label{partialx1Y_0}
\\
 \partial_\rho \cY_0(x_1, \rho)&:&\partial \rho \mapsto  \frac{\partial \rho  \circ  \cX_0(\cY_0( x_1,\rho ), x_1)}{1-  D\rho \cdot\partial_{y_0} \cX_0(\cY_0( x_1,\rho ), x_1)}\end{eqnarray}
We recall that by \cref{AFrepresentation} $(2)$ and $(3)$, it holds 
$|\partial_{y_0}\cX_0|\le Kb^{2/3}$ and $|\partial_{x_1}\cX_0\le 1|$. Thus by \eqref{norm_rho} and \eqref{partialx1Y_0}, 
 $\partial_{x_1} \cY_0$ is at most $\theta(1+\theta)$. As the $C^0$ norm of $\cY_0$ at most $b$, we get the first statement of the lemma. Moreover, we observe that the norm of the derivative of $x_1\mapsto \cX_0(\cY_0(x_1,\rho),x_1)$ is at most $1$. Also  \cref{AFrepresentation} $(3)$, the $C^1$-norm of $\partial_{y_0}\cX_0$ is at most $K\sqrt b$ and so the $C^1$-norm of   $x_1\mapsto \partial_{y_0} \cX_0(\cY_0(x_1,\rho), x_1)$ is at most $K \sqrt b$. Now we infer that $\partial\rho$ 
  and $D\rho$ have $C^1$-norm at most $(1+K\theta)\theta$ by \eqref{norm_rho}. Thus the linear form $\partial_\rho \cY_0(x_1,\rho)$  has norm at most $1+K\theta$ and the  linear map  $\partial_\rho \mapsto [x_1\mapsto  D \cY_0(x_1, \rho)(\partial_\rho)]$  has norm  $\le (1+\theta)$ for the $C^1$-tolopogy. Form this we get the last statement of the lemma.
\end{proof}

\subsection{Graph transform induced by a parabolic operation}
\label{proof of disjoint box}
In this section, we are going to prove propositions \ref{graphtransformpara} and \ref{disjoint box}. 
Let $(Y,n)$ and $(Y',n')$ be two puzzle pieces such that $Y'\subset Y \subset Y_\se$  with $n'\le n +\max(M, n)$ such that $\tilde \cD(\boxdot (Y-Y')$ is not empty. In particular, this implies that $\partial^s Y$ and $\partial^s Y'$ does not have the same left component. 
Put $(\tilde Y, M+n)=(Y_\sr, M)\star (Y,n)$ and  $(\tilde Y', M+ n')=(Y_\sr, M)\star (Y',n')$. Similarly, the right components of 
$\partial^s\tilde Y$ and $\partial^s \tilde Y'$ are disjoint.

 Let us denote by $(\cX_0, \cY_1)$ the affine-like representation of $(\tilde Y, M+n)$. Given $S=graph\, \rho\in \cD(\boxdot (Y-Y'))$, we denote $\iota_\rho(x)= (x, \rho(x))$ for every $x\in I_\se$. 

Let $\epsilon\in \{-,+\}$ be such that 
 $graph\, \cX_0(\cdot , \epsilon \alpha_0)$ is the right component of $\partial^s \tilde Y$. We notice that $\epsilon $ is the sign 
 of $-\partial_{x_1} \cX_0$. Let $x_\boxdot\in I_\se$
 be such that the segment $[x_\boxdot, \epsilon \alpha_0]$ is minimal to satisfy that  $\{(\cX_0(y_0, x_1), y_0): y_0\in [-\theta, \theta], x_1\in [x_\boxdot, \epsilon\cdot  \alpha_0]\}$ contains the right component of $\tilde Y\setminus  \tilde Y'$.

\begin{lemm}\label{graph transform parabolic init}
 For every $S=graph\,  \rho\in \cD(\boxdot (Y-Y'))$, for every $x_1\in [x_\boxdot, \epsilon\cdot \alpha_0]$,  there are exactly two preimages $\cX_{-1}^-(x_1, \rho)<\cX_{-1}^+(x_1, \rho)$  in $ (f\circ \iota_\rho) ^{-1}(graph\, \cX_0(\cdot , x_1))$ and it holds for $\pm\in \{-,+\}$:
 \[S_{\boxdot_\pm  (Y-Y')}\subset \{\iota_\rho\circ  \cX_{-1}^\pm (x_1, \rho): x_1\in [x_\boxdot, \epsilon\cdot \alpha_0]\}\; .\]
\end{lemm}
We will prove this lemma below. Let us give the algorithm of extension. 
\begin{figure}[h]\label{extensioncourbe}
    \centering
        \includegraphics[width=10cm]{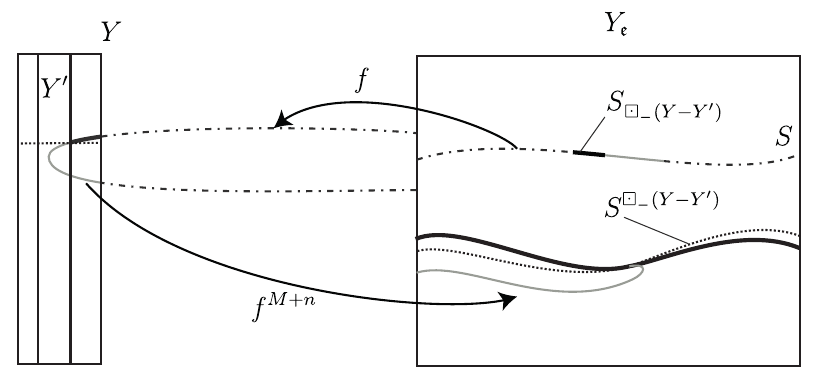}
    \caption{Extension $S^{\boxdot_-  (Y-Y')}$.}
\end{figure}
\begin{defi}[The extension algorithm]\label{extension algo} Let $S:= Graph\, \rho\in \cD(\boxdot_\pm (Y-Y'))$. Put $\cY_0^\pm$ be the $y$-coordinate of $f\circ \iota_\rho\circ  \cX_{-1}^\pm$ and put:
$$T^\pm (\rho): x_1\in [x_\boxdot, \epsilon\cdot \alpha_0 ] \mapsto 
\cY_1(\cY_0^\pm (x_1, \rho), x_1)\; .$$
Each function  $ T^\pm(\rho)$ is $C^{1+Lip}$. 
Let $\tilde T^\pm (\rho)$ be the unique $C^{1+Lip}$-extension of $T\pm(\rho)$ on $I_\se$ s.t. :
\[\partial^2_{x_1} \tilde T^\pm(\rho)(x_1)
= \partial^2_{x_1} \cY_1(\cY_0^\pm (x_\boxdot, \rho), x_1),\quad \forall x_1\in [-\epsilon\alpha_0 , x_\boxdot)
\]
We put $S^{\boxdot_\pm(Y-Y')}:= graph\, \tilde T^\pm(\rho)$. 
\end{defi}

We recall that $(\cX_{-1}, \rho\circ \cX_{-1})(x_1, \rho)\in S$ is sent by
$f$ to  $graph\, \cX_0(\cdot , x_1)$ and so to by $f^{n+M+1}$ to $\{x_1\}\times [-\theta, \theta]$, for every $x_1\in [x_\boxdot, \epsilon \cdot \alpha_0]$.  Thus the graph of $T^\pm(\rho)$ contains the image by $f^{M+1+n}$ of $S_{\boxdot_\pm(Y-Y')}$.  
\medskip 

We are going to show that $S^{\boxdot_\pm(Y-Y')}$ is a horizontal stretched curve and satisfies \cref{graphtransformpara}. Before this, let us show the following extension of  \cref{graph transform parabolic init}:
\begin{lemm}\label{graph transform parabolic init2}
Under the assumption of \cref{graph transform parabolic init}, it holds moreover with $N:=n'+M+2\aleph(n')$: 
\begin{enumerate}
\item $|\partial_{x_1}\cX^\pm_{-1}|$ is small compared to $2^{N}$,
\item The map $x_1\in [x_\boxdot, \epsilon\cdot \alpha_0 ] \mapsto \cY_0(x_1, \rho)$ has $C^0$- norm at most $b$, 
its $C^1$-norm is less than $2^N\cdot b$ and the Lipschitz constant of its derivative is  less than $ 8^N\cdot b$.
\item  $\rho\mapsto [x_1\mapsto \cX_1(x_1, \rho)]$ and  $\rho\mapsto [x_1\mapsto \cY_0(x_1, \rho)]$ are $o(b\cdot 32^{N})$-Lipschitz.
\end{enumerate}
\end{lemm}
\begin{rema}
We note that: $N\le M+\frac M{24} +n'+\frac{n'}{12}\le  3M +3n'$.
\end{rema}
Before proving this lemma let us state the following improvement of \cref{graphtransformpara}: 
\begin{prop}\label{prop pour graphtransformpara (i)}
For every $\pm\in \{-,+\}$, it holds:
\begin{enumerate}
\item for every  $S=graph\, \rho\in \tilde \cD(\boxdot (Y-Y'))$,   the graph  $S^{\boxdot_\pm (Y-Y')}$ 
of $\tilde T^\pm(\rho)$ is a stretched, horizontal curve.
\item The map $S\in \tilde \cD(\boxdot (Y-Y')) \mapsto S^{\boxdot_\pm (Y-Y')}$ is $o(b^{\frac{M+n+1}3})$ contracting.
\item The image $\mathcal I_Y$ of $S\in \mathcal H\mapsto S^Y =f^n(S\cap Y)$ and the image 
$\mathcal I_{\boxdot_\pm (Y-Y')}$ of $S\in \tilde \cD(\boxdot (Y-Y')) \mapsto S^{\boxdot_\pm (Y-Y')}$ are in a small ball of $\mathcal H$ of radius  $o(\theta b^{\frac{n}3})$.
\end{enumerate}
\end{prop} 
\begin{proof}[Proof of \cref{prop pour graphtransformpara (i)}]
 By density, we can assume that $\rho$ is of class $C^2$.
We notice that $T^\pm(\rho):x_1\in [x_\boxdot, \epsilon x_0] \mapsto \cY_1( \cY_0(x_1, \rho), x_1)$.
Thus it holds:
\begin{equation}\label{compo rules derivees}\partial_{x_1} T^\pm(\rho) = \partial_{y_0} \cY_1 \partial_{x_1} \cY_0+ \partial_{x_1} \cY_1
\text{ and }
\partial_{x_1}^2 T^\pm(\rho) = \partial_{y_0} \cY_1 \partial^2_{x_1} \cY_0+ \partial^2_{y_0} \cY_1 (\partial_{x_1} \cY_0)^2 +\partial_{x_1}^2 \cY_1
\; .\end{equation}
We now use \cref{AFrepresentation} to bound the derivatives of $\cY_1$ and \cref{graph transform parabolic init2}.2 to bound the derivative of $\cY_0$. It comes:
\[|\partial_{x_1} T^\pm(\rho)| \le   o(b^{n+M}) 2^N b+ b
\text{ and }
|\partial_{x_1}^2 T^\pm(\rho)| \le o(b^{n+M} ) 8^N \theta + (Kb)^{\frac{n+M}2} (2^N b)^2 +Kb 
\; .\]
Thus:
\[|\partial_{x_1} T^\pm(\rho) |\le  Kb
\qand
|\partial_{x_1}^2 T^\pm(\rho)| \le Kb 
\; .\]
As $|T^\pm(\rho)|\le b$, it comes that $T^\pm(\rho)$ has $C^{2}$-norm at most $Kb$. As $ \partial^2_{x_1} \cY_1(\cdot , 0)$ is at most $Kb$ by \cref{AFrepresentation}.5, it comes that 
$\partial_{x_1}^2\tilde T^\pm(\rho)(x_1)$ is at most $Kb$ for every $x_1\in I_\se$. Thus 
 $\partial_{x_1}\tilde T^\pm(\rho)(x_1)$  and $\tilde T^\pm(\rho)(x_1)$ are at most $Kb$ for every $x_1\in I_\se$.  In particular the graph $S^{\boxdot_\pm (Y-Y')}$  of $\tilde T^\pm(\rho)$ is horizontal and included in $Y_\se$ and so its  endpoints are in $\partial^sY_\se$. Thus $S^{\boxdot_\pm (Y-Y')}$  is stretched. \medskip 
 
To show the contraction $S\mapsto S^{\boxdot_\pm (Y-Y')}$, we notice that by the algorithm of extension, given $S=graph\, \rho$ and $\breve S=graph \, \breve \rho$ in 
$\tilde \cD(\boxdot (Y-Y')$, we have:
\[\max_{x\in I_\se} \| \partial_x \tilde T^\pm (\rho)  -\partial_x \tilde T^\pm (\breve \rho)  \|=  
\max_{x\in [x_\boxdot, \epsilon\alpha_0 ]} \| \partial_x  T^\pm (\rho)  -\partial_x   T^\pm (\breve \rho) \|\; .\]
Thus the contraction of $S\mapsto S^{\boxdot_\pm (Y-Y')}$ is at most $\leb\, I_\se$ times the contraction of $\rho\mapsto T^\pm (\rho)$ for  the $C^1$-topology.  Thus by \cref{graph transform parabolic init2}.3 and \cref{contraction Y Lip Kbn/2}, the latter is $(Kb)^{(M+n)/2}\cdot o(b 32^N)$-contracting. This proves the second statement of the proposition. \medskip

As the diameter of $\mathcal H$ is $4\theta$,  by the second statement the diameter of $\mathcal I_{\boxdot_\pm (Y-Y')}$  is  $o(\theta b^{n/3})$ and by \cref{contractionPPplus }, the diameter of $\mathcal I_Y$ is  at most $4\theta (Kb)^{n/2}=o(\theta b^{n/3})$. Thus to prove the last statement it suffices to show that  one $C^2$-curve $S=graph\, \rho  \in \tilde \cD(\boxdot (Y-Y'))$ has its image $S^{\boxdot_\pm (Y-Y')}$ which is $o(\theta b^{(n+M)/3})$-close to $\breve S^Y$ for $\breve S\in \mathcal H$. We take  $\breve S:= f^{M}(Y_\sr\cap \R\times \{0\})$. Then $\breve S^Y$ is the graph of $\mathcal Y_0(0,\cdot )$.  A to prove the third statement, it suffices to show that $x_1\in I_\se\mapsto \cY_1(0,x_1)$ is at a distance $o(\theta b^{n/3})$ of $\tilde T^\pm (\rho)$. 
For every $x_1\in [x_\boxdot, \epsilon \alpha_0]$, we have $|\cY_1(0,x_1)- T^\pm(\rho)(x_1)|\le b^{M+n}$ by contraction of the fibers of $\cX_0$ and by \cref{compo rules derivees}, it comes:
\begin{multline} \partial_{x_1} \cY_1(0,x_1)- \partial_{x_1} T^\pm(\rho)(x_1)= \partial_{y_0}\cY_1(\cY^\pm _0(x_1, \rho), x_1) 
\cdot \partial_{x_1}\cY^\pm _0(x_1, \rho)  \\
+\partial_{x_1 }\cY_1(\cY^\pm _0(x_1, \rho), x_1) - \partial_{x_1 }\cY_1(0, x_1) = o(b^{M+n+1}\cdot 2^N)+O( b\cdot \partial_{y_0} \partial_{x_1 }\cY_1)\; .\end{multline}
We now infer that $\partial_{y_0} \partial_{x_1 }\cY_1 =O( (Kb)^{(n+M)/2}$ by \cref{AFrepresentation}.4. Thus  $T^\pm(\rho)$ is $O(b^{(M+n)/2+1})$-close to $x_1\in [x_\boxdot, \epsilon \alpha_0]\mapsto \cY_1(0,x_1)$.  As the second derivative of $\tilde T^{\pm}(\rho) |  [-\epsilon \alpha_0,x_\boxdot)$ and $\cY_1(0, \cdot ) |  [-\epsilon \alpha_0,x_\boxdot)$ are equal it comes that the $C^1$-distance between $\tilde T^{\pm}(\rho)$ and $\tilde T^{\pm}(\rho)$ is small compared to $\theta b^{(n+M)/3}$.  \end{proof}

Before proving the lemmas \ref{graph transform parabolic init} and \ref{graph transform parabolic init2}, let us introduce a few notations. We put:
\[f(x,y)= (x^2+a+B_x(x,y), B_y(x,y))\]
Let $e:z\in Y\mapsto (e_x(z), 1)$ a vector field tangent to the fibers of $\cX_0$: $e_x(z)=\partial_{y_0} \cX_0(y_0, x_1)$ with $(y_0,x_1)$ the second and third coordinates of $(z, f^{M+n}(z))$.

Given $S=graph\, \rho\in \cH$, the following is the determinant between the tangent vector $\partial_x (f\circ \iota_\rho)(x)$ of $f(S)$ and the tangent vector $e$ of the fibers of $\cX_0$ at the corresponding point:
\[\Theta(x,\rho)=Df\circ \partial_x \iota_\rho(x)\times e\circ f\circ \iota_\rho(x)\; .\]
This function will play a key role in the proof below. 

\begin{proof}[Proof of \cref{graph transform parabolic init}] 
Let $S=graph\, \rho\in \tilde \cD(\boxdot(Y-Y'))$.   By density we may assume that $S$ and so $\rho$ are of class $C^2$.  We recall that $f(S)$ intersects exactly one component of $\partial^s Y'$ ,  and so exactly  one component of $\partial^s Y$. As  $e$ is tangent to $\partial^sY$, there exists $\xi$ such that $Df\circ D\iota_\rho(\xi)$ is tangent to $e$. This means $\Theta(\xi,\rho)=0$.
Let us develop $\Theta$:
\[\Theta(x,\rho) = 2x+\partial_x  (B_x\circ \iota_\rho)(x)- 
\partial_x  (B_y\circ \iota_\rho) (x)\cdot  e_x\circ   f\circ \iota_\rho(x)\; .\]
 We recall that $z\mapsto e_x(z)$ is $C^1$-$K\sqrt b $-small by  \eqref{ForA} p. \pageref{ForA}. This implies:
 
 \begin{fact} \label{THeta est C1 proche de 2x}
 The function $(x, \rho)\mapsto  \Theta(x,\rho) $ is $O(b)$-$C^1$-close to $(x, \rho)\mapsto 2x$. 
 \end{fact}
Consequently, at $\rho$ fixed,  the function $x\mapsto  \Theta(x,\rho) $ has a unique zero, and so the tangency point $\xi$ is unique. Moreover $\xi$ is dominated by $b$. 

Let $(\cX_0', \cY_1')$ be the affine-like representation of $(\tilde Y', M+n')$. Let $(\cX_0'', \cY_1'')$ be the affine-like representation of the puzzle piece $(Y'', n'')$ whose box is the right component of $\partial_{\aleph(n')} \tilde Y'$.  We remark that:
\[\min_{y_0\in [-\theta, \theta]} \leb(  Y''\cap \R\times \{y_0\})\ge \min |\partial_{x}  \cX_0''|\gg 4^{-N}\; ,\]
 because $n''\le N$, $\|Df\|\le 4$ and the inequality is large because $(Y'', n'')$  is obtained using  $\aleph(n')\gg 1$-times a $\star$-product with $(Y_{\ss_-}, n_{\ss_-})$ whose  horizontal expansion is bounded $2^{2\aleph(n')(1+\sqrt{M})}$ by \cref{coro pour greatly}.

 For $z=(x_0, y_0)\in \tilde Y'$, let $(e'_x(z),1)$ be the tangent space to the fiber of $\cX_0'$:
$e_x'(z) =  \partial_{y_0}\cX_0'({y_0,x'_1})$  with $(y_0, x_1')$ the second and third coordinate projection of $(z,f^{M+n'}(z))$.
 By \cref{2.1}.$(a)$ and \cref{2.2}.$(a)$, it holds:
 \begin{fact}\label{distance e et e'}
The vector fields  $e|\tilde Y $ and $e' $ are $(Kb)^{(M+n)/2}$-$C^1$-close. The curve $\cX_0(\{x_\boxdot\}\times [-\theta, \theta])  $ is $K b\cdot \theta^{n'}$-$C^1$-close to the right component of $\partial^s \tilde Y'$. 
 \end{fact}

 By definition of $\tilde \cD(\boxdot(Y-Y'))$, the curve  $S$ is $\theta^{n'}$-close to a certain curve $S'\in \mathcal H$ which is in critical position with $(Y',n')$. 
 Put $S'=:graph\, \rho'$. By \cref{defi Critical position} of the critical position, the set $f(S')$ intersects in the interior of $\tilde Y'\setminus \partial_{\aleph(n')} \tilde Y'$ but not the left component of $\partial_{\aleph(n')} \tilde Y'$. This means that there exists a point $\xi'$ such that:
 \begin{equation}\label{def xi'}\left\{\begin{array}{l} f\circ  \iota_{ \rho'}(\xi')\in cl(\tilde Y'\setminus \partial_{\aleph(n')} \tilde Y')\\
 0=\Theta'(\xi', \rho')\quad  \text{ with }  \quad 
 \Theta'(x, \rho')=   Df\circ D\iota_{\rho'}(x)\times e'\circ f\circ \iota_{\rho'}(x)\end{array}\right.
.\end{equation}
 \begin{fact}\label{fact sur distance des xi2}
The distance from $f\circ \iota_{\rho'} (\xi')\in cl(\tilde Y'\setminus \partial_{\aleph(n')} \tilde Y')$ to $\partial^s \tilde Y'$ satisfies:
\[d(f\circ \iota_{\rho'}(\xi'),\partial^s \tilde Y')
\ge   \min |\partial_{x}  \cX_0''|\gg 4^{-N}
 \; .\] 
\end{fact}

 As $e $ and $e' $ are $(Kb)^{(M+n)/2}$-$C^1$-close by fact \ref{distance e et e'}  and $\rho, \tilde \rho$  are $\theta^{n'}$-$C^1$-close, the functions $\Theta(\cdot , \rho)$ and $\Theta'(\cdot , \rho')$  are $K b\cdot \theta^{n'}$-close. This implies:
 \begin{fact}\label{fact sur distance des xi}
The points $\xi$ and $\xi'$ are $K b\cdot \theta^{n'}$-close.
\end{fact}
We notice that $4^{-N}$ is large compared to $b\theta^{n'}$. Thus by facts \ref{distance e et e'}, \ref{fact sur distance des xi2} and \ref{fact sur distance des xi}, it comes:
 \begin{fact}\label{fact sur distance des xi3}
The point $f\circ \iota_\rho (\xi)$ is at the left hand side of $\cX_0([\theta, \theta]\times \{x_\boxdot\})$ at a distance which satisfies:
\[d(f\circ \iota_\rho(\xi), \cX_0([\theta, \theta]\times \{x_\boxdot\}))\ge\frac1K  
\min |\partial_{x}  \cX_0''|\gg 4^{-N}
 \; .\] 
\end{fact}
As $\cX_0([\theta, \theta]\times \{x_\boxdot\}))$ is a vertical curve, the latter distance is proportional to:
\[\cX_0(B_y\circ \iota_\rho(\xi), x_\boxdot)-(
\xi^2+a+ B_x\circ \iota_\rho(\xi))\ge\frac1K  
\min |\partial_{x}  \cX_0''|\gg 4^{-N}\; .\]
 
Thus for every $x_1\in [x_\boxdot, \epsilon\cdot \alpha_0]$, it holds:
\[m:=\cX_0(B_y\circ \iota_\rho(\xi), x_1)-(
\xi^2+a+ B_x\circ \iota_\rho(\xi))\ge\frac1K  
\min |\partial_{x}  \cX_0''|\gg 4^{-N}\; .\]
The set $ (f\circ \iota_\rho) ^{-1}(graph\, \cX_0(\cdot , x_1))$ is formed by the zeros of the following function:
\begin{equation}\label{def Qx1rho} Q_{x_1, \rho}: x\mapsto  \cX_0(B_y\circ \iota_\rho(x), x_1)-(x^2+a+ B_x\circ \iota_\rho(x))\; .\end{equation}
 As the function $Q_{x_1, \rho}$ takes the value $m$ at $\xi$ and has derivative close to $x\mapsto -2x$, there are exactly two points $\cX_{-1}^-(x_1, \rho)<\cX_{-1}^+(x_1, \rho)$ such that 
\begin{equation}\label{def X-1pm} x_{-1 }= \cX_{-1}^\pm (x_1,\rho)
\Leftrightarrow  Q_{x_1, \rho}(x_{-1})=0\text{ and } \pm(x_{-1}-\xi)>0
\end{equation}
Moreover $\xi$ is between $
\cX_{-1}^-(x_1, \rho)$ and $\cX_{-1}^+(x_1, \rho)$  at a distance to them satisfying:
\begin{equation}\label{minoration Qx1rho} |\cX_{-1}^\pm(x_1, \rho) - \xi|\asymp  \sqrt {m}
\ge\frac1K  
\min \sqrt{|\partial_{x}  \cX_0''|}\gg 2^{-N}
\quad \text{where }\Theta(\xi, \rho)=0\; . 
\end{equation}
As $\cX_0([-\theta, \theta]\times [x_\boxdot, \epsilon \alpha_0])$ contains the right component of $\tilde Y\setminus \tilde Y'$,  we have that $S_{\boxdot_\pm  (Y-Y')}\subset \{\iota_\rho\circ  \cX_{-1}^\pm (x_1, \rho): x_1\in [x_\boxdot, \epsilon\cdot \alpha_0]\}$. \end{proof}
\begin{rema}\label{rema pour la prochaine preuve} By fact \ref{THeta est C1 proche de 2x} and \eqref{minoration Qx1rho} it holds for every $ x_1\in [x_\boxdot, \epsilon\cdot \alpha_0]$:
\[| \Theta(\cX_{-1}^\pm(x_1, \rho),\rho)|\ge\frac1K  
\min \sqrt{|\partial_{x}  \cX_0''|}\gg 2^{-N}\; .\]
\end{rema}
\begin{rema}\label{pour la prochaine remark} By fact \ref{fact sur distance des xi2} and since $\Theta'$ is $C^1$close to $(x, \rho) \mapsto 2x$ with a zero at $\xi'$, if $z=\iota_\rho (x)$ is sent to $\partial^s\tilde Y'$ by $f$, it holds:
 \[| \Theta'(x,\rho)|=|Df\circ D\iota_{\rho'}(x)\times e'\circ f\circ \iota_{\rho'}(x)|\gg 2^{-N}\; .\]
\end{rema}

\begin{proof}[Proof of \cref{graph transform parabolic init2}] 
We continue with the notations of the proof of the latter lemma. In particular, we still assume $\rho$  of class $C^2$ by density. Equation (\ref{def Qx1rho}) invites us to consider:
\begin{equation}\label{def Q} Q: (x_{-1}, x_1, \rho)\mapsto  \cX_0(B_y\circ \iota_\rho(x_{-1}), x_1)-(x^2_{-1}+a+ B_x\circ \iota_\rho(x_{-1}))\; .\end{equation}
We notice that if $x_{-1}=\cX_{-1}^\pm(x_1, \rho)$,  then $Q(x_{-1}, x_1, \rho)=0$  by \eqref{def X-1pm}, and it holds:
\[\partial_{x_{-1}}  Q(x_{-1}, x_1, \rho)=\Theta(x_{-1},\rho)\quad \text{at }x_{-1}=  \cX_{-1}^\pm(x_1, \rho)
\; .\]
Thus by \cref{rema pour la prochaine preuve} it holds:
\begin{equation}\label{min partialx-1 Q}
 |\partial_{x_{-1}}  Q(\cX_{-1}^\pm(x_1, \rho) , x_1, \rho)|\ge\frac1K  
\min \sqrt{|\partial_{x}  \cX_0''|}\gg 2^{-N}\; .
\end{equation} 
By the implicit function theorem, the function $(x_1, \rho)\mapsto \cX_{-1}^\pm(x_1, \rho)$ defined by \eqref{def X-1pm} is of class $C^1$ (for variations  $\partial \rho$ of $\rho$ in the Banach space of $C^1$-functions). Also we have:
\begin{eqnarray}
\partial_{x_1} \cX^\pm_{-1}& =& -\frac{\partial_{x_1}  Q(\cX^\pm_{-1}, x_1, \rho)}{\partial_{x_{-1}}  Q(\cX_{-1}^\pm , x_1, \rho)} 
\label{derivex1X-1}\; .\\
\partial_{\rho} \cX^\pm_{-1}&= &-\frac{\partial_{\rho }  Q(\cX^\pm_{-1}, x_1, \rho)}{\partial_{x_{-1}}  Q(\cX_{-1}^\pm , x_1, \rho)}
 \; .\label{deriverhoX-1}
\end{eqnarray}
We now use \eqref{min partialx-1 Q} and the fact that $\partial_{x_1}\cX_0$ is small by \cref{AFrepresentation}.$(1)$, to obtain the first item of the lemma:
\begin{equation}\label{bound derive x1 X-1}
|\partial_{x_1} \cX^\pm_{-1}|  \le K 
\frac{ {|\partial_{x_1} \cX_0(\cY_0, x_1)|}}{  
\min \sqrt{|\partial_{x}  \cX_0''|}} \ll 2^N
\end{equation}
We recall that:
\[\cY_0(x_1, \rho):= B_y\circ \iota_\rho(\cX^{\pm}_{-1}(x_1, \rho))\; .\]
The function $x_{-1}\mapsto B_y\circ \iota_\rho(_{-1})$ is $b$-$C^1$-small and its derivative is $Kb$-Lipschitz. This with \eqref{bound derive x1 X-1} implies the $C^1$-bound on $x_1\mapsto \cY_0(x_1, \rho)$ claimed in the  lemma. Also to obtain the bound on the Lipschitz constant of its derivative, it suffices to show that $x_1\mapsto \partial_{x_1} \cX_{-1}(x_1, \rho)$ is $8^N$-Lipschitz. We compute:
\begin{equation}
\partial_{x_1}^2 \cX^\pm_{-1}= -\frac{
\partial_{x_1}^2  Q(\cX^\pm_{-1}, x_1, \rho)}{\partial_{x_{-1}}  Q(\cX_{-1}^\pm , x_1, \rho)}+\frac{\partial_{x_1}  Q(\cX^\pm_{-1}, x_1, \rho)\cdot \partial_{x_1}(\partial_{x_{-1}}  Q(\cX_{-1}^\pm , x_1, \rho)
)
}{(\partial_{x_{-1}}  Q(\cX_{-1}^\pm , x_1, \rho))^2} \; .
\label{derivex1x1X-1}
\end{equation}
We have $\partial_{x_1}^2 Q= \partial_{x_1}^2 \cX_0$ which is small compared to $2^{4(M+n)/3}$ by 
\cref{roughdistbound} and \ref{AFrepresentation}.$(5)$. Thus by \eqref{min partialx-1 Q},  the first term of the above sum is small compared to $2^{7N/3}$. Then the following computation -- which uses  fact \ref{THeta est C1 proche de 2x} -- achieves the proof of the second item of the lemma:
\begin{multline}
|\partial_{x_1}^2 \cX^\pm_{-1}|\ll 2^{7N/3}+ 4^N |\partial_{x_1}(\partial_{x_{-1}}  Q(\cX_{-1}^\pm , x_1, \rho)
)|
= 2^{7N/3}+ 4^N |\partial_{x_1} \Theta( \cX_{-1}^\pm, \rho)|
\\
\le  2^{7N/3}+ 3\cdot 4^N  |\partial_{x_1} \cX_{-1}^\pm |\ll
2^{7N/3}+  8^N \; .
\label{derivex1x1X-1 bis}
\end{multline}
To prove the third and last item of the lemma, we notice that:
\[\partial_\rho \cY_0= DB_y (\partial_{\rho}\iota_\rho) (\cX^{\pm}_{-1}) + \partial_{x_{-1}}(B_y \circ \iota_\rho)\partial_\rho \cX^{\pm}_{-1}\; .\]
The first term of the sum is $b 2^N$-contracting for the $C^1$-topology. As $\partial_{x_{-1}}(B_y \circ \iota_\rho)$ is a $C^{1}$-function of $x_{-1}$ of norm bounded by $b$, it suffices to show that $\partial_\rho \cX^{\pm}_{-1}$ is $\ll 32^N$-contracting for the $C^{1}$-topology.  We compute this derivative using \eqref{deriverhoX-1}:
\[
\partial_{\rho} \cX^\pm_{-1}= \frac{\partial_{\rho }  Q(\cX^\pm_{-1}, x_1, \rho)}{\Theta(\cX^\pm_{-1}, \rho)}=
\frac{ (\partial_{y_0} \cX_0)\circ (DB_y)\circ (\partial_{\rho}\iota_\rho) (\cX^\pm _{-1}) - D B_x\circ (\partial_{\rho} \iota_\rho)(\cX^\pm_{-1})}{\Theta(\cX^\pm_{-1}, \rho)}
 \; .\]
The function $x_1\mapsto \Theta(\cX^\pm_{-1}(x_1, \rho), \rho)$ has $C^1$-norm bounded by $2^N$ and its values are of modulus $\ge 2^{-N}$. Thus its reciprocal has $C^1$-norm bounded by   $8^N$. We already saw that the operators $D B_x\circ (\partial_{\rho} \iota_\rho)(\cX^\pm_{-1})$ and $D B_y\circ (\partial_{\rho} \iota_\rho)(\cX^\pm_{-1})$ are $b\cdot 2^N$-contracting for the $C^1$-topology.  Finally, using \cref{AFrepresentation}-$(2-3)$,  the function $x_1\mapsto (\partial_{y_0} \cX_0(B_y\circ \iota_\rho(\cX^\pm _{-1}), x_1)$ has $C^1$-norm bounded by $\sqrt b$ plus $\sqrt b$ times the one of $B_y\circ \iota_\rho(\cX^\pm _{-1})$ which is smaller than $2^Nb$. This implies the sough inequality:
\[
Lip(\rho \mapsto \partial_{\rho} \cX^\pm_{-1})\ll
8^N
| \sqrt b (1+2^N b )b 2^N + b 2^N|\le K \cdot b\cdot  32^N
 \; .\]
 \end{proof}
 The following will be used for the proof of   \cref{disjoint box}
\begin{rema}\label{pour la prochaine preuve prop i} 
By using \cref{pour la prochaine remark} instead of \cref{rema pour la prochaine preuve} and the affine-like representation of $(\tilde Y', n'+M)$ instead the one of $(\cX_0, \cY_1)$, the same argument as for \eqref{bound derive x1 X-1} shows that if $z\in S\cap f^{-1}(\partial^s\tilde Y')$, it holds:
 \[\|\partial_x (f^{1+M+n'} \circ \iota_\rho)(x)\|\gg 2^{-N}\; .\]
\end{rema}
The following will be useful to prove \ref{Crutial prop}:
\begin{lemm}\label{pre-Crutial prop}
Let $\sc$ and $\sc'$ are words in $\sR$ such that the letter $\sa := \boxdot_{\pm}(\sc-\sc')$ belongs to $\hat \sA$. 
Then for every $S\in \tilde \cD(\boxdot(Y_\sc-Y_\sc'))$, $z\in  S_{\sa}$ and $u$ tangent o $S_\sa$,   it holds:
\[\left\{\begin{array}{cl} 
\|D_zf^{n_\sa}(u)\|\ge 2^{n_\sa/3}\| u \|&\text{ if }n_\sc\le 100\cdot M\, ,\\
\|D_zf^{n_\sa}(u)\|\ge 2^{2n_\sa/5}\| u \|&\text{ otherwise.}\end{array}\right.\; .\]
\end{lemm}
\begin{proof}[Proof of \cref{pre-Crutial prop}]
Let $\sc':= \sc\cdot \sd$. 
Let $(\cX_0, \cY_1)$ be the affine-like representation of $(Y_{\sr\cdot \sc}, n_{\sr\cdot \sc})$. 
Let $Y''$ be the right component of 
$\partial_{\aleph(n')} Y_{\sr\cdot \sc'}$. It is the box of a puzzle piece $(Y_{\sr\cdot \sc\cdot \sd'}, n_{\sr\cdot \sc\cdot \sd'})$ with $\sd' = \sd\cdot \ss_-^{\odot \aleph(n_{\sc'})}$ or $\sd' = \sd\cdot \ss_+\cdot  \ss_-^{\odot \aleph(n_{\sc'})}$. Let 
$(\cX_0'', \cY_1'')$ be the affine-like representation of $(Y_{\sr\cdot \sc\cdot \sd'}, n_{\sr\cdot \sc\cdot \sd'})$.  
 By \eqref{bound derive x1 X-1}, the expansion $\|D_zf^{n_\sa}(u)\|/\|u\|$ is at least:
\begin{equation}\label{resol E1} E=\frac{1}{K}
\frac{  
\min \sqrt{|\partial_{x}  \cX_0''|}} {\max |\partial_{x_1} \cX_0 |}
\ge \frac{1}{K}
\frac{\min_{Y_{\sr\cdot \sc}} \|Df^{n_{\sr\cdot \sc}}|\chi_h\|}{  
\max_{Y''} \sqrt{\|Df^{n_{\sr\cdot \sc\cdot \sd'}}|\chi_h\|}} \; .
\end{equation}
The following bound use the mapping cones property of the   pieces:
\begin{equation}\label{resol E2} \max_{Y''}  \|Df^{n_{\sr\cdot \sc\cdot \sd'}}|\chi_h\|
\le \max_{Y_{\sr\cdot \sc}} \|Df^{n_{\sr\cdot \sc}}|\chi_h\|\cdot 
\max_{Y_{\sd'}}  \|Df^{n_{\sd'}}|\chi_h\|\; .\end{equation}

Now we use the distortion bound of \cref{distprop}:
\begin{equation}\label{resol E3}
\max_{Y_{\sr\cdot \sc}} \|Df^{n_{\sr\cdot \sc}}|\chi_h\|\le 
\min_{Y_{\sr\cdot \sc}} \|Df^{n_{\sr\cdot \sc}}|\chi_h\|\cdot K\cdot 2^{29\cdot 2^{-\sqrt M} n_\sc}\; .
\end{equation}
Then \eqref{resol E1}, \eqref{resol E2} and \eqref{resol E3} yield:
\begin{equation}
E\ge \frac{1}{K}
\sqrt{\frac{  
\min_{Y_{\sr\cdot \sc}} \|Df^{n_{\sr\cdot \sc}}|\chi_h\|
}
{2^{29\cdot 2^{-\sqrt M} n_\sc} \max_{Y_{\sd'}}  \|Df^{n_{\sd'}}|\chi_h\|}}\; .
\end{equation}  
We recall that by \cref{Expansion SR piece} it holds 
$\min_{Y_{\sr\cdot \sc}} \|Df^{n_{\sr\cdot \sc}}|\chi_h\| \ge 2^{(n_\sc+2M)(1-1/\sqrt M)}$  and so:
$$E\ge \frac1K 
\frac{  2^{(n_\sc/2+M)(1-1/\sqrt M)-15\cdot 2^{-\sqrt M} n_\sc}}{\sqrt{{
 \max_{Y_{\sd'}}  \|Df^{n_{\sd'}}|\chi_h\|}}}\; .$$  
If $n_{\sc}<M 2^{\sqrt M-1}$, then $\sc\in \sY_0^{(\N)}$ and  $\sd$ belongs to $\sY_0$ by definition of the strongly regular words. 
Also as the word $\sd'$ is in $\sY_0^{(\N)}$ by \cref{Expansion SR piece} it holds 
$\max_{Y_{\sd'}}  \|Df^{n_{\sd'}}|\chi_h\| \le K2^{{n_{\sd'}}(1+1/\sqrt M)}$. Therefore:
  $$\log_2 (E)\ge   -K - \frac{n_{\sd'}}2(1+\frac{1}{\sqrt M }) + {(\frac {n_\sc}2+M)(1-\frac2{\sqrt M})}\; . $$  
We now bound $\frac{n_{\sd'}}{2}\le \frac{M}2 +\lfloor\frac{n_\sc+M}{12}+\frac M{24}\rfloor+1$. Thus  
\begin{multline*}\log_2 (E)\ge  
- \frac{M}2 - \lfloor\frac{n_\sc+M}{12}+\frac M{24}\rfloor 
   + (\frac{n_\sc}2+M) -K(\frac{n_\sc}{\sqrt M}+\sqrt M)\\
\ge 
\frac5{12} n_\sc+
\frac9{24} M -     K(\frac{n_\sc}{\sqrt M}+\sqrt M)\gg  \frac{n_\sc+M+1}3 =\frac{  n_\sa }{3}
   \; . \end{multline*}
Furthermore, if $n_\sc\in[ 100\cdot  M, M 2^{\sqrt M-1}]$, then $\log_2 (E)\ge  \frac{2 n_\sa }{5}$.
   \\
   If $n_{\sc}>M 2^{\sqrt M-1}$ then  $n_\sd\le 2^{1-\sqrt M}n_\sc$. Thus we obtain:
   $$\log_2 (E)\ge  \frac {n_\sc}2  - \frac{n_\sc}{12} +o(n_\sc)\gg   \frac{2n_\sa}{5}\; . $$ 
\end{proof}
\begin{proof}[Proof of \cref{disjoint box}]\label{proof disjoint box}
Let $(Y, n)$ and $(Y', n')$ be puzzle pieces. We want to show that if a parabolic product $\boxdot_\pm  (Y'- Y'')$ is pre-admissible from a piece  $(Y'', n'')$, then $Y''\boxdot_\pm  (Y- Y')$ is a box. Moreover if $Y''\boxdot_\mp  (Y- Y')$ is well defined, then it is disjoint from $Y''\boxdot_\pm  (Y- Y')$.
\medskip

For every $y\in [-\theta, \theta]$, let $S(y):=I_\se\times \{y\}$ and let $S_{Y''}(y):= S(y)\cap Y''$. 
We recall that $\partial^s Y''$ is the union $\bigcup_{y\in  [-\theta, \theta]} \partial S_{Y''}(y) $ of the
endpoints of $\partial S_{Y''}(y)$ of $S_{Y''}(y)$. Thus $\bigcup_{y\in  [-\theta, \theta]} \partial S_{Y''}(y) $ is formed by two arcs of $W^s(A)$. Likewise, $\bigcup_{y\in  [-\theta, \theta]} \partial f^{n''}(S_{Y''}(y)) $ is formed by two arcs of $W^s(A)$. If $\bar S(y)$ is an extension of $f^{n''}(S_{Y''}(y)) $ which depends continuously on $y$,  then $\bigcup_{y} \partial \bar S_{\boxdot_\pm  (Y'- Y'')}(y)$ is formed by two arcs of $W^s(A)$ which intersects transversely each $\bar S(y)$. 
If such an arc intersects 
 $\bigcup_{y\in  [-\theta, \theta]}   f^{n''}(S_{Y''}(y)) $, then it is included in it by coherence of the arc. By property $(i)$ of \cref{piecedef} of the piece $(Y'', n'')$, its pull back by $f^{n''}|Y''$ is an arc of $W^s(A)$. Moreover it contains an endpoint of each $S(y)\cap (f^{n''}|Y'')^{-1}(\bar S_{\boxdot_\pm  (Y'- Y'')})(y)$. Otherwise the corresponding endpoint of   $S(y)\cap (f^{n''}|Y'')^{-1}(\bar S_{\boxdot_\pm  (Y'- Y'')})(y)$ is the corresponding endpoint of $S_Y(y)$. Thus the following set is bounded by two segment of $\R\times \{-\theta, \theta\}$ and two arcs of $W^s(A)$:
  $$\bigcup_{y\in [-\theta, \theta]} S(y)\cap (f^{n''}|Y'')^{-1}(\bar S_{\boxdot_\pm  (Y'- Y'')})(y)\;.$$
As the latter set is equal to $Y''\boxdot_\pm (Y-Y')$, we just proved that the latter set is bounded by 
a segment of $\R\times \{\theta\}$, a segment of $\R\times \{-\theta\}$ and two disjoint arcs 
$\partial^s(Y''\boxdot_\pm (Y-Y'))$ 
of $W^s(A)$ joining the endpoints of them.
\medskip

Clearly, if  $Y''\boxdot_\mp (Y-Y')$ is well defined, this set intersects $S(y)$ at a segments sent by $f^{n''}$ to a segment of $\bar S$ disjoint from $\bar S_{\boxdot_\pm  (Y'- Y'')})(y)$ by \cref{graph transform parabolic init}. Thus $Y''\boxdot_\mp (Y-Y')$ and $Y''\boxdot_\pm (Y-Y')$ intersects $S(y)$ at two disjoint segments for every $y\in [-\theta, \theta]$. Consequently $Y''\boxdot_\mp (Y-Y')$ and $Y''\boxdot_\pm (Y-Y')$ are disjoint. 
\medskip 

Two show that $Y''\boxdot_\pm (Y-Y')$ is a box it suffices to show that $\partial^s(Y''\boxdot_\pm (Y-Y'))$ is formed by two vertical curves. Let $\mathcal C$ be a component of $\partial^s(Y''\boxdot_\pm (Y-Y'))$. If $\mathcal C$ is included in a component of $\partial^sY''$, then we are done. Otherwise $f^{n''+M+1}(\mathcal C)$ is included in $\partial^s Y$ or $\partial^sY'$. Thus by the proof of \cref{invariancecone}.$(ii)$ p. \pageref{proof:invariancecone2}, the verticality of these curves is the consequence of the next lemma. \end{proof} 
 \begin{lemm} 
For every $z\in \partial^s(Y''\boxdot_\pm (Y-Y'))$:
\begin{enumerate}[$(a)$]
\item if $f^{M+n''+1}(z)\in \partial^s  Y$, then $\|\partial_x f^{k}(z)\|\ge b^{k/6} $ for every $0\le k\le n''+1+M+n$.
\item if $f^{M+n''+1}(z)\in \partial^s   Y'$, then $\|\partial_x f^{k}(z)\|\ge b^{k/6} $ for every $0\le k\le n''+1+M+n'$.
\end{enumerate}
\end{lemm}
\begin{proof}
The cases $k\le n''$ are direct consequences of property $(ii)$ of \cref{piecedef} for $(Y'', n'')$. Furthermore,  $\partial_x f^{n''}$ has norm at least $2^{n''/3}$ by   property $(i)$ of \ref{piecedef}.  Also $\partial_x f^{n''}$ is tangent at $S^{Y''}\in \cD(\boxdot(Y-Y'))$.

In case $(a)$, by  \cref{graph transform parabolic init2}.1, we have $|\partial_{x_{1}}\cX_{-1}|\ll 
 2^{N}$ and so:
\[ \|(Df^{1+M+n})(\partial_x f^{n''}(z))\|\gg 2^{-N} \|\partial_x f^{n''}(z)\| \; .\]
As the norm of $Df$ is at most $4$, this implies:
\[ \|(Df^{k})(\partial_x f^{n''}(z))\|\ge 2^{-N-2(1+M+n)} \|\partial_x f^{n''}(z)\| \quad  \forall k\le 1+M+n.\]
As $M+1+n\le 2^M  n'' $ by pre-admissibility, it comes:
\[ \|(Df^{k})(\partial_x f^{n''}(z))\|\ge b^{n''/6} \|\partial_x f^{n''}(z)\| \quad  \forall k\le 1+M+n.\]
Then we achieve the proof of $(a)$ for $n'' \le k\le n''+n+1+M$ using $\|\partial_x f^{n''}(z)\|\ge 2^{n''/3}$. 

The case $(b)$ is proved likewise by using \cref{pour la prochaine preuve prop i} which implies:
 \[ \|(Df^{1+M+n'})(\partial_x f^{n''}(z))\|\gg 2^{-N} \|\partial_x f^{n''}(z)\| \; .\]
\end{proof}
A Corollary of the latter proof is:
\begin{coro}\label{i implies ii}
Let $(Y, n)$ and $(Y', n')$ be puzzle piece. If a parabolic product $\boxdot_\pm  (Y'- Y'')$ is pre-admissible from a piece  $(Y'', n'')$, and then the pair $(Y''\boxdot_\mp  (Y- Y'), n''+n_\boxdot+n)$ satisfies property $(ii)$ of \cref{piecedef}.
\end{coro}

\begin{lemm} \label{expansion hori prod para}
Let $\sb$ and $\sc$ be words in $\sR$ such that the letter $\sa := \boxdot_{\pm}(\sc-\sc')$ belongs to $\hat \sA$. 
Then for every $S\in \tilde \cD(\boxdot(Y_\sc-Y_\sc'))$, $z\in  S_{\sa}$ and $v\in T_z S_\sa$,   it holds:
\[ \|D_zf^{n_\sa}(v)\| \ge 2^{\frac{n_\sa-j}3 }\|D_zf^{j}(v)\|\; , \forall j\le n_\sa\; .\]
\end{lemm}
\begin{proof}
The case $j=0$ was shown in \cref{pre-Crutial prop}. Furthermore, we showed: 
\begin{equation}\label{le 2/5}  \text{ If }n_\sc\ge 100\cdot M\text{ then } \|D_zf^{n_\sa}(v)\| \ge 2^{\frac{2\cdot n_\sa }5 }\| v \|\; .\end{equation}
Put $n=n_\sc$.  Let $z_{-1}:=z$, $v_{-1}:=v$ and for every $j\in [0, M+n]$, put  $z_j=(x_j,y_j):= f^{1+j}(z_{-1})$ and let $v_{j}:= D_z f^{1+j}(v_{-1})$. We want to show that:
\[\log_2 \frac{\|v_{M+n}\|}{ \| v_j\|}\ge  {\frac{n+M-j}3 } , \quad \forall 0\le  j\le n+M\; .\]
If $n<100 M$ then $\sc$ is a product of pieces in $\sY_0$ by the definition of strongly regular words. As the vector  $v$ is expanded by $Df^{n+M+1}$ and since $\|Df\|\le 4$, it holds $\|v_0\|\ge 4^{-100M}$. As $b$ is small in function of $M$, this means that the $x$ coordinate of $z$ is greater than $4^{-100M+1}$, and such that $v_0$ belongs to $\chi_h$. Using the property $(i)$ of \ref{piecedef} of the pieces $(Y_\sr, M)\star (Y_\sc,n_\sc)$, it comes that $v_j$ is $2^{(n-j)/3}$-expanded by $Df^{n-j}$. 
\medskip

\noindent If $n\ge 100\cdot M$, then put $ j_0:=\frac{n+M+1}{50}-M>1$. Note that $\|v_0\|\le 1$ and $\|v_j\|\le 4^{j}$. By \eqref{le 2/5}, it holds:
\[\log_2 \frac{
\|v_{M+n}\| }{
\| v_j\|}\ge {\frac25 (n+M+1)-2j  }\ge {\frac{n+M-j}3 } +\frac{n+M+1-25 j}{15}\; .
\]
Thus the lemma is satisfied for every $j\le \frac{n+M+1}{25}$.  For the other $j$, we use:
\begin{lemm}\label{lemma pre ha times pour parabolic product} For every $j\ge j_0\ge 1$,  the sinus $\Theta$  of the angle between the vectors  $w_j:= \partial_x  f^{j}({z_0})$ and $v_j$ 
is  $\theta^{j+1}$-small.\end{lemm}
\begin{proof}
We notice that $\|w_0\|=1$ and $\|v_0\|\le 1$ because $z_0\in Y_\boxdot$. 
We use that $|det\, Df|<b$ and $\|w_j\|\ge b^{j/6}$ by property $(ii)$ of \cref{piecedef} of the piece  $(Y_\sr, M)\star (Y_\sc,n_\sc)$, to obtain:
\[|\Theta|:=  \frac{ \|v_{ j}\times   w_{ j}\|}{\|v_{ j}\|\cdot \|w_{ j}\|}
\le  b^j \frac{ \|v_0\times w_0\| }{\|v_{ j}\| b^{j/6}}\le  \frac{ b^{5{ j}/6}}{\|v_{ j}\|}
\; .\]
As  $\|v_{n+M}\|\ge 1$ and as $Df$ has norm  at most 4, it comes that $\|v_{ j}\|\ge  2^{ -2(n+M-j)}$. Now we infer that $j\ge j_0$ and so $50(j+M)\ge n+M$.  Thus 
$\|v_{ j}\|
\ge 2^{- 101\cdot  j}$ for every $j$. Consequently, it holds:
$|\Theta|\le  2^{100\cdot  (j +M)} b^{5j/6}\ll \theta^{j+1}
\; ,$ because   $b\ll \theta\ll 2^{100M}$.
\end{proof}
We are going to use the expansion of the iteration associated to the letter $\sa_i\in \hat \sA$ forming $\sc=\sa_1\cdots \sa_m$. Put $\sg_i:= \sa_0\cdot \sa_1\cdots \sa_i$, with $\sa_0=\sr$.  By definition of the strong regular word and $j_0$, for every $ j\ge \frac{n+M+1}{25}$, there  exists $i$ such that $n_{\sg_i}>  j\ge n_{\sg_{i-1}}\ge j_0$. We recall that $j_0>0$ and so $i\ge 1$. Now we are going to use the expansion associated to $\sa_i$ to show that  $v_j$ is $2^{(n_{\sg_i}-  j)/3}$-expanded by $D_{z_j}f^{n_{\sg_i}- j}$.  To this end, it suffices to show that $v_{\sg_{i-1}}$ is tangent to a curve $ S$ which belongs to the domain $\mathcal D_{\sa_{i}}$ equal to $\mathcal H$ if $\sa_i\in \sA_0$ and to $\tilde \cD(\boxdot (Y_{\sc_i}-Y_{\sc_i'}))$ if 
$\sa=\boxdot_\pm (\sc_i-\sc_i')$. Indeed such an expansion is satisfied at the tangent space of $ S$ by \cref{piecedef}.$(i)$ when $\sa_i\in \sA_0$ and be the last statement \ref{Crutial prop} when $\sa_i$ is parabolic. 

To show that $ S$ belongs to such domains, we work with the curve $S'=f^M(Y_\sr \cap \R\times \{y_0\})$ which contains $z_M=z_{n_{\sg_0}}$. 
As $\sc$ belongs to $\sR$, the curve $S'':= S^{\sa_1\cdots \sa_{i-1}}$ is the graph of a function $\rho''$ which is $b$-$C^1$-small by \cref{pour prod parabolic}, and if $\sa_i=\boxdot_\pm (\sc_i-\sc_i')$, the curve $S''$ belongs to  $\cD(\boxdot (Y_{\sc_i}-Y_{\sc_i'}))$.
We note that $z_{n_{\sg_{i-1}}}$ belongs to $S''$ and 
$w_{n_{\sg_{i-1}}}$ is tangent to $S''$ at $z_{n_{\sg_{i-1}}}$. 
By lemma \ref{lemma pre ha times pour parabolic product}, the angle $\Theta$ between $w_{n_{\sg_{i-1}}}$ and $v_{n_{\sg_{i-1}}}$ is smaller than $\theta^{n_{\sg_{i-1}}}\ll \theta^{n_{\sa_i}}$ (by the order inequality satisfied by the strongly regular words). This is smaller than $\theta^2$. Thus the following function:
 \[
\rho=x\in I_\se\mapsto y_{ n_{\sg_i}} +\int_{x_{ n_{\sg_i}}}^x (\Theta+ D\rho(t))dt\; .\]
has its graph $S$ which is in $\mathcal H$. Moreover if $\sa_i$ is a parabolic symbol, it is $\theta^{n_{\sa_i}}$-close to $S''$. Thus $S$ belongs to $\tilde \cD(\boxdot (Y_{\sc_i}-Y_{\sc_i'}))\Supset \cD(\boxdot (Y_{\sc_i}-Y_{\sc_i'}))$.
\end{proof}

\begin{proof}[Proof of \cref{Crutial prop}] \label{proof of Crutial prop}
Let $\sa :=\boxdot_{\pm}(\sc-\sc')\in \hat \sA$  with $\pm\in \{-,+\}$ such that $\sc, \sc'$ belongs to $\sR$. 
Then by \cref{lemma pre ha times pour parabolic product}  for every $S\in \tilde \cD(\boxdot(Y_\sc-Y_{\sc'}))$,  it holds:
\begin{equation}\label{h time pour para}
\forall z\in S_{\boxdot_{\pm}(\sc-\sc')}, \quad \forall u\in T_z S, \quad \forall k\le n_\sa, \quad 
\|D_zf^{n_\sa}(u)\|\ge 2^{k/3}\|D_zf^{n_\sa-k}(u)\|\; .\end{equation}
Moreover, if there exists  $\sg\in \tilde \sR$,  such that  the parabolic product $\boxdot_\pm (Y_\sc-Y_{\sc'})$ is {pre-admissible} for the piece $(Y_\sg, n_\sg)$, then 
$Y_\sg \boxdot_\pm (Y_{\sc}-Y_{\sc'})$ is a box by \cref{disjoint box}. Furthermore
the pair $(Y_\sg \boxdot_\pm (Y_{\sc}-Y_{\sc'}), n_\sg+n_\sa)$ 
 satisfies properties $(o)-(i)-(ii)$ of \ref{piecedef} of piece. Indeed the box $Y_\sg \boxdot_\pm (Y_{\sc}-Y_{\sc'})$
is sent by $f^{ n_\sg+n_\sa}\subset Y_\sc\subset Y_\se$ as claimed in \ref{piecedef}.$(o)$. The property  \ref{piecedef}.$(i)$ 
is a consequence of \eqref{h time pour para} and the  property  \ref{piecedef}.$(i)$ for $(Y_\sg, n_\sg)$. The property \ref{piecedef}.$(ii)$ was shown in \cref{i implies ii}. 
\end{proof} 

\section{Parameter dependence of pieces and curves }\label{section dependence piece with para}
\subsection{ Motion of the strongly regular puzzle pieces}

\begin{proof}[Proof of prop. \ref{prop9.3}]\label{proof prop9.3}
The proof of the proposition requires two lemmas on the parameter dependence of the affine-like representation of the pieces $(Y_i,n_i)$ of $f_a$. 
We recall that the affine-like representation of $(Y_i,n_i)$ is the pair $(\cX_i, \cY_{i+1})$ of functions  defined by:
\[f_a^{n_i}(x_i,y_i)=(x_{i+1}, y_{i+1})\Leftrightarrow 
(x_i= \cX_i(y_i,x_{i+1},  a), y_{i+1}= \cY_{i+1}(y_i,x_{i+1},  a))\]

\begin{lemm}\label{estimate simple paradep}
For every $(x_1,y_0)\in [\alpha(a),-\alpha(a)]\times [-\theta, \theta]$, it holds:
\begin{enumerate}[$(i)$]
\item The affine-like representation $(\cX_0,\cY_1)$ of $(Y_\sr, n_\sr)$ satisfies:
\[| \partial_a \cX_0(y_0, x_1,  a)-1/3| \le C M 4^{-M}
\; .\]
\item  The affine-like representation $(\cX'_0,\cY'_1)$ of $(Y_\ss, n_\ss)$ with $\ss\in \sY_0$ satisfies: 
\[| \partial_a \cX'_0(y_0, x_1, a)| \le C 2^{n_\ss}\; .
\]
\end{enumerate}
\end{lemm}
\begin{proof} 
Since $n_\sr= M$, the function $(x_1, y_0, a) \mapsto \cX_0(y_0, x_1,  a)$ is $O(4^Mb)$-close to $(x_1, y_0, a) \mapsto 
(P_a^M| I_\sr)^{-1}(x_1)$. Thus $(i)$ is a consequence of Proposition 5.4.(1) \cite{Y95} which states that for every $x_1\in [\alpha(a),-\alpha(a)]$:
\[| \partial_a(P_a^M| I_\sr)^{-1}(x_1)-1/3| \le C M 4^{-M}\; .\]

Likewise, for every  $\ss\in \sY_0$, it holds $n_\ss\le M$ and the function  $(x_1, y_0, a) \mapsto \cX_0(y_0,x_1,  a)$ is $O(4^Mb)$-close to $(x_1, y_0, a) \mapsto 
(P_a^{n_\ss}| I_\ss)^{-1}(x_1)$. Thus $(i)$ is a consequence of Proposition 5.4.(2) \cite{Y95} which states for every  $x_1\in [\alpha(a),-\alpha(a)]$
\[| \partial_a(P_a^{n_\ss}| I_\ss)^{-1}(x_1)| \le C 2^{n_\ss}\; .\]
\end{proof}

\begin{lemm}\label{weak estimate paradep}
For every piece $(Y,n)$, then the affine-like representation $(\cX_0,\cY_1)$  of $(Y, n)$ satisfies for every $(x_1,y_0)\in [\alpha(a),-\alpha(a)]\times [-\theta, \theta]$:
\[| \partial_a \cX_0(y_0,x_1,  a)| \le  4^{n}\qand | \partial_a \cY_1(y_0,x_1,  a)| \le  b 4^{2 n}\; .\]
\end{lemm}
\begin{proof}
Let $g_a$ be the first coordinate of $f^n_a$.  We have $g_a( \cX_0(y_0,x_1,a), y_0)= x_1$.  Thus it holds:
\[(\partial_x g_a)\cdot \partial_a \cX_0(y_0,x_1,a)+
(\partial_a g_a)( \cX_0(y_0,x_1,a), y_0)=0\; .\]
By property $(i)$ of \cref{piecedef}, the derivative $|\partial_x g_a|$ is at least $2^{n/3}(1-\theta)$. On the other hand, the differential of the map $( x, y, a)\mapsto ( f_a(x,y), a)$ has norm at most $4$ (in the operator norm associated to the sup $C^1$-norm on $I\times [-2,2]\times [-\theta, \theta] $). This gives the first estimate of the lemma.
Now we notice that $\cY_1( y_0,x_1, a)$ is the second coordinate of $f^n_a( \cX_0( y_0,x_1, a),y_0)$. As the $C^1$-norm of the second coordinate of $f^n_a$ is smaller than $b 4^{n-1}$, we get the sought estimate on  $\partial_a \cY_1$. 
\end{proof}
We recall that $(\cX_i, \cY_{i+1})$ denotes the  affine-like representation of 
$(Y_i, n_i)$. Let $(\cX_0^{(j)}, \cY^{(j)}_{j+1})$ be the affine-like representation of
$(Y_0, n_0)\star \cdots \star (Y_j, n_j)$ for every $j\le m$. 
 As $Y_\se= [\alpha_0(a),-\alpha_0(a)]\times [-\theta, \theta]$ for every $a$,
for every $m$ it holds:
\[\partial^s Y(f_a)= \bigcup_{y\in [-\theta, \theta]} \{(\cX^{(m)}_0(y_0,\alpha_0(a),  a),y), (\cX^{(m)}_0( y,-\alpha_0(a), a),y_0)\}\; .\]
We first get rid of the dependence of $\alpha_0$ on $a$. We have:
\[\partial_a  [\cX^{(m)}_0(y_0,\pm \alpha_0(a),  a)]= 
[\partial_a  \cX^{(m)}_0](y_0,\pm \alpha_0,  a)\pm \partial_x \cX^{(m)}_0( y_0, \pm \alpha_0,a)\cdot \partial_a \alpha_0\; .\]
Here $ \partial_a \alpha_0$ is bounded and by condition $(i)$ of puzzle piece's \cref{piecedef}, it holds $|\partial_x \cX^{(m)}_0|\le 2^{-(M+n)/3}(1+\theta)$.

We now estimate $\partial_a  \cX^{(m)}_0$ by induction on $m$; for $m=0$ this has been done in \cref{estimate simple paradep}.$(i)$. To prove the step $m-1\to m+1$, we consider the system:
\[\left\{\begin{array}{rl}
x_0 &= \cX_0^{(m-1)}(y_0,x_m, a)\\
y_m &= \cY_m^{(m-1)}(y_0,x_m, a)\\
x_m &= \cX_m (y_m,x_{m+1}, a)\\
y_{m+1}&= \cY_{m+1}(y_m, x_{m+1}, a)\end{array}\right.
\]
We would like to deduce from this system the expressions of $x_0$ and $y_{m+1}$ in function of $x_{m+1}$, $y_0$ and $a$. To this end, we are going to use the implicit function theorem through the following mapping:
\[\psi: \left[
\left( \begin{array}{c}
x_{m+1}\\
y_0 \\
a\end{array}\right), \left( \begin{array}{c}
x_m \\
y_m
\end{array}\right)\right] \mapsto 
\left( \begin{array}{c}
x_m - \cX_m (y_m,x_{m+1}, a)\\
y_m -\cY_m^{(m-1)}(y_0,x_m, a)\end{array}\right)\; .\]
By \cref{AFrepresentation}, the derivative $\partial_2\psi$ of $\psi$ following the second variable is equal to the identity plus a term smaller that $\theta+ \sqrt{1+\theta^2}2^{- n_m/3}$. Thus by the implicit function theorem, there are functions $\cY'_m$ and $\cX'_m$ such that:
\[  x_m = \cX_m'(y_0,x_{m+1}, a)\qand y_m= \cY_m'(y_0,x_{m+1}, a)\; .\] 
Furthermore, $\partial_a (\cX'_m, \cY_m') (y_0,x_m, a)= -(\partial_2 \psi)^{-1}\circ \partial_a \psi(y_0,x_m, a)$. Thus by Lemma \ref{estimate simple paradep}.$(ii)$ and \ref{weak estimate paradep}, it holds:
\begin{enumerate}[$(a)$]
\item If $(Y_m, n_m)$ is simple, then  $| \partial_a \cX'_m| \le C 2^{n_m}$,
\item If $(Y_m, n_m)$ is not simple, then $| \partial_a \cX'_m| \le C 4^{n_m}$ and with $n_m \le 2^{-\sqrt M/2} \sum_{j=1}^m n_j$.
\end{enumerate}
Now we observe that $\cX_0^{(m)}(y_0,x_{m+1},  a)= 
\cX_0^{(m-1)}(y_0, \cX_m'(y_0, x_{m+1}, a), a)$. Its derivative w.r.t. $a$ gives the following recurrence relation:
\[\partial_a \cX_0^{(m)}= \partial_a \cX_0^{(m-1)}+ \partial_x  \cX_0^{(m-1)} \cdot \partial_a \cX'_m\; .\]
As the horizontal expansion of $(Y_0, n_0)$  is of the order of $4^M$, it holds that $\partial_x  \cX_0^{(m-1)}$ is smaller than $C4^{-M}
 \cdot 2^{-\frac13 \sum_{1\le j< m} n_j}$, in both cases $(a)$ and $(b)$, we obtain:
\[|\partial_x  \cX_0^{(m)} \cdot \partial_a \cX'_m|\le C 2^{-M-\frac14 \sum_{1\le j< m} n_j}.\]
Plugging this estimate in the recurrence relation gives the estimate of the proposition. 
\end{proof}
\begin{rema}\label{prop9.3 forte}
The proof showed actually that under the assumptions of \cref{prop9.3}, the affine-like representation $(\cX_0, \cY_1)$ of $(Y(f_a), n)$ satisfies:
\[\partial_a \cX_0 = 1/3+ O(2^{-M}\]
\end{rema}
\subsection{Motion of the horizontal curves}
We are going to prove \cref{transversity de Wu}. 
\begin{defi}[Cone $\tilde \chi$] \index{$\tilde \chi$}
Let $\tilde \chi:=\{(u_x, u_y, u_z)\in \R^3: |u_y|\le \theta \cdot (|u_x|+|u_z|)\}$.
\end{defi}
Let us fix a family $(f_a)_a$ satisfying  the assumptions of \textsection \ref{setting}.  The following is a rephrasing of \cref{transversity de Wu}:
\begin{prop}\label{deplacementdescourbes}
 Let $\st\in \sA^{\Z^-}$ and let $\omega\subset  \R$ be an interval such that $\st\in \arr \sR (f_a)$ for every $a\in \omega$.
Let $\hat  \Sigma_u^\st:= \cup_{a\in \omega} \hat W_u ^\st(f_a)\times \{a\}$ and  $ \Sigma_u^\st:= \cup_{a\in \omega} W_u ^\st(f_a)\times \{a\}$. 
Then both  $\Sigma_u^\st$ and  $\hat \Sigma_u^\st$ are Lipschitz surfaces and their tangent vectors are in $\tilde \chi$. 
\end{prop}
\begin{proof}
Let $F:\; (x,y,a)\mapsto (f_{a}(x,y),a)$.  Let $\st = \cdots \sg_m\cdots \sg_0\in \arr \sR$, with $\sg_m\in \sR$ for every $m$. 
\medskip

Let us first prove that the tangent space of $\Sigma^\st_u$ is in $\tilde \chi$. 
  We notice that the following limit occurs in the $C^0$-topology by \cref{contractionPP}:
\[\Sigma^\st_u= \lim_{m\to -\infty} F^{n_{ \sg_m \cdots \sg_0}}\left (Y_{\sg_m \cdots \sg_0}\cap \bigcup_{a\in \omega} I_e(f_a)\times  \{0, a\}\right)\;  .\]
Thus it suffices to  apply the following lemma  with $(Y_{\sg_m \cdots \sg_0}, n_{\sg_m \cdots \sg_0})=: (Y,n)$:
\begin{lemm} \label{lemme de deplacementdescourbes}
If $(Y(f_a), n)$ is a piece which persists for every $a\in \omega$, 
then the following surface is $C^2$-surface and its tangent space is in $\tilde \chi$:
$$F^{n }\left (Y\cap \bigcup_{a\in \omega} I_e(f_a)\times  \{0, a\}\right)\; .$$ 
\end{lemm}
\begin{proof}This is equivalent to say that $\partial_x F^n(z, a)$ and   $\partial_{a} F^n(z, a)$ span a plan in $\tilde \chi$.
A normal vector to this plan is given by the vector product  
$\partial_u F^n(z, a)\wedge \partial_{a} F^n(z, a)$. It suffices to show that such a vector makes an angle $\ll  \theta$ with $(0,1,0)$. We develop:
\begin{multline}\partial_a F^{n}(z)\wedge  \partial_x F^n_a(z)
=(\partial_a  f^{n}_a(z), 1)\wedge (\partial_x f ^n_a(z), 0)\\
=(p_y\circ \partial_x f ^n_a(z) , -p_x\circ \partial_x f ^n_a(z), \partial_a  f^{n}_a(z) \times \partial_x f ^n_a(z))\; .\end{multline}
Thus the angle between $\partial_a F^{n}(z,a)\wedge  \partial_x F^n_a(z, 0)$ and the vector $(0,1,0)$  is bounded by:
\[ \frac{\| \partial_a  f^{n}_a(z)\times  \partial_x  f ^n_a(z)\|}{\| p_x\circ \partial_x f ^n_a(z)\|}+\frac{\|  p_y\circ \partial_x f ^n_a(z)\|}{\| p_x\circ \partial_x f ^n_a(z)\|}
\; .\]
By \cref{pour prod parabolic}, $\partial_x f ^n_a(z)$ makes an angle $\le b$ with $\partial_x$ and so the second term of this sum is at most $b$. This implies also that $\partial_x f ^n_a(z)$ is dominated by $p_x\circ \partial_x f ^n_a(z)$. Thus it suffices to show that the following is small compared to  $\theta$:  
\[ \frac{\| \partial_a  f^{n}_a(z)\times  \partial_x  f ^n_a(z)\|}{\|  \partial_x f ^n_a(z)\|}\; .
\]
An induction shows:
 \[\partial_a  f_a^n(z)= \sum_{j=1}^n D_{z_j}f^{n-j}(\partial_x^B(z_j)) ,\quad 
\mathrm{with}\; z_j:= f^j(z)\quad\mathrm{and}\quad \partial_x^B:= \partial_x +\partial_a B.\]
 We compute:
\[ \partial_a  f^{n}_a(z)\times  \partial_x f^n(z)= \sum_{j=1}^n D_{z_j}f^{n-j}(\partial_x^B(z_j))\times  D_zf^n(\partial_x )\; .\]
Since the Jacobian of $f$ is less than $b$, we have:
\[\|D_{z_j}f^{n-j}(\partial_x^B)\times  D_{z}f^n(\partial_x )\|\le b^{n-j}\|\partial_x  f^j(z)\|(1+b).\]  
By   \cref{piecedef}.$(i)$ of the piece $(Y,n)$, it holds $\|\partial_xf^j(z)\|\le 2^{- (n-j)/3} \|\partial_x f^n(z)\|$, so we have:
\[\|D_{z_j} f^{n-j}(\partial_x^B)\times  \partial_x  f^n(z)\|\le 2b^{n-j} 2^{-(n-j)/3} \|\partial_x   f^n(z)\|.\]
For $j=n$, since $\partial_x   f^n(z)$ is $ b$-close to  be horizontal by \cref{pour prod parabolic}, we have:
\[\|\partial_x^B(z_n) \times  \partial_x   f^n(z)\|\le  b \|\partial_x   f^n(z)\|,\]
 Thus 
\begin{equation*}\label{angle entre plan2}
\frac{\| \partial_a  f^{n}_a(z) \times \partial_x  f^n(z)\|}{  \|\partial_x   f^n(z)\|}\le 
 Kb + \sum_{j=0}^{n-1}  2b^{n-j} 2^{-(n-j)/3}  =K  b\ll  \theta \; . 
\end{equation*}
\end{proof}
When $\st$ is complete then  $\hat W^\st_u(f_a)=W^\st_u(f_a)$  for every $a$ by \cref{rigidPP}, and so $\Sigma^\st_u=\hat \Sigma^\st_u$. The latter lemma carries this case. Otherwise, an induction using the next  lemma achieves the proof of \cref{deplacementdescourbes}. 
\end{proof}
\begin{lemm} Given $\sb=\boxdot_\pm (\sc-\sc')\in \sA$ and a family  $(S_a)_a$ of horizontal stretched curves such that 
\begin{itemize}
\item  $S_a \in \tilde \cD_\sa(f_a)$ for every $a\in \omega$,
\item $\Sigma_0:= \bigcup_{a\in \omega} S_a\times \{a\}$ is Lipschitz with tangent space in $\tilde \chi$,
\end{itemize}
then $\hat \Sigma =\bigcup_{a\in \omega} S_a^\sb\times \{a\}$ is Lipschitz with tangent space in $\tilde \chi$. 
\end{lemm}
\begin{proof}By density, we can assume that $\Sigma_0$ is of class $C^1$. 
 
Let $(\cX_0, \cY_1)$ be the affine-like representation of the piece $(Y_{\sr\cdot \sc}, n_{\sr\cdot \sc})$, with $n_{\sr\cdot \sc}=M+n_\sc$. We recall that $\epsilon \in \{-, +\}$ and $x_\boxdot(a)$ such that the segment $J_a= [x_\boxdot(a), \epsilon \alpha_0]$ is minimal such that $\cX_0([-\theta, \theta]\times J_a)$ contains  the right component of $Y_{\sr\cdot \sc}(f_a)\setminus   Y_{\sr\cdot \sc'}(f_a)$. 
\begin{sublemm}\label{partial a x boxodot}
The map $a\mapsto  x_\boxdot (a)$ is $4^{M+n_\sc}$-Lipschitz. 
\end{sublemm}
\begin{proof}
By \cref{prop9.3}, the right component of $\partial^s Y_{\sr\cdot \sc'}(f_a)$ is the graph of a function $v_ a$, so that $\partial_a v_a$ is close to $1/3$. Moreover by \cref{prop9.3 forte}, $\partial_a \cX_0$ is $C^0$-close to $1/3$. For every $y_0\in [-\theta, \theta]$, we define $X_\boxdot(y_0, a)$ implicitly by:
\[v_a(y_0)= \cX_0(y_0, X_\boxdot(y_0, a), a)\; .\] 
It satisfies:
\[\partial_a v_a(y_0)- (\partial_a \cX_0)(y_0, x_\boxdot(y_0, a), a) = \partial_x \cX_0(y_0, X_\boxdot(y_0, a), a) \partial_a x_\boxdot(y_0, a)\; .\] 
We recall that $|\partial_x  \cX|$ is at least  $4^{-M-n_\sc}$. 
Thus $\partial_a X_\boxdot(y_0, a)$ is smaller than $4^{M+n_\sc}$. 
Consequently, $a\mapsto X_\boxdot(y_0, a)$ is $4^{M+n_\sc}$ Lipschitz.
Thus $a\mapsto x_\boxdot(a)$  is equal to the supremum or the infimum of the functions $\{X_\boxdot(y_0, a) :y_0\in [-\theta, \theta]\}$. Consequently, it is   $4^{M+n_\sc}$-Lipschitz. 
\end{proof}
Let $S_a$ be the graph of $\rho_a$ and let $S^\sb_a$ be the graph of $\bar \rho_a$. We recall that by \cref{extension algo} \cpageref{extension algo}:
\[\bar \rho_a:  x_1\in I_\se\mapsto \left\{\begin{array}{cl} \cY_1(\cY_0^\pm (x_1, \rho_a,a), x_1,a )& \text{if }x_1\in J_a,\\
\cY_1(\cY_0^\pm ( x_\boxdot(a), \rho_a,a), x_1,a )+\eta_a \cdot (x_\boxdot(a)-x_1) & \text{otherwise,} \end{array}\right. 
 \]
with $\cY_0^\pm (x_1, \rho_a,a)= B_{a\, y}\circ \iota_{\rho_a}\circ \cX_{-1}^\pm (x_1, \rho_a, a)$ and $\eta_a$ such that 
$\bar \rho_a$ is $C^1$. We notice that $\partial_x (B_{a\, y}\circ \iota_{\rho_a})$ and $\partial_a (B_{a\, y}\circ \iota_{\rho_a})$ are $Kb$-small. Thus for every $x_1\in J_a$:
\begin{equation}\label{born partialaY0} |\partial_a  \cY_0^\pm (x_1, \rho_a,a)|\le Kb |\partial_a \cX_{-1}^\pm (x_1, \rho_a,a)|
\; .\end{equation}
Also similarly to \eqref{derivex1X-1} \cpageref{derivex1X-1}, it holds:
\[
\partial_{a} \cX^\pm_{-1}= -\frac{\partial_{a}  Q(\cX^\pm_{-1}, x_1, \rho_a)}{\partial_{x_{-1}}  Q(\cX_{-1}^\pm , x_1, \rho_a)} 
\; . \]
We note that  $\partial_{a}  Q(\cX^\pm_{-1}, x_1, \rho_a)$ is bounded, thus \eqref{min partialx-1 Q} \cpageref{min partialx-1 Q} implies:
\begin{equation}\label{born partialaX-1}
|\partial_{a} \cX^\pm_{-1}|\le K\cdot 2^N, \quad \text{with }N=n_{\sc'}+\aleph(\sc').
\end{equation}
Consequently \eqref{born partialaY0} and \eqref{born partialaX-1} imply:
\begin{equation}\label{born partialaY0 bis} |\partial_a  \cY_0^\pm (x_1, \rho_a,a)|\le K\cdot  b \cdot 2^N \; .\end{equation}

By \cref{lemme de deplacementdescourbes} the derivative $\partial_a \cY_1$ is $Kb$-small. By \cref{AFrepresentation}.(2), we have $\partial_y \cY_1$ is small compared to $b^{n_{\sr\cdot \sc}}$. This gives the following bound for the $C^0$-topology:
\begin{equation}\label{bound ss extension} |\partial_a \bar \rho_a (x_1)|
= \|\partial_a  \cY_1 + \partial_{y_0} \cY_1 \cdot \partial_a \cY_0^\pm \|_{C^0}\le 
Kb+ b^{n_{\sr\cdot \sc}}\cdot b\cdot 2^N\le Kb, \quad \forall x_1\in J_a\; .\end{equation}

For $x_1\in I_\se\setminus J_a$, the Lipschitz constant of $a\mapsto \bar \rho_a(x_1)$ is bounded from above by $\partial_a(\cY_1\circ \cY_0)$ at the point $(x_\boxdot, \rho_a, a)$ -- which is bounded by $Kb$ in \cref{bound ss extension}-- plus the terms:
\begin{equation}\label{add terms}  |\partial_y \cY_1\cdot  \partial_{x_1} \cY_0^\pm |
\cdot Lip (x_\boxdot)+Lip( \eta_a )\cdot |x_\boxdot(a)-x_1| + \eta_a \cdot Lip (x_\boxdot) \; . \end{equation}
It remains only to show that the above expression  is bounded by $Kb$. 
We recall that $\partial_y \cY_1$ is small compared to $b^{n_{\sr\cdot \sc}}$ by \cref{AFrepresentation}.(2). Also $\partial_{x_1} \cY_0^\pm$ is smaller than $ Kb 8^N$ by \cref{graph transform parabolic init2}. We infer sublemma \ref{partial a x boxodot} to obtain  that the first term of \eqref{add terms} is bounded by:
\[ | \partial_y \cY_1 \cdot  \partial_{x_1} \cY_0^\pm \cdot Lip( x_\boxdot) |\le b^{n_{\sr\cdot \sc}}\cdot Kb 8^N\cdot 4^{M+n_\sc}\ll Kb
\; .\]
Thus it remains only to prove that the second and third terms of \eqref{add terms} are dominated by $b$. 
By continuity, the difference of slopes of $x_1\mapsto \cY_1(\cY_0^\pm (x_1, \rho_a,a), x_1,a )$ and 
$x_1   \mapsto  \cY_1(\cY^\pm_0( x_\boxdot(a), \rho_a,a), x_1,a )$ at $x_1= x_\boxdot $ is equal to $\eta_a$. The tangent vector to these two curves at $x_1= x_\boxdot $ are   $\partial_x (f^{n_\sc+M}\circ  f_a\circ \iota_{\rho_a}\circ \cX^\pm _{-1}) (x_\boxdot)$ and $\partial_x (f^{n_\sc+M})\circ (f_a\circ \iota_{\rho_a}\circ \cX^\pm _{-1}) (x_\boxdot)$. Thus  $\eta_a= \Phi(\Xi_a)$ with $\Phi$ a smooth function satisfying $\Phi(0)=0$ and: 
$$\Xi_a:=\frac{\|\partial_x f^{n_\sc+M }({z_\boxdot})\times  D_{z_\boxdot}f^{n_\sc+M }(w_a))\|}{\|\partial_x f^{n_\sc+M }(z_\boxdot )\| \cdot\| D_{z_\boxdot} f^{n_\sc+M }(w_a)\|}\; ,$$
 where $z_\boxdot = f_a\circ \iota_{\rho_a}\circ \cX^\pm _{-1}(x_\boxdot)$ and 
 $w_a:=\partial_x (f_a\circ \iota_{\rho_a}\circ \cX^\pm _{-1})(x_\boxdot)$. 
Thus to achieve the proof of the lemma,  it suffices to show that $\Xi_a$ is small compared to  $1/Lip(x_\boxdot)$ and that $Lip( \Xi_a)$ is small.  We have:
$$|\Xi_a|=|\det\, D_{z_\boxdot} f^{n_\sc+M}_a |\cdot  
\frac{  {\|\partial_x  \times   w_a \|}}
{\|\partial_x f_a^{n_\sc+M }(z_\boxdot )\| \cdot\| D_{z_\boxdot} f_a^{n_\sc+M }(w_a)\|} 
\; .$$ 
Using that $|\det Df|\le b$, property $(i)$ of \ref{piecedef} for the piece $(Y_{\sr\cdot \sc}, n_{\sr\cdot \sc})$, and \cref{expansion hori prod para}, it holds that $|\Xi_a |\ll b^{M+n_\sc}$. Thus by sub-lemma \ref{partial a x boxodot}, the third term of \eqref{add terms} is bounded by:
\[|\Xi_a \cdot Lip(x_\boxdot)|\le b^{M+n_\sc} \cdot 4^{M+n_\sc}\ll b\; .\]
This bound the third term of \eqref{add terms} by $Kb$, let us do the same with the second term. We notice that $z_\boxdot = (\cX_0^\pm(\cY_0(x_\boxdot, 
\rho_a,a), x_\boxdot,a), \cY_0(x_\boxdot, \rho_a,a))$. By   
\cref{prop9.3 forte}, the function $\cX_0$ is $C^1$-bounded. Also 
\eqref{born partialaY0 bis} bounds $\cY_0$ and sub-lemma \ref{partial a x boxodot} bounds $x_\boxdot$. 
This gives that $Lip( z_\boxdot)$ is at most $K^N$ for a universal constant $K$. By a similar proof as for \eqref{born partialaX-1}, we can bound $  \partial_a \partial_x \cX_{-1}$  by $K^N$,  and so we obtain $Lip( w_a)\le K^N$. As $(z,a)\mapsto \det D_z f_a$ is $b$-$C^2$-small and $f$ is $4$-$C^2$-small, it holds that $a\mapsto \det\, D_{z_\boxdot} f^{n_\sc+M} _a$ is $b^{(n_\sc+M) }\cdot   K^N $-small.  Consequently, $Lip \,  \eta_a\asymp Lip\, \Xi_a $ is smaller that $b$. 
\end{proof}

\subsection{Proof of \cref{cardSN}}
\label{apendix combi} 
Put $R(N):= \{(n_j)\in (\Z\setminus \{-1,0,1\})^{(\N)} : 
\sum | n_j | =N\}$.  We notice that:
\[E(N) := \bigcup_{r=  [\max( \delta N,M/2) ]+1, n,t} E(N,r,n,t),\quad \text{with }\]
\[E(N,r,n,t):=\left\{(n_j)_{j}\in R(N):\; r=\sum_{j\le n:\; |n_j|>\frac M2} |n_j|,\; t=|\{j\le n:\; |a_j|>\frac M 2\}|\right\}.\]

\begin{proof}  We recall the proof of \cite{BM13}. 
Put $M':= \lfloor M/2\rfloor $. 

\begin{lemm}  For $r$, $N$, $t$ and $n$ fixed, the number of possible choices of $P= \{j\le n:\; |a_j|>M'\}$ is ${n \choose t}$ is ${n \choose t}\le (eNM'/2r)^{r/M'}$.\end{lemm} 
 \begin{proof}  We remark that   $t \le r/M'$. Also $r< n$ and $M'\ge 2$, thus  $r/M' < n/2$ and ${n \choose t}\le{n \choose r/M'} \le  (enM'/r)^{r/M'}\le (eNM'/2r)^{r/M'}$.
\end{proof}
For given  $r$, $t$ and $P:= \{j\le n:\; |a_j|>M'\}$, the number of possible choices of $(a_j)_{j\in P}$ is bounded by $2^t$ (to choose the sign of $a_j$) times the number of solutions of $x_1+x_2+\cdots+x_t=r$ (to choose the absolute value), so it is $2^t{r \choose t}\le 2^t{r \choose r/M'}\le (2eM')^{r/M'}$.

Therefore,  given  $N$, $r$, $n$ and $t$, the number of possible choices of $\{a_j, j\le n:\; |a_j|>M'\}$ is at most $(e^2N M'^2/r )^{r/M'}$.

The number of choices of $(a_j, j\le n:\; |a_j|\le M')$ is bounded by $2^{N-r}$. Indeed an induction shows that the number $C_k$ of finite sequences of integers in $\mathbb Z\setminus \{-1,0,1\}$ whose absolute value sum is equal to $ k\ge 0 $ is less than $2^{k}$, since $C_{k+1}= 2+\sum_{i=2}^{k-2} 2 C_{k-i}$.  
  
So, the cardinality of $E(N,r,n,t)$ with $r= \lfloor \delta N\rfloor +1$ is at most 
 \[(e^2 M'^2 N/r)^{r/M'} \cdot 2^{N-r}\le 2^{N-r}(e^2 M'^2/\delta)^{r/M'}
 \]\[\le 2^N(e^2 M'^2 2^{-M'}/\delta)^{r/M'}=2^N(e^2 M'^2 2^{-M'}/\delta)^{(\delta N+1)/M'}.\]
Given $N$ there are at most $N^3$ choices for $(t,r,n)$ 
 and so:
\[Card\; \mathcal E(N)\le N^3 2^N(e^2 M'^2 2^{-M'}/\delta)^{(\delta N+1)/M'}
=  N^3 2^{N(1-\delta)}(e^2 M'^2/\delta)^{(\delta N+1)/M'}.\]\end{proof}
} }
 
\clearpage
{\small\addcontentsline{toc}{part}{Index}\printindex}
\clearpage
\bibliographystyle{alpha}
\bibliography{referenceshen}

\def\polhk#1{\setbox0=\hbox{#1}{\ooalign{\hidewidth
  \lower1.5ex\hbox{`}\hidewidth\crcr\unhbox0}}} \def\cprime{$'$}
\begin{thebibliography}{MPY17}

\bibitem[BC91]{BC2}
M.~Benedicks and L.~Carleson.
\newblock The dynamics of the {H}\'enon map.
\newblock {\em Ann. Math.}, 133:73--169, 1991.

\bibitem[Ber10]{berlam}
Pierre Berger.
\newblock Persistence of laminations.
\newblock {\em Bull. Braz. Math. Soc. (N.S.)}, 41(2):259--319, 2010.

\bibitem[Ber18]{Berentropy}
P.~Berger.
\newblock Properties of the maximal entropy measure and geometry of hénon
  attractors.
\newblock {\em arXiv:math.DS, to appear in JEMS}, 2018.

\bibitem[BM13]{BM13}
P.~Berger and G.~C. Moreira.
\newblock Nested cantor sets.
\newblock 283, 03 2013.

\bibitem[BV06]{BV}
M.~Benedicks and M.~Viana.
\newblock Random perturbations and statistical properties of {H}\'enon-like
  maps.
\newblock 23:713--752, 2006.

\bibitem[BY93]{BY}
M.~Benedicks and L.-S. Young.
\newblock Sina\u\i-{B}owen-{R}uelle measures for certain {H}\'enon maps.
\newblock {\em Invent. Math.}, 112(3):541--576, 1993.

\bibitem[H{\'e}n76]{Henon}
M.~H{\'e}non.
\newblock A two-dimensional mapping with a strange attractor.
\newblock {\em Comm. Math. Phys.}, 50(1):69--77, 1976.

\bibitem[JLS86]{JLS86}
W.~B. Johnson, J.~Lindenstrauss, and G.~Schechtman.
\newblock Extensions of {L}ipschitz maps into {B}anach spaces.
\newblock {\em Israel J. Math.}, 54(2):129--138, 1986.

\bibitem[MPY17]{MPY17}
C.~{Matheus}, J.~{Palis}, and J.-C. {Yoccoz}.
\newblock {Stable sets of certain non-uniformly hyperbolic horseshoes have the
  expected dimension}.
\newblock {\em ArXiv e-prints}, December 2017.

\bibitem[MV93]{MV93}
L.~Mora and M.~Viana.
\newblock Abundance of strange attractors.
\newblock {\em Acta Math.}, 171(1):1--71, 1993.

\bibitem[New74]{Newhouse}
S.~E. Newhouse.
\newblock Diffeomorphisms with infinitely many sinks.
\newblock {\em Topology}, 12:9--18, 1974.

\bibitem[Pli72]{Pl72}
V.~A. Pliss.
\newblock On a conjecture of {S}male.
\newblock {\em Differencial\cprime nye Uravnenija}, 8:268--282, 1972.

\bibitem[PY01]{PY01}
J.~Palis and J.-C. Yoccoz.
\newblock Implicit formalism for affine-like maps and parabolic composition.
\newblock In {\em Global analysis of dynamical systems}, pages 67--87. Inst.
  Phys., Bristol, 2001.

\bibitem[PY04]{Yurch}
Ya.~B. Pesin and A.~A. Yurchenko.
\newblock Some physical models described by the reaction-diffusion equation,
  and coupled map lattices.
\newblock {\em Uspekhi Mat. Nauk}, 59(3(357)):81--114, 2004.

\bibitem[PY09]{PY09}
J.~Palis and J.-C. Yoccoz.
\newblock Non-uniformly hyperbolic horseshoes arising from bifurcations of
  {P}oincar\'e heteroclinic cycles.
\newblock {\em Publ. Math. Inst. Hautes \'Etudes Sci.}, (110):1--217, 2009.

\bibitem[Roh67]{Ro67}
V.~A. Rohlin.
\newblock Lectures on the entropy theory of transformations with invariant
  measure.
\newblock {\em Uspehi Mat. Nauk}, 22(5 (137)):3--56, 1967.

\bibitem[Sma67]{Sm}
S.~Smale.
\newblock Differentiable dynamical systems.
\newblock {\em Bull. Amer. Math. Soc.}, 73:747--817, 1967.

\bibitem[Tak11]{Ta11}
H.~Takahasi.
\newblock Abundance of non-uniform hyperbolicity in bifurcations of surface
  endomorphisms.
\newblock {\em Tokyo J. Math.}, 34(1):53--113, 2011.

\bibitem[Tsu93]{Ts93}
M.~Tsujii.
\newblock A proof of {B}enedicks-{C}arleson-{J}acobson theorem.
\newblock {\em Tokyo J. Math.}, 16(2):295--310, 1993.

\bibitem[vSn67]{Sh67}
L.~P. \v~Sil\cprime~nikov.
\newblock On a problem of {P}oincar\'e-{B}irkhoff.
\newblock {\em Mat. Sb. (N.S.)}, 74 (116):378--397, 1967.

\bibitem[Whi34]{Wh34}
H.~Whitney.
\newblock Analytic extensions of differentiable functions defined in closed
  sets.
\newblock {\em Trans. Amer. Math. Soc.}, 36(1):63--89, 1934.

\bibitem[WY01]{WY01}
Q.~Wang and L.-S. Young.
\newblock Strange attractors with one direction of instability.
\newblock {\em Comm. Math. Phys.}, 218(1):1--97, 2001.

\bibitem[WY08]{YW08}
Q.~Wang and L.-S. Young.
\newblock Toward a theory of rank one attractors.
\newblock {\em Ann. of Math. (2)}, 167(2):349--480, 2008.

\bibitem[Yoc]{Y95}
J.-C. Yoccoz.
\newblock A proof of {J}akobson's theorem.
\newblock {\em This voume}.

\bibitem[Yoc97]{Yolecture1}
J.-C. Yoccoz.
\newblock Quelques exemples de dynamique faiblement hyperbolique.
\newblock {\em
  http://media.college-de-france.fr/media/jean-christophe-yoccoz/UPL3205650719684061902$\_$AN$\_$97$\_$yoccoz.pdf},
  1997.

\bibitem[Yoc15]{Y15}
J.-C. Yoccoz.
\newblock Notes on berger/henon.
\newblock {\em Manuscript}, pages 1--39, May 2015.

\bibitem[Yoc16]{Yo16}
J.-C. Yoccoz.
\newblock Hyperbolicité non-uniforme pour les perturbations
  multidimensionnelles des polynômes quadratiques 1/2.
\newblock {\em
  www.college-de-france.fr/site/jean-christophe-yoccoz/course-2016-02-03-10h00.htm},
  February 2016.

\end{thebibliography}

\end{otherlanguage}
\end{document}